\documentclass[11pt,letterpaper,oneside,reqno]{amsart}
\usepackage{t1enc}
\usepackage[latin1]{inputenc}
\usepackage[english]{babel}
\usepackage{latexsym}
\usepackage[dvips]{graphicx}
\usepackage{graphicx}
\usepackage{float}
\usepackage[natural]{xcolor}
\usepackage{enumerate}
\usepackage{appendix}
\usepackage{multirow}
\usepackage{xcolor}
\usepackage{hyperref}
\usepackage{lineno}
\usepackage{setspace}
\usepackage{fullpage}
\usepackage[shortlabels]{enumitem}

\usepackage{enumitem, amsmath, amsthm, comment, amssymb}
\usepackage[norelsize, ruled, boxed, commentsnumbered]{algorithm2e}
\usepackage[letterpaper, margin=1in]{geometry}
\usepackage{graphicx,float}

\usepackage{listings}
\setlist{itemsep=0.25em, topsep=0.4em}
\usepackage{setspace}
\newtheorem{theorem}{Theorem}\numberwithin{theorem}{section}
\newtheorem{lemma}[theorem]{Lemma}\numberwithin{theorem}{section}
\newtheorem{proposition}[theorem]{Proposition}
\numberwithin{theorem}{section}
\newtheorem{fact}{Fact}

\newtheorem{example}[theorem]{Example}
\newtheorem{definition}[theorem]{Definition}

\newtheorem{conjecture}[theorem]{Conjecture}
\newtheorem{claim}[theorem]{Claim}

\newtheorem{setup}[theorem]{Setup}
\numberwithin{equation}{section}

\def\C{\mathcal{C}}

\def\F{\mathcal{F}}

\def\cP{\mathcal{P}}
\def\Q{\mathcal{Q}}

\def\T{\mathcal{T}}
\def \a{\alpha}
\def \e{\varepsilon}
\def \g{\gamma}
\def \d{\delta}
\def \b{\beta}
\def \cP{\mathcal{P}}

\def \bfi{\mathbf{i}}
\def \bfv{\mathbf{v}}

\def\int{\textrm{int}}

\begin{document}

%\onehalfspacing
%\linenumbers
%\baselineskip 0.56cm

%\begin{titlepage}
    
\title{Decision problem for Hamilton $2$-cycles in $4$-graphs}
\author{Luyining Gan}
\address{LG. School of Mathematical Sciences, Beijing University of Posts and Telecommunications, Beijing,
China; Key Laboratory of Mathematics and Information Networks (BUPT), Ministry of Education,
Beijing, China\\
Email: \texttt{(LG) elainegan@bupt.edu.cn.}}
\author{Jie Han}
\address{JH and BW. School of Mathematics and Statistics, Beijing Institute of Technology, China\\
Email: \texttt{(JH) han.jie@bit.edu.cn, (BW) bin.wang@bit.edu.cn.}}
\author{Bin Wang}

\begin{abstract}
A $4$-uniform $2$-cycle in a $4$-uniform hypergraph of length $t$ is a cyclic ordering of $2t$ vertices $v_1v_2\cdots v_{2t}v_1$ such that $v_{2i+1}v_{2i+2}v_{2i+3}v_{2i+4}$ are edges for $0\le i\le t-1$ while the addition is modulo $2t$. 

For every $\gamma>0$ and large $n$, we characterize the $n$-vertex $4$-uniform hypergraphs such that every triple of vertices is contained in at least $(1/3+\gamma)n$ edges and admits a Hamilton $2$-cycle.
Up to the error term $\gamma n$, the assumption on the minimum codegree is best possible and verifies a conjecture of Garbe and Mycroft.
As a consequence, this gives a polynomial-time algorithm that decides whether an $n$-vertex $4$-uniform hypergraph with minimum codegree $(1/3+\gamma)n$ contains a Hamilton $2$-cycle.
This stands as a steep contrast to the graph case where such a hardness gap has size $o(n)$.
\end{abstract}
\maketitle 
%\end{titlepage}

\section{Introduction}

The Hamilton cycle problem looks for a spanning cycle of a given graph.
It is one of the original 21 NP-complete problems by Karp~\cite{Karp}.
The study of Hamiltonicity has been one of the central topics in graph theory and computer science~\cite{BM08, Di17}, due to its broad applications and connections with real-world problems, such as TSP, routing, and scheduling~\cite{Karp, LLRS85, GJ79}.
Given its importance, it is desirable to extend the study of Hamilton cycles to hypergraphs, which have strong potential for modeling more sophisticated multi-object relations in applications.

A $k$-uniform graph $P$ is an $\ell$-path if $P$ has no isolated vertices and moreover, the vertices of $P$ can be ordered in such a way that every edge of $P$ consists of $k$ consecutive vertices and every two consecutive edges intersect in precisely $\ell$ vertices.
Furthermore, if the ordering is cyclic, then $P$ is called an $\ell$-cycle.
Naturally, we say that an $n$-vertex $k$-graph contains a Hamilton $\ell$-cycle if there is a subgraph of $H$ which is a copy of $\ell$-cycle covering all vertices of $H$.
Note that it is necessary that $(k-\ell)$ divides $n$.
The \emph{length} of an $\ell$-path or $\ell$-cycle is the number of edges it contains.
When $\ell=k-1$, it is also referred to as a tight path or tight cycle, which is considered in general as the most interesting case -- one simple reason is that it contains other $\ell$-paths or cycles as subgraphs.

Let $H$ be an $n$-vertex $k$-uniform graph ($k$-graph for short).
The degree of a set $S$ of vertices is the number of edges containing $S$.
Denote by $\delta_d(H)$ its minimum codegree, that is, the minimum of the degree of $S$ (denoted by $\deg(S)$) over all $S$ of size $d$ where $d\in[k-1]$.
We refer to $\delta_{k-1}(H)$ as the minimum codegree of $H$.
%Denote by $\delta_{k-1}(H)$ its minimum codegree, that is, the minimum of the degree of $S$ over all $S$ of size $k-1$.
There has been a strong focus on the study of minimum (co)degree conditions forcing the existence of Hamilton $\ell$-cycles in $k$-graphs, pioneered by the celebrated work of R\"odl, Ruci\'nski and Szemer\'edi~\cite{MR2399020} on an approximate version of the Dirac theorem in $k$-graphs, as well as the absorbing method developed by them in this thread of research.
In particular, in~\cite{MR2399020} it is shown that a minimum codegree $(1/2+o(1))|H|$ guarantees the existence of a tight Hamilton cycle in a $k$-graph $H$.

\subsection{Algorithmic problems -- below-guarantee}
Beyond sufficient conditions, the below-guarantee problem was first investigated by Garbe and Mycroft~\cite{GaMy}.
For tight Hamilton cycles, they showed a strong hardness result saying that for $k\ge 3$, there exists a constant $C$ such that the decision problem for tight Hamilton cycles in $n$-vertex $k$-graphs with minimum codegree $n/2-C$ is NP-complete.
That is, in terms of minimum codegree assumptions, the gap between intractable and trivially tractable is $o(n)$.
%This stands as a contrast to the graph case ($k=2$) where it is known that for the family of graphs of minimum graph $n/2-O(\log n)$ polynomial-time algorithms exist for the Hamilton cycle problem.

On the other hand, the main result of Garbe and Mycroft~\cite{GaMy} shows that for 4-uniform $2$-cycles, the corresponding gap is as large as $\Theta(n)$.
That is, they showed that there exists $\e>0$, such that for  every $n$-vertex $4$-graph $H$ with $\delta_3(H)\ge \frac n 2 - \varepsilon n$ the Hamilton $2$-cycle problem is tractable.
Moreover, they achieve it by characterizing all such $4$-graphs $H$ with Hamilton $2$-cycles. 
Roughly speaking, the obstructions preventing the existence of Hamilton $2$-cycles in $H$ are some parity constructions defined on a bipartition $\{A, B\}$ of $V(H)$.
In particular, given the minimum codegree condition, it is still possible that almost all edges of $H$ contain an odd (or even, respectively) number of vertices of $A$, where some natural obstructions for Hamilton $2$-cycles arise.
Garbe and Mycroft~\cite{GaMy} completely characterized such parity obstructions. To avoid cluttering the presentation, we postpone the somewhat lengthy definitions of even-goodness and odd-goodness to Section 2, see Definition~\ref{def:evenoddgood}.

\begin{theorem}\cite{GaMy}
\label{GM}
There exists $\varepsilon>0$ such that the following holds for sufficiently large even $n$.
Let $H$ be a $4$-graph on $n$ vertices with $\delta_3(H)\ge (1/2-\varepsilon) n$.
Then $H$ admits a Hamilton $2$-cycle if and only if every bipartition of $V(H)$ is both even-good and odd-good.
   Moreover, there is a polynomial-time algorithm that runs in time  $O(n^{25})$ and decides whether $H$ contains a Hamilton $2$-cycle.
\end{theorem}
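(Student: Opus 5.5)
The plan is to follow the absorbing method, with the parity obstructions handled through a stability-type case split. Both directions of the equivalence are needed.

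\emph{Necessity.} Take a Hamilton $2$-cycle $C$ and a bipartition $\{A,B\}$. The edges of $C$ come in a cyclic sequence, and consecutive edges share a pair $v_{2i+3}v_{2i+4}$. Counting $|e\cap A|$ mod $2$ along $C$ forces relations between the number of edges of each parity type and $|A|$. Since $\sum_i |\{v_{2i+1},v_{2i+2}\}\cap A|=|A|$, each edge contributes the $A$-count of two consecutive pairs. This gives exactly the constraints encoded in the definitions of even-good and odd-good, namely the existence of suitable ``wrong-parity'' edges or structures when $|A|$ has the wrong residue. I expect this to be a direct unpacking of Definition~\ref{def:evenoddgood}.

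\emph{Sufficiency.} I split according to whether $H$ is $\beta$-close to a parity extremal configuration. Closeness means there is a bipartition $\{A,B\}$ such that all but $\beta n^4$ edges have an even (respectively odd) intersection with $A$.

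In the non-extremal case I run the standard scheme.
\begin{enumerate}[(i)]
\item Build a connecting lemma: any two ordered pairs can be joined by short $2$-paths. This uses $\delta_3\ge(1/2-\varepsilon)n$ together with non-extremality, which rules out the parity barrier in the ``pair graph'' reachability.
\item Build an absorbing path that can absorb any small set of vertex pairs.
\item Set aside a small random reservoir.
\item Find an almost-spanning $2$-path cover via a fractional or regularity-based matching argument in the reduced graph.
\item Connect everything through the reservoir and absorb the leftover.
\end{enumerate}

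In the extremal case, the bipartition is nearly parity-homogeneous, and the goodness hypothesis supplies the few exceptional edges needed to fix the parity. I first clean the bad vertices, i.e.\ those with many wrong-type edges, by covering them in short paths. I then use the goodness condition to choose a bounded number of parity-switching edges and build the cycle explicitly inside the near-complete parity structure.

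\emph{Algorithm.} Once the characterization holds, deciding Hamiltonicity reduces to checking goodness of the bipartitions. This is only needed for bipartitions close to extremal ones, and there are polynomially many candidates: each is determined by a bounded number of vertices, found by enumerating small vertex sets and their neighbourhoods. The goodness check itself involves searching for configurations of bounded size, which yields the $O(n^{25})$ bound.

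The main obstacle is the extremal case near the codegree threshold. There the goodness conditions must be shown to be exactly sufficient, which requires a delicate case analysis of how the parity-switching structures interact with the sizes of $A$ and $B$ modulo $4$.
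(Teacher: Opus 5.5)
Your overall architecture (necessity by counting, then a non-extremal absorbing case and separate even- and odd-extremal cases) matches the one the paper uses for Theorem~\ref{main}. That theorem implies this statement, since $(1/2-\varepsilon)n\ge(1/3+\gamma)n$ for small $\varepsilon,\gamma$; the paper only cites \cite{GaMy} here and takes necessity from Lemma~\ref{lem:hcevenoddgood}. The real gap is in your extremal case, which is where the characterization actually lives.

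First, your closeness notion has no size condition, so $A=\emptyset$ makes every $H$ even-close and the case split is vacuous. You need $|A|,|B|$ linear in $n$; the paper requires $|A|,|B|\ge n/3+\gamma n$.

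More seriously, you never say for which bipartition the goodness hypothesis is invoked, and ``covering bad vertices by short paths'' is not parity-neutral. A vertex of $A'$ that behaves like a $B$-vertex lies almost only in wrong-parity edges, so any path through it is itself a parity switcher. For instance, let $H$ consist of all even $4$-sets for $\{A,B\}$ with $|A|$ odd and $|A|=|B|+2$, as in Example~\ref{Exp}. Put $A'=A\cup\{v,w\}$ and $B'=B\setminus\{v,w\}$ for some $v,w\in B$. Then $\{A',B'\}$ is even-extremal and is even-good via (\ref{E1}), using disjoint odd edges through $v$ and through $w$. Yet $H$ has no Hamilton $2$-cycle.

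The missing step is the paper's moving process (Lemmas~\ref{lem:cleaned-even-partition} and~\ref{lem:oddmoved}):
\begin{enumerate}
\item reassign every $(\beta_1',\beta_2')$-bad vertex to the other side, so that every vertex becomes medium;
\item invoke goodness only for this cleaned bipartition when building the bridge (Lemmas~\ref{lem:bridgeconstruct} and~\ref{lem:28oddbridge});
\item thread the remaining exceptional vertices through paths of majority-parity edges that leave the residue unchanged.
\end{enumerate}

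The same point is what the algorithmic claim needs, and your version of it is unsupported. The characterization ranges over all $2^n$ bipartitions. You are right that only near-extremal bipartitions can fail goodness, but you give no reason why there are only polynomially many candidates ``each determined by a bounded number of vertices''. The exponent $25$ is not derived from anything; in \cite{GaMy} this enumeration is a separate, nontrivial procedure building on \cite{KKM2015}. The paper avoids enumeration altogether. The non-extremal lemma either certifies a Hamilton cycle or returns one extremal bipartition in time $O(n^{13})$. The extremal lemmas then show that $H$ is Hamiltonian if and only if the single cleaned bipartition is even-good (resp.\ odd-good), which is checked in $O(n^8)$ (resp.\ $O(n^{12})$). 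This gives $O(n^{13})\subseteq O(n^{25})$.

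Two smaller points.
\begin{itemize}
\item In the non-extremal case, the odd configuration does not obstruct connection, since there every $2$-path simply alternates split and connate pairs. It obstructs absorption: in that configuration a $2$-path with fixed ends can never gain a single pair. So non-odd-extremality must enter your absorbing step, not your connecting lemma. The paper absorbs $4$-sets and corrects the residue modulo $4$ with the short odd $2$-cycle of Lemma~\ref{lem:short-odd-cycle}.
\item For necessity, the parity of $|e\cap A|$ alone does not yield the $(m,d)$ conditions. You must count split versus connate pairs along the cycle modulo $4$. This uses that the pair type changes exactly across odd edges, and that each connate pair contributes $\pm 2$ to $|A|-|B|$.
\end{itemize}
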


Moreover, they conjectured that the minimum codegree condition $\delta_3(H)\ge \frac n 2 - \varepsilon n$ in Theorem~\ref{GM} can be relaxed to $n/3$.

\begin{conjecture}\cite{GaMy}
\label{conj}
There exists $n_0$ such that the following holds. 
Let $H$ be a $4$-graph on $n\geq n_0$ vertices with $\delta_3(H)\ge \frac n 3$.
Then $H$ admits a Hamilton $2$-cycle if and only if every bipartition of $V(H)$ is both even-good and odd-good.
\end{conjecture}
%Using subroutine Procedure ListPartition developed in \cite{KKM2015}, it is shown in \cite{GaMy} that even-goodness and odd-goodness (for all potential bipartitions) can be checked in time $O(n^{25})$, which resolves the decision problem for Hamilton $2$-cycles.

\subsection{The decision problem ${\rm HAM}(k,\ell,\delta)$}
We now formalize the natural decision problem for Hamilton cycles in dense $k$-graphs first considered by Karpi\'nski, Ruci\'nski and Szyma\'nska~\cite{KRS10}, which generalizes the classical Hamilton cycle problem.
\begin{definition}
For $k, \ell\in \mathbb N$ and $\delta\in [0,1]$ such that $1\le \ell<k$, let ${\rm HAM}(k,\ell,\delta)$ be the decision problem for Hamilton $\ell$-cycle in $k$-graphs $H$ satisfying $\delta_{k-1}(H) \ge \delta n$.
\end{definition}

For instance, ${\rm HAM}(k,\ell,0)$ considers all $k$-graphs, and is a classical NP-complete problem.
A method of Karpi\'nski, Ruci\'nski and Szyma\'nska~\cite{KRS10} shows that ${\rm HAM}(k,\ell,\d)$ is NP-complete for all $\delta$ below the \emph{space barrier} for Hamilton $\ell$-cycles, that is, for 
\[
\delta < \begin{cases}
    \frac1k \hfill\text{ for } k-\ell \mid k, \\
    \frac{1}{\lceil \frac{k}{k-\ell}\rceil(k-\ell)} \text{ for } k-\ell \nmid k.
\end{cases}
\]
On the other hand, the results on minimum codegree forcing $\ell$-Hamiltonicity yield that ${\rm HAM}(k,\ell,\d)$ is (trivially) in P if $\d > 1/2$ for $k-\ell \mid k$ and if $\d > \frac{1}{\lceil \frac{k}{k-\ell}\rceil(k-\ell)}$ otherwise.
Therefore, for $k,\ell$ with $k-\ell \nmid k$, the problem is essentially closed.
For the divisible case, by the aforementioned result, Garbe and Mycroft essentially closed the problem for tight cycles, by showing that ${\rm HAM}(k,k-1,\d)$ is NP-complete for all $\d<1/2$ (indeed, a codegree $n/2-C$ already suffices for intractability).
Theorem \ref{GM} further shows that there is $\e>0$ such that ${\rm HAM}(4,2,1/2-\e)$ is in P, and thus there is a non-trivial gap for the Hamilton cycle problem in this case.

\subsection{Our results.}
In this paper we show an asymptotic form of Conjecture \ref{conj}, that is, we prove it under the minimum codegree $n/3+o(n)$, which also implies tractability for Hamilton $2$-cycles in such $4$-graphs.

\begin{theorem}[Main result]
    \label{main}
  For $\g>0$, the following holds for sufficiently large even $n$.
   Let $H$ be a $4$-graph on $n$ vertices with $\delta_3(H)\ge \frac n 3+\g n$.
   Then $H$ admits a Hamilton $2$-cycle if and only if every bipartition of $V(H)$ is both even-good and odd-good.
      Moreover, there is a polynomial-time algorithm that runs in time  {$O(n^{13})$} and decides whether $H$ contains a Hamilton $2$-cycle.
\end{theorem}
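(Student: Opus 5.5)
The ``only if'' direction is the easy one, and I would dispose of it first. I expect the definitions of even-goodness and odd-goodness (Definition~\ref{def:evenoddgood}) to be designed exactly so that a Hamilton $2$-cycle $v_1\cdots v_{n}v_1$ yields, for any bipartition $\{A,B\}$, the required structure. Namely, the cycle splits into $n/2$ consecutive pairs $v_{2i+1}v_{2i+2}$, and each edge is the union of two consecutive pairs. Counting the pairs meeting $A$ in an odd number of vertices, and the edges with an odd intersection with $A$, gives the parity certificates. So this direction is a direct verification.

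For the ``if'' direction I would use the absorbing method together with a stability dichotomy. Fix constants $1/n\ll\beta\ll\mu\ll\gamma$. The first step is a \emph{connecting lemma}: any two disjoint ordered pairs that are ``reachable'' can be joined by a short $2$-path. I would build this by defining a reachability relation on pairs, where $xy$ is connected to $zw$ if $xyzw\in E(H)$ and longer connections are allowed via walks. With $\delta_3\ge(1/3+\gamma)n$, each pair $xy$ together with a vertex $z$ has at least $n/3$ extensions. A lattice-type argument, in the style of Han's lattice-based absorbing, then shows that reachability has at most two classes. Moreover, two classes can occur only when $V(H)$ has a bipartition $\{A,B\}$ for which almost all edges are of a fixed parity type with respect to $A$. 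This is the key structural dichotomy. Because codegree is above $n/3$, at most two ``parity classes'' can appear. I expect this to be the main obstacle, since below $n/2$ the possible partitions are less rigid than in Garbe--Mycroft. To get it, I would first study the link graph of each pair, which has more than $n/3$ neighbours per vertex, and classify triples of pairs into connected components.

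\emph{Non-extremal case.} If reachability is a single class, I would carry out the standard steps. I would build an absorbing $2$-path whose absorbers are found via supersaturation, and set aside a small reservoir. I would then cover almost all remaining vertices by $2$-paths using a fractional tiling or regularity argument. The relevant threshold here is the $2$-path cover threshold, which is the space barrier $1/3$ for $k-\ell=2\mid 4$ at density $1/4$; the $1/3+\gamma$ comfortably exceeds what the almost-cover needs. Finally, I would connect the paths through the reservoir and absorb the leftover vertices.

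\emph{Extremal case.} If a bipartition $\{A,B\}$ with near-parity structure exists, the goodness hypotheses supply a small number of ``wrong-parity'' edges or paths of the right kind. Using them, I would switch between the two reachability classes the required number of times, fixing the parity of the number of crossings. After that the absorbing argument runs inside each class. This is essentially the Garbe--Mycroft extremal treatment, adapted so that it only uses the weaker codegree.

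For the algorithm, I would observe that only bipartitions close to a structural bipartition matter, since for all others goodness is automatic. There are polynomially many candidates, obtainable by guessing a bounded set of pairs and closing under reachability. Checking even/odd-goodness for each candidate by searching for bounded-size witnesses then gives a running time of $O(n^{13})$.
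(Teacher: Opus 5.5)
Your ``only if'' direction is fine (it is Lemma~\ref{lem:hcevenoddgood}). The ``if'' direction, however, breaks at its first fork: having a single pair-connectivity class does not put you in a regime where routine absorption works.

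Take $V=A\cup B$ with $|A|,|B|\approx n/2$, and let $H$ consist of all $4$-sets meeting $A$ in an odd number of vertices, so $\delta_3(H)\ge n/2-3$. Every split pair forms an edge with every disjoint connate pair. Hence any two disjoint pairs are $(\beta,2)$-connectable, and there is just one class. Yet $H_{\rm even}=\emptyset$, so when, say, $n\equiv 2\pmod 8$ and $|A|-|B|\equiv 2\pmod 4$, the bipartition $\{A,B\}$ is not odd-good and there is no Hamilton $2$-cycle.

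Failure of pair-connectivity only ever signals even-extremality; this is exactly the dichotomy of Lemma~\ref{lem:conn}. Odd-extremality has to be caught inside the absorbing step, which you treat as standard. It is not standard. In this example the pair graph is bipartite (split versus connate), so no pair can ever be inserted between two consecutive pairs, and ``absorbers via supersaturation'' do not exist at all.

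The paper instead uses swap-absorbers built from vertex reachability. These absorb only $4$-sets whose index vector lies in $I^{\mu}_{\cP}(H)$, where $\cP$ is the partition into at most two closed parts from Lemma~\ref{lem:P}. This creates two obstructions your outline never addresses:
\begin{enumerate}
\item The leftover set must be split into $4$-sets with robust index vectors. This is Proposition~\ref{prop:solution}, which requires $\cP$ to be neither even- nor odd-extremal.
\item The leftover's size must be divisible by $4$. The paper arranges this using a $2$-cycle of length $3$ or $5$ as a length switch (Lemma~\ref{lem:short-odd-cycle}), and the failure of that lemma yields an odd-extremal bipartition.
\end{enumerate}

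The remaining load-bearing steps are only named, not argued.

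\emph{The structural step.} The step you call the main obstacle (two pair classes force an even-extremal bipartition) is asserted. A lattice argument on vertex reachability does not explain why two classes of pairs should come from a vertex bipartition. The paper proves it by pair-level stability. The $1$-step neighbourhoods $\Q_1,\Q_2$ of two non-connectable pairs cover almost all pairs, and few edges are the union of a $\Q_1$-pair and a $\Q_2$-pair. So almost every edge has its three pair-matchings monochromatic, and comparing red/blue neighbourhoods of vertices (Claims~\ref{clm:BR} and~\ref{clm:extremal}) yields the bipartition.

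\emph{The extremal case.} Garbe--Mycroft's argument relies on the majority edges being almost complete, which fails at codegree $n/3$. The paper instead moves bad vertices, hides the exceptional vertices in a short bridge built from the goodness witnesses, and finishes via Hamilton-connectedness of auxiliary pair graphs rather than absorption.

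\emph{The algorithm.} Enumerating candidate bipartitions and testing the goodness of each is Garbe--Mycroft's $O(n^{25})$ route, and nothing in your sketch gives $O(n^{13})$. The paper's bound comes from its sufficiency proofs needing goodness of only one explicitly constructed bipartition: the extremal bipartition from Lemma~\ref{nonextremal} with the $(\beta_1',\beta_2')$-bad vertices moved. That bipartition is found in $O(n^{13})$ and checked once in at most $O(n^{12})$.
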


% \begin{theorem}
%     \label{main}
%   Let $\varepsilon>0$, the following holds for sufficiently large even $n$.
%    Let $H$ be a $4$-graph on $n$ vertices with $\delta(H)\ge \frac n 3+\varepsilon n$.
%    Then $H$ admits a Hamilton $2$-cycle if and only if every bipartition of $V(H)$ is both even-good and odd-good.
% \end{theorem}

%\red{JH. Will update here a bit later: if we can show that $H$ has a HC iff $H$ is even-good and odd-good, then we will include the definitions here as a subsection: Parity barriers.}
The moreover part of Theorem \ref{main} says that ${\rm HAM}(4,2,\d)$ is in P for all $\d>1/3$.
Garbe and Mycroft indeed constructed an algorithm that examines whether a given $4$-graph is even-good and odd-good in time $O(n^{25})$ (by checking all potential bipartitions of the vertex set $V(H)$), which they used in \cite{GaMy} to obtain their complexity result as in Theorem \ref{GM}.
By using this algorithm (which they explicitly stated for $4$-graphs $H$ with $\delta_3(H) \ge |H|/3$), we also obtain an algorithm that resolves the decision problem in time $O(n^{25})$.
We note that our proof of Theorem \ref{main} indeed provides a faster algorithm with running time $O(n^{13})$.
The key is in our proof we locate \emph{two} important bipartitions of $V(H)$ so that we only need to check the goodness of them.

Finally, note that Theorem~\ref{main} does not give a polynomial-time algorithm that constructs a Hamilton $2$-cycle, so for the algorithmic aspect of the proof we only focus on the decision problem (see Procedure~\ref{proc:hc}).
Nevertheless, our probabilistic proof of correctness yields a randomized algorithm that constructs the Hamilton cycle with high probability.
% It is not immediately apparent that the criterion of Theorem \ref{main} can be tested in polynomial
% time, but in Section   we will explain why, due to the minimum codegree of $H$, this is in fact the
% case. 
% Consequently, we can determine in polynomial time whether or not a $4$-graph $H$ whose
% codegree is larger that $n/3+o(n)$ admits a Hamilton $2$-cycle. Moreover, by derandomising
% the proof of Theorem \ref{main} we can actually find such a cycle, should one exist, giving the following theorem.
% \begin{theorem}
% \label{mainalgorithm}
%    Let $\varepsilon>0$, the following holds for sufficiently large even $n$.
%     Let $H$ be a $4$-graph with $n$ vertices with $\delta(H)\ge \frac n 3+\varepsilon n$. Then there exists a polynomial-time algorithm with running time {\color{red} $O()$} that can decide $H$ contains a Hamilton $2$-cycle or outputs a certificate that no such cycle exists.
% \end{theorem}

\subsection{Related works}
Since the work of R\"odl, Ruci\'nski and Szemer\'edi~\cite{MR2399020}, there has been a large number of results on minimum degree conditions forcing Hamilton $\ell$-cycles in $k$-graphs, see e.g.~\cite{Markstr2011Perfect,H2010Dirac,2010Loose,MR2652102,VOJTECH2006A,2011Dirac,Bu2013Minimum,Han2015Minimum,Bastos2016Loose,J2017Loose,Polcyn2020Minimum,Reiher2019Minimum,2020Minimum,2020On,han2021minimum,SHWlarge,lang2024tight} and the references therein.

There is also a recent line of work~\cite{KKM2015,Han14_Poly,HT2020,MR4912875} in the past decade focusing on below-guarantee problems for minimum degree conditions in $k$-graphs, (partially) resolving an algorithmic problem of Yuster~\cite{Yuster07} on $F$-factors, the complexity problem for hypergraph perfect matchings of Karpi\'nski, Ruci\'nski and Szyma\'nska~\cite{KRS10} and a more general conjecture of Keevash, Knox and Mycroft~\cite{KKM2015}.

Yet another recent line of work (see e.g.~\cite{jansen2019hamiltonicity,fomin2022algorithmic,gan2023algorithmic}) pursues significant refinements of existing results by looking closely to the critical windows, and the study so far yields strong stability of the algorithmic results and reveals the phase transition as fixed-parameter tractability. 
For an example related to the topic of this paper, Jansen, Kozma, and Nederlof~\cite{jansen2019hamiltonicity} showed that for graphs satisfying $\d(G)\ge n/2-c$, this decision problem for Hamilton cycles is fixed-parameter tractable with parameter  $c$, that is, they constructed an algorithm that resolves the problem in time $2^{O(c)} n^{O(1)}$.

\section{Core lemmas and highlights of the proofs}

% Let $H$ be a $4$-graph.
% Given a bipartition $\{A,B\}$ of $V(H)$, we say that an edge $e\in E(H)$ is \emph{odd} if $|e\cap A|$ is odd (or equivalently, if $|e\cap B|$ is odd) and \emph{even} otherwise.
% Denote the subgraph of $H$ consisting only of the even edges of $H$ by $H_{{\rm even}}$ and the subgraph of $H$ consisting only of the odd edges of $H$ by $H_{{\rm odd}}$. 
% We say that a pair $p$ of distinct vertices of $H$ is a \emph{split} pair if $|p\cap A|=1$ and a \emph{connate} pair otherwise.
% Note that these terms are all dependent on the bipartition $\{A,B\}$ and these will be clear.
% The \emph{total $2$-pathlength} of $H$ is the maximum sum of lengths of vertex-disjoint $2$-paths in $H$.
% For example, $H$ having total $2$-pathlength 2 indicates the presence in $H$ of 2 vertex-disjoint edges (i.e. two $2$-paths of length 1) or a $2$-path of length 2.

In this section we present three key results which together prove Theorem~\ref{main}.
%The following definitions separate the $4$-graphs $H$ into three categories.
We indeed distinguish the $4$-graphs $H$ into three categories and deal with them by very different techniques. 
To motivate them, we first give the following example.
Given a bipartition $A\cup B$, we call a set $S\subseteq A\cup B$ \emph{even} (otherwise \emph{odd}) if $|S\cap A|$ is even.

\begin{example}\label{Exp}
Let $V=A\cup B$ be a bipartition of vertices with $|A|$ odd and $H$ be a $4$-graph that consists of all even $4$-sets.
If $|V|=n$ is even, $|A|\approx n/2$ and $|A|\neq |B|$, then we have $\delta_3(H) \ge n/2-O(1)$.
Such $H$ has no Hamilton $2$-cycle: suppose it contains a Hamilton $2$-cycle $C$, then it induces a cyclic list $\mathcal L$ of $n/2$ pairs of vertices, where every consecutive two pairs form an edge of $H$.
Since all edges of $H$ are even, all consecutive pairs in $\mathcal L$ share the same parity.
If all of them are odd, namely, each pair contains one vertex in $A$ and one vertex in $B$, then it implies $|A|=|B|$, which is impossible; otherwise all of them are even, then summing up all pairs in $\mathcal L$ their intersection with $A$, we obtain that $|A|$ is even, again a contradiction.
\end{example}

It is known that the parity constructions arising from the example above play an essential role on the existence of perfect matchings and  Hamilton $k/2$-cycles in $k$-graphs.
To distinguish $k$-graphs far from such constructions, we introduce the following definition.
Given a set $V$, we write $\binom{V}{k}$ for the set of subsets of $V$ of size $k$. 

% \begin{definition}
% \label{def:evengood}
%     Let $H$ be an $n$-vertex $4$-graph where $n$ is even and let $\{A,B\}$ be a bipartition of $V(H)$.
%     We say that $\{A,B\}$ is \emph{even-good} if at least one of the following statements holds.
%      \begin{enumerate}[$(i)$]
%     \item $|A|$ is even or $|A|=|B|$.
%     \item $H_{{\rm odd}}$ contains edges $e$ an $e'$ such that either $e\cap e'=\emptyset$ or $e\cap e'$ is a split pair.
%     \item $|A|=|B|+2$ and $H_{{\rm odd}}$ contains edges $e$ and $e'$ with $e\cap e'\in\binom{A}{2}$.
%      \item $|B|=|A|+2$ and $H_{{\rm odd}}$ contains edges $e$ and $e'$ with $e\cap e'\in\binom{B}{2}$.  
%        \end{enumerate}

%  Now let $m\in\{0,2,4,6\}$ and $d\in\{0,2\}$ be such that $m\equiv n\ {\rm mod}\ 8$ and $d\equiv|A|-|B|\ {\rm mod}\ 4$.
% Then we say that $\{A,B\}$ is \emph{odd-good} if at least one of the following statements holds.
%   \begin{enumerate}[$(i)$, start=5]
%    \item $(m,d)\in\{(0,0),(4,2)\}$.
%    \item $(m,d)\in\{(2,2),(6,0)\}$ and $H_{{\rm even}}$ contains an edge.
%    \item $(m,d)\in\{(4,0),(0,2)\}$ and $H_{{\rm even}}$ has total $2$-pathlength at least 2.
%    \item $(m,d)\in\{(6,2),(2,0)\}$ and either there is an edge $e\in E(H_{{\rm even}})$ with $|e\cap A|=|e\cap B|=2$ or $H_{{\rm even}}$ has total $2$-pathlength at least 3.
%   \end{enumerate}
% \end{definition}

% \begin{proposition}
% Let $H$ be a $4$-graph. 
% If $H$ contains a Hamilton $2$-cycle, then every bipartition of $V(H)$ is both even-good and odd-good.
% \end{proposition}

%\section{Proof of Theorem \ref{main}}
\begin{definition}[Even-extremal, odd-extremal]
Let $c,\gamma>0$, and $H$ be an $n$-vertex $k$-graph with a bipartition $\{A,B\}$ of $V(H)$.
If $|A|, |B|\ge n/3 + {\gamma n}$ and $H$ contains at most $c{n}^{k}$ odd (resp. even) edges, then we say that $\{A,B\}$ is $(c,\gamma)$-\emph{even-extremal} (resp. $(c,\gamma)$-\emph{odd-extremal}).
If $V(H)$ admits a $(c,\gamma)$-even-extremal (resp. $(c,\gamma)$-odd-extremal) bipartition, then we say that $H$ is $(c,\gamma)$-\emph{even-extremal} (resp. $(c,\gamma)$-\emph{odd-extremal}).
We call $H$ $(c,\gamma)$-extremal if $H$ is either $(c,\gamma)$-{even-extremal} or $(c,\gamma)$-{odd-extremal}.
% Given $c,\gamma>0$, an $n$-vertex $k$-graph $H$ and a bipartition $\{A,B\}$ of $V(H)$.
% If $|A|, |B|\ge n/3 + {\gamma n}$ and $H$ contains at most $c{n}^{k}$ odd edges, then we say that $\{A,B\}$ is $(c,\gamma)$-\emph{even-extremal}.
% If $|A|, |B|\ge n/3 + {\gamma n}$ and $H$ contains at most $c{n}^{k}$ even edges, then we say that $\{A,B\}$ is $(c,\gamma)$-\emph{odd-extremal}.
% If $V(H)$ admits a $(c,\gamma)$-even-extremal bipartition ($(c,\gamma)$-odd-extremal bipartition), then we say that $H$ is $(c,\gamma)$-\emph{even-extremal} ($(c,\gamma)$-\emph{odd-extremal}).
% We call $H$ $(c,\gamma)$-extremal if $H$ is either $(c,\gamma)$-{even-extremal} or $(c,\gamma)$-{odd-extremal}.
\end{definition}

Now suppose that $H$ is a $4$-graph. 
%Given a bipartition $\{A,B\}$ of $V(H)$, we say that an edge $e\in E(H)$ is \emph{odd} if $|e\cap A|$ is
%odd, or equivalently, if $|e\cap B|$ is odd, and \emph{even} otherwise.
We denote the subgraph of $H$ consisting only of the even edges of $H$ by
$H_{\mathrm{even}}$, and the subgraph of $H$ consisting only of the odd
edges of $H$ by $H_{\mathrm{odd}}$. Also, we say that a pair $p$ of
distinct vertices of $H$ is a \emph{split pair} if $|p\cap A|=1$, and a
\emph{connate pair} otherwise. 
These terms are all defined with respect to a
bipartition $\{A,B\}$ of $V(H)$.
The \emph{total $2$-pathlength} of $H$ is the maximum sum of lengths of vertex-disjoint $2$-paths in $H$. For example, $H$ having total $2$-pathlength $3$ indicates the presence in $H$ of three disjoint edges (i.e. three $2$-paths of length $1$), or of a $2$-path of length $3$, or of two vertex-disjoint $2$-paths, one of length $1$ and one of length $2$.

\begin{definition}\label{def:evenoddgood}
Let $H$ be an $n$-vertex 4-graph where $n$ is even.
Let $\{A,B\}$ be a bipartition of $V$.
We say that $\{A,B\}$ is
\emph{even-good} if at least one of the following conditions holds:
\begin{enumerate}[label=$(E\arabic*)$, ref=$E\arabic*$, start=0]
    \item $|A|$ is even, or $|A|=|B|$;
    \label{E0}
    \item $H_{\rm odd}$ contains two vertex-disjoint edges;
    \label{E1}
    \item $H_{\rm odd}$ contains two edges $e,e'$ such that
    $e\cap e'$ is a split pair;
    \label{E2}
    \item $|A|=|B|+2$, and $H_{\rm odd}$ contains two edges
    $e,e'$ such that $e\cap e'\in \binom{A}{2}$;
    \label{E3}
    \item $|B|=|A|+2$, and $H_{\rm odd}$ contains two edges
    $e,e'$ such that $e\cap e'\in \binom{B}{2}$.
    \label{E4}
\end{enumerate}
Now let $m \in \{0,2,4,6\}$ and $d \in \{0,2\}$ be such that
$m \equiv n \pmod{8}$ and $d \equiv |A|-|B| \pmod{4}$.
We say that $\{A,B\}$ is \emph{odd-good} if at least one of the
following statements holds.

\begin{enumerate}[label=$(O\arabic*)$, ref=$O\arabic*$, start=0]
    \item $(m,d) \in \{(0,0),(4,2)\}$.
    \label{O0}
    \item $(m,d) \in \{(2,2),(6,0)\}$ and $H_{\mathrm{even}}$
    contains an edge.
     \label{O1}
    \item $(m,d) \in \{(4,0),(0,2)\}$ and $H_{\mathrm{even}}$
    has total $2$-pathlength at least two.
     \label{O2}
    \item $(m,d) \in \{(6,2),(2,0)\}$ and either there is an edge
    $e \in E(H_{\mathrm{even}})$ with
    $|e \cap A| = |e \cap B| = 2$, or $H_{\mathrm{even}}$ has total
    $2$-pathlength at least three.
     \label{O3}
\end{enumerate}
\end{definition}

For an integer $k$ we write $[k]$ for the set of integers from $1$ to $k$.
We write $x \ll y \ll z$ to mean that we choose constants in a hierarchical manner from right to left, that is, for any $z > 0$, there exist functions $f$ and $g$ such that, whenever $y \leq f(z)$ and $x \leq g(y)$, the subsequent statement holds. 
Hierarchies of other lengths are defined analogously.
We omit floors and ceilings throughout this paper where they do not affect the argument.
Throughout the paper, for disjoint sets $A, B$ define $A^iB^j$ as the family of $(i+j)$-sets that consist of $i$ elements of $A$ and $j$ elements of $B$.

Now we present our core lemmas. 

\begin{lemma}[\textbf{Non-extremal case}]
\label{nonextremal}
Suppose that $1/n\ll \e \ll \gamma$ with $n\in 2\mathbb N$ and let $H$ be an $n$-vertex $4$-graph with $\delta_3(H)\geq n/3+\gamma n$.
Then either $H$ is $(65\e,\gamma/3)$-extremal, or $H$ contains a Hamilton $2$-cycle.
Moreover, in the former case a $(65\e,\gamma/3)$-extremal partition can be found in time $O(n^{13})$.
\end{lemma}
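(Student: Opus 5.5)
We will follow the absorbing method of R\"odl, Ruci\'nski and Szemer\'edi, adapted to $2$-cycles in $4$-graphs. Throughout, the basic objects are ordered pairs of vertices: a $2$-path is a sequence of pairs $p_1p_2\cdots p_t$ in which every two consecutive pairs span an edge of $H$. We consider the auxiliary \emph{pair graph}, whose vertices are pairs and in which $p,p'$ are adjacent when $p\cup p'\in E(H)$. Connecting two $2$-paths amounts to finding short walks in this pair graph. The proof will consist of four lemmas: a connecting lemma, an absorbing lemma, a reservoir lemma, and an almost-cover lemma. Each will be proved under the assumption that $H$ is not $(65\e,\gamma/3)$-extremal. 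The extremal partitions that arise when one of these lemmas fails will be the certificates required by the ``moreover'' part.

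\textbf{Step 1 (connectivity).} First we show a dichotomy. Either for all but few pairs $p,q$ there are $\Omega(n^{2L})$ short $2$-paths (of bounded length $L$) joining $p$ to $q$, or $V(H)$ has a bipartition $\{A,B\}$ with few odd edges or few even edges. The argument goes as follows. Since $\delta_3\ge (1/3+\gamma)n$, each pair $p$ lies in at least $\binom{n}{2}(1/3+\gamma)$ edges, so the pair graph has linear-size neighbourhoods. Define $p\sim q$ when $p$ and $q$ have many common ``reachable'' pairs. A standard argument (as in lattice-based absorbing) shows that the pair graph splits into at most two closed classes. The reason is that three pairwise poorly connected classes would each need density above $1/3$, contradicting the codegree bound. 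If there are two classes, we then argue that they are determined by the parity of $|p\cap A|$ for some bipartition $\{A,B\}$. From the codegree condition we also get $|A|,|B|\ge n/3+\gamma n/3$, and the edges must be almost all even or almost all odd, giving the extremal case. For the algorithmic claim we locate $A$ by examining the neighbourhoods of a constant number of triples and pairs. Trying all choices of $O(1)$ triples and testing each resulting partition by counting edges in $O(n^4)$ time gives total time $O(n^{13})$, namely $n^{9}$ choices times $n^4$.

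\textbf{Step 2 (absorbing and reservoir).} For a pair $\{x,y\}$, an absorber is a $2$-path $P$ of constant length with the same end pairs as a $2$-path on $V(P)\cup\{x,y\}$. Using $\delta_3>n/3$ together with the connectivity from Step 1, we show that every pair has $\Omega(n^{c})$ absorbers. We then take a random family of absorbers, keep a disjoint subfamily, and connect them into one absorbing path $P_0$ of length at most $\e n$, which can absorb any set $U$ of at most $\e^2 n$ vertices with $|U|$ even. We also choose a random reservoir $R$ of size about $\e^{3/2}n$ that inherits the connectivity property.

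\textbf{Step 3 (almost cover and closing).} In $H-V(P_0)-R$ we cover all but $\e^2 n$ vertices by $O(1)$ long $2$-paths. We do this via weak regularity or a fractional tiling argument: a perfect fractional matching of $2$-path-building blocks exists since $\delta_3>n/3$, which is above the space barrier $1/4$ for $2$-cycles. We then connect these paths and $P_0$ through $R$, and absorb the leftover vertices, including the unused part of $R$.

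The main obstacle is Step 1, namely proving that failure of connectivity forces an extremal bipartition with the quantitative parameter $65\e$. Near $1/3$ the codegree condition is only barely enough to exclude three classes, so the counting has to be careful. We would first analyse the link graphs of pairs (which have edge density above $1/3$) and show that their reachable sets stabilise after boundedly many steps.
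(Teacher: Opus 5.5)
Your overall architecture (connecting lemma, absorbing path, reservoir, almost-cover by $O(1)$ paths) matches the paper's, and Step~3 is fine: the paper uses weak regularity plus an almost perfect matching of the cluster graph, which needs only $\delta_3\ge(1/4+\gamma)n$. The problem is that the two steps carrying the real content are asserted, not proved.

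In Step~1, the claim that two poorly connected classes of pairs ``are determined by the parity of $|p\cap A|$'' is precisely the new ingredient. Your proposed route (showing that reachable sets of pairs stabilise) only re-derives that there are at most two classes. The missing idea is the following.

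\begin{itemize}
\item Very few edges of $H$ are the union of a pair from each class. So for almost every edge, all three ways of splitting it into two disjoint pairs are monochromatic.
\item Colour pairs red/blue by class. For typical $u,v$, sort the other vertices $x$ into $X,Y,Z,W$ according to the colours of $(ux,vx)$.
\item If $x\in X$ (both blue) and $uvx$ is typical, then every $a$ completing $uvx$ to a compatible edge has $ua,va$ blue, so $a\in X$.
\item Since $\delta_3>n/3$, it follows that if $X$ (or $W$) is not tiny it has size $>n/3$, and if $Y$ (resp.\ $Z$) is not tiny then $|Z|>n/3$ (resp.\ $|Y|>n/3$).
\item This forces the red/blue neighbourhoods of $u$ and $v$ to be almost equal or almost swapped (Claim~\ref{clm:BR}). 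Fixing $v$ and sorting all vertices accordingly gives the bipartition, whose odd edges one then counts.
\end{itemize}

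Note also that this step only ever yields an \emph{even}-extremal partition. In an odd-extremal graph split and connate pairs alternate along $2$-paths, but since connectors of different lengths are allowed, this does not disconnect any pairs.

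Consequently the odd-extremal obstruction must be detected in Step~2. There, your claim that every pair $\{x,y\}$ has $\Omega(n^{c})$ absorbers ``using $\delta_3>n/3$ together with the connectivity from Step~1'' is unsupported. Pair-connectivity supplies no switching structure. Worse, in an all-odd graph pair-absorbers do not exist at all: with the end pairs fixed, the alternation of split and connate pairs fixes the parity of the number of pairs, so no $2$-path can gain exactly one pair.

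The paper instead proceeds as follows.
\begin{itemize}
\item It uses vertex reachability, which at $\delta_3>n/3$ only gives a partition into at most two closed classes (Lemma~\ref{lem:P}).
\item It builds absorbers only for $4$-sets whose index vector is robust.
\item It uses non-extremality of this partition to split the leftover set into such $4$-sets (Proposition~\ref{prop:solution}).
\item It corrects the residue modulo $4$ with a $2$-cycle of length $3$ or $5$, which exists unless $H$ is odd-extremal (Lemma~\ref{lem:short-odd-cycle}). Either the pair graph is bipartite, which yields an exactly odd bipartition, or its minimum degree exceeds a third of all pairs, so its shortest odd cycle has length less than $6$.
\end{itemize}

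There are also some smaller points.
\begin{itemize}
\item Your reservoir of size $\varepsilon^{3/2}n$ exceeds the absorbing capacity $\varepsilon^2 n$, yet its unused part must be absorbed. The reservoir must be taken much smaller.
\item Connectivity for ``all but few'' pairs does not suffice unless you control every end pair you connect. The paper shows all pairs are connectable.
\item The paper's $O(n^{13})$ bound comes from computing the reachability partition and testing $4$-connectability. Your $n^9\cdot n^4$ count is not backed by any argument that the correct $A$ is among the candidates.
\end{itemize}
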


\begin{lemma}[\textbf{Even-extremal case}]
\label{evenextremal}
Suppose that $1/n\ll c'\ll\gamma\ll1$ and let $H$ be an $n$-vertex $4$-graph with $\delta_3(H)\geq n/3+\gamma n$ and $\{A',B'\}$ be a $(c',\g/3)$-even-extremal bipartition of $V$.
Then we construct a bipartition $\{A, B\}$ of $V$ in time $O(n^4)$ such that $H$ contains a Hamilton $2$-cycle if and only if $\{A, B\}$ is even-good, which can be checked in time $O(n^8)$.
\end{lemma}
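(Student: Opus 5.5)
We first clean up the given even-extremal bipartition $\{A',B'\}$ into a bipartition $\{A,B\}$ in which every vertex is typical, and then show that $H$ has a Hamilton $2$-cycle if and only if $\{A,B\}$ is even-good. The necessity direction (a Hamilton $2$-cycle forces every bipartition to be even-good) is the parity argument of Example~\ref{Exp}, refined as in \cite{GaMy}. So the real content is the construction of $\{A,B\}$ and the sufficiency direction.

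\textbf{Construction of $\{A,B\}$.} Since $H$ has at most $c'n^4$ odd edges, all but $O(\sqrt{c'}n)$ vertices $v$ lie in at most $\sqrt{c'}n^3$ odd edges. Choose $c'\ll\beta\ll\gamma$ and call a vertex \emph{bad} if it lies in more than $\beta n^3$ odd edges with respect to $\{A',B'\}$. Move each bad vertex to the side that makes it lie in many even edges. Such a side exists: using $\delta_3(H)\ge n/3+\gamma n$, each vertex has degree $\Omega(n^3)$, and for a fixed triple $T$ of $v$'s link the parity of $T\cup\{v\}$ is determined by which side $v$ is on. The codegree condition together with $|A'|,|B'|\ge n/3+\gamma n/3$ forces links of typical triples to intersect both sides substantially, so some side works. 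The result is $\{A,B\}$ with few odd edges in which every vertex lies in $\Omega(n^3)$ even edges of each relevant type. Computing degrees and odd-degrees of all vertices takes $O(n^4)$ time. Checking even-goodness takes $O(n^8)$ time: conditions (E1)--(E4) only ask for a pair of odd edges with a prescribed intersection pattern, which can be found by enumerating pairs of $4$-sets.

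\textbf{Sufficiency.} Assume $\{A,B\}$ is even-good. By (E0)--(E4), either the parities of $|A|$ and $|B|$ already allow a cycle made of even edges, or we obtain at most two odd edges $e,e'$ with the stated intersection. In the latter case we build a short $2$-path containing $e,e'$ that switches the pair type (split versus connate) or corrects the parity of $|A|$. We then complete it to a Hamilton $2$-cycle inside the nearly complete even structure on $A\cup B$. To do this we use the standard absorbing method in $H_{\rm even}$:
\begin{enumerate}[label=(\roman*)]
\item build an absorbing $2$-path;
\item build a reservoir for connecting;
\item cover almost all remaining vertices by long $2$-paths of connate pairs (inside $A$ or inside $B$), possibly together with split-pair paths, with the counts balanced according to $|A|-|B|$;
\item connect all paths and absorb the leftover vertices.
\end{enumerate}
Connectivity holds because even edges between two connate pairs (of type $A^2B^2$, $A^4$, $B^4$) and between two split pairs are abundant, and every vertex is typical.

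\textbf{Main obstacle.} The hardest step is the parity bookkeeping. The final cycle must use exactly the right number of split pairs and of $A$-connate versus $B$-connate pairs so that the pairs cover $A$ and $B$ exactly, and each case (E0)--(E4) has to supply the switch gadget that is needed. The absorber must also preserve pair types, so absorbing structures are built separately for $A$-pairs, $B$-pairs and split pairs. I would first settle the counting identity: a cyclic list of pairs in which all consecutive pairs form even edges, except at the gadget, determines $|A| \bmod 2$ and whether $|A|=|B|$. I would then verify that each of (E1)--(E4) provides exactly the missing transition.
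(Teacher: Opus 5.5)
Your necessity direction, the $O(n^8)$ test and the overall shape (clean the bipartition, extract a parity bridge from (E0)--(E4), complete inside the even structure) match the paper. The gap is in the completion step. It rests on two claims that are false at this codegree: that the even structure is ``nearly complete'', and that after cleaning ``every vertex is typical''.

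\emph{The densities are only just enough.} Since $|A|$ may be close to $2n/3$, a typical triple $T\subseteq A$ only satisfies $\deg_{H[A]}(T)\ge(1/3+\gamma)n-o(n)$, which is barely above $|A|/2$. A typical split pair has about $n^2/6$ split neighbours out of $|A||B|\le n^2/4$. These above-half bounds are what the argument must exploit, and they hold only for typical triples and pairs.

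\emph{Exceptional vertices survive cleaning.} Cleaning can only make every vertex \emph{medium}, i.e.\ in $\Omega(n)$ connate and $\Omega(n)$ split pairs each lying in $\Omega(n^2)$ even edges. It cannot make every vertex typical: a vertex whose link triples are split between the two parities lies in $\Omega(n^3)$ odd edges on either side. So a small, possibly nonempty set of exceptional vertices survives. These lie in $\Omega(n^2)$ low-codegree triples of their side, or in $\Omega(n)$ split pairs with small split neighbourhood. Your plan has no step for them. Nothing forces the path cover to use them, and the usual absorbers need not exist: a medium pair $vw$ has only $\beta_2\binom n2$ even neighbours, and no two of these need form an edge (e.g.\ they may all lie in $\binom S2$ for some $S$ with $H[S]$ empty).

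\emph{The missing step} is to cover them \emph{before} the final covering, as in Propositions~\ref{prop:medium-to-good}, \ref{prop:pairconnect}, \ref{prop:coverex} and Lemma~\ref{lem:bridge}:
\begin{enumerate}[label=(\roman*)]
\item pair each exceptional $x$ with some $y$ so that $xy$ is a medium pair of the type dictated by the bridge;
\item extend $xy$ on both sides to $\theta$-good pairs;
\item link consecutive good pairs through common neighbours, and splice the resulting path into the parity bridge.
\end{enumerate}
The required type is connate in some cases and split in others ((E1),(E2) versus (E3),(E4)). Hence cleaning must guarantee mediumness for \emph{both} pair types.

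\emph{Your cleaning rule does not guarantee this.} A vertex $v\in A'$ whose odd edges are mostly of type $\{v\}\cup B'^3$ may already be medium of both types. Yet after moving it, its even edges are essentially of type $B^4$ only, so it has no medium split pairs. Your justification is also backwards: links of typical triples lie almost entirely on one side. The right argument is the paper's. A vertex with few medium split (or connate) pairs lies in $\Omega(n)$ bad pairs. The codegree of the triples through these pairs then forces $\Omega(n^3)$ edges of type $A'^3B'$ at $v$, and these become $A^2B^2$ edges once $v$ is moved.

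Once the exceptional vertices are inside the bridge, the paper uses no absorption at all. It joins the bridge ends to fresh typical pairs and takes a random perfect matching of the remaining vertices into pairs of the required type: connate pairs in $A$ and in $B$ joined by one $A^2B^2$ edge, or split pairs throughout. It repairs the few non-nice pairs, and uses the above-half bounds to show that every pair of the matching completes to an edge with more than half of the matching. Ore's criterion (Lemma~\ref{lem:Hamiltoncon}) then gives a Hamilton path between the prescribed end pairs in the pair-transition graph, i.e.\ a Hamilton $2$-path (Lemmas~\ref{lem:connateHP} and~\ref{lem:splitHP}).

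Your absorbing route might be made to work at these densities. However, it would need type-preserving absorbers, connectors and a reservoir separately for $A$-pairs, $B$-pairs and split pairs, and you give none of this. The parity bookkeeping you single out as the main obstacle is actually the easy part. Every even edge is the union of two pairs of the same type, so outside the bridge the cycle is type-monochromatic, and (E0)--(E4) hand you the bridge directly.
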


\begin{lemma}[\textbf{Odd-extremal case}]
\label{oddextremal}
Suppose that $1/n\ll c'\ll\gamma\ll1$ and let $H$ be an $n$-vertex $4$-graph with $\delta_3(H)\geq n/3+\gamma n$ and $\{A',B'\}$ be a $(c',\g/3)$-odd-extremal bipartition of $V$.
Then we construct a bipartition $\{A, B\}$ of $V$ in time $O(n^4)$ such that $H$ contains a Hamilton $2$-cycle if and only if $\{A, B\}$ is odd-good, which can be checked in time $O(n^{12})$.
\end{lemma}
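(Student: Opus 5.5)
We prove the odd-extremal lemma by cleaning $\{A',B'\}$ into a bipartition $\{A,B\}$ with structure tight enough that an absorbing-plus-exact-embedding argument works, and then show that the only obstruction left is the parity condition in odd-goodness. Necessity needs no new work, since if $H$ has a Hamilton $2$-cycle then every bipartition is odd-good, by the argument of Garbe and Mycroft generalizing Example~\ref{Exp}. So the content is sufficiency together with the algorithmic claims.

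\textbf{Constructing $\{A,B\}$ in time $O(n^4)$.} Start from $\{A',B'\}$ with at most $c'n^4$ even edges. Call a vertex $v$ \emph{bad} if it lies in more than $\sqrt{c'}n^3$ even edges with respect to $\{A',B'\}$. By counting, there are at most $O(\sqrt{c'})n$ bad vertices. For each bad vertex, compare the number of edges it forms that would be odd if it were placed in $A$ against the number if it were placed in $B$. Put it on the side maximizing odd edges. The codegree condition $\delta_3(H)\ge n/3+\gamma n$, combined with $|A'|,|B'|\ge n/3+\gamma n/3$, should guarantee that every vertex then has $\Omega(\gamma n^3)$ odd edges through it on its chosen side. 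Computing these counts takes $O(n^4)$ time in total. After this step every vertex of the new partition $\{A,B\}$ lies in many odd edges, and all but $O(\sqrt{c'})n$ vertices lie in few even edges.

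\textbf{Sufficiency.} Assume $\{A,B\}$ is odd-good. Then, according to $(m,d)$, we have at hand zero, one, or a small bounded configuration of even edges: an edge of $H_{\rm even}$, or $2$-paths of total length $2$ or $3$, or an $A^2B^2$ edge. We build the cycle in four steps.
\begin{enumerate}[(i)]
\item Embed this small even configuration into a short $2$-path $P_0$. Its purpose is to correct the residues of $|A|-|B|$ mod $4$ and $n$ mod $8$, exactly as in the Garbe--Mycroft parity count.
\item Cover the $O(\sqrt{c'})n$ atypical vertices greedily by short odd $2$-paths, using their $\Omega(\gamma n^3)$ odd degree.
\item Build an absorbing $2$-path inside the odd subgraph. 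Since $H_{\rm odd}$ restricted to the typical vertices is almost complete between the appropriate types ($A^1B^3$ and $A^3B^1$), the absorbers here are easy.
\item Connect everything and cover the remaining vertices with an almost spanning odd $2$-path. In an almost complete odd $4$-graph on parts of size $\ge n/3$, consecutive pairs can alternate between split and connate types. The sizes of $A$ and $B$ then determine how many pairs of each type are needed. Balancing these requires $|A|,|B|\ge n/3+\Omega(\gamma n)$ together with the parity correction made in step (i).
\end{enumerate}

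\textbf{Main obstacle.} The hardest step is the exact parity and size bookkeeping in step (iv). We must show that whenever the odd-goodness condition holds, the leftover counts of $A$- and $B$-vertices can be realized by a purely odd $2$-path with prescribed end pairs. We must also check that the reallocation of bad vertices does not destroy odd-goodness in either direction. To handle this, we would first reduce to the four residue classes of $(m,d)$, then verify feasibility by a small linear system on the numbers of pair types ($AA$, $AB$, $BB$), reusing the counting from Example~\ref{Exp}.

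\textbf{Running time.} The check in $O(n^{12})$ comes from condition (O3). Testing for total $2$-pathlength at least three in $H_{\rm even}$ requires searching over at most three edges, that is, $O(n^{12})$ vertex tuples. The other conditions are cheaper.
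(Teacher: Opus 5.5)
\textbf{Gap: $H_{\rm odd}$ is not almost complete at this codegree.} Your steps (iii) and (iv) rely on $H_{\rm odd}$, on the typical vertices, being almost complete on $A^3B^1$ and $A^1B^3$. At codegree $(1/3+\gamma)n$ this is false.
\begin{enumerate}
\item A triple in $A^2B$ only needs $(1/3+\gamma)n$ odd completions inside $A$, and $|A|$ may be close to $2n/3-\gamma n$. So the relevant densities can be barely above $1/2$.
\item Already for $|A|=|B|=n/2$, a random $4$-graph on the odd $4$-sets with density about $2/3$ satisfies every hypothesis.
\end{enumerate}
Near-completeness is exactly what Garbe and Mycroft had at $n/2-\varepsilon n$, and it is what is lost here. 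So neither the ``easy'' absorbers nor the alternating near-spanning path with prescribed numbers of split and connate pairs is justified.

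This also means you have misplaced the main obstacle. The parity bookkeeping is routine: the two admissible residue classes are exactly what make $x=(N+2)/4$, $y_A=(|A_U|-x)/2$ and $y_B=(|B_U|-x)/2$ integers, where $U$ is the leftover and $N=|U|$. The heart of the matter is covering the leftover exactly by a $2$-path with prescribed split ends when transition densities are only slightly above $1/2$.

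Separately, step (ii) as stated can undo the residue fixed in step (i). The paper covers the exceptional vertices by an even number of identical $12$-vertex gadgets, adding one dummy vertex if needed, so that the residue is unchanged.

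\textbf{What the paper does instead.} It uses no absorption in the extremal case.
\begin{enumerate}
\item After the bridge, it takes a random perfect matching of the leftover containing the two end pairs, with exactly $x$ split pairs, $y_A$ connate pairs inside $A$ and $y_B$ inside $B$.
\item It repairs the few non-$\theta$-good pairs by swaps.
\item It forms the bipartite graph $\Gamma$ between the split and the connate pairs of the matching, joining two pairs when their union is an edge. A degree-sum criterion for weak Hamilton-connectivity of balanced bipartite graphs gives a Hamilton path in $\Gamma$ between the end pairs, which is the desired Hamilton $2$-path.
\end{enumerate}

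The crucial input is that every good split pair is adjacent to more than $(1/2+\xi)y$ of the $y=y_A+y_B$ connate pairs of the matching. This is not automatic: its odd neighbourhood may be spread unevenly over $\binom{A_U}{2}$ and $\binom{B_U}{2}$, while the matching draws connate pairs in proportion $y_A:y_B$. Write $a=|A_U|$, $b=|B_U|$, and let $m_A,m_B$ be the numbers of odd neighbours of the split pair in $\binom{A_U}{2}$ and $\binom{B_U}{2}$. The requirement is the weighted inequality
$y_Am_A\binom{a}{2}^{-1}+y_Bm_B\binom{b}{2}^{-1}\ge(1/2+\xi)y$.
This reduces to $f(r)\ge 7/48>1/8$ for $r=a/N\in[1/3,1/2]$, and it is where the threshold $1/3$ is genuinely used.

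Your worry about reallocation destroying odd-goodness is unnecessary. Necessity holds for every bipartition, so only sufficiency for your chosen $\{A,B\}$ must be shown.
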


Therefore, a proof of Theorem \ref{main} combines the three lemmas above in the na\"ive way: we apply Lemma \ref{nonextremal} to $H$, which either constructs a Hamilton $2$-cycle or outputs an extremal partition, and then the extremal partition is sent to Lemma \ref{evenextremal} or Lemma \ref{oddextremal} accordingly.

The necessity of Lemmas \ref{evenextremal} and \ref{oddextremal} follows immediately from Lemma~\ref{lem:hcevenoddgood} below. Indeed, if $H$ contains a Hamilton $2$-cycle, then every bipartition of $V(H)$ is both odd-good and even-good. 
It remains to prove the sufficiency parts, which are presented in Sections \ref{sec:even} and \ref{sec:odd}.

\begin{lemma}\cite[Proposition 3.1]{GaMy}
\label{lem:hcevenoddgood}
Let $H$ be a $4$-graph.
If $H$ contains a Hamilton $2$-cycle, then every bipartition of $V$ is both even-good and odd-good.
\end{lemma}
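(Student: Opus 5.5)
Fix a Hamilton $2$-cycle $C=v_1v_2\cdots v_nv_1$ of $H$ and a bipartition $\{A,B\}$ of $V$. Pair the vertices as $p_i=\{v_{2i-1},v_{2i}\}$ for $i\in[n/2]$. Then $C$ consists of the edges $e_i=p_i\cup p_{i+1}$, indices modulo $n/2$. Call a pair \emph{split} or \emph{connate} as in the paper. The parity of $e_i$ is the sum of the parities of $p_i$ and $p_{i+1}$. So $e_i$ is odd exactly when one of $p_i,p_{i+1}$ is split and the other is connate. I will translate each goodness condition into a statement about this cyclic sequence of pair types and argue by contradiction.

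\emph{Even-goodness.} Suppose (E0) fails, so $|A|$ is odd and $|A|\neq|B|$. First, not all pairs have the same type. If all are connate, then $|A|$ is a sum of even numbers, a contradiction. If all are split, then $|A|=|B|$, again a contradiction. Hence the cyclic sequence of types changes at least twice, and $C$ contains at least two odd edges $e_i,e_j$ with $i\neq j$. If $C$ has odd edges that are not consecutive, two of them are vertex-disjoint and (E1) holds. Otherwise there are exactly two odd edges and they are consecutive, $e_{i-1}$ and $e_i$. In that case $p_i$ has one type and all other pairs have the other type. If $p_i$ is split and the rest connate, then $e_{i-1}\cap e_i=p_i$ is a split pair and (E2) holds. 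If $p_i$ is connate and the rest split, then $|A|-|B|=\pm2$, with the sign determined by $p_i\subseteq A$ or $p_i\subseteq B$. This gives (E3) or (E4) respectively. Along the way I will check small $n$ and that the chosen edges are distinct.

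\emph{Odd-goodness.} Suppose $C$ has few even edges, and count. Let $s$ and $c$ be the numbers of split and connate pairs, so $s+c=n/2$. Among the connate pairs, let $a$ lie in $A$ and $b$ in $B$, so $a+b=c$. Then $|A|-|B|=2(a-b)$. If $C$ has no even edges, the types strictly alternate around the cycle. Hence $n/2$ is even, $s=c=n/4$, and $n\equiv0\pmod 4$. Moreover $d\equiv 2(a-b)\equiv 2(a+b)=2c=n/2 \pmod 4$, which forces $(m,d)\in\{(0,0),(4,2)\}$, that is, (O0). Each even edge corresponds to two consecutive pairs of equal type. Each such occurrence shifts the achievable $(s,c)$ and hence $(m,d)$.

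I will carry out a careful case analysis on the maximal runs of equal type. A run of length $r+1$ contributes a $2$-path of $r$ even edges, and distinct runs give vertex-disjoint $2$-paths. So the total $2$-pathlength of $H_{\rm even}$ is at least the number $t$ of even edges in $C$. The quantity $c-s$ is tied to $t$ modulo $4$ through the run structure. This shows that if $(m,d)$ is in the class of (O1), (O2) or (O3), then $t\ge1$, $t\ge2$, or $t\ge3$ respectively. The exception is in class (O3) with $t=1$. There the unique even edge must join two split pairs or two connate pairs, and a parity computation should force it to be of the form $AABB$ (type $|e\cap A|=2$). I expect this modular bookkeeping across the four classes, especially the $t=1$ versus $t\ge3$ dichotomy in (O3), to be the main obstacle. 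I would handle it by explicitly tabulating $c \bmod 4$ and $a-b \bmod 2$ for each run pattern.
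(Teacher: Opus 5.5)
Your even-goodness argument is correct and complete; you only need to record $n/2\ge 3$, so that $e_{i-1}\neq e_i$ and $e_{i-1}\cap e_i=p_i$. The odd-goodness half, however, stops exactly where the proof has to happen. You defer it to tabulating ``each run pattern'', which is not a finite check. Your guiding heuristic, that $c-s$ is tied to the \emph{number} of even edges of $C$, is also not correct: one connate--connate even edge gives $c-s\equiv 1$, while one split--split even edge gives $c-s\equiv -1 \pmod 4$.

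What is missing are two identities.
\begin{itemize}
\item Since $m\equiv 2(s+c)\pmod 8$ and $d\equiv 2c\pmod 4$, checking the eight possibilities shows that $(m,d)$ lies in the class of (O$j$) if and only if $c-s\equiv j\pmod 4$. Your alternating-cycle computation is the case $j=0$.
\item Count every pair by its successor on the cycle. This gives $s=T_S+k$ and $c=T_C+k$. Here $T_S$ and $T_C$ are the numbers of split--split and connate--connate (i.e.\ even) edges of $C$. Also $k$ is the common number of split-to-connate and connate-to-split transitions. Hence
\[
c-s=T_C-T_S .
\]
\end{itemize}
With this the cases close quickly.
\begin{itemize}
\item (O1): $T_C-T_S\equiv 1$, so $C$ has an even edge.
\item (O2): $T_C-T_S\equiv 2$, so $T_C+T_S$ is even and nonzero, hence at least $2$.
\item (O3): $T_C+T_S$ is odd. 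If it equals $1$, then necessarily $T_S=1$ and $T_C=0$. So the unique even edge is the union of two split pairs and automatically has $|e\cap A|=|e\cap B|=2$.
\end{itemize}
This is the ``parity computation'' you anticipated, and it is the heart of the lemma.

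Two smaller points. First, your bound that the total $2$-pathlength of $H_{\mathrm{even}}$ is at least the number of even edges of $C$ holds when there is at least one type change, since maximal runs of even edges sit on disjoint blocks of consecutive pairs. It fails when all pairs have the same type. Then the even edges form all of $C$ and you only get a $2$-path of length $n/2-1$. That is enough for $n\ge 8$, and if all pairs are split every edge is already of type $AABB$.

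Second, for $n=6$ the statement is actually false if a part may be empty. Let $H$ be just a $2$-cycle of length $3$ and $A=V$. Then $(m,d)=(6,2)$, no even edge has two vertices on each side, and the total $2$-pathlength is $2$. So ``checking small $n$'' has to be replaced by an assumption such as $n\ge 8$ or both parts non-empty. This is harmless for the paper's large-$n$ application.
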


\subsection{The main algorithm}

Now we state our algorithm for the decision problem HAM$(4,2,\d)$ for $\d>1/3$ (See Procedure \ref{proc:hc}).
We omit the standard reductions: if $n$ is not even, then we halt with negative output; if $n$ is bounded (too small to apply any of the results in the algorithm), then we solve the problem by brute force in $O(1)$ time.
Otherwise, Procedure \ref{proc:hc}($H$) applies Lemma~\ref{lem:conn} and Lemma~\ref{lem:abs} to $H$.
If they construct the absorbing set and the reservoir set successfully, then we output that $H$ contains a Hamilton 2-cycle; otherwise either of it outputs a $(65\varepsilon,\gamma/3)$-extremal bipartition of $V(H)$, and then we execute either Procedure \ref{proc:evenext}$(H)$ or Procedure OddExt$(H)$ accordingly.

\medskip

\begin{procedure}[ht]
\SetAlgoVlined
\SetAlgoNoEnd
\caption{HamCycle($H$)}
\KwData{An $n$-vertex $4$-graph $H$ with $\delta_3(H)\ge (1/3+\gamma)n$.}
\KwResult{YES if $H$ contains a Hamilton $2$-cycle or a certificate for non-existence.}
\BlankLine

Apply Lemma~\ref{lem:conn} to $H$.\\
\If{Lemma~\ref{lem:conn} returns a $(65\varepsilon,\gamma/3)$-even-extremal bipartition $\{A',B'\}$ of $V(H)$}{
    \Return{\ref{proc:evenext}$(H,\{A',B'\})$}
}

\Else{Apply Lemma~\ref{lem:abs} to $H$.\\
\If{Lemma~\ref{lem:abs} returns a $(65\varepsilon,\gamma/3)$-extremal bipartition $\{A',B'\}$ of $V(H)$}{
    \If{$\{A',B'\}$ is a $(65\varepsilon,\gamma/3)$-even-extremal bipartition}{
    \Return{\ref{proc:evenext}$(H,\{A',B'\})$}
    }
    \ElseIf{$\{A',B'\}$ is a $(65\varepsilon,\gamma/3)$-odd-extremal bipartition}{
        \Return{\text{OddExt}$(H,\{A',B'\})$}
    }   
    }
     \Else{\Return{YES.}
}
}
\label{proc:hc}
\end{procedure}
\medskip

\begin{procedure}[H]
\SetAlgoVlined
\SetAlgoNoEnd
\caption{EvenExt($H,\{A',B'\}$)}
\KwData{An $n$-vertex $4$-graph $H$ with $\delta_3(H)\ge (1/3+\gamma)n$ and a $(c',\g/3)$-even-extremal bipartition $\{A',B'\}$ of $V(H)$.}
\KwResult{YES if $H$ contains a Hamilton $2$-cycle; otherwise, a certificate
for non-existence.}
\BlankLine
Set $A_{\text{bad}}=\{a\in A':a\ \text{is}\ (\beta_1',\beta_2')\text{-bad with respect to }\{A',B'\} \}$.\\ Set $B_{\text{bad}}=\{b\in B':b\ \text{is}\ (\beta_1',\beta_2')\text{-bad with respect to } \{A',B'\}\}$.\\
Set $A=(A'\setminus A_{\mathrm{bad}})\cup B_{\mathrm{bad}}$ and $B=(B'\setminus B_{\mathrm{bad}})\cup A_{\mathrm{bad}}$.
\BlankLine
\If{$\{A,B\}$ is even-good}{
 {\Return{YES}}.}
\Else{
  \Return{NO, with $\{A,B\}$ as a certificate.}}
\label{proc:evenext}
\end{procedure}
\medskip
We omit the statement of \text{OddExt}($H,\{A',B'\}$) for an odd-good bipartition $\{A,B\}$ of $V(H)$, which is identical to \ref{proc:evenext}($H,\{A',B'\}$) by replacing ``even'' by ``odd''.
%So we omit it.
%\medskip

%\red{JH. Need to add the algorithm here, maybe assuming subroutine Procedure HAMCYCLE NONEXT.}

\subsection{Proof ideas}
It remains to prove the three lemmas above, and now we briefly discuss on the proof ideas.
%\red{JH. working.}

\medskip
\noindent\textbf{The non-extremal case.}
For the case where $H$ is far away from the extremal structure, the celebrated absorption framework has been shown to be an essential tool, particularly for $k$-graphs, as the regularity--blow up line of tools have only limited applications to the $k$-graph case.
As in the example above, one essential difficulty for constructing a Hamilton cycle is to cover precisely \textit{all} vertices of the $k$-graph.
Roughly speaking, the absorption method reduces the construction of a Hamilton cycle into three tasks.
It first finds a short path (absorbing path) which has the property that one can enlarge it by adding a small number but arbitrary collection of vertices, while keeping the end vertices unchanged.
Then the remaining task is to extend the absorbing path to a long cycle covering all but $o(|H|)$ vertices of $H$.
This normally splits to two separate lemmas: the path cover lemma that finds a constant number of vertex-disjoint paths whose union covers all but $o(n)$ vertices of $H$, and a connecting lemma that can connect a constant number of disjoint paths end-to-end to a cycle.

For our proof of Lemma~\ref{nonextremal}, we use the same absorbing strategy, but with essential improvements in several aspects.
First, the path cover part is standard and follows essentially from existing works.
Second, the absorbing part combines the recent swap-absorb idea and the lattice-based absorption, which has been successfully used in recent works~\cite{HaKe20,HSW26}.
This allows us to use the reachability theory to classify the vertices that can play similar roles in a path.

% The most important novel ideas lie in the proof of the connecting lemma, where we classify the robustly connectable pairs.
% Roughly speaking, with a minimum codegree condition $\delta_3(H)\geq n/3+\gamma n$, it is easy to show that among every three pairs of vertices, there exist two of them that have a significant number of common neighbours and thus are robustly connectable.
% Using this observation, we show that there are essentially two robustly connectable components defined on pairs of vertices, so on $\binom{V(H)}2$.
% The difficult part is to show that this separation of components must induce a bipartition of $V(H)$ so that almost all edges are \textit{even}.
% This step is done by a careful study of the two components of pairs viewed as two edge-disjoint graphs $R$ and $B$ defined on $V(H)$ which are almost complement to each other.
% Then a bipartition of $V(H)$ is defined via symmetric differences of the neighborhoods in $R$ and in $B$: after choosing appropriate $v\in V(H)$, we show that almost every other vertex $u$ must satisfy either i) $|N_R(u)\triangle N_R(v)|=o(n)$ and  $|N_B(u)\triangle N_B(v)|=o(n)$ or ii) $|N_B(u)\triangle N_R(v)|=o(n)$ and  $|N_R(u)\triangle N_B(v)|=o(n)$.
The main new ingredient in the connecting lemma is a pair-level stability argument which replaces the direct common-neighbour method used near the Dirac threshold. In the earlier $n/2-o(n)$ setting, two endpoint pairs either have many short connecting paths or the usual red/blue colouring on the vertex set quickly yields an even-extremal bipartition. At the lower threshold $\delta_3(H)\ge (1/3+\gamma)n$, such direct overlap is no longer available. Instead, we work in the auxiliary graph whose vertices are pairs of vertices of $H$, and study robust connectability among these pair-vertices. We first show that every pair is one-step connectable to a positive proportion of all pairs; if two pairs are nevertheless not connected within bounded length, their one-step reachable sets separate almost all pairs into two large classes $\Q_1,\Q_2$. The absence of many connectors then forces very few hyperedges to be formed by one pair from each class. Treating $\Q_1,\Q_2$ as red and blue graphs, this gives a strong compatibility rule for almost all hyperedges, from which we recover an even-extremal bipartition. Thus the improvement is that global pair-connectivity at the $1/3$ threshold is obtained through a stability analysis of the pair graph, rather than through direct vertex-level neighbourhood overlap.

\medskip
\noindent\textbf{The extremal cases.}
The proof of the extremal cases rely heavily the (strong) structure of $H$.
Although it splits to two very different cases, their proofs share a lot of similarities.
The most important point of the proofs, in both cases, is to find a short path (bridge) that serves as a parity breaker that overcomes the parity issues (cf. Example~\ref{Exp}).
Obviously, such a path must utilize the \textit{minority} edges.
After that, the plan is to cover the vertices atypical to the structure of $H$ via short paths and attach them to the bridge and then extend it to a Hamilton cycle.

Garbe--Mycroft~\cite{GaMy} had an easier task in the sense that due to the minimum codegree condition $n/2-o(n)$, in both cases, the bipartition of $V(H)$ is almost balanced, and the majority edges are \emph{almost full}.
This is no longer true in our setting, and for each type of edges, the density might be barely above $1/2$.
This poses significant challenges to the steps of building the bridge and completing the Hamilton cycle (finding a Hamilton path in the remaining $4$-graph with given ends), due to the lower codegree condition.
More precisely, most of the auxiliary structures in the proofs of~\cite{GaMy} are almost complete and in our work either we only have a much sparser one or the auxiliary structure completely breaks down so that we have to look for alternative ones.
%\red{JH. Maybe add an example }
%\section{Extremal Case}

At last, we emphasize again that our algorithm focuses on the decision problem which allows us to use probabilistic methods to show the existence of the Hamilton cycle in our proof of correctness of the algorithm.

\section{The non-extremal Case }

Under the celebrated framework of the absorbing method, the proof of the non-extremal case consists of the following three lemmas.

Let us include a few definitions.
Let $\binom V2$ be the family of pairs of vertices of $H$, where $V=V(H)$.
Given $e, f\in \binom V2$, $\beta>0$ and $t\in \mathbb N$, we say that $e$ and $f$ are \textit{$(\beta, t)$-connectable} if there exist at least $\beta n^{2i}$ choices of sets $v_1\dots v_{2i}$ such that $e v_1\cdots v_{2i} f$ is a $2$-path in $H$ for some $i\in [t]$ \footnote{Concatenation works for the connectability defined here, however, one may notice that disjoint $t$-connectable set for $u, v$ and $t'$-connectable set for $v, w$ give rise to $(t+t'+1)$-connectable sets for $u$ and $w$, different from the normal setting.}.
Each such $2i$-set is called a \emph{$2i$-connector} for $e$ and $f$.

\begin{lemma}[Connection]
\label{lem:conn}
Suppose $1/n\ll \a\ll \beta \ll \e \ll \gamma$.
Let $H$ be an $n$-vertex $4$-graph with $\delta_{3}(H)\ge (1/3+\gamma)n$.
% which is not $65\e$-even-extremal.
Then one of the following holds.
\begin{itemize}
\item Every two pairs of vertices of $H$ are $(\beta, 4)$-connectable to each other, or
\item we can find a $(65\e,\gamma/3)$-even-extremal bipartition in time $O(n^{12})$.
\end{itemize}
Moreover, in the former case, for any given vertex set $W$ of size at most $3\beta^2 n$, there is a vertex set $R\subseteq V(H)\setminus W$ of size $\a n$ such that for every two pairs of vertices there are at least $\alpha^{9} n$ vertex-disjoint connectors in $R$ each of size at most 8.
\end{lemma}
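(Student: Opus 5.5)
Throughout, for a pair $e\in\binom V2$ let $N(e)=\{f\in\binom V2: e\cup f\in E(H),\ e\cap f=\emptyset\}$ be its \emph{link}, viewed as a graph on $V\setminus e$. The minimum codegree gives every vertex of this graph (other than those of $e$) degree at least $(1/3+\gamma)n-2$ in $N(e)$, so $|N(e)|\ge (1/6+\gamma/2)n^2$. A $2$-path $e\,v_1v_2\cdots v_{2i}\,f$ exists exactly when consecutive pairs lie in each other's links. Hence $e,f$ are $(\beta,1)$-connectable as soon as $|N(e)\cap N(f)|\ge \beta' n^2$ for a suitable $\beta'$.

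\textbf{Step 1 (one-step reachability).} For each pair $e$ define $\mathcal R(e)=\{f: |N(e)\cap N(f)|\ge \beta_0 n^2\}$, with $\beta\ll\beta_0\ll\e$. By double counting, $\sum_f |N(e)\cap N(f)|$ equals $\sum_{g\in N(e)}|N(g)|\ge |N(e)|(1/6+\gamma/2)n^2$. This forces $|\mathcal R(e)|\ge (1/6+\gamma/3)n^2$. More useful is the following observation: among any three pairs, two share $\Omega(n^2)$ common neighbours. This holds because three link graphs, each of density above $1/3$ on $\binom V2$, cannot be almost pairwise disjoint. Consequently the relation "$(\beta,t)$-connectable" has at most two robust classes.

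Concatenation then works. If $f\in\mathcal R(e)$ and $g\in\mathcal R(f)$ via disjoint connectors, we obtain connectors of length up to $4$ blocks, with only a polynomial loss in $\beta$. Deleting the $O(n)$ overlapping choices costs nothing.

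\textbf{Step 2 (stability, the main obstacle).} Suppose some $e,f$ are not $(\beta,4)$-connectable. Take $\Q_1$ to be the pairs reachable from $e$ in two steps and $\Q_2$ those reachable from $f$. By the three-pair observation, $\Q_1\cup\Q_2$ covers all but $\e n^2$ pairs. They are almost disjoint, since otherwise $e,f$ would connect in at most $4$ steps. Moreover, only $O(\e n^4)$ edges $g\cup h$ have $g\in\Q_1$ and $h\in\Q_2$, because each such edge would give a connector.

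Colour the pairs of $\Q_1$ red and those of $\Q_2$ blue, giving nearly complementary graphs $R$ and $B$ on $V$. Almost every edge of $H$ is then "monochromatic under all three of its pair-splittings". Since each $4$-set has three perfect matchings, a $4$-set all of whose matchings are monochromatic has, for counting reasons, its $K_4$ coloured as either a complete monochromatic graph or a complete bipartite $K_{2,2}$ against a monochromatic matching. Using codegree $>n/3$, one shows that no $\Omega(n^4)$ edges can be of the first kind with red and blue both large. Hence, for almost all edges, $R$ restricted to the $4$-set looks like "same side" and $B$ like "cross".

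I then fix a typical vertex $v$ and set $A=N_R(v)\cup\{v\}$ and $B=N_B(v)$, in the spirit of the symmetric-difference argument. Next I verify that almost every $u$ satisfies $N_R(u)\triangle N_R(v)=o(n^2)$-small or the swapped version. Then $R$ is essentially the union of the two cliques on $A$ and $B$, and $B$ the cross graph, so almost all edges are even. Finally, the sizes satisfy $|A|,|B|\ge n/3+\gamma n/3$, because a triple inside the smaller part has all but $o(n)$ of its neighbours in that same part, by parity. The bookkeeping turns the error into $65\e$. This is the step I expect to be hardest, especially ruling out the "monochromatic $K_4$" configuration at density just above $1/3$.

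\textbf{Step 3 (algorithm and reservoir).} Algorithmically, all links and common-neighbourhood sizes take $O(n^{8})$ to compute. Testing $(\beta,4)$-connectability for all $O(n^4)$ pairs of pairs via the reachability sets fits into $O(n^{12})$, and the bipartition is read off from a vertex $v$ as above. For the reservoir, I choose $R$ as a random $\a n$-subset of $V\setminus W$. Each pair of pairs has $\beta n^{2i}$ connectors of size $2i\le 8$. By Janson or Chernoff bounds, $R$ contains at least $\beta\a^{8}n/2$ of them in expectation, with concentration. A greedy or maximal disjoint subfamily then has size at least $\a^{9}n$, since each vertex lies in at most $O((\a n)^{7})$ connectors inside $R$. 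A union bound over the $n^4$ pairs of pairs finishes the argument, and $|W|\le 3\beta^2n$ removes only a negligible fraction of connectors.
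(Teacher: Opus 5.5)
Your Step 1, your construction of two disjoint colour classes $\mathcal{Q}_1,\mathcal{Q}_2$ with few cross edges, the resulting compatibility rule (almost every edge has all three of its perfect matchings monochromatic), and your reservoir step are essentially as in the paper. The gap is in Step 2, in going from this rule to the bipartition, and it has two parts.

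\emph{The local claim is false.} You plan to rule out edges spanning a monochromatic $K_4$, which fails in exactly the situation the second alternative must handle. Let $H$ be all even $4$-sets for a balanced bipartition. Connate pairs link only to connate pairs and split pairs only to split pairs. So $\mathcal{Q}_1$ is essentially the connate pairs and $\mathcal{Q}_2$ the split pairs. Every edge inside $A$ or inside $B$, a positive fraction of $E(H)$, is then a monochromatic $K_4$.

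What is true is weaker: only one colour occurs as a monochromatic $K_4$, and the other colour occurs only as a $C_4$ opposite a matching in the first. But that is essentially the conclusion, and your sketch gives no way to derive it. If you instead meant ``not both many all-red and many all-blue'', you would still have to exclude the swapped $C_4$ pattern, and no argument is given for that either.

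\emph{The vertex dichotomy is only asserted.} The statement that almost every $u$ has $N_R(u)\approx N_R(v)$ or $N_R(u)\approx N_B(v)$ is introduced with ``Next I verify''. This is the heart of the lemma and the place where the threshold $n/3$ must enter.

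The missing idea is the following. For a typical pair $uv$, split the remaining vertices into $X=N_B(u)\cap N_B(v)$, $Y=N_B(u)\cap N_R(v)$, $Z=N_R(u)\cap N_B(v)$ and $W=N_R(u)\cap N_R(v)$.
\begin{itemize}
\item Take $x$ in a class with $uvx$ typical. Every one of the more than $n/3$ completions $a$ of $uvx$ that obeys the compatibility rule satisfies $\chi(ua)=\chi(vx)$ and $\chi(va)=\chi(ux)$.
\item Hence $x\in X,W,Y,Z$ forces $a\in X,W,Z,Y$ respectively.
\item So a non-negligible $X$ (resp.\ $W$) has size $>n/3$. Also $Y$ is non-negligible if and only if $Z$ is, and then both exceed $n/3$.
\item The four classes are disjoint, so they cannot contain three sets of size $>n/3$. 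Together with $d_B(u)>n/3$, this forces either $|Y|,|Z|$ or $|X|,|W|$ to be $O(\varepsilon n)$. That is the dichotomy you need.
\end{itemize}
From there, reading the bipartition off a typical $v$ and your size argument work. One quantitative fix is needed: the number of edges violating the compatibility rule must be $\beta^{\Omega(1)}n^4$, not the $O(\varepsilon n^4)$ you state. The typicality thresholds for triples and pairs live at scale $\varepsilon$, so an $\varepsilon n^4$ error would make almost every pair atypical.
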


\begin{lemma}[Absorption]
\label{lem:abs}
Suppose $1/n\ll \alpha \ll \beta\ll \e\ll \gamma$.
Let $H$ be an $n$-vertex $4$-graph with $\delta_3(H)\geq n/3+\gamma n$. Then either $H$ is $(65\e,\gamma/3)$-extremal, or $H$ contains a $2$-path $P_{A}$ of length at most $\beta^2 n$ associated with a set of vertices $S^{*}$ of size at most $7\alpha n$ such that for any set $U$ of vertices satisfying that $|U|\le 2\alpha n$, $U\cap (V(P_{A})\cup S^{*})=\emptyset$, and $|U\cup S^*|\in 2\mathbb N$, there is another $2$-path on $V(P_{A})\cup U\cup S^{*}$ which has the same ends as $P_{A}$.
Moreover, in the former case a $(65\e,\gamma/3)$-extremal bipartition can be found in time $O(n^{13})$.
\end{lemma}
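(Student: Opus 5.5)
The plan is the lattice-based swap-absorption method, with the reachability structure on vertices as the main tool.

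\textbf{Step 1: absorbers and reachability.} Call vertices $u,v$ \emph{$(\beta,i)$-reachable} if there are at least $\beta n^{4i-1}$ sets $S$ of size $4i-1$ such that both $H[S\cup\{u\}]$ and $H[S\cup\{v\}]$ contain a spanning $2$-path with the same ordered end pairs. Such an $S$ swaps $u$ for $v$ inside a path. Since $\delta_3(H)\ge (1/3+\gamma)n$, every vertex $v$ has codegree neighbourhoods of size more than $n/3$. Among any four vertices, two therefore have $\Omega(n)$ common neighbours over a positive fraction of triples, so two of them are $(\beta,1)$-reachable. A standard merging argument then partitions $V(H)$ into at most three closed classes, with every class of size at least $(1/3+\gamma/2)n$ and any two vertices in the same class $(\beta,c)$-reachable for a bounded $c$.

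\textbf{Step 2: reduce to a single class.} Because each class has size above $n/3$, there are at most two classes. If there is only one class, every vertex can be swapped for every other. Fixing any set $U$ of even size, absorption then reduces to absorbing $U$ paired with a reservoir set, which is how $S^*$ arises. If there are two classes $\{A,B\}$, we look at the index vectors $(|e\cap A|,|e\cap B|)$ of edges and the lattice they generate. If edges of both parities occur robustly (at least $\e n^4$ of each), the lattice is transferral-free. Then an absorber can change $|U\cap A|$ by one at bounded cost and the argument still runs. Otherwise, all but at most $65\e n^4$ edges have a fixed parity with respect to $\{A,B\}$, and since both parts exceed $n/3+\gamma n/3$, the bipartition is $(65\e,\gamma/3)$-extremal. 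We output it.

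\textbf{Step 3: build $P_A$ and $S^*$.} For each relevant local configuration (a swap gadget between reachable vertices, or a transferral gadget), we show there are $\Omega(n^{O(1)})$ absorbing structures. We then choose a random family with probability about $\alpha n^{-O(1)}$ each and delete overlaps. By Chernoff bounds, every configuration keeps at least $\alpha^2 n$ disjoint absorbers. Next we connect these absorbers into one $2$-path $P_A$ of length at most $\beta^2 n$ using Lemma~\ref{lem:conn}. The connection outcome is available since otherwise Lemma~\ref{lem:conn} already outputs an even-extremal partition. The set $S^*$ of size at most $7\alpha n$ consists of reserved partner vertices, which fix parity and the pairing of $U$ so that $|U\cup S^*|$ is even. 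Given $U$, we absorb it one pair at a time greedily, using the $\alpha^2n\gg|U|$ disjoint absorbers.

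\textbf{Running time and main obstacle.} Computing the classes means checking reachability over all pairs with $(4c-1)$-sets for $c\le 3$. This is polynomial, and restricting to connectors of bounded size gives $O(n^{13})$ to test all pairs against $11$-sets. The step I expect to be hardest is Step 2: proving that, at the $1/3$ threshold, failure of the lattice condition forces an \emph{extremal} bipartition with both parts above $n/3+\gamma n/3$, rather than some third configuration. I would first try to rule out classes of size near $n/3$ by counting edges inside a class with the codegree bound. I would then handle the single-parity case by showing that few minority edges means at most $65\e n^4$ edges of the wrong parity, using that each parity-breaking edge yields many transferrals.
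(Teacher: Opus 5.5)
There is a genuine gap: your plan never controls the residue of the absorbed set modulo $4$. This is precisely where the odd-extremal alternative has to enter this lemma. The hypothesis only gives that $|U\cup S^*|$ is even, so $|U|$ may be $0$ or $2 \pmod 4$. Meanwhile $S^*$ is fixed in advance and must be covered entirely by the new path.

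The absorbers that reachability actually produces are swaps attached to an end-absorber, as in Definition~\ref{def:absorber}: a $2$-cycle of length $4$ whose interior path $S_3S_4$ is exchanged for $S_3S_2S_1S_4$. These absorb $4$ vertices at a time. With them alone, no choice of ``partner vertices'' in $S^*$ can handle both residues.

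If you instead mean genuine pair-absorbers, i.e.\ $2$-paths with equal ends on $W$ and on $W\cup\{u,v\}$, these amount to odd closed walks in the auxiliary pair graph. You give no argument that they exist, let alone robustly, and they can genuinely fail. If every edge is odd with respect to some $\{A,B\}$, split and connate pairs alternate along every $2$-path. Hence all $2$-paths with the same ends have the same number of vertices modulo $4$, and robustly so if almost every edge is odd.

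The paper resolves this with Lemma~\ref{lem:short-odd-cycle}. The pair graph has minimum degree above $N/3$, so either it is bipartite, which forces an odd-extremal bipartition, or its shortest odd cycle has length $3$ or $5$. That cycle gives a $2$-cycle $S_1\cdots S_qS_1$ in $H$. This single cycle is built into $P_A$ as a switch between $P_C$ and $P_C'$, with $S_2\subseteq S^*$. Choosing the appropriate state makes the leftover set divisible by $4$, after which only $4$-set absorbers are needed. Nothing in your plan plays this role, and your only extremal output (Step 2) comes from index vectors of the reachability partition, not from this parity obstruction.

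Two smaller points. First, your constants do not close. Keeping $\alpha^2 n$ absorbers per configuration gives fewer than the up to $|U|/2\le\alpha n$ pieces you must absorb greedily. The selection probability must be tied to $\beta$. The paper takes $p=\beta^{7}n^{-b+1}$, which gives about $\beta^{13}n\gg\alpha n$ absorbers per $4$-set, while $P_A$ still has length $O(\beta^7 n)\le\beta^2 n$.

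Second, swap-reachability does not support concatenation through transferrals, so an absorber cannot ``change $|U\cap A|$ by one''. The paper instead places a balanced set $T$ with $|T\cap V_i|\approx 3\alpha n$ into $S^*$, so that the set to be absorbed has index ratio in $[1/2,2]$. Proposition~\ref{prop:solution} then decomposes it into $4$-sets with robust index vectors, using that of any two consecutive index vectors at least one is robust. That same observation settles the step you flagged as hardest: without a $(1,-1)$ difference the robust vectors must alternate in parity, which immediately means the partition is extremal.
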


Our path cover lemma aims to find a constant number of vertex-disjoint paths whose union covers almost all vertices of the hypergraph.
We state and prove a much stronger result for $k$-graphs: we only need a minimum codegree essentially $n/k$, and the lemma indeed finds \emph{tight} paths.

\begin{lemma}[Path cover]
\label{pathcover}
   Suppose  
$1/n \ll 1/p \ll \alpha, \gamma, 1/k$. 
Let $H$ be a $k$-graph on $n$ vertices with  
$\delta_{k-1}(H) \geq (1/k+\gamma)n$. Then we can find a family of vertex-disjoint tight paths in $H$ consisting of at most $p$ paths, whose union covers all but at most $\alpha n$ vertices of $H$.
\end{lemma}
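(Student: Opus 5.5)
We will prove Lemma~\ref{pathcover} by the standard fractional-tiling/regularity-free greedy approach, via an almost perfect tiling by short tight paths. The plan has two stages: first show that every not-too-small induced subgraph still has codegree about $(1/k+\gamma)$ relative to the whole, which allows finding many disjoint copies of a fixed tight path $P_L$ of length $L$ (with $L$ a large constant); second, exploit $\delta_{k-1}(H)\ge(1/k+\gamma)n$ to obtain an almost perfect $P_L$-tiling. The number of pieces is then at most $n/L$, which is still too many, so instead we aim directly for a bounded number of long tight paths using a greedy extension with a ``random reservoir'' of bounded size blocks.

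Concretely, first I would reduce to an almost perfect fractional matching statement in an auxiliary setting. Take a random partition of $V(H)$ into $p'$ parts $V_1,\dots,V_{p'}$ of equal size, $1/p\ll 1/p'$; by Chernoff, every $(k-1)$-set has codegree at least $(1/k+\gamma/2)|V_j|$ into each part. Inside each part we greedily build one tight path: extend the current path $v_1\cdots v_s$ at its end by a vertex in the common neighbourhood of its last $k-1$ vertices not already used. This fails only when the unused vertices of $V_j$ number at most $(1-1/k)|V_j|$ roughly, so a single greedy path covers only about a $1/k$ fraction; this is the main obstacle. To overcome it I would use the \emph{absorbing-free tiling} idea: show $H$ has an almost perfect $K$-tiling, where $K$ is a small $k$-graph admitting a spanning tight path, e.g.\ $K=K^{(k)}_{k+1}$-like structures are too strong, so instead use $K$ = tight path on $k$ vertices plus one extra? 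Better: observe that $\delta_{k-1}\ge(1/k+\gamma)n$ exceeds the perfect matching threshold fractionally (the fractional space barrier is $1/k$), so $H$ has a fractional perfect matching robustly; by the standard Rödl-nibble/weak-regularity conversion we get an almost perfect matching in $H$, and more strongly an almost perfect tiling by tight paths of length $L$ (apply the same fractional argument to the auxiliary $(k)$-graph whose edges are $k$-sets from which a long tight path can be grown inside a random sample).

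Then I would glue: the tiles are $n/L$ disjoint tight paths; with $p$ allowed paths we cannot connect them all without a connecting lemma (unavailable at codegree $n/k$ for tight paths), so instead I would choose $L$ itself large via an iterative argument: in a random subset $S$ of size $m$ (constant-size blocks are fine) find a tight path covering all but $\alpha m$ of $S$, using the weak hypergraph regularity lemma: apply it to $H$, obtain a reduced $k$-graph $R$ on $t\le p/ k$ clusters with $\delta_{k-1}(R)\ge(1/k+\gamma/2)t$ (almost all $(k-1)$-sets), find an almost perfect matching in $R$ by the fractional argument above, and in each regular $k$-tuple of clusters find a single tight path covering almost all of its vertices by the standard greedy embedding in an $\varepsilon$-regular dense $k$-partite $k$-graph (the path winding through the $k$ clusters cyclically). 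This yields at most $t\le p$ tight paths covering all but $\alpha n$ vertices. The key technical step I expect to be hardest is the almost perfect matching in the reduced graph with only a $1/k+\gamma$ codegree, handled by showing a fractional perfect matching exists (LP duality: a fractional vertex cover of weight $<t/k$ would contradict the codegree bound) and then rounding via regular slices.
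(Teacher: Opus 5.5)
Your final plan follows the same route as the paper: weak regularity, then a cluster $k$-graph in which all but $\sqrt{\varepsilon}t^{k-1}$ of the $(k-1)$-sets have codegree at least $(1/k+\gamma/4)t$ (Lemma~\ref{inherit}), then an almost perfect matching there, then path covers inside the matched regular $k$-tuples. Everything before it is abandoned exploration. (Incidentally, a linear-size induced subgraph need not inherit a relative codegree bound; only random subsets do.)

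\textbf{The false step.} You claim that each matched $\varepsilon$-regular $k$-tuple contains a \emph{single} tight path covering almost all of its vertices. For $k\ge 3$, weak regularity does not give tight connectivity, so no greedy embedding can deliver this. Here is a counterexample for $k=3$:
\begin{itemize}
\item Take $|V_1|=|V_2|=|V_3|=m$. Colour the pairs of $V_1\times V_2$ red or blue uniformly at random, and colour half of $V_3$ red and half blue. Let $v_1v_2v_3$ be an edge iff $v_1v_2$ and $v_3$ have the same colour.
\item For $A_i\subseteq V_i$ with $|A_i|\ge\varepsilon m$, $e(A_1,A_2,A_3)$ is a combination of the numbers of red and blue pairs in $A_1\times A_2$. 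With high probability each of these is $(1/2\pm\varepsilon)|A_1||A_2|$, so the triple is $(\varepsilon,1/2)$-regular.
\item In a tight path $u_1u_2\cdots$ of a $3$-partite $3$-graph, $u_j$ and $u_{j+3}$ lie in the same class. If that class is $V_3$, the edges $u_ju_{j+1}u_{j+2}$ and $u_{j+1}u_{j+2}u_{j+3}$ share the $V_1V_2$-pair $u_{j+1}u_{j+2}$, which forces $u_j$ and $u_{j+3}$ to have the same colour.
\item Hence every tight path misses about $m/2$ vertices of $V_3$. With $r$ colours (density $1/r$) you need at least $r$ paths.
\end{itemize}

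\textbf{The fix for that step.} What is true, and what the paper uses (Lemma~\ref{reducedpathcover}), is greedy extraction. As long as every cluster still has $\varepsilon m$ unused vertices, the unused sub-tuple has density at least $d-\varepsilon$ and so contains a tight path of linear length. Repeating this covers all but $k\varepsilon m$ vertices with at most $k/((d-2\varepsilon)\varepsilon)$ paths per tuple. Your total is then at most $T_0/((d-2\varepsilon)\varepsilon)$ rather than $t$. This is still at most $p$, because $p$ is chosen after $\varepsilon,d,T_0$.

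\textbf{The matching step.} This is also not justified as written. The LP-duality bound is fine: the lightest-$(k-1)$-set argument gives fractional cover weight at least $t/k$, and the defect can be handled. But a fractional perfect matching of the $t$-vertex cluster graph does not by itself round to an integral almost perfect matching. Regular slices belong to the strong hypergraph regularity framework and play no role here. You have two standard options:
\begin{itemize}
\item Use the fractional matching directly. Split each cluster into sub-clusters in proportion to the weights; regularity passes to sub-clusters of linear size, and small weights can be discarded.
\item Use the elementary argument behind Lemma~\ref{almostperfect}. Suppose a maximum matching $M$ leaves more than $\beta t$ vertices uncovered. Choose disjoint high-codegree $(k-1)$-sets $S_1,\dots,S_\ell$ among the uncovered vertices. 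By maximality all their neighbours lie in $V(M)$. Since $\sum_i|N(S_i)|\ge \ell(1/k+\gamma/4)t>\ell|M|$, some $e\in M$ receives more than $\ell$ incidences. That forces distinct $x,y\in e$ with $x\in N(S_i)$ and $y\in N(S_j)$ for some $i\neq j$. Replacing $e$ by $S_i\cup\{x\}$ and $S_j\cup\{y\}$ enlarges $M$, a contradiction.
\end{itemize}
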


Note that a tight path on $m$ vertices contains a $k/2$-path as a spanning subgraph if $m\in (k/2)\mathbb N$.
So in our proof of Lemma~\ref{nonextremal} we can use Lemma~\ref{pathcover} at the price of discarding at most one vertex from each tight path returned by the lemma and then regard them as $2$-paths.

We now prove Lemma~\ref{nonextremal} by combining these three lemmas.

%As in typical applications of the absorbing method, we first find an absorbing path $P_{A}$ by Lemma \ref{lem:abs}, which has the property that 

\begin{proof}[Proof of Lemma~\ref{nonextremal}]
Choose additional constants satisfying $1/n \ll 1/p \ll \alpha \ll \beta \ll \e \ll \gamma$ and $n\in 2\mathbb N$.
Let $H$ be an $n$-vertex $4$-graph with  
$\delta_3(H) \geq (1/3+\gamma)n$. 
We start with applying Lemmas~\ref{lem:conn} and~\ref{lem:abs} to $H$.
If either lemma finds a $(65\e,\gamma/3)$-extremal bipartition of $H$ (in time $O(n^{13})$), then we output the bipartition and halt.
So we may assume that neither of the lemmas finds a $(65\e,\gamma/3)$-extremal bipartition of $H$ and thus we can use the structural properties in them.
Below we show the existence of a Hamilton $2$-cycle.

Let $P_{A}$ be the absorbing path and $S^{*}$ be the set returned by Lemma~\ref{lem:abs}.
Let $W:=V(P_{A})\cup S^{*}$, and note that $|W|\le 2\b^2 n + 2 + 7\a n\le 3\b^2 n$.
Then by Lemma~\ref{lem:conn}, take $R\subseteq V(H)\setminus W$ with properties as stated in the lemma. 

%Let $H':=H-(V(P_{A})\cup S^{*})$ be the induced subgraph of $H$ obtained by removing $V(P_{A})\cup S^{*}$.
%Note that $|V(H')|\ge n - (2\beta n +2) - 7\alpha n \ge (1-3\beta)n$, and thus $\delta_{3}(H')\ge (1/3+\gamma/2)|V(H')|$ as $\b \ll \gamma$.
%We next apply Lemma~\ref{lem:conn} to $H'$, which either outputs a $65\e$-even-extremal partition of $H'$, or a reservoir set $R$ of vertices with properties as stated in the lemma.
%If it outputs a $65\e$-even-extremal partition of $H'$, by adding $V(P_{A})\cup S^{*}$ to any side of the bipartition, we obtain a bipartition of $V(H)$ that has at most $65\e n^{4} + 3\beta n^{4}\le 66\e n^{4}$ odd edges and has both parts of size at least $(1/3+\gamma/3)n$.
%Therefore, it is a $66\e$-even-extremal partition of $H$.
%We then output it and halt.
%So it remains to consider the case Lemma~\ref{lem:conn} outputs $R$.

Now let $H':=H-(W\cup R)$, and as $|W|\le 3\b^2 n$ and $|R|\le \e n$, we have that $|V(H')|\ge n -  2\e n$ and $\delta_{3}(H')\ge (1/3+\gamma/3) |V(H')|$.
By Lemma~\ref{pathcover} applied to $H'$, we find a family $\Q$ of $p_0$ disjoint tight paths of $H'$ with $p_0\le p$, whose union covers all but at most $\alpha n$ vertices.
As explained above this proof, we may discard at most one vertex in each tight path and obtain a family of $p_0$ disjoint $2$-paths whose union covers all but at most $\alpha n + p_0$ vertices of $H'$.
Then we use the property of $R$ and greedily connect the paths in $\Q$ and $P_{A}$ to a $2$-cycle $C$ of $H$.
This is possible because we need to do $p_0+1$ connections, each of which consumes at most $8$ vertices of $R$, while the property of $R$ guarantees that every two pairs of vertices have at least $\alpha^{9} n > 8(p_0 +1) $ vertex-disjoint connectors.

Now we obtain a $2$-cycle $C$ of $H$ and let $U=V(H)\setminus (V(C)\cup S^{*})$.
Then $|U|\le |R|-2(p_0+1) +\alpha n+p_0 < 2\alpha n$ and $U\cap (V(P_{A})\cup S^{*})=\emptyset$.
Moreover, as $C$ is a $2$-cycle, we have $|V(C)|$ is even, which together with $n\in 2\mathbb N$, implies that $|U\cup S^*|=n-|V(C)|$ is even.
Therefore, by Lemma~\ref{lem:abs},
% by the absorbing property of $P_{A}$ and $S^{*}$, 
we can find a $2$-path $P'$ on $V(P_{A})\cup U\cup S^{*}$ which has the same ends as $P_{A}$.
Thus, in $C$ we can replace the subpath $P_{A}$ by $P'$ which gives a Hamilton $2$-cycle of $H$.
%
%The proof is completed and the overall running time is $O(n^{13})$.
\end{proof}

\subsection{The path cover lemma}
We should remark here that a proof of Lemma~\ref{pathcover} is essentially presented as the proof of~\cite[Lemma 2.3]{MR2399020} -- even though the statement of their lemma assumes minimum codegree $\delta_{k-1}(H)\ge (1/2+\gamma)n$, their proof indeed only uses the weaker one $\delta_{k-1}(H)\ge (1/k+\gamma)n$ as in our lemma.
We nevertheless include such a proof using the components established in~\cite{MR2399020}.
%and shall not claim any credit for it.

In this section we provide a proof of the path cover lemma using the regularity method and results from hypergraph matchings, which are standard now for finding large structures in (hyper)graphs.
% prove stronger results: we 
%Most of the components of the proofs are somewhat either known or standard.
We will use the weak regularity lemma for hypergraphs, a straightforward extension of Szemer\'{e}di's regularity lemma for graphs~\cite{MR540024}.

Let $H=(V, E)$ be a $k$-graph and let $A_1, \ldots, A_k$ be mutually disjoint non-empty subsets of $V$. We define $e(A_1, \ldots, A_k)$ to be the number of edges with one vertex in each $A_i$, $i \in [k]$, and the density of $H$ with respect to $(A_1, \ldots, A_k)$ as
\[
d(A_1, \ldots, A_k) = \frac{e(A_1, \ldots, A_k)}{|A_1| \cdots |A_k|}.
\]

Given $\varepsilon, d \geq 0$, a $k$-tuple $(V_1, \ldots, V_k)$ of mutually disjoint subsets $V_1, \ldots, V_k \subset V$ is $(\varepsilon, d)$-regular if
\[
|d(A_1, \ldots, A_k) - d| \leq \varepsilon
\]
for all $k$-tuples of subsets $A_i \subset V_i$, $i \in [k]$, satisfying $|A_i| \geq \varepsilon |V_i|$. We say $(V_1, \ldots, V_k)$ is $\varepsilon$-regular if it is $(\varepsilon, d)$-regular for some $d \geq 0$. It is immediate from the definition that in an $(\varepsilon, d)$-regular $k$-tuple $(V_1, \ldots, V_k)$, if $V_i' \subset V_i$ has size $|V_i'| \geq c|V_i|$ for some $c \geq \varepsilon$, then $(V_1', \ldots, V_k')$ is $(\varepsilon/c, d)$-regular.

\begin{theorem}\cite{MR540024}
\label{regularity}
For all $t_0 \geq 0$ and $\varepsilon > 0$, there exist $T_0 = T_0(t_0, \varepsilon)$ and $n_0 = n_0(t_0, \varepsilon)$ so that for every $k$-graph $\mathcal{H} = (V, E)$ on $n > n_0$ vertices, then in time $O(n^{2.376})$, we can find a partition $V = V_0 \cup V_1 \cup \cdots \cup V_t$ such that

\begin{enumerate}[label=(\roman*)]
    \item $t_0 \leq t \leq T_0$,
    \item $|V_1| = |V_2| = \cdots = |V_t|$ and $|V_0| \leq \varepsilon n$,
    \item for all but at most $\varepsilon \binom{t}{k}$ sets $\{i_1, \ldots, i_k\} \in \binom{[t]}{k}$, the $k$-tuple $(V_{i_1}, \ldots, V_{i_k})$ is $\varepsilon$-regular.
\end{enumerate}
\end{theorem}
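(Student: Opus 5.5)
The plan is the standard energy-increment proof of Szemer\'edi's regularity lemma, applied to $k$-tuples of parts instead of pairs. The running-time claim is handled by replacing exact irregularity witnesses with approximate, algorithmically computable ones.

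For a partition $\mathcal P=\{V_0,V_1,\dots,V_t\}$ define the index
\[
q(\mathcal P)=\sum_{\{i_1,\dots,i_k\}\in\binom{[t]}{k}} d(V_{i_1},\dots,V_{i_k})^2\,\frac{|V_{i_1}|\cdots|V_{i_k}|}{n^k}.
\]
Since all densities lie in $[0,1]$, we have $0\le q(\mathcal P)\le 1/k!$. Moreover, by convexity (Cauchy--Schwarz) $q$ does not decrease under refinement, apart from the small correction caused by moving vertices into $V_0$.

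The iteration runs as follows.

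\begin{enumerate}
\item Start with an arbitrary equipartition of $V$ into $t_0$ parts, with fewer than $t_0$ leftover vertices placed in $V_0$.
\item If condition (iii) of Theorem~\ref{regularity} fails, then more than $\varepsilon\binom tk$ of the $k$-tuples $(V_{i_1},\dots,V_{i_k})$ are $\varepsilon$-irregular. For each such tuple pick witnesses $A_{i_j}\subseteq V_{i_j}$ with $|A_{i_j}|\ge\varepsilon|V_{i_j}|$ and $|d(A_{i_1},\dots,A_{i_k})-d(V_{i_1},\dots,V_{i_k})|>\varepsilon$.
\item Take the common refinement of every $V_i$ by all witness sets lying in it, so each $V_i$ splits into at most $2^{\binom{t-1}{k-1}}$ atoms.
\item Apply the defect form of Cauchy--Schwarz to each irregular tuple. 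Each such tuple contributes an increase of at least $\varepsilon^{2}\cdot\varepsilon^{k}\cdot|V_{i_1}|\cdots|V_{i_k}|/n^k$. Summing over more than $\varepsilon\binom tk$ irregular tuples gives a total increase of at least $c_k\varepsilon^{k+3}$.
\item Re-equalize: cut every atom into pieces of a common size $m$, with $m$ small compared with the atom sizes, and throw the remainders into $V_0$. This adds at most $\varepsilon n/(\text{number of rounds})$ vertices to $V_0$ per round and costs at most half of the gained index.
\end{enumerate}

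Since $q\le 1$, the process stops after at most $O(\varepsilon^{-(k+3)})$ rounds. The number of parts grows by at most an exponential-type map in each round, which yields the bound $T_0(t_0,\varepsilon)$ (of tower type). We also get $|V_0|\le\varepsilon n$ once the per-round allowance is chosen appropriately and $n_0$ is large.

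The main obstacle is the algorithmic statement. Deciding whether a given tuple is $\varepsilon$-regular is coNP-hard, so one cannot simply search for witnesses. The plan here is to use an algorithmic regularity subroutine in the spirit of Alon--Duke--Lefmann--R\"odl--Yuster, as extended to hypergraphs by Czygrinow and R\"odl. The subroutine runs in time dominated by a matrix multiplication, $O(n^{2.376})$, and for each tuple either certifies that it is $\varepsilon$-regular or produces witnesses showing it is $\varepsilon'$-irregular, where $\varepsilon'=\mathrm{poly}(\varepsilon)$.

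For $k\ge3$ I would first reduce to this setting by flattening. View a $k$-tuple as a bipartite graph between $V_{i_1}$ and the $(k-1)$-tuples of $V_{i_2}\times\cdots\times V_{i_k}$. Then check weak regularity through codegree or deviation counts, which amount to counting $K_{2,2}$-type configurations and hence to matrix products. From a failure of these counts, extract a witness by a neighbourhood-based selection.

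Rerunning the energy-increment argument with $\varepsilon'$ in place of $\varepsilon$ in step (4) still gives a positive increment $\mathrm{poly}(\varepsilon)$. Hence the number of rounds stays bounded by a function of $\varepsilon$ alone. Each round consists of $O(T_0^k)$ subroutine calls, which is $O(1)$ in $n$, so the total running time is $O(n^{2.376})$.
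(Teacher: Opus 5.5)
The paper does not prove this theorem; it is quoted from the literature. The $O(n^{2.376})$ figure is the Alon--Duke--Lefmann--R\"odl--Yuster bound for graphs. The existential half of your plan is the standard energy-increment argument and is fine. The gap is in the algorithmic half, which is where all the difficulty for $k\ge 3$ lies.

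\textbf{Flattening does not give a certify-or-witness dichotomy.} Condition (iii) only concerns product sets $A_{1}\times\cdots\times A_{k}$. Regularity, or small codegree/deviation counts, of the bipartite graph between $V_{i_1}$ and $V_{i_2}\times\cdots\times V_{i_k}$ is strictly stronger. For example, let $F$ be a random $(k-1)$-partite $(k-1)$-graph of density $1/2$ on $V_{i_2},\dots,V_{i_k}$. Let the edges of the tuple be all $x_1x_2\cdots x_k$ with $x_1\in V_{i_1}$ and $x_2\cdots x_k\in F$.
\begin{itemize}
\item For $k\ge 3$, with high probability every box with $|A_j|\ge\varepsilon|V_{i_j}|$ has density $1/2+o(1)$, so the tuple is $\varepsilon$-regular.
\item In the flattening, however, any two vertices of $V_{i_1}$ have all of $F$ as common neighbourhood, of relative density $1/2$ rather than $1/4$.
\item The neighbourhood-based witness is then $W=F$, which is not a product set.
\end{itemize}
Such a $W$ says nothing about (iii), and it cannot be used in your step (3), where each $V_i$ must be cut by subsets of $V_i$. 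On such tuples your procedure can neither certify nor refine. The missing ingredient is a routine that, from the failure of an efficiently checkable criterion, extracts a \emph{box} witness. That is the real content of the Czygrinow--R\"odl algorithmic lemma (or of a Frieze--Kannan-type approximation of the maximum discrepancy over boxes). It does not follow from the graph case by flattening.

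\textbf{The running time does not come out of your route.} The flattened matrices have $|V_{i_1}|\cdots|V_{i_k}|=\Theta(n^k)$ entries, so the products you describe already cost $\Omega(n^k)$. For $k\ge 3$ no deterministic algorithm can do better in general. Suppose an algorithm inspects only $o(n^k)$ potential edges. An adversary can answer ``non-edge'' to every query, and afterwards, inside every $k$-tuple of output parts, declare as edges all uninspected $k$-sets spanned by fixed halves of those parts. This makes every tuple $\varepsilon$-irregular (for $t_0\ge k$, so that (iii) is not vacuous). So the Czygrinow--R\"odl algorithm you invoke runs in time polynomial in $n$ of degree growing with $k$, not $O(n^{2.376})$. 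The stated time bound should not be expected from this argument for $k\ge 3$. For the paper this is harmless: the theorem is only used existentially, in the path cover lemma of the non-extremal case, where the decision procedure simply answers YES.
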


A partition as given in Theorem~\ref{regularity} is called an $\varepsilon$-regular partition of $\mathcal{H}$. For an $\varepsilon$-regular partition of $\mathcal{H}$ and $d \geq 0$ we refer to $\mathcal{Q} = \{V_i\}_{i \in [t]}$ as the family of clusters and define the cluster hypergraph $K = K(\varepsilon, d, \mathcal{Q})$ with vertex set $[t]$ and $\{i_1, \ldots, i_k\} \in \binom{[t]}{k}$ is an edge if and only if $(V_{i_1}, \ldots, V_{i_k})$ is $\varepsilon$-regular and $d(V_{i_1}, \ldots, V_{i_k}) \geq d$.
The following corollary shows that the cluster hypergraph inherits the minimum codegree condition of the original hypergraph.
The proof is standard and very similar to Proposition 16 in \cite{H2010Dirac}, so we omit it.
\begin{lemma}\cite{H2010Dirac}
\label{inherit}
    Let $\varepsilon\ll d\ll \g$.
    For an integer $k\geq3$  and an integer $t_0\geq2k^2/d$, there exist $T_0$ and $n_0$ such that the following holds.
    Given a $k$-graph $H$ on $n\geq n_0$ vertices with $\delta_{k-1}(H)\geq(1/k+\gamma)n$, there exists an $\varepsilon$-regular partition $V=V_0\cup V_1\cup\cdots\cup V_t$ with $t_0\leq t\leq T_0$.
    Furthermore, let $K=K(\varepsilon,d,\mathcal{Q})$ be the cluster hypergraph of $H$.
    Then the number of $(k-1)$-sets $S$ violating $\deg_K(S)\geq(1/k+\gamma/4)t$ is at most $\sqrt{\varepsilon}t^{k-1}$.
\end{lemma}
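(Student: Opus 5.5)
We prove Lemma~\ref{inherit} by the standard averaging argument. First apply Theorem~\ref{regularity} with parameters $t_0$ and $\varepsilon$ to obtain the partition $V_0\cup V_1\cup\cdots\cup V_t$, with clusters of common size $m=|V_i|\ge (1-\varepsilon)n/t$.

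\textbf{Step 1: bad sets of clusters.} Call a $(k-1)$-set $S=\{i_1,\dots,i_{k-1}\}\subseteq[t]$ \emph{bad} if it lies in more than $\sqrt{\varepsilon}\,t$ irregular $k$-sets. Each irregular $k$-set contains $k$ sets of size $k-1$. Hence
\[
\#\{\text{bad } S\}\cdot \sqrt{\varepsilon}\,t\le k\varepsilon\binom tk,
\]
so at most $\sqrt{\varepsilon}\,t^{k-1}$ sets are bad; this is the exceptional family in the statement.

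\textbf{Step 2: a good set has large degree in $K$.} Fix a good $S$. Count pairs $(T,v)$ with $T\in V_{i_1}\times\cdots\times V_{i_{k-1}}$ and $T\cup\{v\}\in E(H)$. By the codegree condition there are at least $m^{k-1}(1/k+\gamma)n$ such pairs. Subtract the following contributions:
\begin{itemize}
\item $v\in V_0$: at most $\varepsilon n\, m^{k-1}$;
\item $v$ in some $V_{i_j}$ with $i_j\in S$: at most $(k-1)m\cdot m^{k-1}$, which is $\le (k-1)n m^{k-1}/t\le d\,n m^{k-1}$ since $t\ge t_0\ge 2k^2/d$;
\item $v\in V_j$ with $S\cup\{j\}$ irregular: at most $\sqrt{\varepsilon}\,t\cdot m^k\le \sqrt{\varepsilon}\,n m^{k-1}$;
\item $v\in V_j$ with $S\cup\{j\}$ regular but of density $<d$: at most $t\cdot d m^k\le d\,n m^{k-1}$.
\end{itemize}
The remaining pairs come from edges $S\cup\{j\}$ of $K$, each contributing at most $m^k$. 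Therefore
\[
\deg_K(S)\,m^k\ge \bigl(1/k+\gamma-\varepsilon-\sqrt{\varepsilon}-2d\bigr)n m^{k-1}.
\]
Dividing by $m^k$ and using $n/m\ge t$, together with $\varepsilon\ll d\ll\gamma$, gives $\deg_K(S)\ge(1/k+\gamma/4)t$.

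\textbf{Main obstacle.} The only delicate point is the constant bookkeeping, namely ensuring that $d$ and $k/t$ are small compared with $\gamma$. This is exactly what $d\ll\gamma$ and $t_0\ge 2k^2/d$ provide. In Step~2 one should also use only the assumption $t_0\ge 2k^2/d$, since the degree bound must hold for every $t\ge t_0$, not just $t=t_0$. We take $T_0$ and $n_0$ from Theorem~\ref{regularity}.
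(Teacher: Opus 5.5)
Your proposal is correct. It is the standard argument the paper has in mind when it omits the proof and refers to Proposition~16 of \cite{H2010Dirac}: bound the number of $(k-1)$-sets lying in more than $\sqrt{\varepsilon}\,t$ irregular $k$-sets, then for each remaining $(k-1)$-set double count crossing edges, discarding $V_0$, the clusters of $S$, and the irregular and sparse tuples. Your bookkeeping in fact yields the slightly stronger bound $\deg_K(S)\ge(1/k+\gamma/2)t$.
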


We use the following result from \cite{MR2399020} on finding disjoint tight paths in $\e$-regular $k$-tuples.
%\begin{proposition}[\cite{MR2399020}, Claim 4.1]
%    Given $c>0$ and $k\geq2$, every $k$-partite $k$-graph $H$ with at most $m$ vertices in each part and with at least $cm^k$ edges contains a path with at least $cm$ vertices.
%    Furthermore, this can be found in time $O(m^k)$.
%\end{proposition}

%\red{JH. I've revised this lemma below to tight paths. Should cite this lemma appropriately.}

\begin{lemma}\cite[Claim 4.2]{MR2399020}
\label{reducedpathcover}
    Fix an integer $k\geq 3$ and $\varepsilon,d>0$ such that $d>2\varepsilon$.
    Let $m>\frac{k}{\varepsilon(d-\varepsilon)}$.
    Suppose $(V_1,\ldots, V_k)$ is an $(\varepsilon,d)$-regular $k$-tuple with $|V_i|=m$ for each $i\in[k]$.
    Then we can find a family of at most $\frac{k}{(d-2\varepsilon)\varepsilon}$ pairwise vertex-disjoint tight paths whose union covers all but at most $k\varepsilon m$ vertices of $\bigcup_{i\in[k]}V_i$.
\end{lemma}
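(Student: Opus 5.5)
We prove the lemma by a greedy procedure. We repeatedly pull out one long tight path from what is left of the $k$-tuple. Each path is balanced across the $k$ parts, so the parts always stay of equal size. We stop once the parts have shrunk to below $\varepsilon m$.

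\textbf{Step 1 (auxiliary claim).} Let $G$ be a $k$-partite $k$-graph with parts $U_1,\dots,U_k$, each of size $m'$, and with at least $c\,m'^k$ edges. We claim $G$ contains a tight path on roughly $c\,m'$ vertices, up to a loss of a factor depending only on $k$ that the slack below must absorb. Moreover, the path visits the parts in the cyclic order $U_1,U_2,\dots,U_k,U_1,\dots$.

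To prove this, we prune $G$. As long as some crossing $(k-1)$-set $S$ has positive degree less than $c\,m'/k$, we delete all edges containing $S$. There are at most $k\,m'^{k-1}$ crossing $(k-1)$-sets, so fewer than $c\,m'^k$ edges are deleted in total. Hence a nonempty subgraph $G'$ survives. In $G'$, every $(k-1)$-subset of an edge has degree at least $c\,m'/k$.

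Now start with any edge of $G'$ and extend greedily. At each step, the last $k-1$ vertices of the current path lie in an edge of $G'$. Therefore they have at least $c\,m'/k$ neighbours in the next part of the cyclic order. We can keep extending as long as fewer than $c\,m'/k$ vertices of that part are already used. This yields a tight path with about $c\,m'$ vertices in total. Because vertices are taken cyclically, the path uses each part equally, up to $\pm1$. We truncate it to a multiple of $k$ vertices so that the usage is exactly equal. The path can be found in time polynomial in $m'$.

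\textbf{Step 2 (iteration).} Maintain sets $V_i'\subseteq V_i$ of common size $m'$, initially $V_i'=V_i$. While $m'\ge \varepsilon m$, regularity applies to $(V_1',\dots,V_k')$: the density is at least $d-\varepsilon$, so there are at least $(d-\varepsilon)m'^k$ crossing edges. We apply Step 1 with $c=d-\varepsilon$. This gives a balanced tight path with at least $(d-\varepsilon)\varepsilon m-k\ge (d-2\varepsilon)\varepsilon m$ vertices. The inequality uses the hypothesis $m>k/(\varepsilon(d-\varepsilon))$, which also ensures the path is nontrivial. We remove the path's vertices from the $V_i'$; the parts stay of equal size.

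When the loop stops, each $V_i'$ has fewer than $\varepsilon m$ vertices. So at most $k\varepsilon m$ vertices remain uncovered. Each path has at least $(d-2\varepsilon)\varepsilon m$ vertices, and the total number of vertices is $km$. Hence the number of paths is at most $km/((d-2\varepsilon)\varepsilon m)=k/((d-2\varepsilon)\varepsilon)$, as required.

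\textbf{Main obstacle.} The delicate point is the bookkeeping in Step 1. We must make the path long enough, of order $(d-\varepsilon)m'$ rather than a constant fraction smaller, so that the losses from pruning, truncation, and regularity all fit into the slack between $d-\varepsilon$ and $d-2\varepsilon$. If the constant from the pruning argument is too weak, I would first sharpen the pruning threshold, using part-specific degree conditions instead of the uniform threshold $c\,m'/k$. Failing that, I would accept a weaker constant in the number of paths. This costs nothing in the application, since only a bound independent of $n$ is needed there.
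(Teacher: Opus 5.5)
Your scheme is the standard argument behind the claim of R\"odl, Ruci\'nski and Szemer\'edi that the paper cites without proof. You prune $(k-1)$-sets of small positive degree, grow a tight path greedily in cyclic order of the parts, and peel off balanced paths while the common part size $m'$ is at least $\varepsilon m$. Steps 1 and 2 are sound in outline.

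The genuine error is the bookkeeping in Step 2. You assert $(d-\varepsilon)\varepsilon m-k\ge(d-2\varepsilon)\varepsilon m$ and attribute it to $m>k/(\varepsilon(d-\varepsilon))$. But this inequality is equivalent to $\varepsilon^2 m\ge k$. Since $d>2\varepsilon$ forces $d-\varepsilon>\varepsilon$, your hypothesis is strictly weaker than $m\ge k/\varepsilon^2$. For instance, $k=3$, $d=1/2$, $\varepsilon=1/100$, $m=1000$ satisfies the hypothesis, yet the left side is $1.9$ and the right side is $4.8$. As written, you only get at most $km/((d-\varepsilon)\varepsilon m-k)$ paths, which need not be at most $k/((d-2\varepsilon)\varepsilon)$. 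Your fallback of accepting a weaker constant would not prove the lemma as stated. Likewise, a multiplicative loss depending on $k$ in Step 1, as you hedge, could never be absorbed by the additive slack $\varepsilon$.

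The repair is that your truncation need not lose anything. Suppose the greedy extension gets stuck with $\ell=qk+r$ vertices, $0\le r<k$.
\begin{itemize}
\item The part to be visited next is $U_{r+1}$, which contains exactly $q$ path vertices.
\item The final $k-1$ vertices lie in an edge of the pruned graph. So they have at least $\lceil cm'/k\rceil$ neighbours, all in $U_{r+1}$.
\item Being stuck means all of these neighbours are already used, so $q\ge\lceil cm'/k\rceil\ge 1$.
\end{itemize}
The initial segment on $kq$ vertices is then a tight path meeting every part in exactly $q$ vertices, with $kq\ge cm'$ vertices.

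With $c=d-\varepsilon$ and $m'\ge\varepsilon m$, every path removed in Step 2 therefore has at least $(d-\varepsilon)\varepsilon m$ vertices. The number $t$ of paths satisfies $t(d-\varepsilon)\varepsilon m\le km$, i.e.\ $t\le k/((d-\varepsilon)\varepsilon)\le k/((d-2\varepsilon)\varepsilon)$. The rest of your argument goes through unchanged.
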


%For $k\geq 3$ and
%$\varepsilon>0$, we say that a $k$-graph $H$ is $\varepsilon$-\emph{extremal} if
%there is a vertex set $S\subseteq V(H)$ of size
%$\left\lfloor \frac{k-1}{k}n\right\rfloor$ such that
%$e(H[S])\leq \varepsilon n^k$.
%We use the following result of \cite{amlostmatchinggh}.
%    \begin{lemma}[\cite{amlostmatchinggh}, Lemma 4.3]\label{lem:almost-perfect-matching}
%For any integer $k\geq 3$ and $0<\varepsilon\ll \beta,\eta$, the following
%holds for sufficiently large $n$. Let $H=(V,E)$ be an $n$-vertex $k$-graph
%such that all but at most $\varepsilon n^{k-1}$ $(k-1)$-sets $S\subseteq V$
%satisfy that $\deg(S)\geq n/k-\eta n$. If $H$ is not $\eta$-extremal,
%then $H$ contains a matching that covers all but at most $\beta n$ vertices
%of $V$.
%\end{lemma}

%The following is a corollary of Lemma \ref{lem:almost-perfect-matching}.
We also use the following result  from \cite{MR2399020} on almost perfect matchings in the reduced $k$-graph, under a defect minimum codegree condition.

\begin{lemma}\cite[Claim 4.5]{MR2399020}
\label{almostperfect}
      Let $\beta,\gamma>0$, and suppose $1/n\ll \varepsilon\ll\eta \ll \beta,\gamma \ll1/k$. 
    Let $H$ be an $n$-vertex $k$-graph such that all but at most $\sqrt\varepsilon n^{k-1}$ $(k-1)$-sets $S\subseteq V(H)$ satisfy that  $\deg(S)\ge \frac n k+\gamma n$. Then $H$ contains a matching that covers all but at most $\beta n$ vertices of $V$.
\end{lemma}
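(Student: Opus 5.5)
We argue by maximality. Let $M$ be a maximum matching in $H$ and let $U:=V(H)\setminus V(M)$ be the set of uncovered vertices. Suppose for a contradiction that $|U|>\beta n$. We will find an edge $e\in M$ and two vertex-disjoint edges $f,f'$ with $f,f'\subseteq U\cup e$. Then $(M\setminus\{e\})\cup\{f,f'\}$ is a larger matching, which is impossible.

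\textbf{Step 1: good sets inside $U$.} Call a $(k-1)$-set $S$ \emph{good} if $\deg(S)\ge n/k+\gamma n$.
\begin{itemize}
\item The number of $(k-1)$-subsets of $U$ is at least $\binom{\beta n}{k-1}\ge (\beta n)^{k-1}/(2(k-1)!)$.
\item At most $\sqrt\varepsilon\, n^{k-1}$ of them are not good, and $\sqrt\varepsilon \ll \beta^{k-1}/(k-1)!$ because $\varepsilon\ll\beta$.
\item So we can choose good sets $S_1,\dots,S_k\subseteq U$ greedily and pairwise disjoint. After $i<k$ sets are chosen, the $(k-1)$-subsets of $U$ meeting them number at most $k^2 n^{k-2}$. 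This is negligible against $(\beta n)^{k-1}$ since $1/n\ll\beta$.
\end{itemize}
By maximality of $M$, no $S_i$ has a neighbour in $U$. Otherwise $S_i\cup\{u\}$ would be an edge disjoint from $V(M)$. Hence every $S_i$ has at least $n/k+\gamma n$ neighbours in $V(M)$.

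\textbf{Step 2: averaging over $M$.} For each $e\in M$ consider the bipartite graph $G_e$. One side is $\{S_1,\dots,S_k\}$, the other side is the $k$ vertices of $e$, and $S_i$ is joined to $v$ whenever $S_i\cup\{v\}\in E(H)$. Summing over $e\in M$,
\[
\sum_{e\in M} e(G_e)\;\ge\; k\Big(\frac nk+\gamma n\Big)\;=\;n(1+k\gamma).
\]
Since $|M|\le n/k$, some $e\in M$ has $e(G_e)\ge k(1+k\gamma)>k$, so $e(G_e)\ge k+1$.

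\textbf{Step 3: a matching of size two in $G_e$.} A bipartite graph with no matching of size $2$ has all its edges through one vertex, since it contains no triangle. Here every vertex of $G_e$ has degree at most $k$, while $e(G_e)\ge k+1$. So $G_e$ has two disjoint edges, namely $S_iv$ and $S_jw$ with $i\neq j$ and $v\ne w$.

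Now $f=S_i\cup\{v\}$ and $f'=S_j\cup\{w\}$ are disjoint edges lying in $U\cup e$. Replacing $e$ by $f$ and $f'$ enlarges $M$, which is the contradiction we wanted.

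The only delicate point is Step 1, namely ensuring that the defect set of bad $(k-1)$-sets cannot swallow all $(k-1)$-subsets of $U$. This is exactly where the hierarchy $\varepsilon\ll\beta$ is used. The rest is a single averaging argument, and the excess $\gamma$ is what makes the average over $M$ exceed $k$.
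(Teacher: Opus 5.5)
Your argument is correct. It is the standard maximum-matching exchange argument: take $k$ disjoint good $(k-1)$-sets in the uncovered set, average their neighbourhoods over the edges of $M$, and swap one edge for two. This is the argument behind \cite[Claim 4.5]{MR2399020}, which the paper cites without reproducing, remarking that it only needs the $n/k$ threshold. In fact even $\gamma=0$ would suffice, since $|M|\le (n-|U|)/k<n/k$ already pushes the average above $k$.
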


Now we are ready to prove Lemma \ref{pathcover}.
\begin{proof}[Proof of Lemma \ref{pathcover}]
 Suppose $1/n \ll 1/p \ll 1/T_0 \ll 1/t_0 \ll \varepsilon\ll d \ll\beta\ll \alpha \ll \gamma, 1/k$.
Suppose $H$ is an $n$-vertex $k$-graph with $\delta_{k-1}(H) \geq (\frac{1}{k}+\gamma)n$. 
We apply Lemma \ref{inherit} with parameters $\varepsilon$, $\gamma$, $d$ and $t_0$ obtaining an $(\varepsilon, t)$-regular partition $V=V_0\cup V_1\cup\cdots\cup V_t$ with $t_0 \leq t \leq T_0$.
Let $\mathcal{Q} = \{V_i\}_{i \in [t]}$ and the cluster hypergraph $K = K(\varepsilon, d, \mathcal{Q})$ with vertex set $[t]$. Let $m \geq \frac{(1-\varepsilon)n}{t}$ be the size of each cluster $V_i$, $i \in [t]$. 
By Lemma \ref{inherit}, we obtain that for all but at most $\sqrt{\varepsilon}t^{k-1}$ many $(k-1)$-sets $S$,
\[
\deg_K(S) \geq \left( \frac{1}{k}+\frac{\gamma}{4}\right) t.
\]

Thus by Lemma \ref{almostperfect} with $\gamma/4$ in place of $\gamma$, $K$ contains a matching $M$ covering all but at most $\beta t$ vertices. For each edge $\{i_1, \ldots, i_k\} \in M$, the corresponding clusters $(V_{i_1}, \ldots, V_{i_k})$ is $(\varepsilon, \gamma')$-regular for some $\gamma' \geq d$. 
Thus we can apply Lemma \ref{reducedpathcover} to $(V_{i_1}, \ldots, V_{i_k})$ and get a family of at most $\frac{k}{(d-2\varepsilon)\varepsilon}$ tight paths leaving at most $k\varepsilon m$ vertices uncovered. We do this for each edge in $M$ and get at most $\frac{t}{k} \cdot \frac{k}{(d-2\varepsilon)\varepsilon} \le \frac{T_{0}}{(d-2\varepsilon)\varepsilon}\le p$ vertex-disjoint tight paths, which leaves at most
\[
|V_0| + k\varepsilon m \cdot \frac{t}{k} + \beta t \cdot m \leq \varepsilon n + \varepsilon n + \beta n \leq \alpha n
\]
vertices uncovered in $H$, as $\e, \beta \ll \a$.
\end{proof}

\subsection{Connection}
%\red{JH. Connection is done EXCEPT the construction of reservoir, in the proof of Lemma \ref{lem:conn}.}
As in other proofs using the absorbing method, we need a lemma that efficiently connects a small number of paths to a long one, using a small number of vertices.
We show that this is achievable if $H$ is not even-extremal.
For the above claimed connection lemma to work, it seems that a necessary condition is that the $4$-graph should be ``connected'' for $2$-paths in a robust sense.
This is trivial if $\delta_2(H)\ge (1/2+o(1))\binom{n}{2}$, as then any two pairs of vertices shall have many common neighbours.
However, when the minimum codegree drops below the threshold $1/2$, this is no longer the case, that is, we cannot connect arbitrary pairs of vertices.
%This motivates us to define the study ``connected components'' formed by the pairs of vertices.
%Indeed, we shall show that thanks to codegree condition $\delta_3(H)\ge (1/3+o(1)){n}$, there are at most two components and in which case we can study the structure of $H$, via an approach similar to the reachability (defined for vertices of $H$).

%Let us introduce the following notation.
%Let $\binom V2$ be the family of pairs of vertices of $H$, where $V=V(H)$.
%Given $e, f\in \binom V2$, $\beta>0$ and $t\in \mathbb N$, we say that $e$ and $f$ are \textit{$(\beta, t)$-connectable} if there exist $i\in [t]$ and $\beta n^{2i}$ $2i$-sets of vertices $v_1\dots v_{2i}$ such that $e v_1\cdots v_{2i} f$ is a $2$-path in $H$ \footnote{Concatenation still works for the connectability defined here, however, one may notice that disjoint $t$-connectable set for $u, v$ and $t'$-connectable set for $v, w$ give rise to $(t+t'+1)$-connectable sets for $u$ and $w$, different from the reachability.}.
%Each such $2i$-set is called a \emph{$2i$-connector} for $e$ and $f$.
%Then we can use the reachability of Lo and Markstr\"om and indeed show that either all pairs of vertices of $H$ are $(\beta, t)$-connectable for some bounded $\beta>0$ and $t\in \mathbb N$, or the pairs of vertices form exactly two connected components from which we derive that $H$ is even/odd-extremal, and indeed, we can take $t=2$.

 We first include the following useful result. 
 \begin{fact}
 \label{degreeinherit}
Let $1 \leq d' \leq d < k$ and $H$ be a $k$-graph on $n$ vertices. If $\delta_d(H) \geq x\binom{n-d}{k-d}$ for some $0 \leq x \leq 1$, then $\delta_{d'}(H) \geq x\binom{n-d'}{k-d'}$.
 \end{fact}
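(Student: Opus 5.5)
The plan is a double counting of incidences between $d'$-sets and $d$-sets. We may assume $d'<d$, since otherwise there is nothing to prove. Fix an arbitrary $d'$-set $S'\subseteq V(H)$; we need to bound $\deg(S')$ from below. Consider the $\binom{n-d'}{d-d'}$ ways to extend $S'$ to a $d$-set $S\supseteq S'$. By hypothesis each such $S$ satisfies $\deg(S)\ge x\binom{n-d}{k-d}$.

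Next, count the pairs $(S,e)$ with $S'\subseteq S\subseteq e$, where $|S|=d$ and $e\in E(H)$. Summing over $S$ gives at least $\binom{n-d'}{d-d'}\,x\binom{n-d}{k-d}$ such pairs. Summing over edges $e\supseteq S'$ instead, each edge contains exactly $\binom{k-d'}{d-d'}$ choices of $S$ with $S'\subseteq S\subseteq e$. Hence
\[
\deg(S')\,\binom{k-d'}{d-d'}\ \ge\ x\binom{n-d'}{d-d'}\binom{n-d}{k-d}.
\]

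It remains to verify the binomial identity $\binom{n-d'}{d-d'}\binom{n-d}{k-d}=\binom{n-d'}{k-d'}\binom{k-d'}{d-d'}$. This identity counts, in two ways, the nested pairs $T_1\subseteq T_2\subseteq V\setminus S'$ with $|T_1|=d-d'$ and $|T_2|=k-d'$. Dividing the previous inequality by $\binom{k-d'}{d-d'}$ then gives $\deg(S')\ge x\binom{n-d'}{k-d'}$. Since $S'$ was arbitrary, this yields $\delta_{d'}(H)\ge x\binom{n-d'}{k-d'}$.

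None of these steps is a real obstacle; the only point requiring care is getting the multiplicity $\binom{k-d'}{d-d'}$ right in the double count.
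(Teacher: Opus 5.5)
Your proof is correct and is the same double count of pairs $(S,e)$ with $S'\subseteq S\subseteq e$ that the paper uses. It divides by the multiplicity $\binom{k-d'}{d-d'}$ and applies the identity $\binom{n-d'}{d-d'}\binom{n-d}{k-d}=\binom{n-d'}{k-d'}\binom{k-d'}{d-d'}$. You also justify that identity, which the paper states without proof.
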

 \begin{proof}
     Since $\delta_d(H) \geq x\binom{n-d}{k-d}$, we have $
\delta_{d'}(H) \geq \binom{n-d'}{d-d'} x \binom{n-d}{k-d}/\binom{k-d'}{d-d'} = x\binom{n-d'}{k-d'}.$
 \end{proof}

Following the statement of the connecting lemma, we shall use the hierarchy $1/n\ll \a\ll \beta \ll \e \ll \gamma$ in this subsection.
We start with the following result, saying that every pair of vertices is $1$-connectable to more than $1/3$ of other pairs, and among every three pairs of vertices, two of them are $1$-connectable.

\begin{proposition}
\label{prop:1-conn}
     Suppose that $1/n\ll\beta\ll\gamma$.
     Let $H$ be an $n$-vertex $4$-graph with $\delta_2(H)\geq (1/3+\gamma) \binom {n-2}2$.
     Then for every pair of vertices $e$ of $H$, $e$ is $(\beta, 1)$-connectable to at least $(1/3+2\gamma/3)\binom n2$ pairs of vertices of $H$.
     Moreover, for every three pairs of vertices $e, f, g$ of $H$, there are two of them that are $(\beta, 1)$-connectable to each other.
\end{proposition}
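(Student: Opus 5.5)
The plan is to work with link graphs of pairs. For a pair $e\in\binom V2$, let $L(e)\subseteq\binom{V\setminus e}{2}$ be its link, i.e.\ the set of pairs $p$ with $e\cup p\in E(H)$. By hypothesis $|L(e)|\ge (1/3+\gamma)\binom{n-2}2$. A $1$-connector for $e$ and $f$ is a pair $\{v_1,v_2\}$ such that $e\cup\{v_1,v_2\}$ and $\{v_1,v_2\}\cup f$ are both edges. So, up to $O(n)$ pairs meeting $e\cup f$, the number of $2$-connectors is $|L(e)\cap L(f)|$. Hence $e$ and $f$ are $(\beta,1)$-connectable as soon as $|L(e)\cap L(f)|\ge 2\beta n^2$, say. (Here $\beta n^2$ counts $2$-sets; the ordered/unordered discrepancy only costs a constant factor, absorbed since $\beta\ll\gamma$.)

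The ``moreover'' part is immediate by inclusion--exclusion. For three pairs $e,f,g$, the sets $L(e),L(f),L(g)$ all lie in $\binom V2$, whose size is $\binom n2=(1+o(1))\binom{n-2}2$. Their sizes sum to at least $(1+3\gamma)\binom{n-2}2>\binom n2+2\gamma\binom{n-2}{2}$. Therefore
\[
|L(e)\cap L(f)|+|L(f)\cap L(g)|+|L(e)\cap L(g)|\ \ge\ |L(e)|+|L(f)|+|L(g)|-\binom n2\ \ge\ 2\gamma\binom{n-2}2,
\]
so some pairwise intersection has size at least $\tfrac{2\gamma}{3}\binom{n-2}2\ge 2\beta n^2$, and the corresponding two pairs are $(\beta,1)$-connectable.

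For the first part, fix $e$ and double count. Let $N$ be the number of pairs $(f,p)$ with $p\in L(e)$ and $f\in L(p)$, i.e.\ $p\cup f\in E(H)$. Each $p\in L(e)$ has $|L(p)|\ge(1/3+\gamma)\binom{n-2}2$, so $N\ge |L(e)|(1/3+\gamma)\binom{n-2}2$. Let $\mathcal F$ be the set of pairs $f$ with $|L(e)\cap L(f)|\ge 2\beta n^2$. Pairs $f\in\mathcal F$ contribute at most $|L(e)|$ each, and pairs $f\notin\mathcal F$ contribute less than $2\beta n^2$ each. This gives
\[
|L(e)|\Bigl(\tfrac13+\gamma\Bigr)\binom{n-2}2\le N\le |\mathcal F|\,|L(e)|+2\beta n^2\binom n2.
\]
Dividing by $|L(e)|\ge \tfrac13\binom{n-2}2$ yields $|\mathcal F|\ge (1/3+\gamma)\binom{n-2}2-O(\beta)n^2\ge(1/3+2\gamma/3)\binom n2$, using $\beta\ll\gamma$ and $n$ large.

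There is no serious obstacle. The only care needed is bookkeeping: pairs $p$ that intersect $e$ or $f$ (only $O(n)$ of them, which is negligible), and the normalisation between $\binom{n-2}2$, $\binom n2$ and the $\beta n^{2}$ counting convention in the definition of connectability. Both are absorbed by the slack $\gamma/3$ once $1/n\ll\beta\ll\gamma$.
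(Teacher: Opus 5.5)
Your proof is correct and follows the paper's argument. The first assertion comes from the same double count of pairs $(f,p)$ with $e\cup p,\,p\cup f\in E(H)$, and your inclusion--exclusion over $L(e),L(f),L(g)$ spells out the step the paper only asserts follows from the minimum degree condition. The bookkeeping (threshold $2\beta n^2$ versus $\beta n^2$, $\binom{n-2}{2}$ versus $\binom{n}{2}$) is handled correctly.
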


\begin{proof}
%The former conclusion follows directly from the  minimum codegree condition; the latter one follows from the former one and the fact 
Note that two pairs $e$ and $g$ are $(\beta, 1)$-connectable if $|N_{H}(e)\cap N_{H}(g)|\ge \beta n^{2}$.
Then the second assertion follows from this and the minimum degree condition.
Let $\T\subseteq \binom V2$ be the family of pairs that are $(\beta, 1)$-connectable to $e$ and let $M$ be the number of choices of $(f,g)$ with $e\cup f, f\cup g\in E(H)$.
Then we have $\deg(e)\cdot (1/3+\gamma)\binom {n-2}2 \le M\le |\T|\deg(e) + \binom n2 \beta n^{2}$, which yields that 
$|\T| \ge (1/3+\gamma)\binom {n-2}2 - \beta n^{4}/\deg(e) \ge (1/3+2\gamma/3) \binom n2$, as desired.
\end{proof}

The next proposition states that given two pairs of vertices that are not $4$-connectable, we can construct two disjoint families of pairs whose union is almost $\binom{V(H)}{2}$, both of size greater than $(1/3+\gamma/2)\binom n2$, such that there are few edges $E$ that can be expressed as the union of two pairs, one from each family.

\begin{proposition}
\label{prop:pair-partition}
     Suppose that $1/n\ll\beta\ll\gamma$.
     Let $H$ be an $n$-vertex $4$-graph with $\delta_3(H)\geq{n}/{3}+\gamma n$.
     Then either every two pairs of vertices of $H$ are $(\beta, 4)$-connectable to each other, or there exist disjoint families of pairs $\mathcal Q_{1}, \mathcal Q_{2} \subseteq \binom{V(H)}2$ such that for $i=1,2$, we have $|\mathcal Q_{i}|\ge (1/3+\gamma/2)\binom n2$, $|\Q_{1}\cup \Q_{2}|\ge \binom n2 - 2\beta^{1/3} n^{2}$, and $H$ contains at most $2\beta^{1/3} n^{4}$ edges each of which consists of a pair in $\Q_{1}$ and a pair in $\Q_{2}$.
     Moreover, $\Q_{1}$ and $\Q_{2}$ can be constructed in time $O(n^{12})$.
\end{proposition}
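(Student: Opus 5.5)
Suppose two pairs $e,f$ are not $(\beta,4)$-connectable; we will build $\Q_1,\Q_2$ from their neighbourhoods. For a pair $g$, let $N(g)=\{h\in\binom V2: g\cup h\in E(H)\}$. Since $\delta_3(H)\ge (1/3+\gamma)n$, Fact~\ref{degreeinherit} gives $\delta_2(H)\ge(1/3+\gamma)\binom{n-2}2$. Hence Proposition~\ref{prop:1-conn} applies with a slightly larger constant $\beta'$, where $\beta\ll\beta'\ll\gamma$.

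Let $\T_e$ and $\T_f$ be the families of pairs that are $(\beta',1)$-connectable to $e$ and to $f$, respectively. By Proposition~\ref{prop:1-conn}, each has size at least $(1/3+2\gamma/3)\binom n2$. Now take $g\in\T_e$ and $h\in\T_f$ with $g\cup h\in E(H)$. Then $e\,x\,g\,h\,y\,f$ yields a $2$-path, with connectors $e\,a\,g$ and $h\,b\,f$ where $a,b$ are pairs. Moreover, each $2$-path $e\,a\,g\,h\,b\,f$ uses a connector of length $8$, i.e.\ $i=4$, whose middle is $a\cup g\cup h\cup b$.

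Counting such middles, we get about $\beta'n^2\cdot\beta'n^2$ choices for each edge $g\cup h$. After subtracting degenerate non-disjoint choices, which number $O(n^7)$, the count is at least $\beta'^2 n^4$ per edge. If there were more than $\beta^{1/3}n^4$ edges formed by a pair of $\T_e$ and a pair of $\T_f$, we would obtain more than $\beta'^2\beta^{1/3}n^8/O(1)\ge\beta n^8$ connectors. This would make $e$ and $f$ $(\beta,4)$-connectable, a contradiction. Shorter connectors could be handled similarly, but they are not even needed. So few edges join $\T_e$ to $\T_f$.

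Next we show that $\T_e\cap\T_f$ is tiny. If $g\in\T_e\cap\T_f$, then $N(g)$ has size at least $(1/3)\binom n2$, while $g\cup h\in E$ for every $h\in N(g)$. Also, by Proposition~\ref{prop:1-conn}, many $h\in N(g)$ lie in $\T_e\cup\T_f$. Indeed, summing over $h\in N(g)$, a fraction bounded below of those $h$ lie in $\T_e$ or $\T_f$, since $\T_e$ essentially contains most of $N(g')$ for typical $g'\in N(e)$. Each such $g$ contributes $\Omega(n^2)$ cross edges $g\cup h$, counted once with $g$ on the $\T_e$ side and once with $g$ on the $\T_f$ side. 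So $|\T_e\cap\T_f|=O(\beta^{1/3}n^2/\gamma)$.

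Set $\Q_1=\T_e\setminus\T_f$ and $\Q_2=\T_f\setminus\T_e$. It remains to show that almost every pair lies in $\T_e\cup\T_f$; this is the main obstacle. Take a pair $g\notin\T_e\cup\T_f$. By the second part of Proposition~\ref{prop:1-conn} applied to the triple $(g,e,f)$, since $g$ is not connectable to $e$ or to $f$, the pairs $e$ and $f$ must be $1$-connectable. That contradicts non-$4$-connectability, so in fact every pair lies in $\T_e\cup\T_f$. The same triple argument shows that $e,f$ are not $1$-connectable. Combining this with the intersection bound, $|\Q_1\cup\Q_2|\ge\binom n2-2\beta^{1/3}n^2$, and the sizes stay at least $(1/3+\gamma/2)\binom n2$.

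The edge bound transfers to $\Q_1\times\Q_2$ because $\Q_i\subseteq\T$. Tuning the exponents ($\beta'$ versus $\beta^{1/3}$) is routine.

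For the algorithm, we test $(\beta,4)$-connectability of all pairs of pairs by counting $8$-sets. Each test costs $O(n^8)$ and there are $O(n^4)$ pairs of pairs, for $O(n^{12})$ in total. Given a non-connectable $e,f$, computing $\T_e$ and $\T_f$ costs $O(n^6)$.
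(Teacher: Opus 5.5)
Your proof is correct and in its main lines is the paper's argument. The paper takes your $\beta'$ to be exactly $\beta^{1/3}$. It lets $N_1(e),N_1(f)$ be the $(\beta^{1/3},1)$-connectable neighbourhoods and gets $N_1(e)\cup N_1(f)=\binom V2$ from the triple property of Proposition~\ref{prop:1-conn}. It then sets $\Q_1,\Q_2$ to be the two set differences and bounds cross edges with the same $8$-connectors $e\,a\,g\,h\,b\,f$ that you use.

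The one real difference is how you bound $|\T_e\cap\T_f|$.
\begin{itemize}
\item The paper argues directly. Each $g\in\T_e\cap\T_f$ gives about $\beta'^2n^4$ $6$-connectors $e\,a\,g\,b\,f$. So $|\T_e\cap\T_f|\ge 2\beta^{1/3}n^2$ would make $e,f$ $(\beta,3)$-connectable. With $\beta'=\beta^{1/3}$ this gives exactly the constant in the statement.
\item You avoid $i=3$ and instead charge each $g\in\T_e\cap\T_f$ to the $|N(g)|\ge n^2/7$ cross edges $g\cup h$. This works, but only once you know that every $h\in N(g)$ lies in $\T_e\cup\T_f$.
\end{itemize}

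The reason you give for that fact is not a valid argument: that $\T_e$ ``essentially contains most of $N(g')$ for typical $g'\in N(e)$''. You do not need it. Prove the covering step $\T_e\cup\T_f=\binom V2$ first; it uses only the triple property and the non-connectability of $e,f$. After that the claim is immediate.

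Your route also loses a constant factor, since a $4$-set has up to $6$ splittings $(g,h)$, which gives $|\T_e\cap\T_f|\le 42\cdot(\text{cross edges})/n^2$. To reach $2\beta^{1/3}n^2$ you therefore need $\beta'$ strictly larger than $\beta^{1/3}$. For example, $\beta'=\beta^{1/4}$ makes the cross-edge count $O(\beta^{1/2})n^4$ and the intersection $O(\beta^{1/2})n^2$. The factor $1/\gamma$ in your bound is unnecessary. The paper's direct $6$-connector bound is shorter and needs no such tuning.
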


\begin{proof}
%This can be proved by similar argument for the partition lemmas: if the first assertion fails,  take two pairs $Q_{1}$ and $Q_{2}$ which are not 2-connectable.
By Fact~\ref{degreeinherit}, we have $\delta_2(H)\geq (1/3+\gamma) \binom {n-2}2$ and thus we can apply Proposition \ref{prop:1-conn} to $H$.
Put $V=V(H)$ and suppose $e, f$ are two pairs of vertices which are not $(\beta, 4)$-connectable.
Let $N_{1}(e)$ and $N_{1}(f)$ be the family of pairs of vertices of $V$ that are $(\beta^{1/3}, 1)$-connectable to $e$ and to $f$, respectively.
As $e$ and $f$ are not $(\beta^{1/3}, 1)$-connectable (otherwise they are $(\beta, 4)$-connectable), by the second assertion of  Proposition \ref{prop:1-conn} with $\beta^{1/3}$ in place of $\beta$, we have $N_{1}(e)\cup N_{1}(f)=\binom V2$.
Note also that we have $|N_{1}(e)\cap N_{1}(f)| < 2\beta^{1/3} n^{2}$ -- otherwise, by concatenation, we obtain at least $2\beta n^{6}$ 6-tuples of vertices that connect $e$ and $f$, and removing the $O(n^{5})$ multi-sets from the 6-tuples gives that $e$ and $f$ are $(\beta, 3)$-connectable, a contradiction.

Let $\Q_{1}=N_{1}(e)\setminus N_{1}(f)$ and $\Q_{2}=N_{1}(f)\setminus N_{1}(e)$.
Therefore, we have $|\Q_{i}|\ge (1/3+\gamma/2)\binom n2$ by Proposition \ref{prop:1-conn} and $|\Q_{1}\cup \Q_{2}|\ge \binom n2 - 2\beta^{1/3} n^{2}$.
Moreover, note that $H$ contains at most $2\beta^{1/3} n^{4}$ edges each of which consists of a pair in $\Q_{1}$ and a pair in $\Q_{2}$.
Indeed, note that each choice of $e_0e'f'f_0$ such that 
\begin{enumerate}
    \item $e'\in N_{1}(e)$ and $f'\in N_{1}(f)$ with $e'\cup f'\in E(H)$,
    \item $e_0\in N_H(e)\cap N_H(e')$ and $f_0\in N_H(f)\cap N_H(f')$, and
    \item the pairs $e, e_0, e', f', f_0, f$ are mutually disjoint
\end{enumerate}
%i) $e'\in N_{1}(e)$ and $f'\in N_{1}(f)$ with $e'\cup f'\in E(H)$ and ii) $e_0\in N_H(e)\cap N_H(e')$ and $f_0\in N_H(f)\cap N_H(f')$ 
gives rise to an $8$-connector for $e$ and $f$.
%among which at most $O(n^{7})$ are multi-sets or intersect $e\cup f$.
Therefore, if there are $2\beta^{1/3} n^{4}$ choices for the edge $e'\cup f'$, then as there are each $\beta^{1/3}n^2$ choices for $e_0$ and for $f_0$, respectively, we obtain at least $2\b n^8$ 8-tuples satisfying the adjacency required in (1) and (2).
Since among them $O(n^{7})$ choices violate (3),
%Since 
we obtain 
$2\beta n^{8} - O(n^{7}) \ge \beta n^{8}$ $4$-connector for $e$ and $f$, that is, $e$ and $f$ are $(\beta, 4)$-connectable, a contradiction.
%Then by Proposition~\ref{prop:1-conn} all other pairs are 1-connectable to $Q_{1}$ or $Q_{2}$.
%Letting $\cP_{i}\subseteq \binom V2$ be the family of pairs that are 1-connectable to $Q_{i}$ and we have $|\cP_{i}|\ge (1/3+\gamma/2)\binom n2$.
%Notice that if $|\cP_{1}\cap \cP_{2}| \ge \beta n^{2}$ then by concatenating the paths, $Q_{1}$ and $Q_{2}$ are 2-connectable, a contradiction.
%Therefore, this intersection is small.
%By Proposition~\ref{prop:1-conn} and pigeonholing, every pair in $\cP_{1}\cap \cP_{2}$ is either 0-connectable to many pairs in $\cP_{1}\setminus \cP_{2}$ or $\cP_{2}\setminus \cP_{1}$.
%Redistributing the members of $\cP_{1}\cap \cP_{2}$ give the desired partition of $\binom V2$.
%The ``farthest'' possible pairs of members in $\Q_{i}$ are the ones got redistributed to $\Q_{i}$, and as they are both 0-connectable to many members of $\Q_{i}$ and these members are 1-connectable to each other, each $\Q_{i}$, $i=1,2$, is 3-connectable.

For the running time, note that most of the work is to check the connectability between pairs of vertices, which can be done in time $n^4\cdot O(n^8)=O(n^{12})$ (to exhaustively check 4-connectability among all pairs of pairs of vertices).
\end{proof}

If all pairs of vertices of $H$ are $4$-connectable, then we have a robust connecting lemma, that is, we can connect any two pairs of vertices even after a small number of vertices are forbidden from use.
Then we may assume the latter output of Proposition~\ref{prop:pair-partition}.
We now perform additional (structural) analysis in this case.
%
%We use the following auxiliary 2-edge-coloring: 
%We consider a red-blue edge coloring of $\binom{V(H)}2$ defined by the families $\Q_{1}, \Q_{2}$.
It is convenient to treat $\Q_{1}, \Q_{2}$ as graphs, that is, let $R$ and $B$ be graphs on $V(H)$ such that $E(R)=\Q_{1}$ and $E(B)=\Q_{2}$. 
Informally, we can think of them as red and blue edges.
%Let us denote the red graph by $R$ and the blue graph by $B$ both on $V(H)$, respectively.
%Recall that Proposition~\ref{prop:pair-partition} says that $|\mathcal Q_{i}|\ge (1/3+\gamma/2)\binom n2$ for $i=1,2$,  that is, there are at least $(1/3+\gamma/2)\binom n2$ red  (or blue) edges of $\C$.
%Recall also that if two pairs of vertices are 0-connectable if they form an edge of $H$.
%However, we note that even two pairs are connectable, they might still be put into different parts, i.e., one in $\Q_{1}$ and one in $\Q_{2}$.
%But there are few such edges.
%\begin{proposition}
%\label{prop:ideal-edges}
%Suppose $\Q_{1}$ and $\Q_{2}$ are returned by Proposition~\ref{prop:pair-partition}.
%If there exist $\beta n^{4}$ edges $E$ of $H$ such that $E=f\cup g$ with $f\in \Q_{1}$ and $g\in \Q_{2}$, then all pairs of $\binom V2$ are $(\beta^{3}, 7)$-connectable.
%\end{proposition}

%\begin{proof}
%If there exist $\beta n^{4}$ such edges, then we get many 7-connectable sets for every $e\in \Q_{1}$ and $e'\in \Q_{2}$.
%\end{proof}

The key property we have is that few edges of $H$ are the union of one red edge and one blue edge.
This gives the following compatibility rule: 
\begin{enumerate}[label=$(*)$]
\item for all but at most $4\beta^{1/3} n^{4}$ edges of $H$, it holds that the six pairs of vertices form three monochromatic matchings, that is, each pair of matching edges are either both in $R$ or both in $B$. \label{item:1}
\end{enumerate}
Indeed, the edges of $H$ violating the above rule either contain a pair not in $\Q_{1}\cup \Q_{2}$ or can be written as a union of a red edge and a blue edge, so there are at most $2\beta^{1/3} n^{4} + 2\beta^{1/3}n^{2}\cdot n^{2}\le 4\beta^{1/3} n^{4}$ such edges.

Now let $E_1\subseteq E(H)$ be the family of edges that satisfy~\ref{item:1} and thus $|E(H)\setminus E_{1}|\le 4\beta^{1/3} n^{4}$.
Let $V'\subseteq V$ be the set of vertices which are incident to at most $\beta^{1/6} n^{3}$ edges in $E(H)\setminus E_{1}$ \emph{and} incident to at most $\beta^{1/6} n$ uncoloured pairs of vertices.
 Since $|E(H)\setminus E_{1}|\le 4\beta^{1/3} n^{4}$ and there are at most $2\beta^{1/3} n^{2}$ uncoloured pairs, by counting, we have 
\[
|V\setminus V'|\le 4\cdot 4\beta^{1/3} n^{4}/ (\beta^{1/6} n^{3}) + 2\cdot 2\beta^{1/3} n^{2}/(\beta^{1/6} n) = 20 \beta^{1/6} n.
\]
We now study the structure of the graphs $R$ and $B$.

\begin{claim}
\label{clm:V'}
For $v\in V'$, $v$ is incident to at least $(1/3+\gamma/2)n$ red edges and blue edges, respectively.
\end{claim}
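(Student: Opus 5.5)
Fix $v\in V'$. By symmetry it suffices to show that $v$ has at least $(1/3+\gamma/2)n$ red neighbours; the blue case is the same argument with the colours swapped. The idea is to choose a red pair $f=xy$ disjoint from $v$ and use the compatibility rule \ref{item:1} on edges containing $v$, $x$, $y$. If $vxyw\in E_1$, then $xy$ and $vw$ form a matching pair inside that edge, so $vw$ has the same colour as $xy$, namely red. Hence
\[
\deg_R(v)\ \ge\ \deg_H(vxy)-\#\{w:\ vxyw\notin E_1\}-\#\{w:\ vw \text{ uncoloured}\}.
\]
The first term is at least $(1/3+\gamma)n$ by the codegree condition. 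The last term is at most $\beta^{1/6}n$ because $v\in V'$.

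It remains to find a red pair $xy$, avoiding $v$, for which few $w$ give a bad edge $vxyw\notin E_1$. I would average over all red pairs. Since $v\in V'$, the number of bad edges containing $v$ is at most $\beta^{1/6}n^3$. Each such edge contains only three pairs avoiding $v$, so summing over red pairs $xy\not\ni v$,
\[
\sum_{xy}\#\{w:\ vxyw\notin E_1\}\ \le\ 3\beta^{1/6}n^3.
\]
By Proposition~\ref{prop:pair-partition}, $|\Q_1|\ge(1/3+\gamma/2)\binom n2$, and at most $n$ of these red pairs contain $v$, so there are at least $n^2/7$ red pairs avoiding $v$. Some red pair $xy$ therefore has at most $21\beta^{1/6}n$ bad $w$'s.

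Plugging this in gives
\[
\deg_R(v)\ \ge\ (1/3+\gamma)n-21\beta^{1/6}n-\beta^{1/6}n\ \ge\ (1/3+\gamma/2)n,
\]
since $\beta\ll\gamma$. Running the same argument with a blue pair $xy$, using $|\Q_2|\ge(1/3+\gamma/2)\binom n2$, gives the blue bound.

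The only point that needs care is the averaging step. Picking a single fixed red pair would not work, because the bound on bad edges is only global at $v$; averaging over all red pairs, which are plentiful, resolves this. Everything else is immediate from \ref{item:1} and the definition of $V'$.
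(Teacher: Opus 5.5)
Your proof is correct and is essentially the paper's argument. The paper double-counts ordered triples $(u,w,x)$ with $uw$ of the given colour, $v\notin uw$ and $vuwx\in E(H)$, which is the same averaging over coloured pairs avoiding $v$ that you phrase as choosing one pair with few completions outside $E_1$; your extra subtraction of uncoloured pairs is harmless but unnecessary, since every edge of $E_1$ has all six of its pairs coloured.
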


\begin{proof}
Fix a vertex $v\in V'$ and a blue edge $uw\in \binom V2$. 
By the minimum codegree condition, we have $\deg_{H}(uvw)\ge (1/3+\gamma)n$.
If such an edge $uvwx$ is also in $E_{1}$, then we have $vx$ is blue.
Since $v\in V'$, $v$ is incident to at most $ \beta^{1/6} n^{3}$ edges of $H$ not in $E_{1}$.
%Write $E_{b}$ for the blue edges under coloring $\C$ and l
Let $m_{v}$ be the number of (ordered) triples $(u,w,x)$ such that $uw\in E(B)$ and $vuwx\in E(H)$ and let $t$ be the number of blue edges not incident to $v$.
Then we have $t\ge (1/3+\gamma/2)\binom n2 - n$.
Therefore, if $v$ is incident to less than $(1/3+\gamma/2)n$ blue edges, then as $\beta \ll \gamma$ we have
\[
(1/3+\gamma)n t\le m_v < (1/3+\gamma/2)n\cdot t+ 3! \beta^{1/6} n^{3} < (1/3+\gamma)n t,
\]
a contradiction. The same proof shows the conclusion for red edges.
\end{proof}

Our next claim further reveals the asymptotic structure of $H$.
We call a triple of vertices $uvw$ \emph{good} if at most $\gamma n/2$ vertices $x$ satisfy $uvwx\in E(H)\setminus E_{1}$.
%$|N_{H}(uvw)\setminus  E_{1}|\le \gamma n/2$.
Then by $|E(H)\setminus E_{1}|\le 4\beta^{1/3} n^{4}$, we get that all but at most $(32\beta^{1/3}/\gamma)n^{3}$ triples of $V(H)$ are good.
We call $uv\in \binom V2$ \emph{good} (otherwise \emph{bad}) if there are less than $\e n/2$ choices of $w$ such that $uvw$ is not a good triple.
Similar counting shows that there are at most $3(32\beta^{1/3}/\gamma)n^{3} / (\e n/2-1) \le \e^{3}n^{2}$ bad pairs.
The next two claims are the main new ingredients of our proof of the connection step, with Property~\ref{item:1} as the main driving force.

\begin{claim}
\label{clm:BR}
If $u,v\in V'$ and $uv$ is good, then it satisfies one of the following properties:
\begin{enumerate}[label=$(N\arabic*)$]
\item $|N_{B}(u)\triangle N_{B}(v)|\le 2\e n$ and $|N_{R}(u)\triangle N_{R}(v)|\le 2\e n$, or \label{item:N1}
\item  $|N_{B}(u)\triangle N_{R}(v)|\le 2\e n$ and  $|N_{R}(u)\triangle N_{B}(v)|\le 2\e n$. \label{item:N2}
\end{enumerate}
\end{claim}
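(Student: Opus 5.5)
The plan is to classify the third vertices $w$ according to the colours of the two pairs $uw$ and $vw$, and to use the compatibility rule \ref{item:1} to show that this classification is preserved (or swapped) along the edges $uvwx\in E_1$. Fix $u,v\in V'$ with $uv$ good. For $w\in V\setminus\{u,v\}$ such that both $uw$ and $vw$ are coloured, put $w$ into one of four classes:
\begin{itemize}
\item $S_R$ if $uw,vw$ are both red;
\item $S_B$ if $uw,vw$ are both blue;
\item $D_{RB}$ if $uw$ is red and $vw$ is blue;
\item $D_{BR}$ if $uw$ is blue and $vw$ is red.
\end{itemize}
Since $u,v\in V'$, at most $2\beta^{1/6}n$ vertices $w$ are left unclassified.

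The key observation comes from \ref{item:1}. If $uvwx\in E_1$, then the matching $\{uw,vx\}$ is monochromatic, and so is $\{ux,vw\}$. Consequently:
\begin{itemize}
\item if $w\in S_R$, then $vx$ and $ux$ are red, so $x\in S_R$; likewise $S_B$ is mapped into $S_B$;
\item if $w\in D_{RB}$, then $vx$ is red and $ux$ is blue, so $x\in D_{BR}$; symmetrically $D_{BR}$ is mapped into $D_{RB}$.
\end{itemize}
In each case we need $x$ to be classified, which fails for at most $2\beta^{1/6}n$ vertices $x$.

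Now take any $w$ with $uvw$ a good triple. The triple has $\deg_H(uvw)\ge(1/3+\gamma)n$, and at most $\gamma n/2$ of its neighbours $x$ give edges outside $E_1$. Hence at least $(1/3+\gamma/2)n-2\beta^{1/6}n>(1/3+\gamma/3)n$ vertices $x$ lie in the target class of $w$: the class of $w$ itself if $w\in S_R\cup S_B$, and the partner class if $w\in D_{RB}\cup D_{BR}$. Since $uv$ is good, all but fewer than $\e n/2$ vertices $w$ form good triples with $uv$. Call a class \emph{big} if it contains some $w$ with $uvw$ good. By the above, the target of a big class has more than $(1/3+\gamma/3)n$ vertices. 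In particular, if $D_{RB}$ or $D_{BR}$ is big, then both $D_{RB}$ and $D_{BR}$ have size more than $(1/3+\gamma/3)n$. Every class that is not big has fewer than $\e n/2$ vertices.

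Next we exclude mixed configurations. Three disjoint classes, each of size more than $n/3$, cannot coexist. So either all classes are small, or only $S_R,S_B$ can be big, or only the pair $D_{RB},D_{BR}$ is large. Claim~\ref{clm:V'} says $u$ has at least $(1/3+\gamma/2)n$ red and at least $(1/3+\gamma/2)n$ blue neighbours. The red neighbours of $u$ lie in $S_R\cup D_{RB}$ together with the unclassified vertices, and the blue ones in $S_B\cup D_{BR}$ with the unclassified vertices. This rules out the all-small case. It also rules out the case where a single $S$-class is large: if, say, $S_B$ and $D$ are small, then $u$ has too few blue neighbours. Hence either $S_R,S_B$ are large and $D_{RB},D_{BR}$ are small, or the reverse.

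In the first case, $N_B(u)\triangle N_B(v)$ and $N_R(u)\triangle N_R(v)$ are both contained in $D_{RB}\cup D_{BR}$ together with the unclassified vertices and $\{u,v\}$. Their size is therefore at most $\e n+2\beta^{1/6}n+2\le 2\e n$, which gives \ref{item:N1}. In the second case the same computation with $S_R\cup S_B$ in place of the $D$-classes gives \ref{item:N2}.

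The main point to get right is the bookkeeping in the closure step: the error terms must be absorbed so that each big class (or pair of classes) genuinely exceeds $n/3$. This forces the choice $\beta^{1/6}\ll\gamma$ and uses that $\e n/2$ is much smaller than $n/3$. The rest is routine counting.
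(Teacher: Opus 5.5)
Your proof is correct and is essentially the paper's argument: your classes $S_B, D_{BR}, D_{RB}, S_R$ are the paper's $X, Y, Z, W$. Like the paper, you apply $(*)$ to edges $uvwx\in E_1$ with $uvw$ a good triple to show that a class of size at least $\varepsilon n/2$ forces its image class (itself, or its partner) to exceed $n/3$, and you then rule out three disjoint classes of that size. One small remark is that the appeal to Claim~\ref{clm:V'}, which the paper also makes, is not actually needed: if both $D$-classes have fewer than $\varepsilon n/2$ vertices you already get $(N1)$, and otherwise both exceed $n/3$, which forces the $S$-classes to be small and gives $(N2)$.
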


\begin{proof}
%Without loss of generality, assume $uv\in E(B)$.
Let $X:=N_{B}(u)\cap N_{B}(v)$, $Y:=N_{B}(u)\cap N_{R}(v)$, $Z:=N_{R}(u)\cap N_{B}(v)$, $W:=N_{R}(u)\cap N_{R}(v)$.
We first claim the following properties.
\begin{itemize}
\item If $|X|\ge \e n/2$, then $|X|> n/3$;
\item if $|W| \ge \e n/2$, then $|W|> n/3$;
\item if $|Y|\ge \e n/2$, then $|Z|> n/3$;
\item if $|Z|\ge \e n/2$, then $|Y|> n/3$.
\end{itemize}
Indeed, first suppose $|X|\ge \e n/2$.
As $uv$ is good and $|X|\ge \e n/2$, we can choose $x\in X$ such that $uvx$ is good, and thus there are more than $(1/3+\gamma/2)n$ choices of $a$ such that $xuva\in E_{1}$.
Then by $ux, vx\in E(B)$ and~\ref{item:1}, we have $va, ua\in E(B)$, and thus $a\in X$.
Therefore we obtain $|X|> n/3$.
The same proof shows also the second property.
Now assume $|Y|\ge \e n/2$. 
Similarly, we can choose $y\in Y$ such that $uvy$ is good, and thus there are more than $(1/3+\gamma/2)n$ choices of $a$ such that $yuva\in E_{1}$.
By $uy\in E(B)$, $vy\in E(R)$ and~\ref{item:1}, we have $va\in E(B)$, $ua\in E(R)$, yielding that $a\in Z$. Thus we get $|Z|> n/3$. 
The same proof also gives the last property.

Clearly, the last two properties imply that either both $|Y|, |Z| > n/3$ hold, or both $|Y|, |Z| \le \e n/2$.
Now to see the claim, if $|X| > \e n/2$, then $|X| > n/3$, and thus we have $|Y|, |Z| < \e n/2$.
This yields that $|N_{B}(u)\triangle N_{B}(v)|\le |Y|+|Z|+2\beta^{1/6}n+2\le 2\e n$ and $|N_{R}(u)\triangle N_{R}(v)|\le 2\e n$, giving~\ref{item:N1}.
Otherwise $|X| \le \e n/2$.
As $v\in V'$, $v$ is incident to at most $\beta^{1/6} n$ uncoloured pairs of vertices.
Together with Claim~\ref{clm:V'} and $u\in V'$, we get $|X|+|Y|\ge d_{B}(u) - \beta^{1/6} n > n/3$.
This implies $|Y|, |Z| > n/3$, which further yields that $|W|\le \e n/2$.
So we derive $|N_{B}(u)\triangle N_{R}(v)|\le |X|+|W|+2\beta^{1/6}n+2\le 2\e n$ and similarly $|N_{R}(u)\triangle N_{B}(v)|\le 2\e n$.
\end{proof}

The two possible outputs of the above claim say that the red/blue neighbourhoods of $u$ and $v$ are either almost the same, or almost the complement of each other.
This is enough for us to conclude the (asymptotic) structure of $H$.

\begin{claim}
\label{clm:extremal}
Suppose $\Q_{1}$ and $\Q_{2}$ are returned by Proposition~\ref{prop:pair-partition}.
%If $H$ contains few than $\beta n^{4}$ edges $E$ such that $E=f\cup g$ with $f\in \Q_{1}$ and $g\in \Q_{2}$, t
Then $H$ is $(65\e,\gamma/3)$-even-extremal and one can find a $(65\e,\gamma/3)$-even-extremal bipartition in time $O(n^{4})$.
\end{claim}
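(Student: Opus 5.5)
The plan is to use Claim~\ref{clm:BR} to define the bipartition relative to one fixed vertex, and then use the compatibility rule~\ref{item:1} to show that almost all edges are even. Call a vertex $v\in V'$ \emph{typical} if $uv$ is good for all but at most $\e n$ vertices $u\in V'$. Since there are at most $\e^3n^2$ bad pairs, all but at most $2\e^2 n$ vertices of $V'$ are typical. Fix a typical $v_0$, which can be found in time $O(n^4)$ by counting bad triples and pairs. Put $u\in V'$ into $A$ if $uv_0$ is good and satisfies~\ref{item:N1}, and into $B$ if $uv_0$ is good and satisfies~\ref{item:N2}. The two options are exclusive: by Claim~\ref{clm:V'}, both $N_R(v_0)$ and $N_B(v_0)$ have size at least $n/3$, so their symmetric difference is far larger than $4\e n$. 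All remaining vertices (those outside $V'$ or forming bad pairs with $v_0$, at most $21\beta^{1/6}n+\e n$ of them) are placed arbitrarily, say into $A$.

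Next I describe the colouring. Suppose $u,w\in A\cup B$ come from $V'$. If both lie in $A$, or both in $B$, then $N_B(u)$ and $N_B(w)$ each agree with the same set, $N_B(v_0)$ or $N_R(v_0)$, up to $2\e n$ vertices. If they lie on different sides, then $N_B(u)$ agrees with $N_R(w)$ up to $4\e n$ vertices. The aim is to show that, apart from $O(\e n^2)$ pairs, a pair $uw$ is blue exactly when $u$ and $w$ lie on the same side and $u\in N_B(v_0)$-type; more precisely, that the colour of $uw$ is determined by the "side pattern" of $u$ and $w$.

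The cleanest route is to look at the edges through $v_0$. Take $x\in N_B(v_0)$ with $v_0x$ good and $x\in A$ (so $N_B(x)\approx N_B(v_0)$), and let $y$ be any other vertex. Along the way I must note that $v_0\in N_B(x)$, so $v_0x$ is blue, and then ask whether $xy$ is blue. The neighbourhood relations give $xy$ blue iff $y\in N_B(x)\approx N_B(v_0)$. By symmetry, for $x\in B$ the pair $xy$ is blue iff $y\in N_R(v_0)$. Running the same argument for $y\in A$, I expect it to force $A\cap V'\approx N_B(v_0)\cup\{v_0\}$ and $B\cap V'\approx N_R(v_0)$, up to $O(\e n)$ vertices. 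Combining these gives the main structural consequence: up to $O(\e n^2)$ exceptional pairs, all connate pairs are blue and all split pairs are red.

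Once that is in place, the rest follows. An edge in $E_1$ with no exceptional pair has its three perfect matchings monochromatic. If the edge were odd, say of type $A^3B^1$ or $A^1B^3$, then one of its matchings would consist of one connate pair and one split pair, i.e. one blue and one red pair, contradicting~\ref{item:1}. So every such edge is even. Edges outside $E_1$ or containing an exceptional pair number at most $4\beta^{1/3}n^4+O(\e n^2)\cdot n^2$, and tracking the constants should give at most $65\e n^4$ odd edges. For the sizes, $|A|\approx |N_B(v_0)|\ge (1/3+\gamma/2)n-O(\e n)\ge (1/3+\gamma/3)n$ by Claim~\ref{clm:V'}, and similarly for $|B|$ using $N_R(v_0)$. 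The construction only computes neighbourhoods and goodness, so it runs in time $O(n^4)$.

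The main obstacle is the identification $A\approx N_B(v_0)$, i.e. showing that the sides are determined by colour rather than merely being consistent with it. If this fails, I would fall back on a pure pattern argument: since $uw$ and $u'w'$ with $u\approx u'$ and $w\approx w'$ have the same colour for most choices, the colour of a pair depends only on its side pattern. Then~\ref{item:1}, applied to edges counted via the codegree condition on triples from each pattern, forces connate and split pairs to receive different colours. Otherwise some odd-type pattern would contain many edges, which would contradict~\ref{item:1}.
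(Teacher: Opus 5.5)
There is a genuine gap, and it sits exactly at the step you flag as the main obstacle. The identification $A\cap V'\approx N_B(v_0)\cup\{v_0\}$, and with it ``connate pairs are blue and split pairs are red'', is false in general, so it cannot be proved. The colours are fixed by the arbitrary order of $e,f$ in Proposition~\ref{prop:pair-partition}, and swapping $e,f$ swaps $R$ and $B$.

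Concretely, let $H$ consist of all even $4$-sets with respect to some $\{X,Y\}$ with $|X|,|Y|\approx n/2$, and suppose $e$ is connate and $f$ is split. Then $R$ is exactly the set of connate pairs and $B$ is the set of split pairs. For $v_0\in X$, your \ref{item:N1}-class is $X=N_R(v_0)\cup\{v_0\}$, while $N_B(v_0)=Y$ is your \ref{item:N2}-class.

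What your sketch (``running the same argument for $y\in A$'') actually gives is a dichotomy. For $x,x'$ in the same class, the colour of $xx'$ is read off from $N(x)\approx N(v_0)$ as ``$x'\in N_B(v_0)$''. It is also read off from $N(x')\approx N(v_0)$ as ``$x\in N_B(v_0)$''. So if a class met both $N_B(v_0)$ and $N_R(v_0)$ in more than $4\e n$ vertices, the pairs between the two pieces would be simultaneously mostly red and mostly blue. Hence each class lies almost entirely inside $N_B(v_0)$ or almost entirely inside $N_R(v_0)$. Since both sets have size more than $n/3$ (Claim~\ref{clm:V'}), the two classes go to opposite ones.

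This is the paper's \eqref{eq:Vij}, and it leads to two cases: connate blue and split red, or connate red and split blue. Your final count survives in both cases, because it only uses that connate and split pairs receive \emph{different} colours. The size bound also goes through, with $N_R(v_0)$ in place of $N_B(v_0)$ in the second case. But the dichotomy itself is the missing step.

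Your fallback does not close the gap as written. The ``pattern'' step is essentially the symmetry argument above, but the closing justification is backwards. If connate and split pairs had the same colour, edges of the odd patterns would be \emph{consistent} with \ref{item:1}, so they yield no contradiction.

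Write $c_{AA},c_{BB},c_{AB}$ for the pattern colours. Codegrees of triples of types $A^2B$ and $AB^2$, together with \ref{item:1}, only give $c_{AA}\in\{c_{AB},c_{BB}\}$ and $c_{BB}\in\{c_{AB},c_{AA}\}$. This still allows all three colours to coincide, and that case must be ruled out separately. So must the case that one class is tiny, since then there are too few triples of the relevant type.

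The cleanest way is your own observation that $N_B(u)\approx N_R(w)$ for $u,w$ in different classes. Comparing the two sides of this approximation forces $c_{AB}$ to differ from both $c_{AA}$ and $c_{BB}$. A tiny class would make $N_B(u)$ almost empty or almost everything, contradicting Claim~\ref{clm:V'}.
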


\begin{proof}
Recall that there are at most $\e^{3}n^{2}$ bad pairs in $\binom V2$.
Take a vertex $v \in V'$ which is in at most $\frac{2\e^{3}n^{2}}{|V'|}\le \e^{2} n$ bad pairs, and let $W$ denote the union of $V\setminus V'$, the set of the bad neighbours of $v$, the set of $u$ such that $uv$ is uncoloured, and the vertex $v$ itself.
Then $|W|\le 20\b^{1/6}n+\e^{2}n+ \beta^{1/6}n+1\le 2\e^{2} n$.
%Let $V_{1}$ consist of $v\notin W$ and all vertices $u$ satisfying~\ref{item:N1}, and $V_{2}$ consist of all vertices $u\notin W$ satisfying~\ref{item:N2}.
For each
$u\notin W$, assign $u$ to $V_1$ if~\ref{item:N1} holds and to $V_2$ if~\ref{item:N2} holds.
Then $\{V_{1}, V_{2}\cup W\}$ is a bipartition of $V$.

Note that defining $W$ can be done in time $O(n^{4})$ and checking \ref{item:N1} and \ref{item:N2} for every vertex $u$ takes time $O(n)$ (so $O(n^2)$ in total).
Therefore, the overall running time is $O(n^4)$.
It suffices to show that this bipartition sees at most $65\e n^{4}$ odd edges and $|V_{1}|, |V_{2}|\ge (1/3+\gamma/3)n$.
%By Claim~\ref{clm:BR}, if $u\in V_{2}\setminus W$, then $|N_{B}(u)\triangle N_{R}(v)|\le 2\e n$.

%Now, for every two vertices $v_{1}, v_{2}\in V_{1}$, we have
%\[
%|N_{B}(v_{1})\triangle N_{B}(v_{2})|\le |N_{B}(v_{1})\triangle N_{B}(v)| + |N_{B}(v_{2})\triangle N_{B}(v)|\le 4\e n.
%\]
%Similarly, for every two vertices $u_{1}, u_{2}\in V_{2}$, we have
%\[
%|N_{B}(u_{1})\triangle N_{B}(u_{2})|\le |N_{B}(u_{1})\triangle N_{R}(v)| + |N_{B}(u_{2})\triangle N_{R}(v)|\le 4\e n.
%\]

For $i=1,2$, let $V_{i,1}=V_{i}\cap N_{B}(v)$ and $V_{i,2}=V_{i}\cap N_{R}(v)$.
By the definition of $W$, we have $V_{i}=V_{i,1}\cup V_{i,2}$, and $N_{B}(v)\subseteq V_{1,1}\cup V_{2,1}\cup W$ and $N_{R}(v)\subseteq V_{1,2}\cup V_{2,2}\cup W$.
% and $V_{i,3}=V_{i}\setminus (V_{i,1}\cup V_{i,2})$.
%Note that as $v\notin V'$, $|V_{i,3}|\le \beta^{1/6}n$.
We first show that
\begin{equation}
\label{eq:Vij}
\min\{|V_{i,1}|, |V_{i,2}|\}\le 4\e n \text{ for } i=1,2.
\end{equation}
Indeed, suppose $|V_{i,1}|, |V_{i,2}|> 4\e n$ and first assume $i=1$.
Since $v$ sends red edges to $V_{i,2}$, all vertices of $V_{i,1}$ should send mostly red edges to $V_{i,2}$, that is, at least $|V_{i,1}|(|V_{i,2}|-2\e n) > |V_{i,1}||V_{i,2}|/2$ edges are red; on the other hand, since $v$ sends blue edges to $V_{i,1}$, all vertices of $V_{i,2}$ should send mostly blue edges to $V_{i,1}$, that is, at least $|V_{i,2}|(|V_{i,1}|-2\e n)> |V_{i,1}||V_{i,2}|/2$ edges are blue.
This gives a contradiction.
%More precisely, 
For the $i=2$ case, switching red and blue in the above arguments show the result.

Moreover,~\eqref{eq:Vij} leaves two possible cases: i) both $V_{1,2}$ and $V_{2,1}$ are small or ii) both $V_{1,1}$ and $V_{2,2}$ are small. This is because $|V_{1,1}|+|V_{2,1}| \ge |N_B(v)| - |W| \ge (1/3+\gamma/2)n - 2\e^{2}n > n/3$ by $v\in V'$ and Claim~\ref{clm:V'}, yielding that one of $V_{1,1}$ and $V_{2,1}$ must be large. Similarly one of $V_{1,2}$ and $V_{2,2}$ must be large.

%As $v\in V'$, by Claim~\ref{clm:V'}, we have $|N_{B}(v)|, |N_{R}(v)|\ge (1/3+\gamma/2)n$.
We now show that $|V_{1}|, |V_{2}|\ge (1/3+\gamma/3)n$.
Indeed, by~\eqref{eq:Vij}, we have the following two possible cases as explained in the previous paragraph.
\begin{itemize}
\item ($V_{1,2}$ and $V_{2,1}$ are small) In this case using $N_{B}(v)\subseteq V_{1,1}\cup V_{2,1}\cup W$, we get 
\[
|V_{1}|\ge |V_{1,1}|\ge |N_{B}(v)| - |V_{2,1}| -|W|\ge (1/3+\gamma/2)n - 4\e n - 2\e^{2} n \ge (1/3+\gamma/3)n.
\]
Similarly $|V_{2}|\ge |V_{2,2}|\ge (1/3+\gamma/3)n$. 
\item ($V_{1,1}$ and $V_{2,2}$ are small) Again similar arguments give $|V_{1}|\ge |V_{1,2}|\ge (1/3+\gamma/3)n$ and $|V_{2}|\ge |V_{2,1}|\ge (1/3+\gamma/3)n$.
\end{itemize}
Therefore the claimed inequalities hold in both cases.

%First, in either case, we have $|V_{1}|, |V_{2}|\ge (1/3+\gamma/2)n - 4\e n \ge (1/3+\gamma/3)n$.
It remains to bound the number of odd edges.
Now note that for $u\in V_{1}$ and $i=1,2$, by $V_{i,1}\subseteq N_{B}(v)$ and $|N_{B}(u)\triangle N_{B}(v)|\le 4\e n$, we obtain
\[
|V_{i,1}\setminus N_{B}(u)|\le |N_{B}(u)\triangle N_{B}(v)| \le 4\e n,
\]
that is, every vertex $u\in V_{1}$ sends at most $4\e n$ red edges to $V_{i,1}$.
%as $V_{1,1}\subseteq N_{B}(v)$ and $|N_{B}(u)\triangle N_{B}(v)|\le 4\e n$.
%Similarly, using $V_{1,2}\cup V_{2,2}= N_{R}(v)$ and $V_{2,1}\subseteq N_{B}(v)$, f
For $u\in V_{1}$, $w\in V_{2}$ and $i=1,2$,  similar arguments using~\ref{item:N1} and~\ref{item:N2} show that
\[
|V_{i,1}\setminus N_{B}(u)| \le 4\e n,\quad |V_{i,2}\setminus N_{R}(u)| \le 4\e n, \quad |V_{i,1}\setminus N_{R}(w)| \le 4\e n, \text{ and } |V_{i,2}\setminus N_{B}(w)| \le 4\e n.
\]
%The other three inequalities can be obtained similarly.
Now for the first case ($V_{1,2}$ and $V_{2,1}$ are small), we infer that almost all members of $\binom{V_{1}}2\cup \binom{V_{2}}2$ are \emph{blue} and almost all members of $V_{1}V_{2}$ are \emph{red}:
\[
\left|\left(\binom{V_{1}}2\cup \binom{V_{2}}2\right)\cap E(R)\right|\le 4\e n\cdot |V_{1,1}| + 4\e n \cdot |V_{2,2}|+n(|V_{1,2}|+|V_{2,1}|)\le 16\e n^{2}
\]
and 
\[
|(V_{1} V_{2})\cap E(B)|\le |V_{1}|\cdot 4\e n + n |V_{2,1}| \le 8\e n^{2}.
\]
Similarly, for the second case ($V_{1,1}$ and $V_{2,2}$ are small), almost all members of $\binom{V_{1}}2\cup \binom{V_{2}}2$ are red and almost all members of $V_{1} V_{2}$ are blue:
\[
\left|\left(\binom{V_{1}}2\cup \binom{V_{2}}2\right)\cap E(B)\right|\le 4\e n\cdot |V_{1,2}| + 4\e n \cdot |V_{2,1}|+n(|V_{1,1}|+|V_{2,2}|)\le 16\e n^{2}
\]
and 
\[
|(V_{1} V_{2})\cap E(R)|\le |V_{1}|\cdot 4\e n+n|V_{2,2}|\le 8\e n^{2}.
\]

Now we are ready to conclude the proof. 
First we consider the case that $V_{1,2}$ and $V_{2,1}$ are small.
Observe that an odd edge of $H$ must satisfy one of the following: i) it lies in $ V_{1,1}^3 V_{2,2}$ or $V_{2,2}^3 V_{1,1}$ satisfying the property in~\ref{item:1}, ii) it intersects $V_{1,2}\cup V_{2,1}\cup W$, or iii) it does not satisfy the property in~\ref{item:1}.
Indeed, if an odd edge does not intersect $V_{1,2}\cup V_{2,1}\cup W$, then it lies in $ V_{1,1}^3 V_{2,2}$ or $V_{2,2}^3 V_{1,1}$.
%and with three blue edges forming a triangle and three red edges forming a star.
%Since this violates~\ref{item:1}, $F\notin E_{1}$ and thus not an edge of $H$. 
To estimate the number of $ V_{1,1}^3 V_{2,2}$ edges satisfying i), note that all such edges must contain a monochromatic matching, with one edge in $V_{1,1}$ and the other in $V_{1,1}V_{2,2}$.
This can be upper bounded by the number of $4$-sets $xyzw\in V_{1,1}^3 V_{2,2}$ such that either $xyz$ spans at least one red edge or at least one of $xw, yw, zw$ is blue, which in turn is upper bounded by $16\e n^{4}+ 12\e n^{4} = 28\e n^{4}$; similarly the number of $4$-sets $xyzw\in V_{2,2}^3 V_{1,1}$ such that either $xyz$ spans at least one red edge or at least one of $xw, yw, zw$ is blue is also upper bounded by $28\e n^{4}$.
Therefore, the number of odd edges of $H$ is at most $2\cdot 28\e n^{4} + (8\e n+2\e^{2} n) n^{3} + 4\beta^{1/3} n^{4}\le 65\e n^{4}$.

The second case ($V_{1,1}$ and $V_{2,2}$ are small) can be treated similarly.
Observe that an odd edge of $H$ must satisfy one of the following: i) it is in $ V_{1,2}^3 V_{2,1}$ or $V_{2,1}^3 V_{1,2}$ satisfying the property in~\ref{item:1}, ii) it intersects $V_{1,1}\cup V_{2,2}\cup W$, or iii) it does not satisfy the property in~\ref{item:1}.
%Indeed, if none of the above happens, then such a $4$-set $F$ is contained in $V_{1,2}\cup V_{2,1}$ and with three red edges forming a triangle and three blue edges forming a star.
%Since this violates~\ref{item:1}, $F\notin E_{1}$ and thus not an edge of $H$. 
Indeed, if an odd edge does not intersect $V_{1,1}\cup V_{2,2}\cup W$, then it lies in $ V_{1,2}^3 V_{2,1}$ or $V_{2,1}^3 V_{1,2}$.
To estimate the number of $ V_{1,2}^3 V_{2,1}$ edges satisfying i), 
note that all such edges must contain a monochromatic matching, with one edge in $V_{1,2}$ and the other in $V_{1,2}V_{2,1}$.
This can be upper bounded by the number of $4$-sets $xyzw\in V_{1,2}^3 V_{2,1}$ such that either $xyz$ spans at least one blue edge or at least one of $xw, yw, zw$ is red, which in turn is upper bounded by $16\e n^{4}+ 12\e n^{4} = 28\e n^{4}$; similarly the number of $4$-sets $xyzw\in V_{2,1}^3 V_{1,2}$ such that either $xyz$ spans at least one blue edge or at least one of $xw, yw, zw$ is red is also upper bounded by $28\e n^{4}$.
Therefore, the number of odd edges of $H$ is at most $2\cdot 28\e n^{4} + (8\e n+2\e^{2} n) n^{3} + 4\beta^{1/3} n^{4}\le 65\e n^{4}$.
%Without loss of generality, assume 
%Then we output the partition 
%Computing the sizes of the sets $V_{i,1}, V_{i,2}$ takes $O(n)$ time.
\end{proof}

%Finally we show that if all members of $\binom V2$ are connectable, then we have the connecting lemma

%\subsection{A connecting lemma}
Now we are ready to derive our connecting lemma. For convenience we restate Lemma \ref{lem:conn}.

\medskip
\noindent%\begin{lemma}
\textbf{Lemma \ref{lem:conn}.}
%\label{lem:conn}
\emph{Suppose $1/n\ll \a\ll \beta \ll \e \ll \gamma$.
Let $H$ be a $4$-graph with $\delta_{3}(H)\ge (1/3+\gamma)n$.
% which is not $65\e$-even-extremal.
Then one of the following holds.
\begin{itemize}
\item Every two pairs of vertices of $H$ are $(\beta, 4)$-connectable to each other, or
\item we can find a $(65\e,\gamma/3)$-even-extremal bipartition in time $O(n^{12})$.
\end{itemize}
Moreover, in the former case, for any given vertex set $W$ of size at most $3\beta^2 n$, there is a vertex set $R\subseteq V(H)\setminus W$ of size $\a n$ such that for every two pairs of vertices there are at least $\alpha^{9} n$ vertex-disjoint connectors in $R$ each of size at most 8.}
%\end{lemma}

\begin{proof}
[Proof of Lemma \ref{lem:conn}]
We start with applying Proposition~\ref{prop:pair-partition} to $H$, which in time $O(n^{12})$ gives that either every two pairs of vertices of $H$ are $(\beta, 4)$-connectable to each other, or there exist disjoint families of pairs $\mathcal Q_{1}, \mathcal Q_{2} \subseteq \binom{V(H)}2$ such that for $i=1,2$, we have $|\mathcal Q_{i}|\ge (1/3+\gamma/2)\binom n2$, $|\Q_{1}\cup \Q_{2}|\ge \binom n2 - 2\beta^{1/3} n^{2}$, and $H$ contains at most $2\beta^{1/3} n^{4}$ edges each of which consists of a pair in $\Q_{1}$ and a pair in $\Q_{2}$.
%First we assume that Proposition~\ref{prop:pair-partition} outputs a partition $\mathcal Q_{1}\cup \mathcal Q_{2} = \binom{V(H)}2$ and $H$ contains less than $\beta n^{4}$ such edges as above.
In the latter case, Claim~\ref{clm:extremal} outputs a $(65\e,\gamma/3)$-even-extremal bipartition in time $O(n^{4})$ and we are done in this case.

For the former case, we have that for every choice of $S, T\in \binom V2$, there are at least $\beta n^{2i}$ $2i$-connectors for some $i=i(S, T)\le 4$.
Then for any set $W$ of $3\b^2 n$ vertices, there are at least $\beta n/3$ vertex-disjoint choices of $2i$-connectors in $V(H)\setminus W$.
Indeed, suppose $\C$ is a maximum collection of disjoint $2i$-connectors for $S, T$.
If $|\C| < \beta n/3$, then together with $W$, their vertices intersect at most $(2i|\C|+|W|)\binom n{2i-1}\le (2i\beta/3 +3\beta^2)n\binom n {2i-1} <i\b \frac {n^{2i}}{(2i-1)!} \le \beta n^{2i}$ $2i$-connectors, so that we can pick another connector disjoint from the members of $\C$, a contradiction.
Now let $R\subseteq V(H)\setminus W$ be a uniformly random subset of vertices of size $\a n$.
By Chernoff's bound and the union bound, we obtain that with positive probability, for every choice of $S, T\in \binom V2$ there are $\a^{2i}\beta n/6 \ge \a^9 n$ $2i$-connectors in $R$ for $i=i(S,T)$.
Fixing such an instance of $R$ finishes the proof.
%Moreover, among each family of such $\a^{2i}\beta n^{2i}/2 \ge \a^{9}\beta n^{2i}$ $2i$-connectors in $R$, one can pick $\a^{9}\beta n$ vertex-disjoint connectors each of size $2i$ greedily.
\end{proof}

\subsection{Absorbing}
For the absorption part, we use the swap-absorb method for Hamilton cycles originating from~\cite{Reiher2019Minimum}.
One important ingredient of this method is the following notion of reachability introduced by Lo and Markstr\"{o}m~\cite{MR3338027}, originally for matching and tiling problems.
% We shall introduce this notion in terms of
Note that the reachability notion we introduce here is a special form of the general one for $k$-uniform $(k/2)$-paths.
 
 Given a constant $\beta > 0$, an integer $i \geq 1$ and a $k$-graph $H$ on $n$ vertices, we say that two vertices $u, v$ are $(\beta, i)$-\emph{reachable} if there are at least $\beta n^{(3k/2)i-1}$ $((3k/2)i - 1)$-sets $T$ such that there exist vertex-disjoint $k/2$-paths $P_1, \ldots, P_i$ of length two with $V(P_1 \cup \cdots \cup P_i) = T \cup \{u\}$, and vertex-disjoint $k/2$-paths $P'_1, \ldots, P'_i$ of length two with $V(P'_1 \cup \cdots \cup P'_i) = T \cup \{v\}$, where $P_j$ and $P'_j$ have the same ends for all $j \in [i]$. 
 Moreover, we call $T$ a \emph{reachable set} for $\{u, v\}$. Given a vertex set $U \subseteq V(H)$, $U$ is said to be $(\beta, i)$-\emph{closed} if every two vertices in $U$ are $(\beta, i)$-reachable in $H$.
 Given any $v \in V(H)$, define $\tilde{N}_{\beta,i}(v, H)$ to be the set of vertices in $V(H)$ that are $(\beta, i)$-reachable to $v$ in $H$.

For an example of a reachable set, for $k=4$, consider vertices $u, v, v_{1}, v_{2}, v_{4}, v_{5}, v_{6}$ which span edges $v_{1}v_{2} u v_{4}$, $v_{1}v_{2} v v_{4}$, $u v_{4}v_{5} v_{6}$ and $v v_{4}v_{5} v_{6}$.
Therefore, $\{v_{1}, v_{2}, v_{4}, v_{5}, v_{6}\}$ is a reachable set for $u$ and $v$ with $P_{1}=v_{1}v_{2} u v_{4}v_{5}v_{6}$ and $P_{1}'=v_{1}v_{2} v v_{4}v_{5}v_{6}$.

 The following proposition controls the number of vertices 1-reachable to a given vertex $v$ using the minimum codegree condition. 
 \begin{proposition}
 \label{prop:tildeNv}
     Suppose that $1/n\ll\beta\ll\xi \ll \zeta \ll 1/k$ and $k\in 2\mathbb N$.
     Let $H$ be a $k$-graph with $\delta_{k-1}(H)\ge \zeta n$.
     Then for every vertex $v\in V(H)$, we have $|\tilde{N}_{\beta,1}(v, H)|\ge (\zeta - \xi) n$.
     Moreover, in every $\lfloor\zeta^{-1}\rfloor+1$ vertices of $H$, two of them are $(\beta, 1)$-reachable in $H$. 
 \end{proposition}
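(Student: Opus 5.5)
We prove the first assertion by a double counting of structures of the form ``$v$ replaced by $u$'' built from $(k-1)$-sets, and then derive the second assertion from the first by a pigeonhole argument. Throughout, $k$ is even and $h:=k/2$. A reachable set for $\{u,v\}$ with $i=1$ has size $3h-1$. It consists of a $(k-1)$-set $S$ and an $h$-set $T'$, disjoint from $S$, such that for some $h$-subset $S_0\subseteq S$ we have $S\cup\{u\},\ S\cup\{v\},\ (S\setminus S_0)\cup\{u\}\cup T',\ (S\setminus S_0)\cup\{v\}\cup T'\in E(H)$. This is exactly the $k=4$ example in the text: $S=\{v_1,v_2,v_4\}$, $S\setminus S_0=\{v_4\}$, $T'=\{v_5,v_6\}$. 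The two $h$-paths of length two share the ends $S_0$ and $T'$ and differ only in the vertex $u$ or $v$ of the middle block.

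\textbf{Step 1 (counting).} Fix $v$, and fix an ordering convention so that each configuration determines $S_0$. Count the tuples $(S_0,\ S\setminus S_0,\ u,\ T')$ such that the four sets above are edges. Equivalently, count tuples $(S,u)$ where $u\in N(S)\cap N(S\cup\{v\}\setminus\{\cdot\})$, in the following order.
\begin{enumerate}[label=(\roman*)]
\item Choose the $(k-1)$-set $S$ with $S\cup\{v\}\in E(H)$. There are $\Omega(n^{k-1})$ choices, since $\delta_{k-1}\ge\zeta n$ gives vertex degree $\Omega(n^{k-1})$.
\item Choose $u\in N(S)$, giving at least $\zeta n$ choices.
\item Let $S^-:=S\setminus S_0$, which is an $(h-1)$-set. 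Choose $T'$ such that both $S^-\cup\{v\}\cup T'$ and $S^-\cup\{u\}\cup T'$ are edges, that is, $T'$ is a common neighbour $h$-set of the $h$-sets $S^-\cup\{v\}$ and $S^-\cup\{u\}$.
\end{enumerate}
Step (iii) is the only genuinely non-trivial one: common $h$-neighbourhoods of two $h$-sets need not be large when $\zeta<1/2$. I would handle it by averaging, as in Proposition~\ref{prop:1-conn}. For fixed $v$ and $S^-$, the link $L_w$ of the $h$-set $S^-\cup\{w\}$ (a family of $h$-sets) has size at least $\zeta\binom{n}{h}$ for every $w$, by Fact~\ref{degreeinherit}. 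Counting triples $(w,T',\cdot)$ and applying Cauchy--Schwarz over $T'$ shows that $\sum_w |L_v\cap L_w|$ is large in aggregate. The cleaner route is to count pairs $(w,T')$ with $T'\in L_v\cap L_w$ directly: for each $T'\in L_v$, the number of $w$ with $T'\in L_w$ equals $\deg(S^-\cup T')\ge\zeta n$. Hence the number of such pairs is at least $|L_v|\zeta n\ge\zeta^2\binom nh n$.

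\textbf{Step 2 (conclusion of the first assertion).} Combining the counts of Step 1, the number of vertices $u$ that receive at least $\beta n^{3h-1}$ reachable sets is at least $(\zeta-\xi)n$, since the remaining vertices contribute at most $n\cdot\beta n^{3h-1}$ in total. To run this argument cleanly, I would restructure the count. Fix $S^-$ and $T'$ with $S^-\cup\{v\}\cup T'\in E$. Then every $u$ in $N(S^-\cup T')$ (at least $\zeta n$ vertices) satisfies the second pair of edges. For the first pair, fix $S_0$ with $S_0\cup S^-\cup\{v\}\in E$ and require $u\in N(S_0\cup S^-)$, again a set of size at least $\zeta n$. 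The two conditions on $u$ restrict it to the intersection of two $\zeta n$-sets, and this intersection is where the loss occurs. I therefore expect the main obstacle to be showing that most $u$ lie in both sets for many choices of $(S_0,T')$. I would try an averaging argument over $T'$: for a fixed $u$ in $N(S_0\cup S^-)$, the number of suitable $T'$ is the number of common neighbours of $S^-\cup\{u\}$ and $S^-\cup\{v\}$. Summing over $u$ and using convexity should give that all but $\xi n$ vertices of $\bigcup_{S_0}N(S_0\cup S^-)$ work. Since each such neighbourhood has size at least $\zeta n$, this yields $|\tilde N_{\beta,1}(v)|\ge(\zeta-\xi)n$.

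\textbf{Step 3 (the ``moreover'' part).} Given $m=\lfloor\zeta^{-1}\rfloor+1$ vertices, each has $|\tilde N|\ge(\zeta-\xi)n$, and $m(\zeta-\xi)>1$ once $\xi$ is small enough. By pigeonhole, some vertex $w$ lies in $\tilde N_{\beta,1}(x)\cap\tilde N_{\beta,1}(y)$ for two of the given vertices $x\neq y$; moreover, many such $w$ exist, at least $\Omega(\xi n)$. Concatenating a reachable set for $(x,w)$ with a disjoint one for $(w,y)$ only gives $2$-reachability, so this is not enough on its own. Instead I would argue directly: if the vertices are pairwise not reachable, the neighbourhoods $\tilde N$ are essentially disjoint, and this contradicts the size bound. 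To obtain the near-disjointness, I would show that a large common set of $w$ forces $x$ and $y$ to share many $(S,T')$ configurations, which gives $1$-reachability with $\beta$ replaced by a smaller polynomial in $\beta$. This is handled through the hierarchy $\beta\ll\xi$.
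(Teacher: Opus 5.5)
There is a genuine gap, and the same idea is missing in both parts. You never establish the criterion that $u$ and $v$ are $(\beta,1)$-reachable as soon as their links overlap substantially, i.e.\ $|N_H(u)\cap N_H(v)|\ge\xi^2n^{k-1}$.

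You do correctly observe that a reachable set for $\{u,v\}$ with $i=1$ is exactly the union of two members $S_0\cup S^-$ and $S^-\cup T'$ of the common link $N_H(u)\cap N_H(v)$ that meet in exactly the $(h-1)$-set $S^-$. Given this, the criterion is quick. The paper uses supersaturation for this $(k-1)$-partite configuration. Cauchy--Schwarz over the $(h-1)$-sets $S^-$ already gives $\Omega(\xi^4n^{3h-1})$ such pairs, of which only $O(n^{3h-2})$ are degenerate. The first assertion then follows from a single double count, $\zeta n|N_H(v)|\le\sum_{S\in N_H(v)}\deg(S)=\sum_u|N_H(u)\cap N_H(v)|<|\tilde N_{\beta,1}(v,H)|\,|N_H(v)|+\xi^2n^k$, together with $|N_H(v)|\ge\zeta k^{-k}n^{k-1}$.

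Your Steps 1--2 instead anchor configurations at $v$ and ask how many $u$ lie in $N(S_0\cup S^-)\cap N(S^-\cup T')$. This has three problems. The aggregate count of pairs $(w,T')$ carries no per-vertex information. ``Convexity should give'' is not an argument. And your stated conclusion, that all but $\xi n$ vertices of $\bigcup_{S_0}N(S_0\cup S^-)$ are reachable, is false in general. Membership in that union only requires one common link set through $S^-$. Take a parity construction and add each minority $k$-set independently with tiny probability. Then the union is almost all of $V$, while $\tilde N_{\beta,1}(v,H)$ remains essentially one side.

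In Step 3 you rightly note that pigeonholing the sets $\tilde N_{\beta,1}(x_i,H)$ only gives $2$-reachability. But your proposed repair is not valid: it is not true that many common reachable neighbours $w$ force $x,y$ to be $1$-reachable, so non-reachable vertices need not have essentially disjoint $\tilde N$. For a counterexample, let $N_H(x)$ and $N_H(y)$ be complementary random halves of $\binom{V\setminus\{x,y\}}{k-1}$, and let every other $k$-set be an edge independently with probability $1/2$. Then $x,y$ are not $(\beta,1)$-reachable, yet $\tilde N_{\beta,1}(x,H)\cap\tilde N_{\beta,1}(y,H)$ is almost all of $V$. This non-transitivity is why Lemma~\ref{lem:P} only yields $2^{c-1}$-closedness.

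The pigeonhole must be applied to the links themselves. Each $|N_H(x_i)|\ge\zeta\binom{n-1}{k-1}$, and $m\zeta>1$ for $m=\lfloor\zeta^{-1}\rfloor+1$. Hence $\sum_{i<j}|N_H(x_i)\cap N_H(x_j)|\ge(m\zeta-1-o(1))\binom{n}{k-1}$, so some pair shares at least $\xi^2n^{k-1}$ link sets. The margin $m\zeta-1>0$ depends only on $\zeta$, so $\xi\ll\zeta$ absorbs it. The criterion above then makes that pair $(\beta,1)$-reachable.
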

 
 \begin{proof}
Let $1/n\ll\beta\ll\xi \ll \zeta$.
We have $\delta_1(H)\geq {\zeta}\binom{n}{k-1}\ge {\zeta}{k^{-k}}n^{{k-1}}$ by Fact \ref{degreeinherit} and $\delta_{k-1}(H)\ge \zeta n$.
%Fix a vertex $v$ and any other vertex $u$ such that 
We first show that for vertices $u, v$, if $|N_H(u) \cap N_H(v)| \geq \xi^{2} n^{k-1}$, then they are $(\beta, 1)$-reachable in $H$.  
Let $P$ be the $k$-uniform $k/2$-path of length two. 
Then the link $(k-1)$-graph $P^{*}$ of a vertex of degree two in $P$ is $(k-1)$-partite. 
By the supersaturation result (see~\cite{MR183654}) and using $\beta \ll \xi$, we can find $(3k/2-1)! \beta n^{3k/2-1}$ copies of $P^{*}$ in $N_H(v) \cap N_H(u)$. 
Given any such copy of $P^{*}$ whose vertex set is denoted by $T$, we get that both $T \cup \{v\}$ and $T \cup \{u\}$ form a copy of $P$. 
Overall there are at least $\beta n^{3k/2-1}$ choices of $(3k/2-1)$-sets for $T$. 
So $v$ and $u$ are $(\beta, 1)$-reachable in $H$. 

The claim above together with $\delta_1(H)\geq {\zeta}\binom{n}{k-1}$ directly implies the latter assertion.
For the former one, by double counting, we have
\[
\zeta n\cdot |N_H(v)|\le \sum_{S\in N_H(v)} \deg(S) < |\tilde{N}_{\beta,1}(v, H)|
\cdot |N_H(v)| + n\cdot \xi^{2} n^{k-1},
\]
which together with $|N_H(v)|=\deg(v)\ge {\zeta}{k^{-k}}n^{{k-1}}$, yields $|\tilde{N}_{\beta,1}(v, H)| > \zeta n - (k^{k}\xi^{2}/\zeta )n\ge (\zeta-\xi)n$, as $\xi \ll \zeta \ll 1/k$.
 \end{proof}

%\red{JH. Need to copy back the proposition that shows among every 3 vertices there are two vertices that are 1-reachable.}

The following lemma from \cite{HT2020} provides a vertex partition $\cP=\{V_1,\ldots,V_r\}$ of a $k$-graph $H$ such that for every $i\in[r]$, $V_i$ is $(F,\beta,2^{c-1})$-closed in $H$, where $F$ is a fixed $k$-graph.
We state it under our setting, where we take $F$ to be the ($k$-uniform) $k/2$-path of length 2.

\begin{lemma}\cite{HT2020}
\label{lem:P}
Given $\delta'>0$, integer $c\ge 2$, $k\in 2\mathbb N$ and $0<\eta \ll 1/c, \delta', 1/k$, there exists a constant $\beta>0$ such that the following holds for all sufficiently large $n$. 
%Let $F$ be an $m$-vertex $k$-graph.
Assume $H$ is an $n$-vertex $k$-graph  and $S\subseteq V(H)$ is such that $|\tilde{N}_{\eta, 1}(v,H)\cap S| \ge \delta' n$ for any $v\in S$. 
Further, suppose every set of $c+1$ vertices in $S$ contains two vertices that are $(\eta, 1)$-reachable in $H$. 
Then in time $O(n^{2^{c-1} (3k/2)+1})$ we can find a partition $\cP$ of $S$ into $V_1,\dots, V_r$ with $r\le \min\{c, 1/\delta' \}$ such that for any $i\in [r]$, $|V_i|\ge (\delta' - \eta) n$ and $V_i$ is $(\beta, 2^{c-1})$-closed in $H$.
\end{lemma}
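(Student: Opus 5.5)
This lemma is quoted from \cite{HT2020}, so the plan is to reconstruct its standard proof: an iterative merging procedure on reachability classes, with the reachability parameter doubling at each merge. Choose constants $\beta=\beta_c\ll\beta_{c-1}\ll\dots\ll\beta_1=\eta$. The first tool is a concatenation fact. Suppose $u,w$ are $(\beta_j,i)$-reachable and $w,v$ are $(\beta_j,i)$-reachable. Pick disjoint reachable sets $T_1$ for $\{u,w\}$ and $T_2$ for $\{w,v\}$, neither containing $u,v,w$. Then $T_1\cup T_2\cup\{w\}$ is a reachable set for $\{u,v\}$ built from $2i$ paths. Indeed, $T_1\cup\{u\}$ and $T_2\cup\{w\}$ are covered by path systems with ends matching those of $T_1\cup\{w\}$ and $T_2\cup\{v\}$. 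Counting the choices of $w$ gives: if $u$ and $v$ have at least $\beta_j n$ common reachable neighbours $w$, then they are $(\beta_{j+1},2i)$-reachable. The count of resulting $((3k/2)2i-1)$-sets loses only a constant factor from overcounting, and we discard the $O(n^{\ast-1})$ non-disjoint choices.

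The partition is built greedily. Start with $\cP_1$, the partition of $S$ into singletons, and use the reachability graph at level $1$. Among any $c+1$ vertices of $S$, two are $(\eta,1)$-reachable, so the reachability graph on $S$ has independence number at most $c$. Each vertex has at least $\delta' n$ neighbours in $S$.

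The key step is to pick a maximal set of vertices $v_1,\dots,v_r$ in $S$ that are pairwise \emph{not} $(\beta_2,2)$-reachable. Note $r\le c$: pairwise non-reachability at level $2$ implies pairwise non-reachability at level $1$, so these vertices are independent in the level-$1$ graph. The neighbourhoods $\tilde N_{\eta,1}(v_i)\cap S$ pairwise intersect in fewer than $\eta n$ vertices, since otherwise concatenation would make $v_i,v_j$ level-$2$ reachable. As each neighbourhood has size at least $\delta' n$, this also gives $r\le 1/\delta'$ approximately.

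Set $V_i$ to consist of the vertices of $S$ that are $(\beta_2,2)$-reachable to $v_i$. Every vertex lies in some $V_i$ by maximality. A vertex lying in several classes is assigned to one of them. Within $V_i$, any two vertices are reachable via $v_i$, but only at level $4$ with a smaller $\beta$, and only if there are many intermediate vertices. This is the subtle point: concatenation through a single common vertex does not give the counts needed.

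So the actual argument iterates. We repeatedly replace $\eta$ by a smaller constant and double $i$, and merge two classes whenever they have many cross-reachable pairs. Each merge reduces the number of classes, so after at most $c-1$ rounds the classes are closed at level $2^{c-1}$. At each round the class sizes stay at least $(\delta'-\eta)n$ after discarding small intersections.

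The main obstacle is this counting. One must ensure every pair inside a final class has $\Omega(n)$ common reachable intermediates, not just one. The plan is to use the minimum-degree condition $|\tilde N_{\eta,1}(v)\cap S|\ge\delta' n$, together with the fact that $v$'s neighbourhood lies almost entirely inside its own class. Any two vertices of the same class then share at least $(\delta'-c\eta)n$ neighbours with a fixed representative, and this yields linearly many intermediates.

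The running time $O(n^{2^{c-1}(3k/2)+1})$ comes from testing reachability for all pairs. Each test enumerates all $((3k/2)2^{c-1}-1)$-sets, and checking the path systems on a set costs $O(1)$ each.
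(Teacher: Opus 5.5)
There is a genuine gap: your proposal never shows that the final classes are closed. You correctly observe that taking $V_i$ to be the vertices $(\beta_2,2)$-reachable to $v_i$ fails, because concatenating through the single vertex $v_i$ gives no count. The proposed repair does not fix this.

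\begin{itemize}
\item The merging procedure is not specified. It is unclear what ``many cross-reachable pairs'' means, why merging classes that are themselves not closed should produce closed ones, and why the process ends after $c-1$ rounds at step count $2^{c-1}$.
\item The concluding counting claim is unsupported and false in general. Nothing shows that a vertex's reachable neighbourhood lies almost entirely in its own class; this fails for vertices in the overlaps of the neighbourhoods of the $v_i$.
\item Even granting that claim, two vertices of a class much larger than $2\delta' n$ (e.g.\ $r=1$, $V_1=S$) can have disjoint neighbourhoods.
\item Sharing many neighbours with a fixed representative $v_i$ only makes $u$ and $w$ each reachable to $v_i$. That is again concatenation through one vertex.
\end{itemize}

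The paper gives no proof of this lemma; it is quoted from \cite{HT2020}. The standard argument there gets closedness from \emph{maximality} rather than counting, with the level chosen adaptively.

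Let $G_0$ be the $(\eta,1)$-reachability graph on $S$ and $G_j$ the $(\beta_j,2^j)$-reachability graph, with $\beta_{j+1}\ll\beta_j\ll\eta$.
\begin{itemize}
\item Padding a reachable set by disjoint copies of the $k/2$-path of length two (of which $H$ has $\Omega(n^{3k/2})$) gives $G_j\subseteq G_{j+1}$. You use this implicitly when saying non-reachability at level $2$ implies it at level $1$, and it should be stated.
\item Your concatenation fact shows that $\eta n/c^2$ common $G_j$-neighbours force $G_{j+1}$-adjacency.
\item The hypothesis says $\alpha(G_0)\le c$.
\end{itemize}
Hence either some $G_j$ with $j\le c-1$ is complete, or there is a smallest $t\le c-2$ with $\alpha(G_t)=\alpha(G_{t+1})=:d$.

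In the second case, take a \emph{maximum} independent set $\{v_1,\dots,v_d\}$ of $G_{t+1}$; it is maximum in $G_t$ as well. The sets $N_{G_t}(v_i)\cap S$ have size at least $\delta' n$, pairwise meet in fewer than $\eta n/c^2$ vertices, and together with the $v_i$ cover $S$. This gives the size bounds and $r\le\min\{c,1/\delta'\}$.

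The key point concerns the private part $U_i$: $v_i$ together with its $G_t$-neighbours that are not $G_t$-neighbours of any other $v_j$. This $U_i$ is a clique in $G_t$. If $u,u'\in U_i$ were non-adjacent, then $\{u,u'\}\cup\{v_j:j\ne i\}$ would be an independent set of size $d+1$ in $G_t$. Only the few overlap vertices need a counting argument, and they are distributed among the parts using the degree condition.

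Your level-$2$ step is essentially the case $t=0$ of this argument, but it differs in three ways:
\begin{itemize}
\item you take a merely maximal set rather than a maximum one;
\item you use $G_1$-neighbourhoods instead of private $G_0$-neighbourhoods;
\item you omit the condition $\alpha(G_0)=\alpha(G_1)$, which is exactly what makes the classes closed.
\end{itemize}
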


Proposition~\ref{prop:tildeNv} with $\zeta=1/3+\gamma$ allows us to apply Lemma~\ref{lem:P} to $H$ with $S=V(H)$, $\delta'=1/3+\gamma/2$, and $c=2$ and thus the resulting vertex partition of $V(H)$ has at most two parts.
%Our goal will be to show the existence of an absorbing path if $V(H)$ is closed, and otherwise output that $H$ is even (or odd) extremal.

We next study the distribution of edges over the partition $\cP$ via the robust vector theory from~\cite{KM1} and~\cite{Han14_Poly}.
%\red{JH. Using transferrals seems a wrong idea.}
%We use the following lemma from~\cite{Han16} that provides a sufficient condition on when we can merge the parts of the partition.
Suppose $\mathcal{P} = \{V_1, \dots, V_d\}$ is a vertex partition of a $k$-graph $H$ and $\mu>0$.
The \emph{index vector} of a set $S\subseteq V(H)$ is $\textbf{i}_{\cP}(S) = (|S\cap V_{1}|, \dots, |S\cap V_{d}|)$.
Define $I_{\mathcal{P}}^\mu(H)$ as the set of all vectors $\textbf{i}\in \mathbb{Z}^d$ for which $H$ contains at least $\mu n^k$ edges with index vector $\textbf{i}$. 
We now define the absorbers, which shows why the reachability is useful.
\begin{definition}[End-absorbers, $S$-absorbers]
\label{def:absorber}
Suppose $k\in 2\mathbb N$ and integer $t\ge 1$.
An end-absorber $Q$ for our proof is a $k$-uniform $k/2$-cycle of length $4$, that is, it
has vertex set $\{w_{1},\dots, w_{2k}\}$ written as disjoint $k/2$-sets $S_{1}, S_{2}, S_{3}, S_{4}$, and edges $S_{1}\cup S_{2}, S_{2}\cup S_{3}, S_{3}\cup S_{4}, S_{4}\cup S_{1}$.
Let its interior path $P_{Q}$ be $S_{3}\cup S_{4}$ and its exterior path be $S_{3}S_{2}S_{1}S_{4}$.
For a $k$-set $S=\{v_{1}, \dots, v_{k}\}$, an
$S$-\emph{absorber} is the union of a family of disjoint sets $T_{i}$ each of size $(3k/2)t-1$ $i\in [k]$ and an end-absorber $Q$ (also disjoint from $S$) such that for $i\in [k]$, $T_{i}$ is a reachable set for $v_{i}$ and $w_{i}$.
\end{definition}

We will use $S$-absorbers in the following manner.
For simplicity, let $k=4$.
By reachability, for $i\in [4]$, there are vertex-disjoint paths $P_{i,j}$, $j\in [t]$ and vertex-disjoint paths $P_{i,j}'$, $j\in [t]$ such that $T_{i}\cup\{w_{i}\}=\bigcup_{i\in [t]}V(P_{i,j})$ and $T_{i}\cup\{v_{i}\}=\bigcup_{i\in [t]}V(P_{i,j}')$, where each $P_{i,j}$ and $P_{i,j}'$ have the same ends.
Then we include the paths $P_{i,j}$, $i\in [4]$, $j\in [t]$ and the interior path of $Q$ as subpaths of our absorbing path (e.g., by connecting them into a single path by Lemma~\ref{lem:conn}).
If we need to absorb the set $S$, then we can take $P_{i,j}'$ instead of $P_{i,j}$ for all $i\in [4], j\in [t]$, and take the exterior path of $Q$, as subpaths of the absorbing path. Since each pair of the paths $P_{i,j}$ and $P_{i,j}'$ has the same ends, this transformation is valid and results in a path with $v_{1}, \dots, v_{4}$ absorbed.

We now show that every $k$-set $S$ of vertices with $\bfi_{\cP}(S)\in I_{\cP}^{\mu}(H)$ has many $S$-absorbers in $H$.
%\red{JH. Need to edit.}
\begin{proposition}
\label{prop:abs}
Suppose that $1/n\ll \beta\ll \mu, 1/k, 1/k', 1/t$ and $k\in 2\mathbb N$.
Let $H$ be a $k$-graph 
%with $\delta_{k-1}(H)\geq \zeta n$
 and let 
% such that $V(H)$ is $(\beta',t)$-closed. 
$\mathcal{P}=\{V_1,\ldots,V_r\}$ be a partition such that $r\le k'$ and for any $i\in[r]$, $|V_i|\ge \mu n$ and $V_i$ is $(\beta,t)$-closed in $H$.
Then every $k$-set $S$ with $\bfi_{\cP}(S)\in I_{\cP}^{\mu}(H)$ has at least ${\beta^{k+2}}n^b$ distinct $S$-absorbers, where $b=(3k^{2}/2)t+k$.
\end{proposition}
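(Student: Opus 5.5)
We count $S$-absorbers by building them in two stages: first an end-absorber $Q$, then the reachable sets $T_1,\dots,T_k$. Write $S=\{v_1,\dots,v_k\}$ and $\bfi:=\bfi_{\cP}(S)\in I^{\mu}_{\cP}(H)$.

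\emph{Stage 1: many end-absorbers with matching index vector.} Since $\bfi\in I^\mu_{\cP}(H)$, $H$ contains at least $\mu n^k$ edges $e$ with $\bfi_{\cP}(e)=\bfi$. We need many copies of $Q$ (a $k/2$-cycle of length $4$ on $w_1,\dots,w_{2k}$) in which the first $k$ labelled vertices $w_1,\dots,w_k$, i.e.\ the edge $S_1\cup S_2$, have the same index vector as $S$. This allows each $w_i$ to be paired with some $v_i$ lying in the same part of $\cP$.

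To get these copies, restrict to the $k$-partite subhypergraph of edges of type $\bfi$. It has density at least $\mu$ on the appropriate part-tuples. The hypergraph $Q$ is $k$-partite: colour $S_1,S_3$ by one set of $k/2$ colours and $S_2,S_4$ by another, so that every edge $S_j\cup S_{j+1}$ is rainbow. Choose the colouring so that the edges $S_1\cup S_2$, $S_2\cup S_3$, $S_3\cup S_4$, $S_4\cup S_1$ all receive the same index vector; this is automatic if $S_3$ mirrors $S_1$ part-wise and $S_4$ mirrors $S_2$. The supersaturation result of Erd\H{o}s~\cite{MR183654} applied to this $k$-partite hypergraph then gives at least $c(\mu,k)n^{2k}$ labelled copies of $Q$ with $w_i$ in the same part as $v_i$ for all $i\in[k]$.

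\emph{Stage 2: greedy choice of reachable sets.} Fix such a $Q$. For each $i\in[k]$, $v_i$ and $w_i$ lie in the same part $V_j$, and $V_j$ is $(\beta,t)$-closed, so there are at least $\beta n^{(3k/2)t-1}$ reachable sets for $\{v_i,w_i\}$. Choose $T_1,\dots,T_k$ one at a time, requiring each to avoid $S$, $V(Q)$ and the previously chosen $T$'s. Each step forbids at most $O_{k,t}(1)\cdot n^{(3k/2)t-2}$ sets, which is negligible since $1/n\ll\beta$. Hence there are at least $\tfrac{\beta}{2}n^{(3k/2)t-1}$ choices at every step.

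Multiplying the two stages gives at least $c(\mu,k)\,(\beta/2)^k\, n^{2k+k((3k/2)t-1)}=c(\mu,k)(\beta/2)^k n^{b}$ tuples, where $b=(3k^2/2)t+k$. Each $S$-absorber corresponds to at most a constant number (depending on $k,t$) of such tuples, namely the labellings and orderings. Because $\beta\ll\mu,1/k,1/t$, we have $c(\mu,k)(\beta/2)^k/O_{k,t}(1)\ge\beta^{k+2}$, which is the required bound.

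\emph{Main obstacle.} Stage 1 is the delicate step, because $Q$ has four edges and we must guarantee many copies whose vertices are correctly distributed among the parts. The natural remedy is to assign all four edges index vector $\bfi$ via the mirrored colouring above and apply supersaturation inside the type-$\bfi$ edges. If the mirrored assignment does not line up as claimed, a fallback is to require only that $S_1\cup S_2$ has type $\bfi$, and to build $S_3,S_4$ by one further supersaturation step: take common neighbours of $S_2$ and $S_1$ in the link graphs, which is valid since every edge is counted with positive density. Since $r\le k'$ is bounded, losing constant factors across part choices is harmless.
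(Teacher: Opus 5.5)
Your proposal is correct and follows the paper's proof: supersaturation inside the subgraph of edges of type $\mathbf{i}_{\mathcal P}(S)$, then greedy choice of the reachable sets $T_1,\dots,T_k$, then dividing by the $O_{k,t}(1)$ overcount. The paper finds copies of $K^{(k)}(2,\dots,2)$ and reads $Q$ off them, whereas you embed $Q$ directly, which amounts to the same thing. Two small remarks: you should discard the $O(n^{2k-1})$ copies of $Q$ that meet $S$, and the ``mirrored colouring'' discussion is unnecessary, since every copy of $Q$ in the type-$\mathbf{i}$ subgraph already has $S_1\cup S_2$ of type $\mathbf{i}$.
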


\begin{proof}
Write $S=\{v_1,v_2,\dots,v_k\}$, where $v_{i}\in V_{j_{i}}$ and $\bfi_{\cP}(S)\in I_{\cP}^{\mu}(H)$, and let $F$ be the complete $k$-partite
$k$-graph with two vertices in each class. 
By $\bfi_{\cP}(S)\in I_{\cP}^{\mu}(H)$, $H$ contains  $\mu n^k$ edges with index vector $\bfi_{\cP}(S)$. 
Hence, as $\beta \ll \mu$, by supersaturation, $H$ contains
$\beta n^{2k}$ labelled copies of $F$ consisting of these edges. 
In particular, each such copy has index vector $2\bfi_{\cP}(S)$.
All but
$O(n^{2k-1})$ of them are disjoint from $S$.

Fix $F'$ as such a copy and write its $k$ classes as $\{w_i,x_i\}\subseteq V_{j_{i}}$, $i\in[k]$.
For each $i\in[k]$, the vertices $v_i$ and $w_i$ are
$(\beta,t)$-reachable. We may therefore choose successively pairwise
disjoint $((3k/2)t-1)$-sets $T_i$, avoiding $S$ and the chosen copy of $F$, so
that $T_i$ is a reachable set for $\{v_i,w_i\}$. 
At each step at most $O(n^{(3k/2)t-2})$ reachable sets meet the previously used vertices, so there
are at least $0.9\beta n^{(3k/2)t-1}$ choices for $T_i$.

For each $i$, let $\mathcal R_i^w$ and $\mathcal R_i^v$ be the witnessing
families of $t$ disjoint length-two $k/2$-paths covering $T_i\cup\{w_i\}$ and
$T_i\cup\{v_i\}$, respectively, with corresponding paths having the same
ends. 
%The vertex sequences
%\[
%  Q=x_3x_4x_1x_2
%  \quad\text{and}\quad
%  Q'=x_3x_4w_1w_2w_3w_4x_1x_2
%\]
%form, respectively, a one-edge and a three-edge $2$-path in $F$, and they
%have the same ordered ends. 
Denote $V(F')=S_{1}\cup S_{2}\cup S_{3}\cup S_{4}$ where each $|S_{i}|=k/2$ such that $S_{1}S_{2}S_{3}S_{4}S_{1}$ forms a $k/2$-cycle of length 4.
Let $Q=S_{3}\cup S_{4}$ and $Q'=S_{3}S_{2}S_{1}S_{4}$ be $k/2$-paths.
Thus, for
$A=V(F')\cup T_1\cup T_2\cup \cdots \cup T_k$,
the families
\[
  \mathcal P=\{Q\}\cup\bigcup_{i=1}^k\mathcal R_i^w
  \quad\text{and}\quad
  \mathcal P'=\{Q'\}\cup\bigcup_{i=1}^k\mathcal R_i^v
\]
witness that $A$ is an $S$-absorber. Moreover,
$|A|=2k+k((3k/2)t-1)=b$.

The choices above give $(\beta n^{2k} - O(n^{2k-1}))(0.9\beta n^{(3k/2)t-1})^k\ge \beta^{k+1}n^{b}/2$ labelled
constructions. Each $b$-set is produced by at most $b!$ such
constructions, so, as $t$ is fixed and $\beta$ is sufficiently small,
there are at least $\beta^{k+2} n^b$ distinct $S$-absorbers, as desired.
\end{proof}

Therefore, the final task of absorption is to partition the vertices to be absorbed into $k$-sets that whose index vector are in $I_\cP^\mu(H)$, which is trivial when $\cP=\{V(H)\}$.
Readers who are familiar with the lattice-based absorption method may wonder if we can use the theory of transferrals to merge the (two) parts when our partition is non-trivial.
Unfortunately, the notion of reachability in swap-absorb only allows direct concatenation, but not concatenation via transferrals, which means that we cannot use the full theory of lattice-based absorption.
%which, after partitioning, seeks for the existence of transferrals between two parts and then merge them to reduce the dimension of the lattice.
%On the other hand, fortunately, our lattice has dimension only 2, and thus we can close the 
Fortunately, as our lattice has dimension only 2, we can ``manually'' solve the partition problem as follows.

\begin{proposition}
\label{prop:solution}
Suppose $k\in 2\mathbb N$, $k\ge 4$ and $1/n\ll \mu, \gamma, 1/k$.
     Let $H$ be an $n$-vertex $k$-graph with $\delta_{k-1}(H)\geq {n}/{3}+\gamma n$.
     Let also $\cP=\{V_{1}, V_{2}\}$ be a partition of $V(H)$ such that for $i=1,2$, $|V_i|\ge (1/3 + \gamma /3) n$ which is not $(k\mu,\gamma/3)$-extremal.
     %or there exist $\textbf v, \textbf w\in I_{\mathcal{P}}^\mu(H)$ such that $\textbf v - \textbf w = (1,-1)$.
    Then for every vector $(w_{1}, w_{2})\in \mathbb N^{2}$ such that $\log n\le w_{2}\le w_{1}\le 2w_{2}$ and $w_{1}+w_{2}\in k\mathbb N$, it holds that the equation $(w_{1}, w_{2}) = \sum_{\textbf v\in I_{\mathcal{P}}^\mu(H)} c_{\textbf v} \textbf v$ has non-negative integer solutions.
\end{proposition}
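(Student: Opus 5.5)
The plan is to work entirely with index vectors $(a,k-a)$, $0\le a\le k$, where $a$ is the number of vertices in $V_1$. I first collect enough vectors in $I_{\cP}^{\mu}(H)$ from the codegree condition and the non-extremality. I then show these vectors generate the full lattice $L=\{(x,y)\in\mathbb Z^2: x+y\in k\mathbb Z\}$. Finally, I use the fact that $(w_1,w_2)$ lies deep inside the cone they span to get non-negative integer coefficients. Write $A=\{a:(a,k-a)\in I_{\cP}^{\mu}(H)\}$.

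\emph{Step 1: collecting vectors.} Fix $j\in\{0,\dots,k-1\}$ and consider $(k-1)$-sets with exactly $j$ vertices in $V_1$ and $k-1-j$ in $V_2$. Since $|V_1|,|V_2|\ge(1/3+\gamma/3)n$, there are $\Omega(n^{k-1})$ such sets. Each has degree at least $(1/3+\gamma)n$, and every edge containing it has index vector $(j,k-j)$ or $(j+1,k-1-j)$. Summing, $H$ has $\Omega(n^k)$ edges of these two types, so as $\mu$ is small one of them has at least $\mu n^k$ edges. Hence
\[
\{j,j+1\}\cap A\neq\emptyset\quad\text{for every } j\in\{0,\dots,k-1\}.
\]
In particular $A$ contains an element of $\{0,1\}$ and an element of $\{k-1,k\}$. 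Next, $\cP$ is not $(k\mu,\gamma/3)$-extremal and its parts are large enough, so $H$ has more than $k\mu n^k$ odd edges and more than $k\mu n^k$ even edges. There are at most $k/2+1$ index vectors of each parity, so by pigeonhole $A$ contains both an odd and an even number.

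\emph{Step 2: the lattice (the main obstacle).} The lattice generated by $\{(a,k-a):a\in A\}$ equals $L$ exactly when $\gcd\{a-a':a,a'\in A\}=1$. I will argue this directly. If $A$ contains two consecutive integers, we are done. Otherwise consecutive elements of $A$ differ by at least $2$. The covering property $\{j,j+1\}\cap A\ne\emptyset$ for all $j$ forbids gaps of $3$ or more, since then some pair $\{j,j+1\}$ would be missed. So all gaps equal exactly $2$, and all elements of $A$ have the same parity. This contradicts Step 1.

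I expect this to be the crux. Non-extremality alone only gives an odd difference, which is not enough: for $k=4$ one could have $A=\{0,3\}$, with gcd $3$. The codegree covering property is what rules this out.

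\emph{Step 3: non-negative representation.} Let $u=(a_0,k-a_0)$ with $a_0\le1$ and $u'=(a_1,k-a_1)$ with $a_1\ge k-1\ge3$, both in $I_{\cP}^{\mu}(H)$. Then $u_1/u_2\le 1/(k-1)<1$ and $u'_1/u'_2\ge k-1>2$, so every $(w_1,w_2)$ with $w_2\le w_1\le 2w_2$ lies in the cone of $u,u'$ at distance $\Omega(w_2)$ from its boundary rays. The sublattice $L'=\mathbb Z u+\mathbb Z u'$ has index at most $|\det(u,u')|\le k^2$ in $L$. So for each coset of $L'$ in $L$, I fix once and for all a representative $z=\sum_{\bfv}c_{\bfv}\bfv$ with integer coefficients, $\bfv$ ranging over $I_{\cP}^{\mu}(H)$, and $|c_{\bfv}|\le C(k)$; Step 2 guarantees such representatives exist.

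Given $(w_1,w_2)\in L$, pick the representative $z$ in its coset and add $C'(k)$ copies of each vector of $I_{\cP}^{\mu}(H)$ to make all coefficients non-negative; call the result $z'$, which still has norm $O_k(1)$. Then $(w_1,w_2)-z'\in L'$. Because $w_2\ge\log n\gg C(k)$, this difference still lies inside the cone of $u,u'$, so its (integer) coordinates with respect to $u,u'$ are non-negative. Adding these to $z'$ gives the required non-negative integer solution.
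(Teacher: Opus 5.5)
Your Steps 1 and 2 follow the paper's argument. The covering property $\{j,j+1\}\cap A\neq\emptyset$ is the paper's $(\dagger)$. The paper likewise combines it with non-extremality: if no two robust vectors differed by $(1,-1)$, they would alternate in parity, so one parity class would carry at most $k\mu n^k$ edges. From this it obtains $\mathbf v_3,\mathbf v_4\in I_{\cP}^{\mu}(H)$ with $\mathbf v_3-\mathbf v_4=(1,-1)$.

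You diverge in Step 3. The paper gives an explicit decomposition: it subtracts $k\mathbf v_4$, writes the remainder as $x^*\mathbf v_1+(q-x^*)\mathbf v_2+(\ell,-\ell)$ with $x^*=\lfloor x_0\rfloor$ and $0\le\ell<k$, and absorbs $(\ell,-\ell)$ as $\ell\mathbf v_3+(k-\ell)\mathbf v_4$. Here $\mathbf v_1,\mathbf v_2$ are your $u',u$. You instead use a general lattice-plus-cone principle: fix the coset of $L'=\mathbb Z u+\mathbb Z u'$ with a bounded non-negative combination, then read off non-negative integer coordinates in the basis $u,u'$ for a point deep inside their cone. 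Your route is more general, since it works for any robust set generating $L$ and extends to more parts. The paper's is shorter and gives concrete coefficients.

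There is one slip in your coset bookkeeping. Adding $C'(k)$ copies of every vector of $I_{\cP}^{\mu}(H)$ shifts $z$ by $C'(k)\sum_{\mathbf v}\mathbf v$. That shift lies in $L$ but need not lie in $L'$, so the claim $(w_1,w_2)-z'\in L'$ can fail. For example, take $k=4$ with all five vectors robust, $u=(0,4)$ and $u'=(4,0)$, so $L'=4\mathbb Z\times 4\mathbb Z$. Adding one copy of each vector adds $(10,10)\notin L'$, and the coordinates of $(w_1,w_2)-z'$ in the basis $u,u'$ are then not integers.

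The fix is immediate. Note that $L/L'$ is cyclic of order $d:=a_1-a_0\le k$, generated by $(1,-1)$, so $d\,L\subseteq L'$.
\begin{itemize}
\item Either take $C'(k)$ to be a multiple of $d$.
\item Or use the coset representatives $j\mathbf v_3+(d-j)\mathbf v_4=j(1,-1)+d\mathbf v_4$ for $0\le j<d$. These already have non-negative coefficients, and they lie in the coset of $j(1,-1)$ because $d\mathbf v_4\in L'$.
\end{itemize}
With either change your argument is complete.
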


We remark that the $\log n$ term is not essential there -- we just avoid introducing another constant.

\begin{proof}
%Suppose $H$ is not $3\mu$-even/odd-extremal.
%That is, with respect to the partition $\cP=\{V_{1}, V_{2}\}$, there are at least $k\mu n^{4}$ odd edges and $k\mu n^{4}$ even edges in $H$.
%Thus, there exists $(a,b), (c,d)\in I_{\cP}^{\mu}(H)$ such that $a$ is even and $c$ is odd.
%Thus, we have that at least one of $(4,0), (2,2)$ and $(0,4)$ is in $I_{\cP}^{\mu}(H)$ and at least one of $(3,1)$ and $(1,3)$ is in $I_{\cP}^{\mu}(H)$.
Consider all $(k-1)$-sets of vertices of index vector $(i,j)$ with $i+j=k-1$.
By the minimum codegree condition, every such set is in at least $n/3+\gamma n$ edges, which gives rise to at least $\frac1k\binom{|V_{1}|}i \binom{|V_{2}|}j\cdot (1/3+\gamma)n \ge 2\mu n^{k}$ edges of $H$, with index vector $(i+1,j)$ or $(i,j+1)$.
Thus, we have: $(\dagger)$ at least one of $(i+1,j)$ and $(i,j+1)$ is in $I_{\cP}^{\mu}(H)$, for every $0\le i\le k-1$.
%That is, at least one of the two consecutive index vectors is in $I_{\cP}^{\mu}(H)$.

We now show that there exist $\textbf v, \textbf w\in I_{\mathcal{P}}^\mu(H)$ such that $\textbf v - \textbf w = (1,-1)$.
%Note that, up to symmetry, the only situation that does not give rise to $(1,-1)$ as a difference of two vectors is that $I_{\mathcal{P}}^\mu(H) = \{(4,0), (1,3)\}$, which contradicts to the observation above.
Suppose otherwise, that is, there is no $i$ with $0\le i\le k-1$ such that  $(i+1,k-1-i), (i,k-i)\in I_{\cP}^{\mu}(H)$.
Note that $I_{\cP}^{\mu}(H)$ consists of exactly all even (or odd) vectors: property $(\dagger)$ and the absence of consecutive members together show that the robust vectors must alternate.
Therefore, $H$ has at most $k\mu n^{k}$ odd (or even) edges, yielding that $H$ is $(k\mu,\gamma/3)$-extremal, a contradiction.
%first suppose that $(2,2)\in I_{\cP}^{\mu}(H)$. Then as $H$ contains $3\mu n^{4}$ odd edges, we have $(3,1)$ or $(1,3)\in I_{\cP}^{\mu}(H)$, which together with $(2,2)$ gives $(1,-1)\in L_{\mathcal{P}}^\mu(H)$.
%So we may assume that $H$ contains less than $\mu n^{4}$ edges with index vector $(2,2)$.
%Now consider the triples of vertices of index vector $(2,1)$.
%By the minimum codegree condition, every such set is in at least $n/3+\gamma n$ edges, which gives rise to at least $\frac14\binom{|V_{1}|}2 |V_{2}|\cdot (1/3+\gamma)n \ge 2\mu n^{4}$ edges of $H$, with index vector $(2,2)$ or $(3,1)$.
%Thus, we have $(2,2)$ or $(3,1)$ is in $I_{\cP}^{\mu}(H)$.
%So we can take $\textbf v = (4,0)$ and $\textbf w = (3,1)$, or $\textbf v = (2,2)$ and $\textbf w = (1,3)$.

Now we show the proposition. 
%Given vector $(w_{1}, w_{2})\in \mathbb N^{2}$ such that $0\le w_{2}\le w_{1}\le 2w_{2}$ and $w_{1}+w_{2}\in 4\mathbb N$.
We use four (not necessarily distinct) vectors of $I_{\cP}^{\mu}(H)$, $\bfv_{1}, \bfv_{2}, \bfv_{3}, \bfv_{4}$ such that $\bfv_{1}=(i,k-i)$ with $i=k$ or $k-1$, $\bfv_{2}=(i',k-i')$ with $i'=0$ or $1$, and $\bfv_{3} - \bfv_{4} = (1,-1)$.
The existence of $\bfv_{1}$ and $\bfv_{2}$ are given by $(\dagger)$.
First let $(w_{1}', w_{2}') := (w_{1}, w_{2}) - k\bfv_{4}$ and note that $w_{1}' \le 2.1w_{2}'$ as $w_{1}, w_{2}$ are large.
Therefore $(w_{1}', w_{2}')$ is in the positive cone of $\bfv_{1}$ and $\bfv_{2}$.
Let $x_0\in \mathbb R$ be the solution to the equation $(w_{1}', w_{2}') = x\bfv_{1} + (q-x)\bfv_{2}$, where  $q=(w_{1}'+w_{2}')/k$.
Then note that for $x^*=\lfloor x_0 \rfloor$, we obtain $(\ell,-\ell):=(w_{1}', w_{2}') - x^*\bfv_{1} - (q-x^*)\bfv_{2}$ satisfies that $\ell \in \{0,1,\dots,i-i'\}$.
%
%However, we may not have integer solution to the equation $(w_{1}', w_{2}') = x\bfv_{1} + (q-x)\bfv_{2}$, where  $q=(w_{1}'+w_{2}')/k$.
% and $(w_{1}', w_{2}') = (w_{1}, w_{2}) - q\bfv_{1}$.
%Still, we can take $x\in \mathbb N$ such that $(w_{1}', w_{2}') - x\bfv_{1} - (q-x)\bfv_{2} = (i,-i)$ and $i\in \{0,1,\dots, k-1\}$.
%This is possible by e.g., screening through all choices of $0\le x\le q$ and noticing that when $x$ increases by 1 then the difference $(j,-j)$ becomes $(j,-j)-\bfv_1+\bfv_2=(j-i+i', -j+i-i')$ so that the absolute value of its coordinates drops $i-i'\in \{k-2, k-1, k\}$.
%Therefore, we can choose $x$ such that 
Therefore, as $\ell \le k$, we obtain 
\[
(w_{1}, w_{2}) =  k\bfv_{4} + x^*\bfv_{1} + (q-x^*)\bfv_{2} + \ell (\bfv_{3} - \bfv_{4}) = x^*\bfv_{1} + (q-x^*)\bfv_{2} + \ell \bfv_{3} + (k-\ell)\bfv_{4}.
\]
We are done as all coefficients are non-negative integers.
%As $(w_{1}', w_{2}') = w_{2}' (\bfv_{2} - \bfv_{3})$, we obtain that $(w_{1}, w_{2}) = q\bfv_{1} + w_{2}' (\bfv_{2} - \bfv_{3})$.
\end{proof}

The swap-absorber method is powerful but also comes with a price.
Indeed, for $k/2$-cycles, the na\"ive form of an absorbing $k/2$-path should be able to absorb any set of vertices whose size is divisible by $k/2$.
However, as in our definition of $S$-absorbers, each time we absorb a $k$-set.
Therefore, we need extra treatment for this -- we need to be able to adjust the length of the cycle before executing absorption if the set of the uncovered vertices have a wrong residue.
For $k=4$, this is handled by the following result, which gives a short odd $2$-cycle in $H$.

\begin{lemma}
\label{lem:short-odd-cycle}
Suppose $1/n\ll \e\ll \gamma$.
Let $H$ be a $4$-graph on $n$ vertices with $\delta_3(H)\geq n/3+\gamma n$.
% which is not $\beta'$-odd-extremal. 
Then either $H$ contains a $2$-cycle of length $3$ or $5$ or one can find an $(\e,\gamma)$-odd-extremal bipartition in time $O(n^{4})$.
\end{lemma}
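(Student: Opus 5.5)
The plan is to work in the auxiliary \emph{pair graph} $G$ on $\binom V2$, where two disjoint pairs $p,q$ are adjacent if $p\cup q\in E(H)$. A $2$-cycle of length $3$ in $H$ is a triangle of $G$ on three mutually disjoint pairs. A $2$-cycle of length $5$ is a $5$-cycle of $G$ whose five pairs are mutually disjoint. So if $H$ has neither, then $G$ has no ``disjoint'' triangles and no ``disjoint'' $5$-cycles. Since we only need existence, this is an \emph{exact} restriction, up to the $O(n^{-1})$-fraction of configurations in which the pairs are not disjoint. From this I want to extract an odd-extremal bipartition, mirroring the red/blue analysis used for Lemma~\ref{lem:conn} with the roles of ``same colour'' and ``opposite colour'' exchanged.

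\textbf{Step 1: a red/blue colouring of most pairs.} Fix an edge $e\cup f\in E(H)$, chosen so that $e$ and $f$ are typical. Let $\Q_1=N_G(e)$ and $\Q_2=N_G(f)$. By Fact~\ref{degreeinherit} each has size at least $(1/3+\gamma)\binom{n-2}2$.
\begin{itemize}
\item No triangle through the edge $ef$ of $G$ gives $|\Q_1\cap\Q_2|=O(n)$.
\item No triangle inside $N_G(e)$ or inside $N_G(f)$ means that essentially no edge of $H$ is the union of two red pairs or of two blue pairs.
\item No $5$-cycle $e\,p\,q\,r\,f$ in $G$ means that a pair of $\Q_1\setminus\Q_2$ and a pair of $\Q_2\setminus\Q_1$ can be adjacent only through $O(n^3)$ degenerate configurations.
\end{itemize}
The pairs of $\Q_1\cup\Q_2$ that are adjacent to some red pair and to some blue pair are therefore forced into few bad configurations. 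Using the degree condition, as in Proposition~\ref{prop:1-conn}, every further pair $g$ has $N_G(g)$ of size more than $n^2/6$, so $N_G(g)$ meets $\Q_1$ or $\Q_2$ substantially. I colour $g$ with the opposite colour. The triangle- and $C_5$-freeness should make this colouring consistent, leaving only $O(\e n^2)$ uncoloured or ambiguous pairs.

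\textbf{Step 2: the compatibility rule.} The resulting rule, the analogue of~\ref{item:1}, is that for all but $O(\e n^4)$ edges of $H$, each of the three perfect matchings of the $4$-set consists of one red pair and one blue pair. I then repeat Claims~\ref{clm:V'}, \ref{clm:BR} and~\ref{clm:extremal} with this rule. The sets $X,Y,Z,W$ there are defined exactly as before, but the propagation now swaps colours. For example, if $ux$ and $vx$ are both blue, then the edge $xuva$ forces $ua$ and $va$ to be red, and so on. This again yields, for typical $u,v$, the dichotomy \ref{item:N1}/\ref{item:N2}, and hence a bipartition $\{V_1,V_2\}$ in which connate pairs have one colour and split pairs the other. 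Under the rule, an edge of type $V_1^4$, $V_1^2V_2^2$ or $V_2^4$ would contain a monochromatic matching. Hence almost every edge is odd.

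The size bound $|V_1|,|V_2|\ge(1/3+\gamma)n$ (rather than the $\gamma/3$ loss of Lemma~\ref{lem:conn}) should follow from the codegree: a triple inside $V_1$ lies in at least $(1/3+\gamma)n$ edges, almost all of them odd. These edges have their fourth vertex in $V_2$, so $|V_2|\ge(1/3+\gamma)n-o(n)$, and symmetrically for $|V_1|$. Some slack must be absorbed into the choice of constants. Computing $N_G(e)$, $N_G(f)$, the colouring and the neighbourhood comparisons takes $O(n^4)$ time.

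\textbf{Main obstacle.} I expect Step 1 to be the hard part: turning the exact absence of short odd cycles into a colouring that covers almost all of $\binom V2$ with few violations. The degree of $G$ is only about $1/3$ of $\binom n2$, which is below the Andr\'asfai threshold $2/5$ for $\{C_3,C_5\}$-free graphs to be bipartite. So the graph-theoretic argument alone fails, and the hypergraph structure must be used. Concretely, every triple lies in more than $n/3$ edges, so the link of a pair is well spread. My first attempt is to count, for a typical pair $g$, the $2$-paths $e\,p\,g$ versus $f\,p\,g$, and to show that if both counts are large then a disjoint $C_5$ appears, using supersaturation to ensure disjointness.
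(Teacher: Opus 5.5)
There is a genuine gap. Step~1, which you yourself flag as the crux, is only asserted (``should make this colouring consistent''). The reason you give for needing a delicate hypergraph stability argument is also mistaken. The threshold $2/5$ is the Andr\'asfai--Erd\H{o}s--S\'os bound for \emph{triangle-free} graphs. For graphs with no $C_3$ and no $C_5$ the bipartiteness threshold is $2/7<1/3$. Concretely, the pair graph $G$ has $\delta(G)\ge(1/3+\gamma/2)N$ with $N=\binom n2$. A shortest odd cycle of length $q\ge5$ is induced, and every outside vertex has at most two neighbours on it. Hence $q\,\delta(G)\le 2N$, so $q\le 5$.

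The missing idea is therefore to use the \emph{exact} dichotomy ``$G$ is bipartite or not'' instead of a stability analysis.
\begin{itemize}
\item A triangle of $G$ automatically consists of pairwise disjoint pairs, since adjacent pairs are disjoint. So it is a $2$-cycle of length $3$. In particular, $\mathcal Q_1\cap\mathcal Q_2=\emptyset$ exactly in your first bullet.
\item Suppose $G$ is triangle-free and has a $5$-cycle $P_1\cdots P_5$. Then $N_G(P_i)$ is disjoint from $N_G(P_{i-1})\cup N_G(P_{i+1})$, so $|N_G(P_{i-1})\cap N_G(P_{i+1})|\ge 3\delta(G)-N\ge\frac{3\gamma}{2}N$. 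You can therefore re-choose the $P_i$ one at a time, avoiding the $O(n)$ pairs meeting the other four, until all five pairs are disjoint. This replaces your counting heuristic about degenerate configurations, which by itself only gives few $5$-cycles rather than none.
\item Suppose $G$ is bipartite. A proper $2$-colouring $\chi$, found by breadth-first search in time $O(N^2)=O(n^4)$, satisfies your Step~2 rule on \emph{every} edge of $H$. The codegree condition upgrades this to $\chi(uv)=z_u+z_v+\sigma$. To see this, fix $a,b$ and sort $x$ by $(\chi(ax),\chi(bx))$. Every nonempty class forces its image under $(i,j)\mapsto(1-j,1-i)$ to have more than $n/3$ elements, which makes $\chi(ax)+\chi(bx)$ independent of $x$. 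Then every edge of $H$ meets $X=\{v:z_v=1\}$ in an odd number of vertices.
\end{itemize}

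Exactness is also needed for the conclusion as stated. An $(\e,\gamma)$-odd-extremal bipartition must have both parts of size at least $n/3+\gamma n$, with the \emph{same} $\gamma$ as in the codegree hypothesis. So the $o(n)$ loss of an approximate argument cannot be absorbed into the constants. You would only get parts of size $(1/3+\gamma-o(1))n$. That suffices for the later use in Lemma~\ref{lem:abs}, where a $(65\e,\gamma/3)$-extremal bipartition is enough, but it is not the statement. With an exactly odd bipartition, take any edge $e$ and vertices $x\in e\cap X$, $y\in e\setminus X$. Then $N_H(e\setminus\{x\})\subseteq X$ and $N_H(e\setminus\{y\})\subseteq V\setminus X$, so $|X|,|V\setminus X|\ge\delta_3(H)$ exactly.

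Finally, your third bullet is wrong as written. Red and blue pairs \emph{are} allowed to be adjacent: that is precisely your Step~2 rule, and $p\sim r$ with $p\in\mathcal Q_1$, $r\in\mathcal Q_2$ only closes the $4$-cycle $e\,p\,r\,f$. What the absence of a $5$-cycle $e\,p\,q\,r\,f$ forbids is a pair $q$ with both a red and a blue neighbour.
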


\begin{proof}
Put $V=V(H)$ and define an auxiliary graph $G$ on $\binom V2$ by joining
two pairs $P,Q$ whenever $P\cup Q\in E(H)$. In particular, adjacent pairs
are disjoint. 
Set $N=\binom n2$. For every $P\in\binom V2$,
\[
  2\deg_G(P)=\sum_{x\in V\setminus P} \deg_H(P\cup\{x\})
  \geq (n-2)(n/3+\gamma n),
\]
and hence, for sufficiently large $n$,
\begin{equation}
\label{eq:51}
  \delta(G)\geq (1/3+\gamma/2)N>N/3.                
\end{equation}

By running the breadth-first search algorithm, in time $O(N^{2})=O(n^{4})$ we can decide whether $G$ is bipartite.
First suppose that $G$ is bipartite and let $\chi:\binom V2\to\mathbb F_2$ be a proper $2$-colouring of $G$.
%, which can be constructed by Breath-first Search in time $O(N^{2})=O(n^{4})$. 
For every
$\{a,b,x,y\}\in E(H)$, the three partitions of this edge into two pairs give
\begin{equation}
\label{eq:52}
 \chi(ab)+\chi(xy)=\chi(ax)+\chi(by)
 =\chi(ay)+\chi(bx)=1.        
\end{equation}
Fix distinct $a,b\in V$ and, for $i,j\in\mathbb F_2$, let
\[
 V_{ij}=\{x\in V\setminus\{a,b\}:(\chi(ax),\chi(bx))=(i,j)\}.
\]
If $x\in V_{ij}$ and $y\in N_H(abx)$, then \eqref{eq:52} yields
$(\chi(ay),\chi(by))=(1-j,1-i)$. Thus
\begin{equation}
\label{eq:53}
 V_{ij}\neq\varnothing\quad\Longrightarrow\quad
 |V_{1-j,1-i}|\geq\delta_3(H)>n/3.  
\end{equation}
The map $(i,j)\mapsto(1-j,1-i)$ interchanges $00$ and $11$ and fixes
$01$ and $10$. Consequently, if $V_{00}\cup V_{11}\neq\varnothing$, then
both $V_{00}$ and $V_{11}$ have size greater than $n/3$, and \eqref{eq:53} rules
out $V_{01}\cup V_{10}\neq\varnothing$. Otherwise only $V_{01}$ and
$V_{10}$ occur. Hence there is $s_{ab}\in\mathbb F_2$ such that
\begin{equation}
\label{eq:54}
 \chi(ax)+\chi(bx)=s_{ab}
 \quad\text{for every }x\in V\setminus\{a,b\}.  
\end{equation}

Choose a root $r\in V$ and put $z_u=\chi(ru)$ for $u\neq r$. For distinct
$u,v\neq r$, \eqref{eq:54} gives
\[
 \chi(uv)=s_{ur}+z_v=s_{vr}+z_u.
\]
It follows that $s_{ur}+z_u$ is independent of $u\neq r$; denote this
constant by $\sigma$, and set $z_r=\sigma$. Then
\begin{equation}
\label{eq:55}
 \chi(uv)=z_u+z_v+\sigma\qquad\text{for all }uv\in\binom V2. 
\end{equation}
By \eqref{eq:52} and \eqref{eq:55}, every edge $\{a,b,x,y\}$ of $H$ satisfies
$z_a+z_b+z_x+z_y=1$. Therefore, with
$X=\{v\in V:z_v=1\}$, every edge of $H$ meets $X$ oddly.
Moreover, $X$ can be constructed in time $O(n)$.

Choose an edge $e\in E(H)$, a vertex $x\in e\cap X$, and a vertex
$y\in e\setminus X$. Since every edge meets $X$ oddly,
\[
 N_H(e\setminus\{x\})\subseteq X
 \quad\text{and}\quad
 N_H(e\setminus\{y\})\subseteq V\setminus X.
\]
It follows that
\[
 |X|,\ |V\setminus X|\geq\delta_3(H)\geq n/3+\gamma n.
\]
As $\e\ll\gamma$, the bipartition $\{X,V\setminus X\}$ is
$(\e,\gamma)$-odd-extremal and we output it in time $O(n^4)$. 
%Thus $G$ is non-bipartite.

It remains to consider the case that $G$ is non-bipartite.
Let $C=P_1\cdots P_qP_1$ be a shortest odd cycle in $G$. Then $C$ is
induced, and every vertex outside $C$ has at most two neighbours on $C$;
otherwise the arcs between three consecutive neighbours yield a shorter
odd cycle. Consequently,
\[
 q\delta(G)\leq\sum_{i=1}^q d_G(P_i)
 \leq 2q+2(N-q)=2N.
\]
Together with \eqref{eq:51}, this gives $q<6$, so $q\in\{3,5\}$.

If $q=3$, then $P_1,P_2,P_3$ are pairwise disjoint, and they directly give
a $2$-cycle in $H$ of length $3$. Suppose that $q=5$. Since $C$ is a shortest odd
cycle, $G$ is triangle-free. For every $i\in\mathbb Z/5\mathbb Z$, the
triangle-free property and \eqref{eq:51} imply
\[
\begin{aligned}
 |N_G(P_{i-1})\cap N_G(P_{i+1})|
 &\geq \deg_G(P_{i-1}) + \deg_G(P_i) + \deg_G(P_{i+1})-N\\
 &\geq 3\delta(G)-N\geq \frac{3\gamma}{2}N.
\end{aligned}
\]
Only $O(n)$ pair-vertices of $G$ meet the vertices contained in any fixed
four pairs. So we may replace $P_1,\ldots,P_5$ successively, each
time choosing the new $P_i$ in
$N_G(P_{i-1})\cap N_G(P_{i+1})$ and disjoint from the other four current
pairs. 
These replacements preserve the $5$-cycle, and after five
steps its pair-vertices are mutually disjoint. The five corresponding
edges of $H$ form a $2$-cycle of length $5$.
\end{proof}

%\red{JH. Absorption not done: need to replace Lemma 5.18.}

Finally we are ready to prove the absorbing lemma, by combining the tools developed in this subsection together with the connecting lemma.
For convenience we restate Lemma \ref{lem:abs}.

\medskip
\noindent%\begin{lemma}
\textbf{Lemma \ref{lem:abs}.}
\emph{Suppose $1/n\ll \alpha \ll \beta\ll \e\ll \gamma$.
Let $H$ be an $n$-vertex $4$-graph with $\delta_3(H)\geq n/3+\gamma n$. Then either $H$ is $(65\e,\gamma/3)$-extremal, or $H$ contains a $2$-path $P_{A}$ of length at most $\beta^2 n$ associated with a set of vertices $S^{*}$ of size at most $7\alpha n$ such that for any set $U$ of vertices satisfying that $|U|\le 2\alpha n$, $U\cap (V(P_{A})\cup S^{*})=\emptyset$, and $|U\cup S^*|\in 2\mathbb N$, there is another $2$-path on $V(P_{A})\cup U\cup S^{*}$ which has the same ends as $P_{A}$.
Moreover, in the former case a $(65\e,\gamma/3)$-extremal bipartition can be found in time $O(n^{13})$.}

\begin{proof}[Proof of Lemma \ref{lem:abs}]
%First let $t$ be the constant determined by Lemma \ref{lem:closed}.
%We apply Proposition \ref{prop:solution}, 
Choose an additional constants such that $1/n\ll \alpha \ll \beta\ll \b_{conn}\ll\mu\ll \e\ll \gamma$.
We apply Lemma \ref{lem:short-odd-cycle} and Lemma \ref{lem:conn} to $H$, which takes time $O(n^{12})$, and if either of them finds a $(65\e,\gamma/3)$-extremal bipartition, then we output the extremal bipartition and halt.
Otherwise, by Lemma \ref{lem:P} applied to $H$ with $S=V(H)$, $\delta'=1/3+\gamma/2$, and $c=2$, in time $O(n^{13})$ we obtain that either $V(H)$ is $(\beta, 2)$-closed, or there exists a bipartition $\cP=\{V_{1}, V_{2}\}$ such that for $i=1,2$, $|V_i|\ge (1/3 + \gamma /3) n$ and $V_{i}$ is $(\beta, 2)$-closed.
In the latter case, we check whether the bipartition $\cP=\{V_{1}, V_{2}\}$ is $(4\mu,\gamma/3)$-extremal by screening the edges in time $O(n^{4})$ and if yes, we output the bipartition and halt.
%So we may assume that in this case $\cP=\{V_{1}, V_{2}\}$ is not $4\mu$-extremal, which allows us to apply Proposition~\ref{prop:solution}.

So far the applications of the auxiliary results take time $O(n^{13})$, and we may assume that none of them returns an extremal bipartition, and thus we can use the structural property of Lemma \ref{lem:short-odd-cycle} and Lemma \ref{lem:conn}, that is, $H$ contains a $2$-cycle $C$ of length 3 or 5, and all pairs of $H$ are $(\beta_{conn},4)$-connectable.
Furthermore, if $\cP=\{V_{1}, V_{2}\}$, then this bipartition is not $(4\mu,\gamma/3)$-extremal, which allows us to apply Proposition~\ref{prop:solution}.
Below we show the existence of the absorbing path.

By Proposition~\ref{prop:abs} with $r=1$ or $2$, every $S$ with index vector in $I_{\cP}^{\mu}(H)$ has at least ${\beta^6}n^b$ distinct $S$-absorbers in $H$, where  $b=24\cdot 2+4=52$.
%Indeed, in the former case, this is true as $I_{\cP}^{\mu}(H)=\{(4)\}$; and in the latter case, this is true as by Proposition \ref{prop:solution}.
Moreover, by Lemma \ref{lem:short-odd-cycle}, $H$ contains a $2$-cycle $C$ of length 3 or 5, and by Lemma \ref{lem:conn}, every two pairs of vertices of $H$ are $(\beta, 4)$-connectable (we do not need the set $R$ in this proof).
Our goal is to build a desired absorbing path such that it contains many $S$-absorbers for every $S$ with $\bfi_{\cP}(S)\in I_{\cP}^{\mu}(H)$, 
and show that we can always partition the vertices to be absorbed into $4$-sets with index vector in $I_{\cP}^{\mu}(H)$ by Proposition~\ref{prop:solution}.
The short odd cycle $C$ will be used to guarantee that the number of vertices to be absorbed is a multiple of 4.

Write $\beta_{0}:=\beta^7$ and $t=2$.
We start with choosing a random family $\F$ of $b$-sets of vertices by including each $b$-set of vertices in $V(H)\setminus V(C)$ independently with probability $p:=\beta_{0} n^{-b+1}$.
By applying Chernoff's inequality (for (1) and (2) below) and Markov's inequality (for (3)) and the union bound, there exists a family $\F$ satisfying the following properties:
\begin{enumerate}
\item $|\F|\le 2p\binom nb\le \beta_{0} n/b$;
\item for every $S$ with $\bfi_{\cP}(S)\in I_{\cP}^{\mu}(H)$, $\F$ contains at least $(p/2)\beta^{6} n^{b} =\beta^{13}n/2 $ $S$-absorbers;
\item there are at most $2p^{2}b\binom nb \binom n{b-1}\le \beta^{14}n$ pairs of overlapping members of $\F$.
\end{enumerate}
By deleting one set from each overlapping pair of members of $\F$ and the members that are not $S$-absorbers for any $S$, we obtain a family $\F'$ of $b$-sets such that i) $|\F'|\le \beta_{0} n/b$, ii) each $b$-set spans a family of $4t+1$ vertex-disjoint $2$-paths that is an $S$-absorber for some $S\in \binom V4$, and iii) for every $S$ with $\bfi_{\cP}(S)\in I_{\cP}^{\mu}(H)$, $\F'$ contains at least $\beta^{13}n/2 - \beta^{14}n\ge 12(t+1)\alpha n$ $S$-absorbers, as $\alpha \ll \b$.
%Then $\mathcal F'$ is a family of disjoint $b$-sets of vertices such that ...

Let $C=S_{1}\cdots S_{q}S_{1}$ where $q=3$ or 5.
If $q=3$, then let $P_{C}=S_{1}S_{3}$ and $P_{C}'=S_{1}S_{2}S_{3}$; if $q=5$, then let $P_{C}=S_{1}S_{5}S_{4}S_{3}$ and $P_{C}'=S_{1}S_{2}S_{3}$. 
Let $T$ be a set of vertices such that $|T\cap V_{i}| \in [3\alpha n-4, 3\alpha n]$ for each $i\in [|\cP|]$, $|T|\in 4\mathbb N$, and $T$ is disjoint from all existing vertices.
%$T\cap (V(P_{A})\cup C)=\emptyset$.
Let $S^{*}=S_{2}\cup T$ and thus $|S^*| = 2\pmod 4$.
%, where i) if $\cP=\{V\}$, then $T=\emptyset$; if $\cP=\{V_{1}, V_{2}\}$, then $T$ consists of $3\alpha n$ vertices from $V_{1}$ and $3\alpha n$ vertices from $V_{2}$ disjoint from the existing vertices and satisfies that $|T|\in 4\mathbb N$.
Now since every two pairs of vertices of $H$ are $(\beta_{conn}, 4)$-connectable, we can greedily connect the $2$-paths in each member of $\mathcal F'$ and the $2$-path $P_{C}$.
This is possible as $\beta \ll \b_{conn}$: there are at most $(4t+1)\beta_{0} n +1$ such $2$-paths to connect and thus at most $8((4t+1)\beta_{0} n +1)+\beta_{0}n+10+6\alpha n\le \beta^{2} n$ vertices to avoid, while for every pair of $2$-sets in $H$ there are at least $\beta_{conn} n^{2i}$ connectors of size $2i$ for some $i\le 4$.

Denote the resulting path by $P_{A}$ and we claim it together with $S^{*}$ has the desired absorbing property.
Indeed, take any set $U$ of vertices such that $|U|\le 2\alpha n$, $U\cap (V(P_{A})\cup S^{*})=\emptyset$, and $|U|\in 2\mathbb N$.
If $|U|$ is not a multiple of $4$, then let $U'=U\cup S^{*}$ and $P'=P_{A}$; otherwise let $P'$ be the path obtained from $P_{A}$ by replacing the subpath $P_{C}$ by $P_{C}'$, and let $U'=U\cup T$ if $q=3$ and $U'=U\cup T\cup S_{5}\cup S_{4}$ if $q=5$.
Therefore $P'$ has the same ends as $P_{A}$, $|U'|\in 4\mathbb N$ and $3\alpha n \le |U'|\le 8\alpha n+4$.
By the definition of $T$, if $\cP=\{V_{1}, V_{2}\}$, then $U'$ satisfies that $\frac12 \le \frac{m_{1}}{m_{2}}\le 2$ and $m_{1}, m_{2}\ge \log n$ for $\bfi_{\cP}(U') = (m_{1}, m_{2})$.
Thus, we can partition $U'$ into at most $2\alpha n+1$ sets of size $4$ each of whose index vector is in $I_{\cP}^{\mu}(H)$, and thus each of them has at least $12(t+1)\alpha n$ absorbers in $P_{A}$, therefore also in $P'$.
Indeed, for $\cP=\{V(H)\}$ this is trivial because all $4$-sets have index vector $(4)\in I_{\cP}^{\mu}(H)$.
For $\cP=\{V_{1}, V_{2}\}$ Proposition~\ref{prop:solution} (one may need to interchange $V_{1}$ and $V_{2}$) gives an integer solution of the coefficients for vectors in $I_{\cP}^{\mu}(H)$, which can be realized by partitioning the vertices of $U'$ into $4$-sets according to the solution, because the total number of vertices requested from each part agrees with $|U'\cap V_{i}|$.
As each $S$-absorber consists of $4t+1$ disjoint subpaths of the absorbing path, we can proceed with the absorption greedily and obtain a $2$-path on $V(P_{A})\cup U\cup S^{*}$ which has the same ends as $P_{A}$.

Finally, since each $2$-path in the members of $\F$ has length $2$ and each connection uses paths of length at most 5, the length of $P_{A}$ is at most $5\cdot 2(4t+1)\beta_{0}n+3\le \beta^2 n$. The proof is completed.
\end{proof}

As one can see from the above proof, one can prove such an absorbing lemma for $k$-uniform Hamilton $k/2$-cycles for all even integer $k\ge 4$ provided a general version of Lemma~\ref{lem:short-odd-cycle} of short odd cycles for higher uniformity.

%\red{JH. Absorbing DONE.}

% \begin{lemma}
% \cite{MR4912875}
   %  Let $\alpha > 0$, and integers $c, k \geq 2$ be given and suppose $1/n \ll \delta' \ll \alpha, 1/k, 1/c$.  
%Assume that $H$ is a $k$-graph on $n$ vertices satisfying that every set of $c+1$ vertices contains two vertices that are $(2\alpha, 1)$-reachable in $H$. Then in time $O(cn^{k+1})$ we can find a set of vertices $S \subseteq V(H)$ with $|S| \geq (1-c\delta')n$ such that $|\tilde{N}_{\alpha,1}(v, H[S])| \geq \delta' n$ for any $v \in S$.
 %\end{lemma}
 \section{The even-extremal case}
 \label{sec:even}
We focus on the extremal case in this section. 
It reduces the existence of a Hamilton $2$-cycle in an even-extremal $4$-graph to a finite set of parity-type alternatives encoded by the notion of even-goodness.
We begin by fixing the hierarchy of constants and collecting the definitions
which will be used throughout the even-extremal argument. 

\subsection{Setup and basic definitions}

\begin{setup} \label{setup:evenextr}
Fix constants satisfying 
$1/n  \ll c' \ll c\ll \theta \ll \eta \ll \beta \ll \beta_2
\ll \beta'_2 \ll \beta_1 \ll \beta'_1 \ll \rho \ll \tau \ll \mu
\ll \xi \ll \gamma \ll 1$.
Let $H$ be a $4$-graph of order $n$, and let $V = V(H)$.
\end{setup}

Now suppose that $H$ is a $4$-graph. Given a bipartition $\{A,B\}$ of
$V$, let $\mathcal C:=\binom A2\cup\binom B2$. 
For a pair $p$, define 
$N_{\mathrm{sp}}(p):=N_H(p)\cap AB$ and $N_{\mathrm{con}}(p):=N_H(p)\cap \C$ be the number of split pairs and connate pairs among neighbours of $p$, respectively.
%$N_{\mathrm{sp}}(p):=\{s\in A\times B:s\cap p=\emptyset,\ s\cup p\in E(H)\}$ and $N_{\mathrm{con}}(p):=\{c\in\mathcal C:c\cap p=\emptyset,\ c\cup p\in E(H)\}$.
We concatenate 2-paths in the natural way, for
example, if $P$ is a 2-path with ends $p$ and $p_0$, and $Q$ is a 2-path with ends $p_0$ and $q$, and $P$ and $Q$ have no vertices in common outside $p_0$, then we write $pPp_0Qq$ for the $2$-path obtained by concatenating $P$ and $Q$ at their common end
$p_0$. 
Here the common pair $p_0$ is identified, not repeated.
%\begin{theorem}[Even-extremal case]
%\label{thm:evenextremal}
 %   Suppose that $1/n\ll\gamma,c'\ll1$ and let $H$ be an $n$-vertex 4-graph with $\delta_3(H)\geq n/3+\gamma n$ and $\{A',B'\}$ be a $(c',\g/3)$-even-extremal bipartition of $V$.
%    Then we construct a bipartition $\{A, B\}$ of $V$ in time $O(n^4)$ such that $H$ contains a Hamilton 2-cycle if and only if $\{A, B\}$ is even-good, which can be checked in time $O(n^8)$.
%\end{theorem}
We also need the following definitions.
\begin{definition} \label{def:good}
Under Setup~\ref{setup:evenextr}, for a fixed bipartition $\{A, B\}$ of $V$, we say that
\begin{enumerate}[label=$(\roman*)$]
\item for each $t\in[3]$, a $t$-subset of $V$ is \emph{$\theta$-good} if it is contained in at most $\theta^{2t-1}\binom{n}{4-t}$ odd edges,
\item a pair $p \in \binom{V}{2}$ is \emph{$\beta_2$-medium} if it is contained in at least $\beta_2\binom{n}{2}$ even edges; otherwise, it is \emph{$\beta_2$-bad},
%\item a vertex $v\in V$ is \emph{$(\beta_1,\beta_2)$-medium} if it is contained in at least $\beta_1 n$-many $\beta_2$-medium pairs; otherwise, it is \emph{$(\beta_1,\beta_2)$-bad},
\item A vertex $v\in V$ is \emph{$(\beta_1,\beta_2)$-medium} if $v$ is contained in at least $\beta_1 n$ $\beta_2$-medium connate pairs and at least $\beta_1 n$ $\beta_2$-medium split pairs; otherwise it is \emph{$(\beta_1,\beta_2)$-bad}.
\end{enumerate}
\end{definition}

%Here condition (\ref{E0}) does not need odd-edge transition bridge, while conditions (\ref{E1})--(\ref{E4}) correspond to odd-edge transition bridges.

A graph $G$ is \emph{Hamilton-connected}
if every two vertices of $G$ are connected by a Hamilton path.

\begin{lemma}\cite{Ore1963HamiltonConnected}\label{lem:Hamiltoncon}
    A graph $G$ of order at least 3 is Hamilton-connected if $d(u)\ge (|V(G)|+1)/2$ for each vertex $u$ of $G$.
\end{lemma}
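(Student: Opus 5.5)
The plan is to run the Bondy--Chv\'atal closure argument for Hamilton-connectedness. Write $n=|V(G)|$, so $d(u)\ge (n+1)/2$ for every $u$, and hence $d(x)+d(y)\ge n+1$ for every pair of distinct vertices $x,y$. The argument has three steps.

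\begin{enumerate}[label=(\arabic*)]
\item \textbf{Base case.} The complete graph $K_n$ with $n\ge 3$ is Hamilton-connected. Given $u\ne v$, list the other $n-2$ vertices in any order between $u$ and $v$.
\item \textbf{Closure step (the key local lemma).} Let $x,y$ be nonadjacent in $G$ with $d_G(x)+d_G(y)\ge n+1$. If $G+xy$ is Hamilton-connected, then so is $G$.
\item \textbf{Iteration.} Adding edges only increases degrees, so the hypothesis $d(x)+d(y)\ge n+1$ persists for all nonadjacent pairs in every intermediate graph. Hence $G$ can be completed to $K_n$ one edge at a time, each step preserving the hypothesis. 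Applying the closure step backwards from $K_n$ gives that $G$ is Hamilton-connected.
\end{enumerate}

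The only real content is the closure step, which I would prove as follows. Fix distinct $u,v$ and a Hamilton $u$--$v$ path $P=p_1p_2\cdots p_n$ in $G+xy$, with $p_1=u$ and $p_n=v$. If $P$ avoids the edge $xy$ we are done. Otherwise, after possibly swapping the names $x$ and $y$, we have $x=p_i$ and $y=p_{i+1}$ for some $i$. We look for an index $j\in[n-1]\setminus\{i\}$ such that $xp_j\in E(G)$ and $yp_{j+1}\in E(G)$, and reroute accordingly.

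If $j<i$, the new path is
\[p_1\cdots p_j\,x\,p_{i-1}\cdots p_{j+1}\,y\,p_{i+2}\cdots p_n,\]
that is, we traverse $p_i,\dots,p_{j+1}$ backwards between the edges $p_jx$ and $p_{j+1}y$. If $j>i+1$, the new path is
\[p_1\cdots p_i\,p_j\,p_{j-1}\cdots p_{i+1}\,p_{j+1}\cdots p_n,\]
that is, we reverse the segment $p_{i+1}\cdots p_j$ using the edges $xp_j$ and $yp_{j+1}$. The value $j=i+1$ never occurs, because it would require $xp_{i+1}=xy\in E(G)$. In both cases the new path is a Hamilton $u$--$v$ path in $G$ that avoids $xy$ and keeps the ends fixed.

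The existence of such a $j$ is a pigeonhole count. Let
\[S=\{j\in[n-1]: xp_j\in E(G)\},\qquad T=\{j\in[n-1]: yp_{j+1}\in E(G)\}.\]
Neither set contains $i$: the index $i$ is excluded from $S$ since $p_i=x$, and from $T$ since $p_{i+1}=y$. So $S,T\subseteq [n-1]\setminus\{i\}$, a set of size $n-2$. Moreover $|S|\ge d_G(x)-1$, since at most the neighbour $p_n$ of $x$ is lost, and $|T|\ge d_G(y)-1$, since at most the neighbour $p_1$ of $y$ is lost. Hence $|S|+|T|\ge n-1>n-2$, so $S\cap T\neq\emptyset$, giving the required $j$.

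I expect no serious obstacle. The only points needing care are:
\begin{itemize}
\item the index bookkeeping (that $j\ne i$ and $j\ne i+1$ are automatically excluded, and the orientation of $xy$ along $P$ handled by symmetry);
\item the fact that the lemma needs $u,v$ arbitrary, including $u$ or $v$ equal to $x$ or $y$. This is harmless because the endpoints $p_1,p_n$ are never moved by the rerouting.
\end{itemize}
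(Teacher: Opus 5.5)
Your proof is correct and complete. The paper does not prove this lemma; it cites it from Ore (1963) and uses it as a black box in the Hamilton path lemmas of the even-extremal case. So there is no in-paper argument to compare against.

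Your route is the standard Bondy--Chv\'atal closure argument, showing that Hamilton-connectedness is $(n+1)$-stable. This is essentially the edge-maximal reasoning behind Ore's original result, so nothing is lost relative to the citation. I checked the closure step:
\begin{itemize}
\item Both reroutings use only edges of $G$ and omit exactly $p_jp_{j+1}$ and $xy$.
\item The endpoints $p_1,p_n$ stay fixed, including the boundary cases $i=1$ and $i=n-1$.
\item The count $|S|+|T|\ge d_G(x)+d_G(y)-2\ge n-1>n-2=|[n-1]\setminus\{i\}|$ is right.
\end{itemize}

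Your argument never uses the minimum-degree hypothesis beyond $d(x)+d(y)\ge n+1$ for nonadjacent $x,y$. That condition survives adding edges, since a pair nonadjacent in an intermediate graph is nonadjacent in $G$ and its degrees have only grown. So you have in fact proved Ore's stronger degree-sum theorem.

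One small bookkeeping point belongs with your remarks on $j\ne i$ and $j\ne i+1$. In the case $j<i$, the displayed rerouting degenerates when $j=i-1$: it would become $p_1\cdots p_{i-1}\,x\,y\,p_{i+2}\cdots p_n$ and reuse $xy$. This value is automatically excluded, because $i-1\in T$ would require $yp_i=yx\in E(G)$. It costs nothing, but it should be stated.
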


\subsection{Consequences of even-extremality}
We next record the basic structural consequences of even-extremality.  Since there are few odd edges, there are few bad pairs and few bad vertices.
These estimates will be used repeatedly later.

\begin{proposition}\label{prop:types}
Assume Setup~\ref{setup:evenextr}, and suppose that $\delta_3(H)\geq n/3+\g n$ and that $\{A, B\}$ is a $(c,\g/4)$-even-extremal bipartition of $V$. Then
\begin{enumerate}[label=$(\alph*)$]
\item At most $\eta n^t$ many $t$-sets are non $\theta$-good for $t\in[3]$,
%\item there are at most $\eta n$ vertices which are not $\theta$-good,
%\red{Can merge (a) and (b)}
%\item there are at most $\frac{c}{(1-\beta_1)(\frac{1}{3}+\g/2-\beta_2)}n$ vertices which are $(\beta_1,\beta_2)$-bad and
\item At most $36cn^2$ pairs are $\beta_2$-bad, and
\item At most $288cn$ vertices are $(\beta_1,\beta_2)$-bad.
\end{enumerate}
\end{proposition}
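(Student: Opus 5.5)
All three parts follow from double counting against the single hypothesis that $H$ has at most $cn^4$ odd edges. For (b) and (c), the codegree condition supplies the lower bounds that play against it.

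For (a), fix $t\in[3]$. Every odd edge contains exactly $\binom4t$ $t$-subsets, so the sum over all $t$-sets $S$ of the number of odd edges containing $S$ is $\binom4t\cdot\#\{\text{odd edges}\}\le \binom4t cn^4$. A non $\theta$-good $t$-set contributes more than $\theta^{2t-1}\binom{n}{4-t}\ge \theta^{2t-1}n^{4-t}/(4-t)!\cdot(1-o(1))$. Hence the number of such sets is at most roughly $24cn^{t}/\theta^{2t-1}$. This is at most $\eta n^t$ because $c\ll\theta$ (for instance $c\le \eta\theta^{5}/100$ suffices).

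For (b), let $p$ be a pair. Fact~\ref{degreeinherit} gives $\deg(p)\ge(1/3+\g)\binom{n-2}{2}$. If $p$ is $\beta_2$-bad, it lies in fewer than $\beta_2\binom n2$ even edges. So $p$ lies in at least $(1/3+\g-\beta_2)\binom{n-2}2-O(n)\ge n^2/8$ odd edges, using $\beta_2\ll\g$. Each odd edge contains $6$ pairs, so summing gives (number of bad pairs)$\cdot n^2/8\le 6cn^4$. This yields at most $48cn^2$ bad pairs. A slightly sharper lower bound, $(1/3-\beta_2)n^2/2\ge n^2/6$, gives exactly $36cn^2$. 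I would use that sharper bound, taking care with the binomial approximations.

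For (c), a $(\beta_1,\beta_2)$-bad vertex $v$ lies in fewer than $\beta_1 n$ medium connate pairs, or in fewer than $\beta_1 n$ medium split pairs. By symmetry assume the connate case. Since $|A|,|B|\ge(1/3+\g/4)n$, the vertex $v$ lies in at least $(1/3+\g/4)n-1$ connate pairs. Because $\beta_1\ll\g$, at least $n/3$ of these are $\beta_2$-bad, and the split case is the same. So every bad vertex lies in at least $n/3$ bad pairs. Each pair has two vertices, so double counting with (b) gives (number of bad vertices)$\cdot (n/3)\le 2\cdot 36cn^2$, i.e.\ at most $216cn\le 288cn$ bad vertices. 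If the balance bound on $|A|,|B|$ is used more loosely, say $n/4$ in place of $n/3$, the count is exactly $288cn$, which matches the stated constant.

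I expect no real obstacle here. The only care needed is in (b): the codegree lower bound must be turned into a clean $\Omega(n^2)$ count of odd edges through a bad pair, and the constants must be tracked so they come out as $36$ and $288$.
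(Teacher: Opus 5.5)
Your proposal is correct and is essentially the paper's proof. Part (a) uses the same double counting against the at most $cn^4$ odd edges, (b) uses the codegree lower bound on odd completions of a $\beta_2$-bad pair, and (c) counts bad pairs at a bad vertex; the paper uses $n/4$ in (c), giving $288$ rather than your $216$. The only slip is that $(1/3-\beta_2)n^2/2\ge n^2/6$ is false as written: keep the $\gamma$ term, i.e.\ use $(1/3+\gamma)\binom{n-2}{2}-\beta_2\binom n2\ge n^2/6$ for large $n$ since $\beta_2\ll\gamma$, which is exactly the paper's bound.
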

\begin{proof}
For every $p\in {V\choose t}$, by the minimum codegree condition and Fact~\ref{degreeinherit}, we have
$\deg_H(p)\ge (1/3+\g){n-t\choose 4-t}$.
% for all sufficiently large $n$. Indeed, summing the codegrees of all triples
% containing $p$, each edge containing $p$ is counted twice. Similarly, for every
% vertex $v\in V$, we have
% $d_H(v)\ge (1/3+\g/2){n\choose 3}$, since summing over all triples
% containing $v$ counts each edge containing $v$ three times.

We first prove $(a)$. Suppose, to the contrary, that there are more than
$\eta n^t$ $t$-sets which are not
$\theta$-good. 
%If a $t$-set $p$ is not $\theta$-good, then $p$ is contained in at least $d_H(p)-\theta^3\binom n{4-t}\ge\frac13{n\choose {4-t}}$ odd edges.
Then the number of odd edges of $H$ is more than
${\binom{4}{t}}^{-1}\cdot \eta n^t\cdot
\theta^{2t-1}{n\choose 4-t}\ge c{n^ 4}$ since $c\ll\theta\ll\eta$,
where the factor ${\binom{4}{t}}^{-1}$ appears because each odd edge contains $\binom 4t$ many $t$-sets. 
This contradicts the assumption that $\{A,B\}$ is $(c,\g/4)$-even-extremal. 
%Suppose that there are more than $\frac{c}{\g/2-\theta}n$ vertices which are not $\theta$-good. If $v$ is not $\theta$-good, then $v$ is contained in fewer than $(1/3+\theta){n\choose 3}$ even edges. Since $d_H(v)\ge (1/3+\g/2){n\choose 3}$, the vertex $v$ is contained in at least $(\g/2-\theta){n\choose 3}$ odd edges. Thus the number of odd edges is more than $\frac14\cdot \frac{c}{\g/2-\theta}n\cdot (\g/2-\theta){n\choose 3}\ge c{n\choose 4}$, where the factor $1/4$ appears because each odd edge contains four vertices. This again contradicts $(c,\g/4)$-even-extremality. Hence (b) holds.
%For (c), let $\lambda:=1/3+\g/2-\beta_2$, which is positive since $\beta_2\ll \g$. Suppose that there are more than $\frac{c}{(1-\beta_1)\lambda}n$ vertices which are $(\beta_1,\beta_2)$-bad. If $v$ is $(\beta_1,\beta_2)$-bad, then fewer than $\beta_1 n$ pairs containing $v$ are $\beta_2$-medium. Hence at least $(1-\beta_1)n$ pairs containing $v$ are not $\beta_2$-medium. For every such pair $p$, we have $d_{H_{\mathrm{even}}}(p)<\beta_2{n\choose 2}$. Since $d_H(p)\ge (1/3+\g/2){n\choose 2}$, the pair $p$ is contained in at least $\lambda{n\choose 2}$ odd edges. Therefore the number of odd edges is more than $\frac1{12}\cdot \frac{c}{(1-\beta_1)\lambda}n\cdot (1-\beta_1)n\cdot \lambda {n\choose 2}\ge c{n\choose 4}$. Here the factor $1/12$ is used because an odd edge is counted at most four times by the choice of the bad vertex and, for each such vertex, at most three times by the choice of the pair containing it. This contradicts $(c,\g/4)$-even-extremality, and proves (c).

Now we show $(b)$.
Let $\mathcal B$ be the family of pairs which are $\beta_2$-bad. 
If $p\in\mathcal B$, then
$p$ is contained in fewer than $\beta_2\binom n2$ even edges. 
On the other hand, by 
%the pair-degree lower bound following from the minimum
%codegree condition, $p$ is contained in at least
$\deg_H(p)\ge (1/3+\gamma)\binom {n-2}2$, we infer that $p$ is contained in at least $(1/3+\gamma)\binom {n-2}2 - \beta_2\binom n2\ge n^2/6$ odd edges. 
Since $\{A,B\}$ is
$c$-even-extremal, the number of odd edges is at most $c n^4$.
Counting pairs inside odd edges gives
$|\mathcal B|n^2/6\le 6c n^4$, and thus
$|\mathcal B|\le 36cn^2$.

For $(c)$, if a vertex $v$ is $(\beta_1,\beta_2)$-bad with respect
to $\{A,B\}$, then $v$ is incident with fewer than $\beta_1n$
$\beta_2$-medium connate pairs, or it is incident with fewer than
$\beta_1n$ $\beta_2$-medium split pairs. 
Suppose $v\in A$. 
In the first
case, at least $n/3+\g n/4-1-\beta_1n\ge n/4$ connate pairs containing $v$ are
$\beta_2$-bad, and hence lie in $\mathcal B$. 
In the second case, at least $n/4$ split pairs containing $v$ are
$\beta_2$-bad.
Therefore, $v$ is incident with at least $n/4$
pairs from $\mathcal B$.
Denote the set of $(\beta_1,\beta_2)$-bad vertices by $L$.
Therefore
$|L|n/4\le 2|\mathcal B|\le 72c n^2$, and hence $|L|\le 288cn$.
\end{proof}

The previous proposition shows that bad objects are rare.  
We now use these properties to obtain a local connecting tool.  
Roughly speaking, any two good pairs can be connected by a short path
while avoiding any prescribed small set of vertices.

\begin{proposition}\label{prop:pairconnect}
Assume Setup~\ref{setup:evenextr}, and fix a $(c,\g/4)$-even-extremal bipartition $\{A,B\}$ of $V$. 
Let $R\subseteq V$ with $|R|\le\mu n$.
%and suppose that at most $\rho n^2$ pairs of $V$ are not $\theta$-good. 
%satisfy $|A\cap R|\le \mu n$ and $|B\cap R|\le \mu n$.
Then the following holds.
\begin{enumerate}
    \item For any two disjoint $\theta$-good split pairs $s_1$ and $s_2$,
    there exists a  split pair $s_3\in {V\setminus R\choose 2}$ such that
    $s_1\cup s_3\in E(H)$ and $s_3\cup s_2\in E(H)$.

    \item For any two disjoint $\theta$-good connate pairs $p_1$ and $p_2$,
    there exists a  connate pair $p_3\in {V\setminus R\choose 2}$ such that
    $p_1\cup p_3\in E(H)$ and $p_3\cup p_2\in E(H)$.

    % \item For any $\theta$-good split pair $s_1$, there exists a
    % $\theta$-good split pair $s_2\in {V\setminus R\choose 2}$ such that
    % $s_1\cup s_2\in E(H)$. \red{Might be redundant.}

    % \item For any $\theta$-good connate pair $p_1$, there exists a
    % $\theta$-good connate pair $p_2\in {V\setminus R\choose 2}$ such that
    % $p_1\cup p_2\in E(H)$.
\end{enumerate}
\end{proposition}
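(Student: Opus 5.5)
Both parts follow from one counting argument. We compare the number of split (resp. connate) pairs in $N_H(s_1)$ and in $N_H(s_2)$, and show the two neighbourhoods must overlap substantially inside $\binom{V\setminus R}{2}$. The key input is that a $\theta$-good pair lies in at most $\theta^3\binom n2$ odd edges, while $\deg_H(p)\ge(1/3+\gamma)\binom{n-2}{2}$ by Fact~\ref{degreeinherit}.

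For (1), let $s$ be a $\theta$-good split pair. An edge $s\cup q$ is even exactly when $q$ is a split pair, because $|s\cap A|=1$. Hence $|N_{\mathrm{sp}}(s)|\ge(1/3+\gamma)\binom{n-2}2-\theta^3\binom n2$. The number of split pairs is only $|A||B|$, and each of $|A|,|B|$ lies between $n/3+\gamma n/4$ and $2n/3-\gamma n/4$. So I will compare relative densities: set $\sigma:=|N_{\mathrm{sp}}(s)|/(|A||B|)$. Since $|A||B|\le n^2/4$, we get $\sigma\ge (2/3+2\gamma)(1-o(1))-4\theta^3>1/2+\gamma$. Applying this to both $s_1$ and $s_2$ gives
$$|N_{\mathrm{sp}}(s_1)\cap N_{\mathrm{sp}}(s_2)|\ge 2\gamma|A||B|\ge \gamma n^2/2,$$
using $|A||B|\ge (n/3)(2n/3)-o(n^2)\ge 2n^2/9$. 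Pairs meeting $R\cup s_1\cup s_2$ number at most $(\mu n+4)n\le 2\mu n^2<\gamma n^2/2$, because $\mu\ll\gamma$. So a common neighbour $s_3$ avoiding $R$, $s_1$ and $s_2$ exists. It is split by construction, and $s_1\cup s_3$ and $s_3\cup s_2$ are edges.

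For (2), a $\theta$-good connate pair $p$ has $N_{\mathrm{con}}(p)$ of size at least $(1/3+\gamma)\binom{n-2}2-\theta^3\binom n2$. The connate pairs number $\binom{|A|}2+\binom{|B|}2$, which is at most $\binom{2n/3}{2}+\binom{n/3}{2}\approx(5/9)\binom n2$. The ratio is then only about $(1/3)/(5/9)=3/5$, and $3/5+3/5>1$, so the intersection is again of order $\gamma n^2$ or more. Here I must be careful: the case $|A|\approx 2n/3$ is the worst case, and I would verify the bound $2(1/3+\gamma)>\binom{|A|}2/\binom n2+\binom{|B|}2/\binom n2$ holds with room $\Omega(\gamma)$. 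Equivalently, $|A|^2+|B|^2\le(2/3-\Omega(\gamma))n^2$ whenever $|A|,|B|\ge n/3+\gamma n/4$, which is true since $x^2+(1-x)^2$ is maximized at the endpoints and equals $5/9$ at $x=1/3$. Discarding pairs meeting $R\cup p_1\cup p_2$ as before gives $p_3$.

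The main obstacle is the arithmetic in case (2). Connate pairs form a larger family than $1/2$ of all pairs, so the naive "more than half" argument fails. I need the sharper comparison of $(1/3+\gamma)$ against the connate density, which is at most $5/9-\Omega(\gamma)$, and this works precisely because of the $\gamma n/4$ slack in the part sizes. For (1) the split-pair density $|A||B|/\binom n2\le 1/2$ makes the bound comfortable.
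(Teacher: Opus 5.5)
Your proposal is correct and follows the paper's proof: you bound the intersection of the even neighbourhoods of $s_1,s_2$ (resp.\ $p_1,p_2$) inside the split (resp.\ connate) pairs by inclusion--exclusion, using $|A||B|\le n^2/4$ and $\binom{|A|}{2}+\binom{|B|}{2}\le 5n^2/18$, and then discard the $O(\mu n^2)$ pairs meeting $R$. There are two harmless slips: $2\gamma|A||B|$ is only at least $4\gamma n^2/9$ rather than $\gamma n^2/2$, and in (2) the surplus is about $n^2/18$ independently of $\gamma$, so the $\gamma n/4$ slack in the part sizes is not what makes the argument work; $|A|,|B|\ge n/3$ already suffices.
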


\begin{proof}
Write $a:=|A|$ and $b:=|B|$. Since $\{A,B\}$ is $(c,\g/4)$-even-extremal, we have
$a,b\ge n/3+\g n/4$.
Recall that a $\theta$-good pair $s$ is contained in at least
$(1/3+\g-\theta^3)\binom n2\ge(1/3+\g/2)\binom n2$ even edges. 
Thus, if $s$ is split, then $|N_{\mathrm{sp}}(s)|\ge (1/3+\g/2)\binom n2$; if $s$ is connate, then $|N_{\mathrm{con}}(s)|\ge (1/3+\g/2)\binom n2$.

(1) Let $s_1,s_2$ be two disjoint $\theta$-good split pairs.
Then $N_{\mathrm{sp}}(s_1),N_{\mathrm{sp}}(s_2)\subseteq A B$. Hence by inclusion-exclusion and $ab\le n^2/4$, we get
%$|N_{\mathrm{sp}}(s_1)\cap N_{\mathrm{sp}}(s_2)| \ge |N_{\mathrm{sp}}(s_1)|+|N_{\mathrm{sp}}(s_2)|-ab$.
%Since $ab\le n^2/4$, for all sufficiently large $n$ we get
$|N_{\mathrm{sp}}(s_1)\cap N_{\mathrm{sp}}(s_2)|
\ge 2(1/3+\g/2)\binom n2- ab\ge n^2/20$.
Note that the number of split pairs that meet $R$ is at most
$|R|n\le 2\mu n^2$. Since $\mu\ll 1$, there exists a split pair $s_3\in N_{\mathrm{sp}}(s_1)\cap N_{\mathrm{sp}}(s_2)$ with
$s_3\subseteq V\setminus R$. Then $s_1\cup s_3\in E(H)$ and
$s_3\cup s_2\in E(H)$.

(2) 
%If a connate pair $p$ is $\theta$-good, then $p$ is contained in at least
%$(1/3+\theta^3)\binom n2$ even edges. 
%Thus, $|N_{\mathrm{con}}(p)|\ge (1/3+\theta^3)\binom n2$.
Let $p_1,p_2$ be two disjoint $\theta$-good connate pairs.
Then $N_{\mathrm{con}}(p_1),N_{\mathrm{con}}(p_2)\subseteq\binom{A}2\cup\binom{B}2$.
Let $\mathcal C=\binom{A}2\cup\binom{B}2$.
Moreover, since $a+b=n$ and $a,b\ge (1/3+\g/4)n$, we have
$|\mathcal C|=\binom a2+\binom b2\le 5n^2/18$.
Therefore
$|N_{\mathrm{con}}(p_1)\cap N_{\mathrm{con}}(p_2)|
\ge |N_{\mathrm{con}}(p_1)|+|N_{\mathrm{con}}(p_2)|-|\mathcal C|
\ge 2(1/3+\g/2)\binom n2-5n^2/18\ge n^2/30$
for sufficiently large $n$. The number of connate pairs that meet $R$ is
at most $|R|n\le 2\mu n^2$. Since $\mu\ll1$, there
exists a connate pair $p_3\in N_{\mathrm{con}}(p_1)\cap N_{\mathrm{con}}(p_2)$
with $p_3\subseteq V\setminus R$. Then $p_1\cup p_3\in E(H)$ and
$p_3\cup p_2\in E(H)$. 
%
%Note that (3) and (4) are easier versions of (1) and (2), respectively.
% (3) Let $s_1$ be a $\theta$-good split pair disjoint from $R$. As above,
% $|N_{\mathrm{sp}}(s_1)|\ge (1/3+\theta^3)\binom n2\ge n^2/7$ for all
% sufficiently large $n$. Among these split pairs, at most $\rho n^2$ are not
% $\theta$-good, and at most $2\mu n^2$ meet $R$. Since $\rho,\mu\ll1$, there
% exists a $\theta$-good split pair $s_2\in N_{\mathrm{sp}}(s_1)$ with
% $s_2\subseteq V\setminus R$. Then $s_1\cup s_2\in E(H)$.
%
% (4) Similarly, let $p_1$ be a $\theta$-good connate pair disjoint from $R$.
% As above, $|N_{\mathrm{con}}(p_1)|\ge (1/3+\theta^3)\binom n2\ge n^2/7$
% for all sufficiently large $n$. Among these connate pairs, at most
% $\rho n^2$ are not $\theta$-good, and at most $2\mu n^2$ meet $R$. Since
% $\rho,\mu\ll1$, there exists a $\theta$-good connate pair
% $p_2\in N_{\mathrm{con}}(p_1)$ with $p_2\subseteq V\setminus R$. Then
% $p_1\cup p_2\in E(H)$.
\end{proof}

Our final result for constructing a Hamilton 2-path in an auxiliary 4-graph $G$ uses the following type of codegree condition: every vertex is in $o(n^2)$ triples which have codegree less than $(1/2+o(1))|V(G)|$ (there is also a bipartite variant of it). 
To prepare to plug in such a result we make the following definitions.

\begin{definition}\label{def:exc}
Assume Setup~\ref{setup:evenextr}, given a fixed bipartition $\{A, B\}$ of $V$. 
Let $\mathcal T_Y:=\{T\in\binom Y3:\deg_{H[Y]}(T)<(1/2+2\xi)|Y|\}$ where $Y\in\{A,B\}$ and let $\mathcal E_{\mathrm{sp}}:=\{s\in AB: |N_H(s)\cap A B|<(1/2+2\xi)|A||B|\}$.
We say that
\begin{enumerate}[label=(\roman*)]
\item A vertex $v\in Y$ is called $Y$-\emph{exceptional} if $v$ is contained in more than
$\frac{\rho}{4}|Y|^2$ triples of $\mathcal T_Y$,
\item A vertex $v\in V$ is called \emph{split-exceptional} if it is contained in more than
$\frac{\rho}{4}n$ pairs of $\mathcal E_{\mathrm{sp}}$,
\item A pair $p\in\binom Y2$ is called
$Y$-\emph{atypical} if $p$ is contained in more than $\frac{\tau}{4}|Y|$ triples of $\mathcal T_Y$.
\end{enumerate}
Finally, we say a vertex is exceptional if it is $Y$-exceptional for $Y\in \{A, B\}$ or it is split-exceptional, and denote by $X_E$ the set of exceptional vertices.
Let $\mathcal P_{at}\subseteq \binom A2\cup \binom B2$ be the family of pairs that are either $A$-atypical or $B$-atypical.
\end{definition}

%Denote the set of $Y$-exceptional vertices by $X_Y$ where $Y\in\{A,B\}$.
%Let $X_{\mathrm {int}}=X_A\cup X_B$.
%Denote the set of split-exceptional vertices by $X_{\mathrm{sp}}$.

\begin{claim}\label{claim:few-exceptional-vertices}
Assume Setup \ref{setup:evenextr}.
Let $H$ be an $n$-vertex 4-graph with $\delta_3(H)\geq n/3+\gamma n$ and $\{A,B\}$ be a $(c,\g/4)$-even-extremal bipartition of $H$.
Then we have $|\cP_{at}|\le \eta n^2$, $|\mathcal E_{\mathrm{sp}}|\le 60cn^2$ and $|X_E|\le \eta n$.
% The number of $A$-atypical pairs and
% $B$-atypical pairs is at most $\eta n^2$ and $|\mathcal E_{\mathrm{sp}}|\le5cn^2$.
% %$|\mathcal T_A|,|\mathcal T_B|\le3c\g^{-1}n^3$.
% We also have
% $|X_{\mathrm {int}}|\le\eta n$ and $|X_{\mathrm {sp}}|\le \eta n$. 
\end{claim}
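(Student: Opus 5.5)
The plan is to bound everything by double counting odd edges, exactly as in the proof of Proposition~\ref{prop:types}. The key input is that even-extremality forces the edges inside one part, or across the split, to be rare unless they are even. We then transfer the codegree condition $\delta_3(H)\ge (1/3+\gamma)n$ to lower bounds on degrees inside $H[Y]$ and on split neighbourhoods.

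\emph{Bounding $|\mathcal T_Y|$.} Fix $Y\in\{A,B\}$ and $T\in\binom Y3$. Let $Y'$ be the other part. Every edge $T\cup\{x\}$ with $x\in Y'$ has $|e\cap A|$ equal to $3$ or $1$, so it is odd. Hence
\[
\deg_{H[Y]}(T)\ \ge\ \deg_H(T)-\#\{\text{odd edges containing }T\}.
\]
I first want $\deg_{H[Y]}(T)\ge(1/2+2\xi)|Y|$ for $\theta$-good $T$. Since $|Y|\le (2/3-\gamma/4)n$, we have $(1/2+2\xi)|Y|\le (1/3-\gamma/8+2\xi)n<(1/3+\gamma-\theta^5)n$, using $\xi\ll\gamma$. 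So every $\theta$-good triple inside $Y$ lies outside $\mathcal T_Y$. By Proposition~\ref{prop:types}(a) with $t=3$, we get $|\mathcal T_Y|\le \eta' n^3$, where $\eta'$ is the bound from that count, namely $c n^4\binom43/(\theta^5 n)$, i.e. $O(c\theta^{-5})n^3$. This is far smaller than $\eta$; I will use the sharper bound $|\mathcal T_Y|\le 4c\theta^{-5}n^3$.

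\emph{Bounding atypical pairs and exceptional vertices.} Each triple of $\mathcal T_Y$ contains $3$ pairs and $3$ vertices. Double counting therefore gives
\[
|\mathcal P_{at}|\cdot \tfrac{\tau}{4}|Y| \le 3|\mathcal T_Y|,
\]
so $|\mathcal P_{at}|\le 2\cdot 12\cdot 4c\theta^{-5}n^3/(\tau n/3)\le \eta n^2$, since $c\ll\theta,\tau,\eta$. Similarly, the number of $Y$-exceptional vertices is at most $3|\mathcal T_Y|/(\rho|Y|^2/4)=O(c\theta^{-5}\rho^{-1})n\le \eta n/2$.

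\emph{Bounding $\mathcal E_{\mathrm{sp}}$.} For a split pair $s$, the even edges containing $s$ are exactly $s\cup s'$ with $s'\in AB$, so $|N_H(s)\cap AB|=\deg_H(s)-\#\{\text{odd edges}\supseteq s\}$. Also $(1/2+2\xi)|A||B|\le (1/2+2\xi)n^2/4\le (1/8+\xi)n^2$, while $\deg_H(s)\ge (1/3+\gamma)\binom{n-2}2\ge n^2/6-o(n^2)$. Hence every $s\in\mathcal E_{\mathrm{sp}}$ lies in at least $n^2/6-n^2/8-\xi n^2\ge n^2/30$ odd edges. Each odd edge contains at most $6$ pairs, so $|\mathcal E_{\mathrm{sp}}|\,n^2/30\le 6cn^4$. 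This gives $|\mathcal E_{\mathrm{sp}}|\le 180cn^2$.

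That is a factor $3$ off the stated $60c$. To recover the stated constant I would count more carefully: each odd edge contains exactly $3$ split pairs (type $A^1B^3$ or $A^3B^1$), not $6$. That gives $|\mathcal E_{\mathrm{sp}}|\le 90cn^2$. Getting all the way to $60c$ needs the slightly sharper $n^2/20$ slack, using $\gamma$ more carefully. Since only $O(c)n^2$ matters downstream, any such constant suffices.

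Finally, split-exceptional vertices number at most $2|\mathcal E_{\mathrm{sp}}|/(\rho n/4)=O(c/\rho)n$. Summing the three contributions gives $|X_E|\le \eta n$. The only delicate step is the inequality chain showing that $\theta$-good triples and non-$\mathcal E_{\mathrm{sp}}$ pairs clear the $1/2+2\xi$ thresholds. That relies on the part sizes lying in $[n/3+\gamma n/4,\,2n/3-\gamma n/4]$, and it needs to be checked cleanly.
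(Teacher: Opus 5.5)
Your proof follows the paper's argument. Each triple of $\mathcal T_Y$ and each pair of $\mathcal E_{\mathrm{sp}}$ is forced by the codegree condition into many odd edges, and double counting against $e(H_{\mathrm{odd}})\le cn^4$ then bounds $\mathcal T_Y$, $\mathcal P_{at}$ and the exceptional vertices. The one small difference is how you bound $\mathcal T_Y$. You pass through non-$\theta$-good triples (threshold $\theta^5 n$), whereas the paper observes directly that each $T\in\mathcal T_Y$ has at least $(1/3+\gamma)n-(1/2+2\xi)|Y|\ge \gamma n/2$ odd completions. This only replaces $c/\gamma$ by $c/\theta^5$ in the constants, which the hierarchy absorbs.

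One correction concerning $\mathcal E_{\mathrm{sp}}$: handling $\gamma$ more carefully cannot give the slack $n^2/20$. For $s\in\mathcal E_{\mathrm{sp}}$, the guaranteed number of odd completions is only
\[
(1/3+\gamma)\tbinom{n-2}{2}-(1/2+2\xi)|A||B|\ \ge\ \Bigl(\tfrac{1}{24}+\tfrac{\gamma-\xi}{2}\Bigr)n^2-O(n).
\]
Since $|A|=|B|=n/2$ is allowed, both estimates used here are tight. So this lower bound is below $n^2/20$ whenever $\gamma-\xi<1/60$, which covers the whole regime $\gamma\ll 1$. With slack $n^2/24$ and exactly three split pairs per odd edge, the counting gives $|\mathcal E_{\mathrm{sp}}|\le 72cn^2$, not $60cn^2$.

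The paper's own proof asserts the same inequality (``$\ge n^2/20$'') and so has the same slip. The constant $60$ should read $72$, or simply $O(c)$. That is all that is used later: choosing $s_1,s_2\notin\mathcal E_{\mathrm{sp}}$, and bounding the split-exceptional vertices by $O(c/\rho)n\le \eta n/2$.
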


\begin{proof}
Let $T\in\mathcal T_A$.
Since $\delta_3(H)\ge (1/3+\gamma)n$ and $|A|\le (2/3-\g/4)n$, we have
$|N_H(T)\cap B|\ge (1/3+\gamma)n-(1/2+2\xi)|A|\ge \gamma n/2$,
provided $\xi\ll\gamma$.
% Since $e(H_{\mathrm{odd}})\le c n^4$, we have
% $|\mathcal T_A|\cdot \frac{\gamma n}{2}\le e(H_{\mathrm{odd}})\le c n^4$, yielding $|\mathcal T_A|\le 2c\g^{-1}n^3$.
% It is easy to see that
Let $x_Y$ be the number of $Y$-exceptional vertices for $Y\in \{A, B\}$.
Now we can bound the number of odd edges by 
$x_A\cdot \frac{\rho}{4}|A|^2\cdot \frac{\gamma n}{2}\le 3e(H_{\mathrm{odd}}) \le 3c n^4$.
Since $|A|\ge (1/3+\g/4)n$, this gives
$x_A\le 216c(\rho\g)^{-1}n\le\eta n/4$.
Same arguments give $x_B\le\eta n/4$.
%Thus, $|X_{\mathrm{int}}|\le\eta n$.

Similarly, if there are $m_A$ $A$-atypical pairs, then
$m_A\cdot \frac{\tau}{4}|A|\cdot \frac{\gamma n}{2}\le 3c n^4$,
which gives $m_A\le 72c(\g\tau)^{-1}n^2\le\eta n^2/2$.
The same argument works with $A$ replaced by $B$ and thus $|\cP_{at}|\le \eta n^2$.

Let $s\in\mathcal E_{\mathrm{sp}}$. Since $s$ is a split pair, its even
neighbours are split pairs. 
%By the pair-degree lower bound,
%$d_H(s)\ge (1/3+\gamma/2)\binom n2$.
By the definition of $\mathcal E_{\mathrm{sp}}$, $s$ has fewer than
$(1/2+2\xi)|A||B|\le (1/4+\xi)n^2/2$ even split-neighbours. 
%Since $|A||B|\le n^2/4$, 
Therefore, the number of odd completions of $s$ is at least
$(1/3+\gamma)\binom {n-2}2-(1/4+\xi)n^2/2
\ge n^2/20$,
as $\xi\ll\gamma$.
Since each odd edge contains three split pairs,
$|\mathcal E_{\mathrm{sp}}|\cdot n^2/20\le 3e(H_{\mathrm{odd}})
\le 3c n^4$ and thus $|\mathcal E_{\mathrm{sp}}|\le 60cn^2$.
Also, let $x_{\mathrm{sp}}$ be the number of split exceptional vertices, we have
$x_{\mathrm{sp}}\cdot \frac{\rho}{4}n\le 2|\mathcal E_{\mathrm{sp}}|$ and thus $x_{\mathrm{sp}}\le \eta n/2$.

Overall we obtain $|X_E|\le x_A + x_B + x_{\mathrm{sp}}\le \eta n$.
\end{proof}

For convenience we restate Lemma \ref{evenextremal}.
\medskip

\noindent
%\begin{lemma}
\textbf{Lemma \ref{evenextremal} (Even-extremal case.)}
%\label{evenextremal}
\emph{Suppose that $1/n\ll c'\ll\gamma\ll1$ and let $H$ be an $n$-vertex $4$-graph with $\delta_3(H)\geq n/3+\gamma n$ and $\{A',B'\}$ be a $(c',\g/3)$-even-extremal bipartition of $V$.
Then we construct a bipartition $\{A, B\}$ of $V$ in time $O(n^4)$ such that $H$ contains a Hamilton $2$-cycle if and only if $\{A, B\}$ is even-good, which can be checked in time $O(n^8)$.}
%\end{lemma}

\subsection{Proof of Lemma \ref{evenextremal}}
Before delving into the details, we give an overview of the proof.
The proof of Lemma \ref{evenextremal} consists of three
steps and we will also introduce some auxiliary results, whose proofs we will defer later.

\emph{Step 1: Refine the extremal bipartition.}
Starting from a $(c',\g/3)$-even-extremal bipartition $\{A',B'\}$, we move every vertex which is $(\beta'_1,\beta'_2)$-bad to the opposite side. This produces a new bipartition
$\{A,B\}$ which is $(c,\g/4)$-even-extremal, but has much better local properties.
%almost all vertices and pairs are $\theta$-good, and every vertex is medium with respect to the new partition. 
%This cleaning step ensures that the later local connecting arguments can be carried out inside the even structure determined by $\{A_1,B_1\}$.

\emph{Step 2: Construct a bridge.}
We next build a small bridge which breaks the parity obstruction and contains all exceptional vertices. 
There are two possible
types of bridges. A \emph{connate bridge} is a path whose two end pairs are connate pairs with good properties. 
A \emph{split bridge} is a
path whose two end pairs are split pairs with good properties. 
%The construction of the core bridge is divided according to the bridge-good cases. 
%Next, we extend it to a short path which contains all vertices that are exceptional for the final covering step.  This extension preserves the relevant parity condition.
%: in the connate case both $|A_1\setminus V(Q)|$ and $|B_1\setminus V(Q)|$ are even, while in the split case $|A_1\setminus V(P)|=|B_1\setminus V(P)|$.

\emph{Step 3: Cover the remaining vertices.}
Finally, we cover the unused vertices. 
In the connate-bridge case, we choose an edge $p_Ap_B$ between $A$ and $B$ where $p_A, p_B$ are connate pairs and apply Lemma \ref{lem:connateHP}  separately inside $A$ and inside $B$.
In the split-bridge case, the bridge leaves a balanced bipartite graph, Then apply Lemma \ref{lem:splitHP}. 
Each case gives a Hamilton
$2$-cycle of $H$.

Now we give the necessary lemmas for the proof of Lemma \ref{evenextremal}.
Given a $(c',\g/3)$-even-extremal bipartition $\{A',B'\}$ of $V$.
Define $A_{\mathrm{bad}}$ to be the set of vertices in $A'$ which are $(\beta'_1,\beta'_2)$-bad with respect to $\{A',B'\}$ and $B_{\mathrm{bad}}$ to be the set of vertices in $B'$ which are $(\beta'_1,\beta'_2)$-bad with respect to $\{A',B'\}$.
Put
$A:=(A'\setminus A_{\mathrm{bad}})\cup B_{\mathrm{bad}}$ and
$B:=(B'\setminus B_{\mathrm{bad}})\cup A_{\mathrm{bad}}$.
We say that the vertices in $A_{\mathrm{bad}}\cup B_{\mathrm{bad}}$ are
\emph{moved}. 

We call this the \emph{moving process} of $\{A',B'\}$.
The first lemma is the refining step. 

\begin{lemma}\label{lem:cleaned-even-partition}
Assume Setup~\ref{setup:evenextr}.
Executing the moving process of a $(c',\g/3)$-even-extremal bipartition $\{A',B'\}$,
we obtain a bipartition $\{A,B\}$ which is $(c,\g/4)$-even-extremal. Moreover, with respect to
$\{A,B\}$ every vertex of $H$ is $(\beta_1,\beta_2)$-medium.
\end{lemma}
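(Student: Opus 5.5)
The plan is to compare the old bipartition $\{A',B'\}$ with the new one $\{A,B\}$ and to control everything through the small moved set $M:=A_{\rm bad}\cup B_{\rm bad}$. I will apply Proposition~\ref{prop:types}$(c)$ to $\{A',B'\}$ with constants $c',\beta'_1,\beta'_2$ in place of $c,\beta_1,\beta_2$. This is legitimate since $\{A',B'\}$ is $(c',\gamma/3)$-even-extremal, hence also $(c',\gamma/4)$-even-extremal. It gives $|M|\le 288c'n$.

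\emph{Step 1: even-extremality.} First, $|A|,|B|\ge n/3+\gamma n/3-288c'n\ge n/3+\gamma n/4$. Next, the parity of an edge changes only if the edge meets $M$. So the number of odd edges with respect to $\{A,B\}$ is at most $c'n^4+|M|n^3\le 289c'n^4\le cn^4$, because $c'\ll c$. Hence $\{A,B\}$ is $(c,\gamma/4)$-even-extremal.

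\emph{Step 2: unmoved vertices are medium.} Let $v\notin M$, so $v$ is $(\beta'_1,\beta'_2)$-medium for $\{A',B'\}$. Consider a $\beta'_2$-medium pair $vu$ of a given type (connate or split) with $u\notin M$. It keeps its type in $\{A,B\}$. Its number of even edges drops by at most the number of edges through $vu$ meeting $M$, which is at most $|M|n\le 288c'n^2$. So it remains $\beta_2$-medium, since $\beta_2\ll\beta'_2$. Discarding the at most $288c'n$ pairs $vu$ with $u\in M$ leaves at least $\beta'_1n-288c'n\ge\beta_1n$ medium pairs of each type.

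\emph{Step 3: moved vertices are medium (main obstacle).} Let $v\in M$; by symmetry $v\in A'$, so $v\in B$ now. For a pair $vu$ with $u\notin M$, new-even edges through $vu$ not meeting $M\setminus\{v\}$ are exactly old-odd edges through $vu$. So it suffices to find at least $2\beta_1 n$ vertices $u\in A'$ (these give new split pairs) and at least $2\beta_1 n$ vertices $u\in B'$ (new connate pairs) such that $vu$ lies in at least $2\beta_2 n^2$ old-odd edges. Badness of $v$ gives one of two cases.

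\textbf{Case (i).} At least $n/4$ vertices $u\in A'$ have $vu$ $\beta'_2$-bad. Each such $vu$ lies in at least $n^2/6$ old-odd edges, as computed in the proof of Proposition~\ref{prop:types}$(b)$. This already yields the required pairs with $u\in A'$. An old-odd edge $vuxy$ with $u\in A'$ has exactly one of $x,y$ in $B'$. Hence there are $\Omega(n^3)$ old-odd edges containing $v$ and some $y\in B'$. Averaging, using that each $vy$ lies in at most $n^2$ edges, gives $\Omega(n)$ vertices $y\in B'$ with $vy$ in $\Omega(n^2)$ old-odd edges. These absolute constants dominate $\beta_1,\beta_2$.

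\textbf{Case (ii).} At least $n/4$ vertices $y\in B'$ have $vy$ $\beta'_2$-bad, which directly supplies the pairs with $u\in B'$. To obtain pairs with $u\in A'$, fix such a $y$ and any $x\in A'\setminus\{v\}$. The triple $vxy$ has at least $n/3$ neighbours $w$. Since $|\{v,x\}\cap A'|=2$, the edge $vxyw$ is old-odd iff $w\in A'$. Summing over $x$, the old-odd edges $vyxw$ number at least $(1/3+\gamma)\binom{n-2}{2}-\beta'_2\binom n2$, and every one of them has $x,w\in A'$. So over all such $y$ there are $\Omega(n^3)$ old-odd edges containing $v$ together with two vertices of $A'$, and the same averaging gives $\Omega(n)$ vertices $x\in A'$ with $vx$ in $\Omega(n^2)$ old-odd edges.

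In both cases I finally discard the $O(c'n)$ partners lying in $M$, and the at most $O(c'n^2)$ edges per pair meeting $M\setminus\{v\}$. I expect Step~3, and specifically Case~(ii), to be the delicate point: there one must produce pairs of the type opposite to the one witnessed by badness, and the codegree of the triples $vxy$ is the tool that makes this work.
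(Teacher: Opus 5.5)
Your proposal is correct and follows essentially the paper's argument. The paper also bounds the moved set via Proposition~\ref{prop:types}$(c)$. For a moved $v\in A'$ it shows that $v$ lies in $\Omega(n^3)$ old edges with three vertices in $A'$ and one in $B'$, using the codegree of the triples $p\cup\{u\}$ when the bad pairs are split. After the move these become $2{+}2$ even edges, which supply both medium split and medium connate pairs. Your Cases~(i)/(ii) are the same computation organized slightly differently.

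There is one numerical slip in Case~(ii). The quantity $(1/3+\gamma)\binom{n-2}{2}-\beta_2'\binom n2$ bounds \emph{all} old-odd edges through $vy$. That includes those with $x,w\in B'$, which are also odd. So it does not bound the edges with $x,w\in A'$, and the claim that every edge it counts has $x,w\in A'$ is false in general. Summing $\deg(vxy)$ only over $x\in A'\setminus\{v\}$ gives instead at least $\tfrac12(|A'|-1)(1/3+\gamma)n-\beta_2'\binom n2\ge n^2/20$ such edges. This is still $\Omega(n^2)$, so your conclusion is unaffected.
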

%For convenience, denote the set of $\theta$-good pairs by $\mathcal G_\theta$, the set of $Y$-atypical pairs by $\mathcal A_Y$ where $Y\in\{A,B\}$.

Once the bipartition has been refined, we construct a bridge. 
The bridge is short, avoids the future reservoir, and covers all exceptional vertices.
More importantly, it leaves behind a vertex set satisfying the divisibility
condition required by one of the two Hamilton path lemmas.

\begin{lemma}\label{lem:bridge}
Assume Setup~\ref{setup:evenextr}, and let $\{A,B\}$ be a fixed $(c,\g/4)$-even-extremal
bipartition of $V$.  
Suppose that $\{A,B\}$ is even-good and every vertex is $(\beta_1,\beta_2)$-medium. 
Then we obtain either
\begin{itemize}
    \item a path $Q$ with $\theta$-good connate ends such that $|A\setminus V(Q)|,\ |B\setminus V(Q)|$ are even, and $X_E\subseteq V(Q)$. Moreover, $|V(Q)|\le \mu n$; or
\item a path $P$
with  $\theta$-good split ends such that
$|A\setminus V(P)|=|B\setminus V(P)|$, and $X_E\subseteq V(P)$.
Moreover, $|V(P)|\le \mu n$.
\end{itemize}
\end{lemma}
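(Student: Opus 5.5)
We build the bridge in three stages: first a short \emph{core} path that fixes the parity, then an extension through all exceptional vertices that leaves the parity untouched, and finally, if needed, a single two-vertex swap. The case is chosen from which of (\ref{E0})--(\ref{E4}) holds for $\{A,B\}$.

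\textbf{Choice of type and core.}
If (\ref{E0}) holds with $|A|$ even, we aim for a connate bridge. We start from a single $\theta$-good connate pair, viewed as a trivial path; such a pair exists in $\binom{A}{2}$ by Proposition~\ref{prop:types}(a). If (\ref{E0}) holds with $|A|=|B|$, we aim for a split bridge and start from a single $\theta$-good split pair.

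Otherwise $|A|$ is odd and $|A|\neq|B|$, and one of (\ref{E1})--(\ref{E4}) supplies odd edges $e,e'$. In each case we turn them into a short core path $Q_0$ whose ends are $\theta$-good pairs of a prescribed type:
\begin{itemize}
\item Under (\ref{E2}), $e\cap e'$ is a split pair and $e\setminus e'$, $e'\setminus e$ are the ends. Since each edge is odd, one end is split and the other connate. The removed vertex set meets $A$ in an odd number of vertices.
\item Under (\ref{E1}), we join $e$ and $e'$ by a short path through even edges.
\item Under (\ref{E3}) and (\ref{E4}), the intersection lies in $\binom{A}{2}$ (resp.\ $\binom{B}{2}$). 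The imbalance $|A|=|B|\pm 2$ is exactly what lets a split bridge become balanced after this core is removed.
\end{itemize}
The ends of $e,e'$ need not be $\theta$-good. We make them good by appending one even edge at each end: since every vertex is $(\beta_1,\beta_2)$-medium, each end pair $p$ can be extended by a $\beta_2$-medium pair $p'$ with $p\cup p'\in E(H)$ even, and then from $p'$ to a $\theta$-good pair of the desired type. Appending even edges preserves the parity bookkeeping, because an even edge with split (resp.\ connate) end $p$ has the other half split (resp.\ connate) too, so the parity type of each end is preserved and the removed set changes by pairs of the appropriate type.

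\textbf{Absorbing $X_E$.}
By Claim~\ref{claim:few-exceptional-vertices}, $|X_E|\le\eta n$. For each $x\in X_E$, we use that $x$ is medium to find a medium pair $xy$ of the type needed, and an even edge containing $xy$ together with a $\theta$-good pair of the current end type. We then splice this short detour onto the path using Proposition~\ref{prop:pairconnect}, with $R$ the set of already used vertices, which has size at most $\mu n$. Each insertion uses $O(1)$ vertices, so the total length is $O(\eta n)\le\mu n$.

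All added pieces are even edges glued at ends of a fixed type: split-to-split joins use part (1) of Proposition~\ref{prop:pairconnect}, and connate-to-connate joins use part (2). Hence each added piece removes either a balanced set (split case) or a set with even intersections with $A$ and $B$ (connate case). The invariant established by the core therefore persists.

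\textbf{Final parity adjustment.}
The connate joins all use connate pairs, but a connate pair may lie in $A$ or in $B$. So in the connate case the individual parities of $|A\setminus V(Q)|$ and $|B\setminus V(Q)|$ could both be off, even though their sum is right since $n$ is even. We fix this by one more connate-pair detour that consumes one vertex from each side. Concretely, we append an even edge of type $A^2B^2$ through an $A$-pair and a $B$-pair, which alters the removed counts appropriately. If this is still insufficient, we switch which side the end pair lies in.

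The main obstacle is this parity and balance bookkeeping across the five cases (\ref{E0})--(\ref{E4}). In particular, one must check that each core has ends of a type compatible with the target bridge and that no later extension step can flip the invariant. This is essentially the case analysis of Garbe and Mycroft; the changes needed here are that we use Proposition~\ref{prop:pairconnect} and the medium-vertex property in place of near-completeness.
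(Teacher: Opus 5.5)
Your architecture (a constant-size parity core chosen from (\ref{E0})--(\ref{E4}), then a detour through $X_E$ made of even edges glued at ends of one fixed type via Proposition~\ref{prop:pairconnect}) is the paper's. However, the step that makes the core usable fails.

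You claim each end pair $p$ of the odd edges can be extended by an even edge $p\cup p'$ ``since every vertex is $(\beta_1,\beta_2)$-medium''. That hypothesis concerns vertices, not the particular pair $p$. A $\beta_2$-bad pair lies in fewer than $\beta_2\binom n2$ even edges, possibly none, and Proposition~\ref{prop:medium-to-good} only upgrades $\beta$-medium pairs.

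This bites in case (\ref{E1}). The two disjoint odd edges you are given can have all their pairs bad; nothing prevents, say, $e=\{a_1,a_2,a_3,b\}$ with every $a_ia_j$ and $a_ib$ bad. Then you can neither extend the connate parts nor join the split parts.

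The ingredient you are missing is Proposition~\ref{prop:two_odd_edges_connate_bridge}. Using that there are at most $36cn^2$ bad pairs (Proposition~\ref{prop:types}$(b)$), it finds \emph{some} odd edge outside a small set $Z$ that contains a $\beta_2$-medium connate pair $p$. If the accompanying split pair $s$ is bad, it keeps $s$ as an interior pair via a second odd edge $s\cup p'$. The result is either a path with one good connate end and one good split end, using an even number of vertices from each of $A,B$, or a path with two good connate ends, using an odd number from each. Two such gadgets built disjointly (via $Z$) give the (\ref{E1}) core, either directly or after joining their split ends by one split pair.

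In cases (\ref{E2})--(\ref{E4}) no appending is needed, for a reason you did not use. Once (\ref{E1}) fails, the ends $e\setminus e'$ and $e'\setminus e$ are automatically $\theta$-good. Otherwise one of them lies in many odd edges, one of which is disjoint from the other given edge, and that would be (\ref{E1}).

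The bookkeeping also has errors:
\begin{itemize}
\item In (\ref{E2}), $e=s\cup q$ is odd with $s$ split, so $q$ is connate. Both ends are connate, not one of each. This is exactly why $q_Asq_B$ is a connate bridge meeting $A$ and $B$ oddly.
\item Your ``final parity adjustment'' rests on a false premise. A connate pair meets each of $A,B$ in $0$ or $2$ vertices, so connate-to-connate pieces never change the parities of $|A\setminus V(Q)|$ and $|B\setminus V(Q)|$; your own previous paragraph says as much. The proposed fix is also incoherent: an $A^2B^2$ edge removes two vertices from each side, and a pair taking one vertex from each side is split. Drop that paragraph.
\item When inserting $x\in X_E$, you need $\theta$-good pairs on \emph{both} sides of the medium pair $r=xy$, i.e.\ a segment $a\,r\,b$ obtained from two applications of Proposition~\ref{prop:medium-to-good}. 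A merely medium end cannot be fed back into Proposition~\ref{prop:pairconnect}.
\item In (\ref{E1}), the split-to-split join must use an odd number of intermediate split pairs, so that both intersections with $A$ and $B$ stay odd.
\end{itemize}
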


We use the following two lemmas to cover the vertices not used by the bridge.

\begin{lemma}\label{lem:connateHP}
Assume Setup~\ref{setup:evenextr}. Let $G$ be a $4$-graph on
a vertex set $U$ with $|U|=n$ even. 
Let $\mathcal T_U=\{T\in\binom{U}{3}:\deg_G(T)<(1/2+\xi)n\}$.
Suppose that 
\begin{enumerate}[label=(C\arabic*), ref=C\arabic*, start=1]
    \item Every vertex $v\in U$ is contained in at most
    $\rho n^2$ triples of $\mathcal T_U$.
\label{C1}
    \item There are disjoint pairs $p,q\in\binom{U}{2}$ each of which is contained in at
    most $\tau n$ triples of $\mathcal T_U$.
    \label{C2}
\end{enumerate}

Then $G$ contains a Hamilton $2$-path with ends $p$ and $q$.
\end{lemma}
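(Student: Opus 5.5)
The natural strategy is to run the absorbing method inside the nearly complete host $G$, where all ingredients are cheap because almost all triples have codegree above $(1/2+\xi)n$. The plan has three stages. First, we build an absorbing $2$-path $P_A$ starting at the prescribed end $p$, together with a reservoir $R$. Second, we cover almost all remaining vertices by a bounded number of $2$-paths. Third, we connect everything through $R$ into a path ending at $q$, and absorb the leftover vertices with $P_A$.

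The basic tool is a connecting statement. Call a pair \emph{typical} if it lies in at most $\tau n$ triples of $\mathcal T_U$; by \eqref{C1}, all but $O(\rho n^2/\tau)$ pairs at each vertex are typical.
\begin{itemize}
\item For two disjoint typical pairs $e,f$, consider pairs $xy$ such that both $exy$ and $fxy$ are edges. For all but $O(\tau n^2)$ vertices $x$, the triples $e\cup\{x\}$ and $f\cup\{x\}$ both lie outside $\mathcal T_U$.
\item For such $x$, the links $N_G(e\cup\{x\})$ and $N_G(f\cup\{x\})$ each have size at least $(1/2+\xi)n$, so they share at least $2\xi n$ vertices $y$.
\item Hence $e$ and $f$ have $\Omega(\xi n^2)$ common neighbours in $\binom U2$, i.e.\ they are robustly $1$-connectable.
\end{itemize}
This survives deleting any $\mu n$ vertices. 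Taking a random reservoir $R$ of size $\alpha n$ then lets us connect any two typical pairs through $R$ many times disjointly, as in the proof of Lemma~\ref{lem:conn}. Ends produced later can always be chosen typical, since almost all pairs are typical.

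For absorption we use simple absorbers, because $G$ has no parity obstruction. For a pair $\{u,v\}$ to be inserted, look for a $2$-path $a_1a_2\,b_1b_2$ (a single edge) such that $a_1a_2uv$ and $uvb_1b_2$ are both edges. Then the path $\ldots a_1a_2b_1b_2\ldots$ can be rerouted as $\ldots a_1a_2\,uv\,b_1b_2\ldots$.

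For a typical pair $uv$ the count of such absorbers is $\Omega(n^4)$:
\begin{itemize}
\item choose $a_1a_2\in N(uv)$ typical, giving $\Omega(n^2)$ choices;
\item then choose $b_1b_2\in N(uv)\cap N(a_1a_2)$, again $\Omega(n^2)$ by the common-neighbour computation above.
\end{itemize}
The leftover set $L$ has even size because $n$ is even and every $2$-path has an even number of vertices. The step I expect to be the main obstacle is pairing $L$ so that every pair is typical. I would handle it with a Hall-type or greedy matching in the graph of typical pairs: it has minimum degree $(1-o(1))|U|$, and we may first move to $R$ or cover directly any vertex of $L$ lying in many atypical pairs. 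By \eqref{C1} there are no bad vertices to cover, so this works.

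A random family of absorbers, selected and then connected into $P_A$ by the connecting tool, absorbs any set of at most $\alpha' n$ typical pairs. The path starts at $p$, which is allowed since $p$ is typical by \eqref{C2}.

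For the path cover, apply Lemma~\ref{pathcover} with $k=4$ to $G-V(P_A)-R$. Its hypothesis $\delta_3\ge(1/4+\gamma)n$ fails only on $o(n^3)$ triples, so we use the almost-perfect-matching Lemma~\ref{almostperfect} on the cluster graph, which tolerates $\sqrt\varepsilon$-defective codegree. Discarding one vertex per tight path gives $2$-paths covering all but $\alpha n$ vertices, with typical ends after trimming a few vertices.

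Finally, concatenate $P_A$, the covering paths, and a last connection ending at $q$, all through $R$. Then pair the uncovered vertices of $R$ and elsewhere into typical pairs and absorb them, which yields a Hamilton $2$-path from $p$ to $q$.
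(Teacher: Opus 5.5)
Your absorbing-method outline is viable, but it takes a different and much heavier route than the paper. The correct parts include the common-neighbour count for two typical pairs, the one-edge absorbers $a_1a_2b_1b_2$ with $\Omega(\xi n^4)$ copies per typical pair, and the parity of the leftover set.

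The paper uses no absorbers, reservoir, regularity or path cover. Let $F$ be the graph of typical (there called nice) pairs; then (C1) gives $\delta(F)\ge n-1-(2\rho/\tau)n$. Your own codegree count shows that in the transition graph $\Gamma_F$ on $E(F)$, where two disjoint pairs are adjacent when their union is an edge of $G$, every vertex has degree at least $(1/2+\xi/3)|E(F)|$. Lemma~\ref{lem:hpframework} then gives a perfect matching $M\subseteq E(F)$ containing $p$ and $q$ with $\delta(\Gamma_F[M])\ge(1/2+\xi/18)|M|$. So $\Gamma_F[M]$ is Hamilton-connected by Lemma~\ref{lem:Hamiltoncon}, and a Hamilton $p$--$q$ path in it is exactly the required Hamilton $2$-path.

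Because almost all codegrees exceed $n/2$, the pair-transition graph is Dirac. One random matching plus Ore's theorem therefore does the connecting, covering and absorbing at once. Your connecting step uses the same $1/2$ threshold (two links of size $>n/2$ must meet), so the absorbing machinery buys no extra generality here.

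If you keep your route, three steps need more than you wrote.

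\emph{Typical ends of the covering paths.} ``Typical ends after trimming a few vertices'' is not justified. The atypical pairs form a graph of maximum degree up to $2\rho n/\tau$, which may contain long paths. Nothing in Lemma~\ref{pathcover} prevents a covering tight path from having a linear-length stretch, or even all, of its consecutive pairs atypical, so trimming could cost linearly many vertices. A clean fix is to run the cover in the subgraph $G^*$ of edges all six of whose pairs are typical. A triple with typical pairs lying outside $\mathcal T_U$ loses at most $6\rho n/\tau$ of its codegree, so $G^*$ still satisfies a defective codegree condition, and every pair of consecutive vertices on the resulting paths is typical.

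\emph{Defective inheritance.} Lemma~\ref{inherit} as stated assumes the full codegree bound. Before invoking Lemma~\ref{almostperfect} you must prove a defective analogue: only few cluster triples can carry a positive fraction of bad triples.

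\emph{Pairing the leftover.} Pairing $L$ into typical pairs needs large minimum degree of the typical-pair graph inside $L$, not inside $U$. Since $L$ contains all but $O(1)$ vertices of $R$, this holds if you place $\alpha$ above $\rho$ in the hierarchy so that $\rho\ll\tau\alpha$. Then every vertex has at most $2\rho n/\tau\le |L|/4$ atypical partners, and Dirac's theorem gives a perfect matching of $L$, which has even size.
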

\begin{lemma}\label{lem:splitHP}
Assume Setup \ref{setup:evenextr}.
Let $G$ be a $4$-graph on
$U=A_U\cup B_U$, where $|A_U|=|B_U|=m$ and $n/4\le m<n$. 
Let 
%$\mathcal E_U:=\{s\in A_U\times B_U:|\{s'\in A_U\times B_U: s\cup s'\in E(G)\}|<(1/2+2\xi)|A_U||B_U|\}$.
$\mathcal E_U:=\{s\in A_U B_U:| N_G(s)\cap A_U B_U|<(1/2+2\xi)m^2\}$.
Suppose that 

\begin{enumerate}[label=(S\arabic*), ref=S\arabic*, start=1]
\item Every vertex $v\in U$ is contained in at most $\rho m$
    pairs of $\mathcal E_U$.
\label{S1}
    \item The prescribed disjoint split pairs $s_1,s_2\in
    A_U B_U$ are not in $\mathcal E_U$.
    \label{S2}
\end{enumerate}

Then $G$ contains a Hamilton $2$-path with ends $s_1$ and $s_2$ whose
pair sequence consists only of split pairs. 
\end{lemma}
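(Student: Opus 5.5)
We would prove Lemma~\ref{lem:splitHP} by the absorbing method, carried out entirely inside the split-pair structure. Let $\mathcal S:=A_UB_U\setminus\mathcal E_U$ be the family of \emph{good} split pairs. A $2$-path whose pair sequence consists of split pairs is a sequence of disjoint split pairs in which consecutive pairs form edges of $G$. The whole proof rests on two counting facts.

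\textbf{Fact 1 (connection).} Two disjoint good split pairs $s,s'$ each have more than $(1/2+2\xi)m^2$ split neighbours among the $m^2$ split pairs. Hence they have at least $4\xi m^2-O(m)$ common split neighbours, and these remain after forbidding any set of at most $\xi m$ vertices. This is the analogue of Proposition~\ref{prop:pairconnect}(1). In particular, good split pairs are connectable in one step, robustly. By (\ref{S1}), a typical common neighbour is itself good. So we can keep all interior pairs of a path good, and concatenation stays possible.

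\textbf{Fact 2 (absorbers).} Fix a good split pair $s$, and let $N:=N_G(s)\cap A_UB_U$. For a good $p\in N$, its split neighbourhood meets $N$ in at least $4\xi m^2-O(m)$ pairs. By (\ref{S1}), all but $O(\rho m^2)$ members of $N$ are good. So there are $\Omega(\xi m^4)$ ordered pairs $(p,q)$ of disjoint good split pairs with $p\cup q\in E(G)$ and $p\cup s,\ s\cup q\in E(G)$. The one-edge path $pq$ can then be replaced by the path $psq$, which has the same ends; this absorbs $s$.

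The plan has four steps.

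\begin{enumerate}
\item Choose a random family of $\Theta(\theta m)$ disjoint candidate absorbers $pq$. By Chernoff and Markov, every good split pair $s$ still has $\Omega(\xi\theta m)$ absorbers in the family after overlaps are deleted. Connect the absorbers one by one, starting from $s_1$, using Fact~1. This gives an absorbing path $P_{A}$ of length $O(\theta m)$.
\item Set aside a random balanced reservoir $R$ of size $\sqrt\theta\, m$ per side. With high probability, every pair of good split pairs has $\Omega(\xi\theta m)$ common split neighbours inside $R$.
\item Cover all but $\theta^2 m$ vertices of the remainder by a bounded number of split-pair paths. Join these paths to $P_{A}$ through $R$, ending at $s_2$.
\item Pair the leftover vertices into good split pairs, and absorb them.
\end{enumerate}

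For the pairing in the final step, note that the leftover set is balanced, because every split-pair path uses equally many vertices of $A_U$ and $B_U$. By (\ref{S1}), every vertex lies in at most $\rho m$ pairs of $\mathcal E_U$. So a greedy or Hall argument pairs the $\le \theta^2m+|R|$ leftover vertices into good split pairs. To make this work we would first reserve, in the random steps, enough unused vertices with no bad incidences. Each such pair is then absorbed by its own absorber, via Fact~2. Throughout, $s_1,s_2\notin\mathcal E_U$ by (\ref{S2}), so they can serve as ends.

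The main obstacle is the almost-cover step. Greedy extension from the current end breaks down once the unused set has size at most about $m/\sqrt2$, since the neighbourhood density $1/2+2\xi$ no longer guarantees an unused neighbour. We would handle this by splitting the non-reservoir vertices at random into $O(1/\xi)$ balanced slices $U_1,\dots,U_r$. For every good $s$, and simultaneously in each slice, concentration gives split-neighbour density at least $1/2+\xi$ inside $U_i$. Inside a single slice we run greedy extension. Whenever the current end gets stuck, we start a new path from another good pair of that slice. The density bound then guarantees that at most $O(\xi)|U_i|$ vertices per slice remain uncovered, using a Dirac-type count on the bipartite split-pair graph. If the crude greedy count is insufficient, the fallback is to mirror Lemma~\ref{pathcover}. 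We would take a weak regular partition refining $\{A_U,B_U\}$ and find an almost perfect matching of the cluster graph restricted to split pairs; its edges would come from $4$-tuples of clusters with two clusters in each of $A_U$ and $B_U$. Each regular tuple would then be covered by split-pair $2$-paths, following Lemma~\ref{reducedpathcover}. The number of resulting paths stays bounded, so the connections in the third step consume only $O(1)$ reservoir vertices.
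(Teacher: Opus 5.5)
Your step~3 (the almost-cover) is a genuine gap. You flag it as the main obstacle, but neither of your routes works as stated.

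\textbf{The slicing route.} Random slicing changes nothing, because the greedy estimate is scale-invariant. Inside a slice with $k$ vertices per side, suppose $w$ vertices per side are still unused. The current end is then guaranteed only $(1/2+\xi)k^2-(k^2-w^2)=w^2-(1/2-\xi)k^2$ unused split neighbours, which is non-positive once $w\le\sqrt{1/2-\xi}\,k$. The same bound applies to any fresh good pair you restart from, so the process can stall with roughly a $1/\sqrt2$ fraction of every slice uncovered. No ``Dirac-type count'' on the split-pair graph rescues this. You do not need a long path in the transition graph on pairs; you need one whose pairs are mutually vertex-disjoint in $U$. A pair-degree density of $1/2+\xi$ does not control that once a constant fraction of $U$ is used.

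\textbf{The regularity fallback.} This does not close the gap either. Lemma~\ref{almostperfect} needs a codegree condition on triples of clusters, and (S1)--(S2) give none: $G$ may contain only $A_U^2B_U^2$ edges, so triples inside $A_U$ have codegree $0$. Even when that lemma does apply, it returns matching edges of arbitrary type, not the $A_U^2B_U^2$ edges that split-pair paths require. An almost perfect $A^2B^2$-matching of the cluster graph under only a split-pair link condition is a new statement you would have to prove.

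\textbf{The missing idea.} The paper fixes the pairing of $U$ before looking for the path. Take a random perfect matching $M$ between $A_U$ and $B_U$ that contains $s_1,s_2$, and use (S1) to swap away the few pairs of $\mathcal E_U$ it contains. Every good split pair has at least $(1/2+\xi)m^2$ good neighbours in the transition graph $\Gamma$ on split pairs. By concentration, each good pair therefore has at least $(1/2+\xi/6)|M|$ neighbours inside $M$ (Lemma~\ref{lem:hpframework}). Then $\Gamma[M]$ is Hamilton-connected by Lemma~\ref{lem:Hamiltoncon}. A Hamilton path from $s_1$ to $s_2$ in $\Gamma[M]$ is exactly the required Hamilton $2$-path, and vertex-disjointness is automatic because $M$ is a perfect matching. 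No absorbers, reservoir or almost-cover are needed.

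\textbf{Parameters, separately.} Your parameters also do not fit together. Each good pair gets only $\Omega(\xi\theta m)$ absorbers, but the leftover contains almost all of $R$, i.e.\ $\Theta(\sqrt\theta\,m)\gg\xi\theta m$ pairs, so greedy absorption runs out. The reservoir must be much smaller than the absorbing capacity. Moreover, vertices ``with no bad incidences'' need not exist under (S1). Pairing the leftover into good split pairs would instead have to come from the leftover being dominated by the random reservoir.
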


\begin{proof}[Proof of Lemma \ref{evenextremal}]
%Choose constants satisfying
%$1/n \ll \theta \ll c' \ll c \ll \eta \ll \beta\ll \beta_2\ll \beta'_2 \ll \beta_1 \ll \beta'_1 \ll \rho \ll \tau \ll \mu\ll \xi \ll \gamma \ll 1$.

If $n$ is odd, then $H$ contains no Hamilton $2$-cycle, since every
$2$-cycle in a $4$-graph has an even number of vertices. In this case
the algorithm outputs this divisibility obstruction as a certificate.
Thus we may assume from now on that $n$ is even.

\textbf{Step 1. Refine the extremal bipartition.}
Given a $(c',\g/3)$-even-extremal bipartition $\{A',B'\}$ of $V$.
%We first clean this extremal partition. 
Define $A_{\mathrm{bad}}$ as the set of vertices in $A'$ which are $(\beta'_1,\beta'_2)$-bad with respect to $\{A',B'\}$ and $B_{\mathrm{bad}}$ as the set of vertices in $B'$ which are $(\beta'_1,\beta'_2)$-bad with respect to $\{A',B'\}$. 
Put
$A:=(A'\setminus A_{\mathrm{bad}})\cup B_{\mathrm{bad}}$ and
$B:=(B'\setminus B_{\mathrm{bad}})\cup A_{\mathrm{bad}}$.
% By Proposition~\ref{prop:types}, applied with $(c',\beta'_1,\beta'_2,\g/3)$ in
% place of $(c,\beta_1,\beta_2,\g/4)$, the number of moved vertices is at most
% $288c'n$.
By Lemma \ref{lem:cleaned-even-partition}, the resulting bipartition $\{A,B\}$ is $(c,\g/4)$-even-extremal and with respect to
$\{A,B\}$ every vertex of $H$ is  $(\beta_1,\beta_2)$-medium.
Note that $(\beta'_1,\beta'_2)$-badness of a vertex $v$ can be checked by checking the edges containing $v$, and thus the sets $A_{\mathrm{bad}}, B_{\mathrm{bad}}$ and the new bipartition $\{A, B\}$ can be constructed in time $O(n^4)$.
As the even-goodness of $\{A,B\}$ can be checked in time $O(n^8)$ and even-goodness is a necessary condition for $2$-Hamiltonicity, to complete the proof of the theorem it suffices to assume that $\{A,B\}$ is even-good, and show that $H$ contains a Hamilton 2-cycle.

% We now test whether the cleaned bipartition $\{A,B\}$ is even-good.
% This can be done in time $O(n^8)$ by checking the five conditions
% (\ref{E0})--(\ref{E4}) in Definition~\ref{def:evenoddgood}.

% If $\{A,B\}$ is not even-good, then the algorithm outputs the bipartition $\{A,B\}$ as a certificate that $H$ contains no Hamilton $2$-cycle.
% Indeed, by Lemma~\ref{lem:hcevenoddgood}, every bipartition of the vertex set of a
% Hamiltonian $4$-graph must be even-good. 
% Thus a bipartition which is not
% even-good certifies non-existence of a Hamilton $2$-cycle.
% It remains to consider the case that $\{A,B\}$ is even-good. 

\textbf{Step 2. Construct a bridge.}
Now, we apply Lemma~\ref{lem:bridge} to $\{A,B\}$ and obtain either
\begin{itemize}
    \item a path $Q$ with $\theta$-good connate ends such that $|A\setminus V(Q)|,\ |B\setminus V(Q)|$ are even, and $X_E\subseteq V(Q)$. Moreover, $|V(Q)|\le \mu n$; or
\item a path $P$
with  $\theta$-good split ends such that
$|A\setminus V(P)|=|B\setminus V(P)|$, and $X_E\subseteq V(P)$.
Moreover, $|V(P)|\le \mu n$.
\end{itemize}

\textbf{Step 3. Cover the remaining vertices.}
Finally we use Lemmas \ref{lem:connateHP} and \ref{lem:splitHP} to cover the remaining vertices. 
Recall that $\mathcal P_{at}\subseteq \binom A2\cup \binom B2$ is the family of pairs that are either $A$-atypical or $B$-atypical.

(1) By Lemma~\ref{lem:bridge}, we obtain a path $Q_0$ with $\theta$-good connate ends $c,c'$ such that $|A\setminus V(Q_0)|,\ |B\setminus V(Q_0)|$ are even, $X_E\subseteq V(Q_0)$, and $|V(Q_0)|\le \mu n$.
Next we choose $\theta$-good connate pairs $q_A\in \binom{A\setminus V(Q_0)}{2}$ and $q_B\in \binom{B\setminus V(Q_0)}{2}$ such that $q_A,q_B\notin \mathcal P_{at}$.
This can be done since $|\mathcal P_{at}|\le\eta n^2$ by Claim \ref{claim:few-exceptional-vertices}.
By Proposition \ref{prop:pairconnect}, there exist connate pairs $h,h'$ such that $h\cup q_A,h\cup c\in E(H)$ and $c'\cup h',h'\cup q_B\in E(H)$ avoiding used vertices.

Set $Q=q_AhcQ_0c'h'q_B$ and $U_A=(A\setminus V(Q))\cup q_A$. 
Note that $|U_A|$ is even.
It is easy to see that $|U_A|\ge|A|/2$ since $|V(Q)|\le2\mu n$.
Define $\mathcal T_A:=\{T\in \binom{A}{3}: \deg_{H[A]}(T)<(1/2+2\xi)|A|\}$ and 
$\mathcal T_{U_A}=\{T\in\binom{U_A}{3}:\deg_{H[U_A]}(T)<(1/2+\xi)|U_A|\}$.
We claim that $\mathcal T_{U_A}\subseteq \mathcal T_A$. 
Indeed, since $|V(Q)|\le 2\mu n$ and
$\mu\ll\xi$, for every $T\in\mathcal T_{U_A}$, we have
$\deg_{H[A]}(T)\le \deg_{H[U_A]}(T)+2\mu n<(1/2+\xi)|U_A|+2\mu n<(1/2+2\xi)|A|$,
so $T\in\mathcal T_A$.

Since $X_E\subseteq V(Q_0)$ and $q_A\cap V(Q_0)=\emptyset$, we obtain that every vertex of $U_A$ is not $A$-exceptional and thus is contained in at most $\frac{\rho}{4}|A|^2\le\rho|U_A|^2$ triples of $\mathcal T_A$.
Thus, every vertex of $U_A$ is contained in at most $\rho|U_A|^2$ triples of $\mathcal T_{U_A}$.
Let $U_B=(B\setminus V(Q))\cup q_B$, for the vertex in $U_B$, we can also draw the same conclusion and thus (\ref{C1}) in Lemma \ref{lem:connateHP} holds.

%Set $U_B:=(B\setminus V(Q))\cup q_B$.
Also, $q_A\notin \mathcal P_{at}$, thus it is contained in at most $\frac{\tau}{4}|A|$ triples of $\mathcal T_A$ and at most $\tau|U_A|$ triples of $\mathcal T_{U_A}$.
For $q_B$, it is the same. 
We now choose an even edge $p_Ap_B$, where $p_A\in\binom{A\setminus V(Q)}2, p_B\in\binom{B\setminus V(Q)}2$ and $p_A,p_B\notin \mathcal P_{at}$. 
The number of even edges $e$ such that $|e\cap A|=2$ is at least $\frac12\left(\binom{|A|}{2}|B|(\frac13+\g)n-3cn^4\right)\ge\frac{n^4}{400}$.
Since $|\mathcal P_{at}|\le\eta n^2$ and the number of edges containing $V(Q)$ is at most $2\mu n^4$, we can choose an even edge $p_Ap_B$ such that $|p_A\cap A|=2, |p_B\cap B|=2$  and $p_A,p_B\notin \mathcal P_{at}$. 
Since $p_A\notin \mathcal P_{at}$, it is contained in at most $\frac{\tau}{4}|A|$
triples of $\mathcal T_A$, and hence in at most $\tau |U_A|$ triples of
$\mathcal T_{U_A}$; similar assumption holds for $p_B$.
Thus, (\ref{C2}) in Lemma \ref{lem:connateHP} holds.

Using Lemma~\ref{lem:connateHP} separately on $H[(A\setminus V(Q))\cup q_A]$ and on $H[(B\setminus V(Q))\cup q_B]$, we obtain Hamilton $2$-paths
$P_A$ on $H[(A\setminus V(Q))\cup q_A]$ with ends $q_A,p_A$, and
$P_B$ on $H[(B\setminus V(Q))\cup q_B]$ with ends $q_B,p_B$.
Then $q_AQq_BP_Bp_Bp_AP_Aq_A$ gives rise to a Hamilton
$2$-cycle of $H$. 
%Indeed, $Q$ ends at $q_B$, $P_B$ starts at $q_B$ and ends at $p_B$, $F$ goes from $p_B$ to $p_A$, and $P_A$ goes from $p_A$ back to $q_A$, the initial end of $Q$. The vertex set covered is $V(Q)\cup(A\setminus V(Q))\cup(B\setminus V(Q))=V$. Thus, we obtain a Hamilton 2-cycle.

(2) By Lemma~\ref{lem:bridge}, we obtain a path $P_0$
with $\theta$-good split ends $s,s'$ such that
$|A\setminus V(P_0)|=|B\setminus V(P_0)|$, $X_E\subseteq V(P_0)$, and $|V(P_0)|\le\mu n$.
%Orient $P_0$ from $s$ to $s'$.
Next we choose two disjoint $\theta$-good split pairs $s_1,s_2$ outside $V(P_0)$ such that $s_1,s_2\notin\mathcal E_{\mathrm{sp}}$, which can be done as  $|\mathcal E_{\mathrm{sp}}|\le 60cn^2$ by Claim \ref{claim:few-exceptional-vertices}.
Note that $(s_1\cup s_2)\cap X_E=\emptyset$ since $X_E\subseteq V(P_0)$.
By Proposition \ref{prop:pairconnect}, there exist split pairs $f,f'$ such that $s_1\cup f,f\cup s\in E(H)$ and $s'\cup f',f'\cup s_2\in E(H)$.

Set $P=s_1fsP_0s'f's_2$ and
$U:=(V\setminus V(P))\cup s_1\cup s_2$. 
Let
$A_U:=A\cap U$ and $B_U:=B\cap U$. 
It is easy to see that
$|A_U|=|B_U|=:m$. %Write $m:=|A_U|=|B_U|$. 
Since $|V(P)|\le 2\mu n$ and
$|A|,|B|\ge n/3+\g n/4$, we have $m\ge n/4$ for sufficiently large $n$.

Let $\mathcal E_{\mathrm{sp}}:=\{s\in A B:|N_H(s)\cap A B|<(1/2+2\xi)|A||B|\}$ and $\mathcal E_U:=\{s\in A_U B_U:|N_H(s)\cap A_U B_U|<(1/2+\xi)m^2\}$.
We claim that $\mathcal E_U\subseteq \mathcal E_{\mathrm{sp}}$. 
Indeed, if $s\in\mathcal E_U$, then
$|N_H(s)\cap A B|
\le (1/2+\xi)m^2+2\mu n^2
< (1/2+2\xi)|A||B|$,
since $m\ge n/4$, $\mu\ll\xi$ and $|A||B|=m^2+O(\mu n^2)$. Hence
$s\in\mathcal E_{\mathrm{sp}}$.

Since $X_E\subseteq V(P_0)$ and $(s_1\cup s_2)\cap V(P_0)=\emptyset$, every $v\in U$ is contained in at most $\rho n/4$ pairs of $\mathcal E_{\mathrm{sp}}$ and thus incident with at most $\rho n/4$ pairs of
$\mathcal E_U$, which is at most $\rho m$ because $m\ge n/4$.
Thus, (\ref{S1}) in Lemma \ref{lem:splitHP} holds.
%Let $X_{\mathrm{sp}}$ be the set of split-exceptional vertices. By the construction of the split bridge, the end pairs $s_1,s_2$ are disjoint from $X_{\mathrm{sp}}$, and all vertices of $X_{\mathrm{sp}}$ are contained in $D=V(P)\setminus(s_1\cup s_2)$. Therefore no vertex of $U$ is split-exceptional. Since $\mathcal E_U\subseteq\mathcal E_{\mathrm{sp}}$,

Also, $s_1,s_2\notin \mathcal E_{\mathrm{sp}}$ yields
$s_1,s_2\notin\mathcal E_U$ and thus (\ref{S2}) in Lemma \ref{lem:splitHP} holds. 
Applying Lemma~\ref{lem:splitHP} with
ends $s_2,s_1$, we obtain a Hamilton $2$-path $P_1$ in $H[U]$ from
$s_2$ to $s_1$. 
%\red{whose pair sequence consists only of split pairs (JH. Look redundant to me?)}.
Hence $s_1Ps_2P_1s_1$ is a
Hamilton $2$-cycle of $H$.
\end{proof}

In the following subsections, we just need to consider the cleaned bipartition and we may assume that $\{A,B\}$ is $(c,\g/4)$-even-extremal and every vertex of $\{A,B\}$ is $(\beta_1,\beta_2)$-medium.

\subsection{Proof of Lemma \ref{lem:cleaned-even-partition}}

Now we prove the refining lemma by moving all bad vertices to the opposite side of the bipartition. 
Since the number of bad vertices is very small, the new bipartition remains extremal. 
Moreover, every vertex is contained in enough medium pairs.

\begin{proof}
Let $L:=A_{\mathrm{bad}}\cup B_{\mathrm{bad}}$. 
By Proposition~\ref{prop:types} (c), applied with $c',\beta'_1,\beta'_2$ in
place of $c,\beta_1,\beta_2$, we have $|L|\le 288c'n$.
Since at most $288c'n$ vertices are moved, we have $|A|,|B|\ge(1/3+\g/3)n-288c'n\ge(1/3+\g/4)n$
and the number of odd edges with respect to
$\{A,B\}$ is at most
$c' n^4+|L|(n-1)^{3}\le c n^4$, as $c'\ll c$. 
Thus $\{A,B\}$ is $(c,\g/4)$-even-extremal.

Since $c'\ll\beta_2\ll\beta_2'\ll\beta_1\ll\beta_1'$ and $\beta_1'n-288c'n\ge\beta_1n$, every unmoved vertex was $(\beta_1',\beta_2')$-medium under $\{A',B'\}$ and is $(\beta_1,\beta_2)$-medium under $\{A,B\}$.
Now let $v\in L$. 
Without loss of generality, assume that $v\in A_{\mathrm{bad}}$, so $v$ is
moved from $A'$ to $B$.

Note that it suffices to show that $v$ is contained in $\b_1 n^3$ edges $e$ of $H$ with $|e\cap A'|=3$ and $|e\cap B'|=1$.
Indeed, after moving the vertices in $L$, at least $\b_1n^3 - 288c' n^3\ge 0.9\b_1 n^3$ such edges become even edges with two vertices in $A$ and two vertices in $B$.
Thus the number of $\b_2$-medium split pairs containing $v$ is at least $(0.9\b_1 n^3 - n\cdot \b_2 n^2)/\binom n2 > \b_1 n$, and similarly the number of $\b_2$-medium connate pairs containing $v$ is at least $(0.9\b_1 n^3 - n\cdot \b_2 n^2)/\binom n2 > \b_1 n$, as desired.

Since $v$ is $(\b_1',\b_2')$-bad with respect to $\{A', B'\}$, it is contained in at most $\beta_1'n$ $\beta_2'$-medium split (or connate) pairs under $\{A',B'\}$.
Thus, $v$ was in at least $(1/3+\g/3-\beta_1')n\ge n/3$ $\beta_2'$-bad split (or connate) pairs $p$ under $\{A',B'\}$, where every such $p$ is contained in at most $\beta_2'\binom n2$ even edges.
By the minimum codegree condition, we infer that $p$ is in at least $\frac12 \frac n3\delta_3(H)\ge \frac{n^2}{18}$ edges of $H$ with two or three vertices in $A'$ -- indeed, if $p$ is split then we can consider all triples $p\cup \{u\}$ with $u\in A'$, or $p$ is connate and we consider all triples $p\cup \{u\}$ with $u\in B'$.
Therefore, at least $\frac{n^2}{20}$ such edges contains exactly three vertices of $A'$.
Since there are $n/3$ choices for such pair $p$, we obtain at least $\frac13\frac n3\frac{n^2}{20} = \frac{n^3}{180}$ edges containing $v$ and two other vertices of $A'$ and we are done.

The argument
for vertices moved from $B'$ to $A$ is identical with the roles of $A'$ and
$B'$ interchanged.
Hence every vertex of $H$ is $(\beta_1,\beta_2)$-medium with
respect to $\{A,B\}$. 
\end{proof}

\subsection{Proof of Lemma \ref{lem:bridge}}

We prove the bridge lemma.  The construction is split into several
local ingredients.  
First we show that medium pairs can be upgraded to good pairs inside suitable edges, and then we show that we can cover the exceptional vertices by a short path.  
Finally we use the even-goodness to adjust the residue of the bridge so that the remaining vertex set is suitable for the final Hamilton path lemma.

\begin{proposition}\label{prop:medium-to-good}
Assume Setup~\ref{setup:evenextr}, and suppose that $\delta_3(H)\geq n/3+\g n$ and that $\{A, B\}$ is a $(c,\g/4)$-even-extremal bipartition of $V$.
If $R\subseteq V$ satisfies $|R| \leq \frac{1}{3} \beta n$, then for every $\beta$-medium pair $p_1$ there exists a $\theta$-good pair $p_2 \in \binom{V \setminus R}{2}$ such that $p_1\cup p_2$ is an even edge.
\end{proposition}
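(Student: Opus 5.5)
A direct counting argument should suffice. The statement says ``$\beta$-medium'', which I read as a pair lying in at least $\beta\binom n2$ even edges, exactly as in Definition~\ref{def:good}(ii) with $\beta$ in place of $\beta_2$. Fix such a pair $p_1$ and let $\mathcal N$ be the family of pairs $p_2$ with $p_2\cap p_1=\emptyset$ and $p_1\cup p_2$ an even edge. By the medium hypothesis, $|\mathcal N|\ge \beta\binom n2$. We need one member of $\mathcal N$ that avoids $R$ and is $\theta$-good. So it is enough to show that two subfamilies of $\mathcal N$ together contain fewer than $\beta\binom n2$ pairs: the pairs meeting $R$, and the pairs that are not $\theta$-good.

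First, the pairs meeting $R$. There are at most $|R|\,n\le \frac13\beta n^2$ of them. This is about $\frac23\beta\binom n2$, so it is below $\beta\binom n2$ but only by a constant factor. The slack left over is roughly $\frac13\beta\binom n2\approx \frac16\beta n^2$.

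Second, the pairs that are not $\theta$-good. By Proposition~\ref{prop:types}(a) with $t=2$, at most $\eta n^2$ pairs of $V$ are not $\theta$-good. The hierarchy in Setup~\ref{setup:evenextr} gives $\eta\ll\beta$, so $\eta n^2<\frac16\beta n^2$ with room to spare.

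Adding the two bounds, at most $\frac13\beta n^2+\eta n^2<\beta\binom n2$ members of $\mathcal N$ are excluded, once $n$ is large. Hence some $p_2\in\mathcal N$ lies in $\binom{V\setminus R}{2}$ and is $\theta$-good, and $p_1\cup p_2$ is an even edge by the definition of $\mathcal N$. No parity case analysis is needed, because evenness of the edge is built into $\mathcal N$.

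The only step needing care is the constant bookkeeping in the first bound. The factor $\frac13$ in $|R|\le\frac13\beta n$ is what keeps $|R|n$ below $\beta\binom n2\approx\frac12\beta n^2$. The resulting margin of about $\frac16\beta n^2$ must then absorb the $\eta n^2$ non-good pairs, and it does since $\eta\ll\beta$.
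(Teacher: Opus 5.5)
Your proposal is correct and is essentially the paper's proof. Both arguments take the at least $\beta\binom n2$ even completions of $p_1$, then discard the at most $|R|n\le\beta n^2/3$ pairs meeting $R$ and the at most $\eta n^2$ non-$\theta$-good pairs given by Proposition~\ref{prop:types}(a). The only difference is that the paper splits the budget as $\frac{\beta}{4}\binom n2+\frac{3\beta}{4}\binom n2$ where you use a margin of about $\frac16\beta n^2$.
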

\begin{proof}
Let $p_1$ be a $\beta$-medium pair. Then there are at
least $\beta {n\choose 2}$ pairs $p_2$ such that $p_1\cup p_2$ is an even edge.
By Proposition \ref{prop:types}, the number of non $\theta$-good pairs is at most
$\eta n^2 < \frac{\beta}{4}{n\choose 2}$.
%, which is smaller than
%$\frac{\beta}{4}{n\choose 2}$ by the hierarchy of constants. 
Moreover, the number of pairs meeting $R$ is at most $|R|n\le \beta n^2/3<\frac{3\beta}{4}{n\choose 2}$ for sufficiently large $n$. 
%Hence not all even completions of $p_1$ are either non-$\theta$-good or meet $R$. 
Therefore there exists a $\theta$-good pair $p_2\in {V\setminus R\choose 2}$ such that $p_1\cup p_2$ is
an even edge. 
\end{proof}

The following result allows us to find a short path covering a small given set of vertices.
\begin{proposition}\label{prop:coverex}
Assume Setup \ref{setup:evenextr}, and suppose that $\delta_3(H)\geq n/3+\g n$ and that $\{A, B\}$ is a $(c,\g/4)$-even-extremal bipartition of $V$.
Given a set $X\subseteq V$ of $(\beta_1,\beta_2)$-medium vertices with $|X|\le\eta n$
we can obtain a path $Q$ with $|V(Q)|\le8\eta n$ that consists of even edges and contains all vertices of $X$. %but also obtain a path $Q$ which contains all vertices of $X$ and is a sequence of connate pairs, whose ends are $\theta$-good connate pairs.
Furthermore, we can choose the path $Q$ so that the two ends of $Q$ are either both $\theta$-good connate pairs, or both $\theta$-good split pairs. 
\end{proposition}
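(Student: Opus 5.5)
The plan is to absorb the vertices of $X=\{x_1,\dots,x_m\}$ one at a time into short even-edge segments, and then to link the segments into one path using Proposition~\ref{prop:pairconnect}. Throughout, $R$ denotes the set of vertices used so far. We keep $|R|\le 8\eta n\ll\beta n$, so Propositions~\ref{prop:medium-to-good} and~\ref{prop:pairconnect} always apply, with $\beta_2$ playing the role of $\beta$; this is allowed since $\eta\ll\beta\ll\beta_2$.

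\textbf{Building one segment.} Since $x_i$ is $(\beta_1,\beta_2)$-medium, it lies in at least $\beta_1 n$ $\beta_2$-medium connate pairs and at least $\beta_1 n$ $\beta_2$-medium split pairs. Only $|R|\le 8\eta n<\beta_1 n/2$ of these meet $R$. Hence we can pick a $\beta_2$-medium connate pair $p=x_iy$ with $y\notin R$.

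Next, apply Proposition~\ref{prop:medium-to-good} to $p$ twice, avoiding $R\cup p$ and then also the first pair found. This gives two disjoint $\theta$-good pairs $q,q'$ such that $p\cup q$ and $p\cup q'$ are both even edges. Then $qpq'$ is a $2$-path of two even edges, on $6$ vertices, with $\theta$-good ends and containing $x_i$.

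Because $p$ is connate and $p\cup q$ is even, $q$ is connate, and likewise $q'$ is connate. So every segment has connate $\theta$-good ends. (Starting instead from a medium split pair would give split ends; either choice works.) We add the six vertices to $R$ and move to the next $x_i$. If $x_i$ was already covered by an earlier segment, we skip it.

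\textbf{Linking the segments.} Consecutive segments $q_1P_1q_1'$ and $q_2P_2q_2'$ are joined through their connate $\theta$-good ends $q_1'$ and $q_2$. Proposition~\ref{prop:pairconnect}(2) gives a connate pair $h\subseteq V\setminus R$ with $q_1'\cup h$ and $h\cup q_2$ both edges. Both edges are even, since all three pairs are connate. Each link costs $2$ new vertices.

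\textbf{Size and ends.} The total number of vertices is at most $6|X|+2|X|\le 8\eta n$, which confirms that $|R|$ stays within the bound used above. The resulting path uses only even edges, contains $X$, and has two $\theta$-good connate ends. If split ends are wanted instead, we build every segment from medium split pairs and link with Proposition~\ref{prop:pairconnect}(1).

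\textbf{Main obstacle.} The delicate point is the bookkeeping: the forbidden set must stay below both $\tfrac13\beta n$ and $\mu n$ throughout, and the pair $p$ must avoid previously used vertices. Both are handled by the counts above, since $\eta\ll\beta,\beta_1,\mu$. The parity claim, that the neighbours of a connate pair through even edges are connate, is immediate from the definition of an even edge.
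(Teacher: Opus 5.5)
Your proposal is correct and follows the paper's proof essentially verbatim. You pick a $\beta_2$-medium pair through each $x_i$, extend it on both sides by $\theta$-good pairs of the same type via Proposition~\ref{prop:medium-to-good}, and link consecutive segments through single pairs from Proposition~\ref{prop:pairconnect}, for a total of $8t-2\le 8\eta n$ vertices.

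The differences are cosmetic: you write out the connate case and build the segments one at a time, skipping vertices of $X$ that are already covered, whereas the paper first fixes all the pairs $r_i$ and writes out the split case. Your ordering handles the disjointness bookkeeping slightly more carefully than the paper's phrasing.
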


\begin{proof}
 Let $X=\{x_1,\ldots,x_t\}$.
Now suppose we need to choose $Q$ with split ends, and the connate case can be dealt with verbatim.
Since every vertex is $(\beta_1,\beta_2)$-medium, each vertex is contained in at least $\beta_1n$ $\beta_2$-medium connate pairs and at least $\beta_1n$ $\beta_2$-medium split pairs.
Thus, for each $x_i$, choose greedily a vertex $y_i$ such that $r_i:=\{x_i,y_i\}$ is a
$\beta_2$-medium split pair, and all pairs $r_i$ are mutually disjoint and avoid all
previously used vertices. 
This is possible since $t\le \eta n$ and $\eta\ll\beta_1$.
Using Proposition~\ref{prop:medium-to-good} twice, for each $i\in[t]$ we may choose disjoint $\theta$-good split pairs
$a_i,b_i$, avoiding all vertices already used, such that
$a_i\cup r_i\in E(H)$ and $r_i\cup b_i\in E(H)$ since $\eta\ll\beta$.
Thus $a_ir_ib_i$ is a short split segment containing $x_i$ for each $i\in[t]$.

% By Proposition~\ref{prop:pairconnect}, we obtain that if $u,v$ are
% two disjoint $\theta$-good connate (or split) pairs and $W$ is a set of at most
% $\mu n$ forbidden vertices disjoint from $u\cup v$, then there exists a
% connate (split) pair $h\subseteq V\setminus W$ such that
% $u\cup h\in E(H)$ and $h\cup v\in E(H)$. 
Now by Proposition~\ref{prop:pairconnect}, we can connect all short segments $a_ir_ib_i, i\in[t]$ greedily, each time with $R$ as the set of vertices already chosen. 
For each $i\in[t-1]$, we choose a split pair $h_i$ such that $b_i\cup h_i\in E(H)$ and $h_i\cup a_{i+1}\in E(H)$.
%All these pairs are chosen disjointly and avoid all previously used vertices since $\eta\ll\mu$.

Now define $Q:=a_1r_1b_1h_1a_2r_2b_2\cdots
h_{t-1}a_tr_tb_t$, which is a sequence of split pairs.
%%If $t=0$, we omit all segments $a_ir_ib_i$ and take the sequence $q_Ah_0c_AQ_0c_Bh_1q_B$ instead.
By construction, every two consecutive pair-vertices in this sequence
form an even edge of $H$ and the ends are $\theta$-good, which finishes the proof.
\end{proof}

The following result is the key result for us to build a constant-length bridge, overcoming the parity issue.

\begin{proposition}\label{prop:two_odd_edges_connate_bridge}
Assume Setup \ref{setup:evenextr}.
Let $Z\subseteq V$ be a set with $|Z|\le 10$. Suppose that $H_{\mathrm{odd}}-Z$ contains an edge.
Then one can find either
\begin{itemize}
\item a path with one $\theta$-good connate end and one
$\theta$-good split end, using 8 vertices with an even number of vertices from each of
$A,B$, or 
\item a path with two $\theta$-good connate ends, using 10 vertices with an odd number of vertices from each of $A,B$.
\end{itemize}
\end{proposition}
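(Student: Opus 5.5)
The plan is to build the path around the given odd edge $e\in E(H_{\mathrm{odd}}-Z)$ and then extend it at both ends by even edges to reach $\theta$-good pairs, using Proposition~\ref{prop:medium-to-good} with a forbidden set $R\supseteq Z\cup e$ of constant size. I take $\{A,B\}$ to be $(c,\g/4)$-even-extremal, as in the surrounding section, so that Propositions~\ref{prop:types} and~\ref{prop:medium-to-good} apply. The parity bookkeeping rests on two facts.
\begin{itemize}
\item Every partition of an odd edge into two pairs consists of one connate pair and one split pair. For example, if $e=\{a_1,a_2,a_3,b\}$, then each pairing has the form $\{a_ia_j, a_kb\}$, and the case $|e\cap A|=1$ is symmetric.
\item An even edge containing a connate pair has its other pair connate, and an even edge containing a split pair has its other pair split.
\end{itemize}
Hence, if $e=c\cup s$ with $c$ connate, $s$ split, and both $c,s$ are $\beta$-medium, Proposition~\ref{prop:medium-to-good} gives disjoint $\theta$-good pairs $c'$ (connate) and $s'$ (split) outside $R$ with $c\cup c'$ and $s\cup s'$ even edges. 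The path $c'\,c\,s\,s'$ then has $8$ vertices, and $|V\cap A|$ on it is $\text{even}+\text{even}+1+1$, which is even; the same holds for $B$. This is the first alternative.

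The remaining work is to handle non-medium pairs. If the split pair $s$ is $\beta$-bad, then, since $\deg_H(s)\ge(1/3+\g)\binom{n-2}{2}$, it lies in at least $n^2/6$ odd edges $s\cup c''$. Each such $c''$ is connate, because the edge is odd and $s$ contains exactly one vertex of $A$. By Proposition~\ref{prop:types}(b), at most $36cn^2$ pairs are bad, so we may pick such a $c''$ that is $\beta$-medium and avoids $R$. If in addition $c$ is medium, we extend $c$ and $c''$ to $\theta$-good connate pairs $c',c'''$ via Proposition~\ref{prop:medium-to-good}. This gives the path $c'\,c\,s\,c''\,c'''$, which has $10$ vertices and two $\theta$-good connate ends. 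Its $A$-count is a sum of four even numbers and $1$, hence odd, and its $B$-count is $10$ minus an odd number, also odd. This is exactly the second alternative.

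The main obstacle is the case in which the connate pair $c$ itself is not medium. Then the odd edges through $c$ have the form $c\cup s''$ with $s''$ split, so the naive extension produces split ends, which fit neither alternative. I would first try to avoid this case by using the three different pairings of $e$. If that fails, I would use the abundance of odd edges through the bad pair $c$: there are at least $n^2/6$ odd edges $f=c\cup s''$, and each of them can be repaired into a different pairing of $f$ in which the connate pair is medium. To find such an $f$, I would count over these $\Omega(n^2)$ odd edges and discard those whose alternative pairings contain one of the at most $36cn^2$ bad pairs. The difficulty is that the alternative connate pair of $f$ always contains a vertex of $c$, so this count only works if the relevant vertex lies in few bad pairs. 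That is guaranteed if the vertices of $e$ are $(\beta_1,\beta_2)$-medium; otherwise I expect to need a case distinction, replacing $e$ by a nearby odd edge whose vertices are typical. The resulting edge $f$ then plays the role of $e$ in the first two paragraphs, yielding one of the two configurations. I expect the delicate point to be getting this counting right without any medium-vertex hypothesis in the statement.
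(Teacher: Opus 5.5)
Your extension steps are correct and coincide with the paper's. Once you have an odd edge $s\cup p$ avoiding $Z$ with $p$ a medium connate pair, the case $s$ medium gives the $8$-vertex path and the case $s$ bad gives the $10$-vertex path, with exactly the parities you compute. The gap is the case you flag as the main obstacle: an odd edge all of whose connate pairs are bad, e.g.\ $|e\cap A|=3$ and all three pairs of $e\cap A$ are $\beta_2$-bad. This case is the real content of the proposition, and your proposal leaves it open. The repair you suggest does not work. A $(\beta_1,\beta_2)$-medium vertex $a_1$ is guaranteed at least $\beta_1 n$ medium connate partners, not few bad ones. Nothing forces any of those partners to be the third $A$-vertex $a$ of an odd edge $a_1a_2ab$ through the bad pair $c=a_1a_2$, since these $a$ range over a set of size about $n/12$ that can miss them entirely. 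Moreover, the statement has no vertex hypothesis at all. Finally, \emph{replacing $e$ by a nearby odd edge with typical vertices} is not substantiated, because every odd edge through $c$ still contains $a_1$ and $a_2$.

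The missing idea is that you need not keep $e$. It suffices to find \emph{some} odd edge of $H-Z$ containing a $\beta_2$-medium connate pair, and its existence follows from a propagation-of-badness count.
\begin{itemize}
\item Suppose there is no such edge. Say the given edge meets $A$ in three vertices (otherwise swap $A$ and $B$). Let $G$ be the graph on $A\setminus Z$ whose edges are the $\beta_2$-bad pairs; $G$ is nonempty because the connate pairs of the given edge are bad.
\item Each edge $uv$ of $G$ lies in at least $\frac13\binom n2-10n$ odd edges avoiding $Z$, all of the form $uvab$ with $a\in A\setminus Z$ and $b\in B$. By assumption $ua$ and $va$ are bad, so $a\in N_G(u)\cap N_G(v)$.
\item Each $a$ pairs with at most $|B|\le 2n/3$ vertices $b$, so $|N_G(u)\cap N_G(v)|\ge n/12$.
\item Hence every non-isolated vertex of $G$ has degree at least $n/12$, so $e(G)\ge n^2/288$. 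This contradicts the bound of $36cn^2$ bad pairs from Proposition~\ref{prop:types}(b).
\end{itemize}
Your local search can be rescued the same way. If no odd edge through $c$ has a medium connate pair, then $a_1$ has at least $n/12$ bad partners $a$. Rerunning the argument on each bad pair $a_1a$ gives each such $a$ bad degree at least $n/12$, hence $\Omega(n^2)$ bad pairs. With such an edge in hand, your two extension steps complete the proof exactly as in the paper.
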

\begin{proof}
    We first claim that if $H_{\mathrm{odd}}-Z$ contains an odd edge, then $H_{\mathrm{odd}}-Z$ contains an odd edge $e=s\cup p$ where $s$ is a split pair and $p$ is a $\beta_2$-medium connate pair.

Indeed, let $\mathcal B$ be the set of all $\beta_2$-bad pairs. 
By Proposition \ref{prop:types} (b), we obtain
$|\mathcal B|\le {36c n^2}$.
Suppose, for a contradiction, that every odd edge contains no
$\beta_2$-medium connate pair. 
Take an odd edge $f_0$. 
By symmetry we may assume that $|f_0\cap A|=3$ and $|f_0\cap B|=1$. Then all three pairs of
$f_0\cap A$ are $\beta_2$-bad. Choose one of them, say $uv\in\binom A2$.

Let $G$ be the graph on $A$ whose edges are precisely the $\beta_2$-bad pairs contained in
$A$. Since $uv$ is $\beta_2$-bad, it is contained in at least $\frac13\binom n2$ odd edges in $H$.
Thus, it is contained in at least $\frac16\binom n2$ odd edges in $H-Z$.
Each such odd edge has the form $uvab$ with
$a\in A$ and $b\in B$. By our assumption, both $ua$ and $va$ are $\beta_2$-bad,
and hence $a\in N_{G}(u)\cap N_{G}(v)$. Since for each fixed $a\in A$ there are at most
$|B|\le 2n/3$ choices for $b$, it follows that
$|N_{G}(u)\cap N_{G}(v)|\ge\frac{\binom{n}{2}}{6|B|}\geq\frac{n}{12}$.
In particular, we have $d_{G}(u)\ge \frac{n}{12}$.

%Set $T:=N_{F_A}(u)\cap N_{F_A}(v)$. Then $|T|\ge\frac{n}{12}$. 
% For every $a\in T$, the pair
% $ua$ is $\beta_2$-bad. Repeating the preceding argument with $ua$ in place of $uv$, we obtain
% $d_{F_A}(a)\ge \frac{n}{12}$ for every $a\in T$. 
The same conclusion can be obtained for each edge $uv$ of $G$.
Thus, for any $a\in A$, if $d_{G}(a)>0$, then $d_{G}(a)\ge \frac{n}{12}$.
As $e(G)>0$, we have 
$e(G)\ge \frac12 (\frac{n}{12})^2\ge \frac{n^2}{288}$. 
This contradicts $e(G)\le |\mathcal B|\le {36c n^2}$,
provided $c\ll 1$. The case $|f_0\cap A|=1$ is symmetric, using the
$\beta_2$-bad pair graph in $B$.

Now we can choose an odd edge $e=s\cup p$ where $s$ is a split pair and $p$ is a $\beta_2$-medium connate pair.
First suppose that $s$ is also $\beta_2$-medium. Since $\beta\ll\beta_2$, both $p$ and $s$ are
$\beta$-medium. By Proposition \ref{prop:medium-to-good}, avoiding $Z$, we may
choose a $\theta$-good connate pair $p^+$ avoiding $Z\cup e$ such that $p^+\cup p\in E(H)$ and a $\theta$-good
split pair $s^+$ avoiding $Z\cup e\cup p^+$ such that $s\cup s^+\in E(H)$. Here $p^+$ is connate because $p\cup p^+$ is
even, and $s^+$ is split because $s\cup s^+$ is even. Then $p^+pss^+$ is a path
with a $\theta$-good connate end and a $\theta$-good split end, using even number of vertices from $A$ and from $B$.

Finally suppose that $s$ is $\beta_2$-bad. Since $s$ is split and $\beta_2$-bad, it has at least
$\frac13\binom n2$ odd completions, which are connate pairs.
Since $|\mathcal B|\le {36c n^2}$, and the
number of connate pairs meeting $Z\cup e$ is only $O(n)$, we may choose a $\beta_2$-medium connate pair $p'$ disjoint from
$Z\cup e$ such that $s\cup p'\in E(H)$. Applying Proposition \ref{prop:medium-to-good} to $p$ and $p'$, again
avoiding chosen vertices, we obtain disjoint $\theta$-good connate pairs
$p^+$ and $p^*$ such that $p^+\cup p\in E(H)$ and $p'\cup p^*\in E(H)$. Then
$p^+psp'p^*$ is a path whose two ends are $\theta$-good connate pairs using odd number of vertices from $A$ and from $B$.
%
%Moreover, in the first outcome, namely the path $p^+pss^+$, the path uses an even number of vertices from $A$ and an even number of vertices from $B$. %Indeed, the only split pairs on this path are $s$ and $s^+$, and each split pair contributes one vertex to each of $A$ and $B$, while every connate pair contributes an even number of vertices to each side.
%In the second outcome, namely the path $p^+psp'p^{\prime +}$, the path uses an odd number of vertices from $A$ and an odd number of vertices from $B$. 
%Indeed, this path contains exactly one split pair, namely $s$, and all other pairs are connate. Hence the split pair contributes one vertex to each side, while the connate pairs contribute even numbers to each side.
\end{proof}

After the exceptional vertices have been covered, we still need to correct
the global residue.  
The following lemma extracts the required small bridge from the parity configurations appearing in the definition of even-goodness.

\begin{lemma}\label{lem:bridgeconstruct} 
Assume Setup~\ref{setup:evenextr}, and let $\{A,B\}$ be a $(c,\g/4)$-even-extremal bipartition of $V$. 
Suppose that $\{A,B\}$ is even-good and every vertex is $(\beta_1,\beta_2)$-medium. 
Then we obtain either

\begin{enumerate}
    \item[(i)] a connate bridge $Q$ with two $\theta$-good connate ends such that both $|A\setminus V(Q)|$ and
    $|B\setminus V(Q)|$ are even; or

    \item[(ii)] a split bridge $P$ with two $\theta$-good split ends such that
    $|A\setminus V(P)|=|B\setminus V(P)|$.
\end{enumerate}

Moreover, the bridge has at most 22 vertices.
%and can be constructed avoiding a given set of at most $\mu n$ vertices.
\end{lemma}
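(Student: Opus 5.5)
We split according to which condition of Definition~\ref{def:evenoddgood} certifies even-goodness, and in each case build a bounded-size path from the witnessing odd edges. Connections are made with Propositions~\ref{prop:medium-to-good} and~\ref{prop:pairconnect}, and we avoid a fixed set of $O(1)$ vertices. Two counting facts are used throughout. A path consisting only of even edges with connate ends uses an even number of vertices from $A$ and an even number from $B$, since each connate pair contributes an even amount to each side. A path with split ends whose pair sequence is all split uses equally many vertices of $A$ and of $B$. Since $n$ is even, $|A|\equiv|B|\pmod 2$.

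\emph{Case (\ref{E0}).} If $|A|$ is even, then $|B|$ is even too. We take an even edge $p\cup p'$ with $p,p'$ connate and $\beta$-medium (most connate pairs are medium by Proposition~\ref{prop:types}). We extend both ends by Proposition~\ref{prop:medium-to-good} to $\theta$-good connate pairs. This gives a connate bridge on $8$ vertices, with even counts on each side, so (i) holds. If instead $|A|=|B|$, we do the same with split pairs, which gives (ii).

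\emph{Case (\ref{E1}).} Here $|A|$ is odd and $H_{\rm odd}$ has two disjoint edges. We apply Proposition~\ref{prop:two_odd_edges_connate_bridge} to the first edge, obtaining a path $Q_1$, and then again with $Z\supseteq V(Q_1)$, using the second disjoint edge (after possibly moving to another odd edge avoiding $Z$), obtaining $Q_2$. The parity of $|V(Q_i)\cap A|$ is even for the first outcome type and odd for the second. If either $Q_i$ is of the second type, it alone gives (i), since it removes an odd number from each side and $|A|,|B|$ are odd. If both are of the first type, each $Q_i$ has one connate end and one split end. We join their split ends by Proposition~\ref{prop:pairconnect}(1) through a split pair, which gives a path with two connate ends. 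The parity bookkeeping must then be checked. Here I expect an imbalance: the odd edge in each $Q_i$ contributes $3{:}1$ or $1{:}3$. So I would instead choose the connecting orientation so that the total contribution makes $|A\setminus V(Q)|$ and $|B\setminus V(Q)|$ even. If that fails, I would join the connate ends and output a split bridge whose $A$/$B$ imbalance, the sum of the two $\pm2$ contributions, cancels when the two odd edges are of opposite types ($3{:}1$ and $1{:}3$). If the two odd edges are of the same type, I would replace one of them, using the abundance argument from the proof of Proposition~\ref{prop:two_odd_edges_connate_bridge}.

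\emph{Cases (\ref{E2})--(\ref{E4}).} Two odd edges $e,e'$ share a pair $q=e\cap e'$, so $(e\setminus q)\,q\,(e'\setminus q)$ is already a $2$-path of length two on $6$ vertices. In (\ref{E2}) $q$ is split and both end pairs are connate. Each odd edge contributes one extra vertex to one side, and a direct count shows the path uses an odd number from each side, or else forms a split bridge imbalanced by $0$. We then extend both ends by medium-to-good steps. Where an end is bad, we first pass through one odd completion, as in the $\beta_2$-bad case of Proposition~\ref{prop:two_odd_edges_connate_bridge}. In (\ref{E3}) $q\in\binom A2$, and the two remaining pairs are split. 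Each of the two edges is of type $3{:}1$, so the path uses $4$ vertices of $A$ and $2$ of $B$. After extending with good split pairs, the remainder satisfies $|A\setminus V(P)|=|B|+2-4+\ldots=|B\setminus V(P)|$, which gives (ii). Case (\ref{E4}) is symmetric.

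The main obstacle is guaranteeing that the ends of the witnessing odd edges can be upgraded to $\theta$-good ends. These edges are only given to exist, and their pairs may be $\beta_2$-bad. I would handle this as in Proposition~\ref{prop:two_odd_edges_connate_bridge}: a bad pair has $\Omega(n^2)$ odd completions, which can be used to reroute through a medium pair while tracking how the parity changes. A second obstacle is the parity bookkeeping in case (\ref{E1}). Throughout, the vertex count stays at most $22$: two $10$-vertex pieces plus a $2$-vertex connector.
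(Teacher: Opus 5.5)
Your case split and the constructions you name match the paper's. The two points you flag as obstacles are where the proposal breaks down, and in each case the missing step is a short observation.

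In Case (\ref{E1}) there is no parity problem to fix. Proposition~\ref{prop:two_odd_edges_connate_bridge} itself says that a first-type path meets each of $A$ and $B$ in an even number of vertices. Join the two split ends through a split pair $h$. The result has connate ends and meets each side in an odd number of vertices, which is exactly (i), since $|A|$ and $|B|$ are odd. The $A$/$B$ imbalance, including the $3{:}1$ shape of the odd edges, plays no role in (i).

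Your fallback of joining the connate ends to produce a split bridge cannot work. Once (\ref{E0}) fails, $|A|-|B|$ is a nonzero even number that may be of order $n$. But (ii) would need a path on at most $22$ vertices to have imbalance exactly $|A|-|B|$. Also, apply the proposition to the first edge with $Z=e_2$, so that $e_2$ is still available for the second application.

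In Cases (\ref{E2})--(\ref{E4}) the missing idea is to handle (\ref{E1}) first and then use its failure. Suppose $H_{\rm odd}$ has no two vertex-disjoint edges. Then both end pairs of $(e\setminus q)\,q\,(e'\setminus q)$ are automatically $\theta$-good. If, say, $e\setminus q$ were not, it would lie in more than $\theta^3\binom n2$ odd edges. One of these avoids $e'$, since $(e\setminus q)\cap e'=\emptyset$, which gives two disjoint odd edges. Hence the $6$-vertex path is already the bridge, with no extension needed.

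Your proposed repair for a $\beta_2$-bad end, passing through one odd completion, fails as stated. An odd completion of a connate end is a split pair, and an odd completion of a split end is a connate pair. So the rerouted path has ends of mixed type. In (\ref{E2}) its intersection with $A$ also becomes even. Neither (i) nor (ii) results.

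Your (\ref{E0}) construction is fine, though longer than the paper's. The paper simply takes a $\theta$-good pair and a $\theta$-good even neighbour via Proposition~\ref{prop:medium-to-good}.
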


\begin{proof}
%Let $Z\subseteq V$ be a set of already used vertices with $|A\cap Z|,|B\cap Z|\le \mu n$.
% Given two $\theta$-good split pairs $s_1,s_2$, we can find a split-pair path
% $s_1t_1t_2s_2$.  
% Indeed, by Proposition \ref{prop:pairconnect}(3), choose a
% $\theta$-good split pair $t_1\in V\setminus Z$ with $s_1\cup t_1\in E(H)$.  Then apply
% Proposition \ref{prop:pairconnect} to $t_1$ and $s_2$ to find a split pair
% $t_2$ such that $t_1\cup t_2\in E(H)$ and $t_2\cup s_2\in E(H)$.
% Thus between two good split pairs we may choose a split-pair connector
% using either three split pairs or four split pairs.  This allows us to
% control parity.
We now separate the cases according to what structure the even-goodness gives us.

\textbf{Case (\ref{E0}).}  
In this case there is no parity issue, and we just include an even path of length 2 with good ends.
Suppose first that $|A|$ is even. Since $n$ is even,
$|B|$ is also even.  
Choose a $\theta$-good connate pair $q$, and by Proposition~\ref{prop:medium-to-good}, we find a $\theta$-good connate pair $q'\in N_H(q)$. 
% This can be done since, by
% Proposition \ref{prop:types} and Claim \ref{claim:few-exceptional-vertices}, only $\eta n^2$ pairs are not $\theta$-good, while $|A|,|B|\geq n/3+\g n/4$.
Let $Q=qq'$.
Hence both $|A\setminus V(Q)|$ and
$|B\setminus V(Q)|$ are even.  
%\red{BW: Why do we use Proposition~\ref{prop:medium-to-good} twice? $qq'$ is enough. You are right.}

Now suppose that $|A|=|B|$.
Choose a $\theta$-good split pair $s$ and by Proposition \ref{prop:medium-to-good}, we find a $\theta$-good split pair $s'\in N_H(s)$.
Let $P=ss'$.
Therefore
$|A\setminus V(P)|=|B\setminus V(P)|$.  

\textbf{Case (\ref{E1}).} Suppose that $H_{\rm odd}$ contains two
vertex-disjoint odd edges $e_1$ and $e_2$.
We may assume that Case (\ref{E0}) does not hold.
Thus $|A|$ is odd, and since $n$ is even, $|B|$ is odd as well.

Apply Proposition \ref{prop:two_odd_edges_connate_bridge} to $e_1$ and $e_2$ in the following manner.
First, apply the proposition to $e_1$ with $Z=e_2$ and denote the path we obtain by $G_1$; then apply it to $e_2$ with $Z=V(G_1)$ and denote the path we obtain by $G_2$.
%avoiding $e_2$, we obtain a path $G_1$ disjoint from $e_2$ with at most 10 vertices.
Therefore, both $G_1$ and $G_2$ have $\theta$-good ends and at most 10 vertices.
If $G_i$ has two $\theta$-good connate ends, then we set $Q:=G_i$ and discard the other path, and note that both $|A\cap V(Q)|$ and $|B\cap V(Q)|$ are odd.

So we may assume that for $i=1,2$, $G_i$ has a $\theta$-good connate end, denoted by $c_i$ and a $\theta$-good split end, denoted by $s_i$. 
%Orient $G_1$ from its connate end to its split end, and denote these ends by $c_1$ and $s_1$, respectively. 
Also, $G_i$, $i=1,2$ uses an even number
of vertices from $A$ and an even number of vertices from $B$.
As $s_1$ and $s_2$ are two disjoint $\theta$-good split pairs, by Proposition~\ref{prop:pairconnect},
avoiding $V(G_1)\cup V(G_2)$, there exists a split pair $h\in N_H(s_1)\cap N_H(s_2)$.
Set $Q:=G_1s_1hs_2G_2$. 
Then $Q$ is a path with 22 vertices whose two ends $c_1$ and $c_2$ are $\theta$-good connate pairs.
Furthermore,  $|A\cap V(Q)|$ and $|B\cap V(Q)|$ are both odd.

Thus, in all cases, we obtain a path $Q$ with at most 22 vertices whose two ends are
$\theta$-good connate pairs, and such that $|A\cap V(Q)|$ and $|B\cap V(Q)|$ are both odd. 
Since $|A|$ and $|B|$ are both odd, both
$|A\setminus V(Q)|$ and $|B\setminus V(Q)|$ are even, as desired.

\textbf{Case (\ref{E2}).}  We may assume that Cases (\ref{E0}) and (\ref{E1}) do not hold. Thus $|A|$ and $|B|$ are both
odd, and $H_{\mathrm{odd}}$ contains no two vertex-disjoint edges. 
By Even-goodness, suppose
$H_{\mathrm{odd}}$ contains two odd edges
$e_1=q_A\cup s$ and $e_2=s\cup q_B$,
where $s$ is a split pair and $q_A,q_B$ are connate pairs.

We note that $q_A$ and $q_B$ are $\theta$-good since otherwise there will be two vertex-disjoint odd edges.
Define
$Q:=q_Asq_B$.
Then $Q$ is a path with 6 vertices whose ends are $\theta$-good connate pairs. 
As $Q$ contains one split pair and two connate pairs, $|A\cap V(Q)|$ and
$|B\cap V(Q)|$ are both odd. Since $|A|$ and $|B|$ are both odd, it follows that
$|A\setminus V(Q)|$ and $|B\setminus V(Q)|$ are both even. 

\textbf{Case (\ref{E3}).}  Suppose that $|A|=|B|+2$ and that $H_{\rm odd}$
contains two odd edges whose intersection is a connate pair in $A$.
Write these edges as $e_1=s_1\cup p$ and $e_2=p\cup s_2$, where
$p\in\binom A2$ and $s_1,s_2$ are split pairs.

Since Case (\ref{E1}) does not hold, both $s_1$ and $s_2$ must be
$\theta$-good since otherwise we would have two disjoint odd edges.
The path $P:=s_1ps_2$ has split ends.  It uses four vertices from $A$
and two vertices from $B$.  Since $|A|=|B|+2$, we have
$|A\setminus V(P)|=|B\setminus V(P)|$.  Hence $P$ is a split bridge.

\textbf{Case (\ref{E4}).}  This case is symmetric to Case (\ref{E3}).

Now the proof is completed.
\end{proof}

%Next, our goal is to extend the bridges to cover all exceptional vertices with the same parity as bridges in Lemma \ref{lem:bridgeconstruct}.
Now we are ready to prove Lemma~\ref{lem:bridge}.

\begin{proof}[Proof of Lemma~\ref{lem:bridge}]
Recall that $X_E$ is the set of exceptional vertices and $|X_E|\le \eta n$ by Claim~\ref{claim:few-exceptional-vertices}.
%, we have $|X_E|\le \eta n$.
We apply Lemma~\ref{lem:bridgeconstruct} to $H$ and separate the proof according to outcome of the lemma.

(1) Suppose Lemma~\ref{lem:bridgeconstruct} returns a connate bridge $Q_0$ with two $\theta$-good connate ends $c_1$ and $c_2$ such that both $|A\setminus V(Q_0)|$ and $|B\setminus V(Q_0)|$ are even.
By Proposition \ref{prop:coverex}, we obtain a path $Q_1$ containing all vertices of $X_E\setminus V(Q_0)$ whose ends $c,c'$ are $\theta$-good connate pairs.
Furthermore, $Q_1$ is a sequence of connate pairs.
Finally, we connect $\theta$-good connate pairs $c_2$ and $c$ by $c_2hc$, and such connate pair $h$ exists by Proposition~\ref{prop:pairconnect}.

Now let $Q:=c_1Q_0c_2hcQ_1c'$; if $X_E\setminus V(Q_0)=\emptyset$, then take
$Q:=Q_0$ instead.
Thus, $X_E\subseteq V(Q)$ and both ends of $Q$ are $\theta$-good connate pairs.
Since $|A\setminus V(Q_0)|$ and $|B\setminus V(Q_0)|$ are even, 
we have $|A\setminus V(Q)|$ and $|B\setminus V(Q)|$ are even.
Finally, 
$|V(Q)|\le |V(Q_0)|+8\eta n+2\le \mu n$ as $\eta\ll\mu$.

(2) Suppose Lemma~\ref{lem:bridgeconstruct} returns a split bridge $P_0$ with good
split ends $s_1,s_2$. 
By Proposition \ref{prop:coverex}, we obtain a path $P_1$ containing all vertices of $X_E\setminus V(P_0)$ whose ends $s,s'$ are $\theta$-good split pairs.
Furthermore, $P_1$ is a sequence of split pairs.
Finally, we connect $\theta$-good split pairs $s_2$ and $s$ by $s_2s_0s$, and such split pair $s_0$ exists by Proposition \ref{prop:pairconnect}. 

Similarly, let $P=s_1P_0s_2s_0sP_1s'$; if $X_E\setminus V(P_0)=\emptyset$, then take $P:=P_0$.
Thus, $X_E\subseteq V(P)$ and both ends of $P$ are $\theta$-good split pairs. 
Since $|A\setminus V(P_0)|=|B\setminus V(P_0)|$, we have $|A\setminus V(P)|=|B\setminus V(P)|$.
Finally, 
$|V(P)|\le |V(P_0)|+8\eta n+2\le \mu n$ as $\eta\ll\mu$.
\end{proof}

\subsection{Proofs of Lemmas \ref{lem:connateHP} and \ref{lem:splitHP}}
After the exceptional vertices have been removed, the final covering step is essentially a Hamilton path problem in a dense auxiliary graph.
We first need the following lemma.

\begin{lemma}\label{lem:hpframework}
Let $0<1/n\ll \lambda\ll \gamma\ll 1$. 
Let $R$ be either the
complete graph on a set $U$ of $n\in 2\mathbb N$ vertices, or the complete balanced bipartite graph with vertex classes $A_U,B_U$ and $|A_U|=|B_U|=n/2$.
Let $F=(U,E)$ be a spanning subgraph of $R$ such that
$d_F(v)\ge d_R(v)-\lambda n$ for every $v\in U$. Let $\Gamma$ be a graph
with vertex set $E$ such that $\delta(\Gamma)\ge (1/2+\gamma)|E|$.

Then for any two disjoint edges $p,q\in E$, there exists a perfect matching
$M\subseteq E$ of $F$ containing $p$ and $q$ such that, for every $e\in E$,
$|N_\Gamma(e)\cap M|\ge (1/2+\gamma/6)|M|$.
\end{lemma}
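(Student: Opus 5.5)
The plan is to take $M$ to be a uniformly random perfect matching of $F$ that contains $p$ and $q$, show that each fixed $e\in E$ fails the required bound with probability $\exp(-\Omega(n))$, and finish with a union bound over the $O(n^2)$ members of $E$. First we need such matchings to exist, and in a form we can sample nicely. Let $F'=F-(p\cup q)$ on $U'=U\setminus(p\cup q)$. Every $v\in U'$ still satisfies $d_{F'}(v)\ge d_R(v)-\lambda n-4$. In the complete case this is $\ge |U'|-2\lambda n$. In the bipartite case $p,q$ are edges of $R$, so each takes one vertex from each side and $F'$ is balanced with minimum degree $\ge |U'|/2-2\lambda n$. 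Either way $F'$ is almost complete, or almost complete balanced bipartite, with only $\lambda n$ defect per vertex. Hence it has many perfect matchings: use Dirac, or Hall in the bipartite case.

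Rather than a uniform perfect matching, whose edge marginals are awkward, I would build $M$ in two stages. \textbf{Stage 1} takes $M_0$, a random set of $m=(1-\sqrt\lambda)n/2$ disjoint edges of $F'$, chosen as the first $m$ pairs of a uniformly random ordering/pairing of $U'$ (respecting sides in the bipartite case), keeping only pairs that are edges of $F'$. Since each vertex misses at most $2\lambda n$ partners, all but $O(\lambda n)$ of the first $m$ pairs are edges with high probability. A cleaner alternative is to take a uniformly random perfect matching of $R'=R-(p\cup q)$ and discard its non-$F'$ edges, which number $O(\lambda n)$ with high probability by Markov/concentration. \textbf{Stage 2} completes the partial matching on the $O(\sqrt\lambda n)$ leftover vertices. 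In the induced subgraph of $F'$ on those leftovers every vertex still has degree at least half the leftover size: degree defects are at most $2\lambda n\ll \sqrt\lambda n$, so Dirac applies in the complete case and Hall in the balanced bipartite case, provided the leftover set is itself balanced, which holds by construction.

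For the concentration step, fix $e\in E$. Since $|N_\Gamma(e)|\ge(1/2+\gamma)|E|$ and $|E|\ge |E(R)|-\lambda n^2$, the set $N_\Gamma(e)$ contains at least a $(1/2+\gamma-O(\lambda))$-fraction of the edges of $R$. For a uniformly random perfect matching of $R'$, each edge of $R'$ lies in it with the same probability. So the expected number of its edges lying in $N_\Gamma(e)$ is at least $(1/2+\gamma-O(\lambda))n/2$, after subtracting $O(n)$ edges meeting $p\cup q$. This count is a $2$-Lipschitz function under switchings (swapping two matching edges changes it by at most $2$). By the standard martingale/switching concentration for random perfect matchings, or McDiarmid applied to a random permutation, it deviates by $\gamma n/20$ with probability $\exp(-\Omega(\gamma^2 n))$. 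Stage 1's discarding and Stage 2's completion alter only $O(\sqrt\lambda n)$ edges, and together with $p,q$ this costs at most $\gamma n/20$ as $\lambda\ll\gamma$. So $|N_\Gamma(e)\cap M|\ge(1/2+\gamma/6)|M|$ with $|M|=n/2$. A union bound over $|E|\le n^2$ completes the proof.

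The main obstacle is making the concentration rigorous while forcing $p,q\in M$ and staying inside $F$ rather than $R$. The two-stage scheme addresses this by concentrating in the complete structure $R'$ and repairing a negligible number of edges. The one delicate point is balance in the bipartite Stage 2. It is automatic, because discarding a matching edge frees one vertex on each side.
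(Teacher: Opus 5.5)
Your proof is correct if you use the first version of Stage~1, with the random reservoir of roughly $\sqrt\lambda n$ uncovered vertices. It follows the paper's framework:
\begin{itemize}
\item a uniformly random perfect matching of $R-V(p\cup q)$;
\item equal edge marginals, giving $\mathbb E|N_\Gamma(e)\cap M_0|\ge(1/2+\gamma-O(\lambda))(n-4)/2$;
\item concentration plus a union bound over $e\in E$;
\item control of the number of non-$F$ edges via Markov.
\end{itemize}
Your McDiarmid argument with transposition-Lipschitz constant $2$ is a more explicit version of the paper's appeal to a ``standard Chernoff bound for random perfect matchings''. Discarding non-$F$ edges costs nothing, since $N_\Gamma(e)\subseteq E$; only the dropped tail of about $\sqrt\lambda n/2$ edges reduces the count.

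The real difference is the repair step. The paper keeps the full random perfect matching and removes each non-$F$ edge $ab$ by a switching with a good matching edge $xy$. It replaces $ab,xy$ by $ax,by$, or by $ay,xb$ in the bipartite case. You instead complete the matching on the reservoir together with the freed vertices, using Dirac or Hall. Both work. Your version needs only the standard degree theorems; the switching version needs no reservoir.

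Your ``cleaner alternative'' does not combine with Stage~2 as you stated it. Without the reservoir, the leftover set consists only of the $O(\lambda n)$ endpoints of discarded non-$F$ edges. The per-vertex defect $\lambda n$ is then not small compared with the size of that set, so the Dirac/Hall degree condition can fail. In fact the leftover set may span no edge of $F$ at all. For example, let $F$ be $K_U$ minus all pairs inside a set $Q$ with $|Q|=\lambda n$. Then every discarded edge lies inside $Q$, and the leftover set is independent in $F$. In that version you would need the paper's switching step, rematching leftover vertices against edges already in the matching.
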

\begin{proof}
Choose uniformly at random a perfect matching $M_0$ of $R_0:=R-V(p\cup q)$, and set $m:=|M_0|=(n-4)/2$. 
Fix $e\in E$, and let $X_e:=|N_\Gamma(e)\cap M_0|$. 
It is easy to see that for every edge $r\in E(R_0)$, 
\[
\mathbb P[r\in M_0]
=
\frac{m}{|E(R_0)|}.
\]
Hence
\[
\mathbb E X_e
=
\sum_{r\in N_\Gamma(e)\cap E(R_0)}\mathbb P[r\in M_0]
=
\frac{m}{|E(R_0)|}|N_\Gamma(e)\cap E(R_0)|.
\]
Since deleting $V(p\cup q)$ removes at most $4n$ edges from $N_\Gamma(e)$, we have
$
|N_\Gamma(e)\cap E(R_0)|
\ge\left(\frac12+\gamma\right)|E|-4n.
$
Moreover, from $d_F(v)\ge d_R(v)-\lambda n$ for every $v\in U$, we get $|E|\ge |E(R)|-\lambda n^2/2$, while $|E(R_0)|\le|E(R)|$. 
Therefore, since $\lambda\ll\gamma$ and $n$ is sufficiently large,
$
\frac{|E|}{|E(R_0)|}\ge 1-\gamma/10.
$
Consequently,
\[
\mathbb E X_e
\ge
\frac{m}{|E(R_0)|}\left(\left(\frac12+\gamma\right)|E|-4n\right)
\ge
\left(\frac12+\frac{2\gamma}{3}\right)m.
\]

By the standard Chernoff bound for random perfect matchings, we have
\[
\mathbb P\left[
X_e<
\left(\frac12+\frac{\g}{2}\right)m
\right]
\le
\exp(-\Omega_\g(n)).
\]
Taking a union bound over all $O(n^2)$ choices of $e\in E(F)$, with positive probability we have $
X_e\ge
\left(\frac12+\frac{\g}{2}\right)m$
for every $e\in E(F)$.

Since $d_F(v)\ge d_R(v)-\lambda n$ for every $v\in U$, the number of edges
of $R\setminus F$ is at most $\lambda n^2/2$.
Let $Y:=|M_0\cap (E(R)\setminus E)|$. 
%Since $d_F(v)\ge d_R(v)-\lambda n$ for every $v\in U$, we have
Recall that
$|E(R)\setminus E|\le \lambda n^2/2$. Hence
\[\mathbb E Y\le |E(R)\setminus E|\cdot \frac{m}{|E(R_0)|}\le \frac{\lambda n^2}{2}\cdot \frac{m}{|E(R_0)|}\le2\lambda  n.\]
By Markov's inequality, $\mathbb P[Y>\sqrt{\lambda}n]\le \mathbb E[ Y]/(\sqrt{\lambda}n)\le 2\sqrt{\lambda}$. 
Hence, with probability at least $1-2\sqrt{\lambda} > 1/2$, $Y\le \sqrt{\lambda}n$, that is, $M_0$ contains at most $\sqrt{\lambda}n$ non-edges of $F$.

Fix a matching $M_0$ satisfying both properties above.  
We now repair the non-edges of $F$ in $M_0$ one by one. Suppose that
$ab\in M_0\setminus E$. Since $M_0$ is a matching of $R_0$, the edge $ab$
is disjoint from $p\cup q$. We distinguish the two possible choices of
$R$.

If $R$ is the complete graph on $U$ where $|U|=n$, then every vertex has at most
$\lambda n$ non-neighbours in $F$. Hence all but at most $2\lambda n$
vertices are adjacent in $F$ to both $a$ and $b$. Since the current matching
has at most $\sqrt{\lambda}n$ non-edges of $F$, we can choose an edge
$xy$ of the current matching, disjoint from $\{a,b\}$, such that
$xy\in E$ and all four pairs $ax,ay,bx,by$ lie in $E$. Replace $ab,xy$
by $ax,by$.

If $R$ is the complete balanced bipartite graph with vertex classes $A_U,B_U$ and $|A_U|=|B_U|=n/2$, write $a,x\in A_U$ and
$b,y\in B_U$ for an edge $xy$ of the current matching. Since $a$ has at
most $\lambda n$ non-neighbours in $B_U$ and $b$ has at most $\lambda n$
non-neighbours in $A_U$, we can choose an edge $xy$ of the current matching
with $xy\in E$, disjoint from $\{a,b\}$, such that $ay,xb\in E$. Replace
$ab,xy$ by $ay,xb$.

In both cases the operation produces another matching of $R_0$, decreases
the number of non-edges of $F$, and does not touch the vertices of
$p\cup q$. Repeating this operation at most $\sqrt{\lambda}n$ times, we
obtain a perfect matching $M_1\subseteq E$ of $R_0$.

During this repair process, at most $4\sqrt{\lambda}n$ matching edges are
changed. Therefore, for every $e\in E$,
$|N_\Gamma(e)\cap M_1|
\ge
\left(\frac12+\frac{\gamma}{2}\right)m-4\sqrt{\lambda}n
\ge
\left(\frac12+\frac{\gamma}{3}\right)m$,
since $\lambda\ll\gamma$.
Finally, set $M:=M_1\cup\{p,q\}$.
Then $M$ is a perfect matching of $F$ on $U$, contains $p$ and $q$, and
$|M|=m+2=n/2$. Moreover, for every $e\in E$,
\[
|N_\Gamma(e)\cap M|
\ge
|N_\Gamma(e)\cap M_1|
\ge
\left(\frac12+\frac{\gamma}{3}\right)m\ge
\left(\frac12+\frac\gamma 6\right)(m+2)=\left(\frac12+\frac \gamma 6\right)|M|,
\]
as required.
\end{proof}
\begin{proof}[Proof of Lemma \ref{lem:connateHP}]
Call a pair $r\in\binom U2$ \emph{nice} if $r$ is contained in at most $\tau n$ triples from $\mathcal B:=\mathcal T_U$. 
%Let $R$ be the complete graph on $U$, and 
Let $F$ be a graph defined on $U$ whose edges are precisely the
nice pairs.

We first show that $F$ is dense. Fix a vertex $v\in U$. If $vw\notin E(F)$, then $\{v,w\}$ is contained in more than $\tau n$ triples from
$\mathcal B$. On the other hand, by (\ref{C1}), the vertex $v$ is
contained in at most $\rho n^2$ triples from $\mathcal B$. 
Since each triple from $\mathcal B$ containing $v$ induces two pairs
containing $v$, the number of vertices $w$ such that $vw\notin E(F)$ is at most $2\rho n^2/(\tau n)=2\rho n/\tau$. Put $\lambda:=2\rho/\tau$. Then
$d_F(v)\ge n-1-\lambda n$ for every $v\in U$. Since $\rho\ll \tau$, and
in fact $\rho$ is chosen sufficiently small with respect to $\tau$ and
$\gamma$, we have $\lambda\ll\xi$.

Next define the transition graph $\Gamma$ on vertex set $\binom U2$ by
joining two disjoint pairs $r,r'$ if and only if $r\cup r'\in E(G)$. Let
$\Gamma_F:=\Gamma[E(F)]$ be the subgraph induced on the nice pairs. We claim
that $\Gamma_F$ has large minimum degree.

Fix a nice pair $r=\{x,y\}\in E(F)$. Since $r$ is nice, there are at most
$\tau n$ vertices $z$ such that $\{x,y,z\}\in\mathcal B$. For every other
vertex $z\in U\setminus r$, we have
$\deg_G(xyz)\ge (1/2+\xi)n$. Hence the number of ordered pairs $(z,w)$
with $z,w\in U\setminus r$, $z\ne w$, and $xyzw\in E(G)$ is at least
$(n-2-\tau n)((1/2+\xi)n-3)$. Each unordered neighbour pair $\{z,w\}$ is
counted at most twice. Therefore, for sufficiently large $n$ and
$\tau\ll\xi$, we have
$\deg_\Gamma(r)\ge (1/2+\xi/2)\binom n2$.

Now we restrict to nice pairs. Since $d_F(v)\ge n-1-\lambda n$
for every $v\in U$, the number of non-nice pairs is at most
$\lambda n^2/2$. Thus, for every $r\in E(F)$,
$\deg_{\Gamma_F}(r)\ge \deg_\Gamma(r)-\lambda n^2/2
\ge (1/2+\xi/3)\binom n2$.
Since $|E(F)|\le \binom n2$, we have
$\deg_{\Gamma_F}(r)\ge (1/2+\xi/3)|E(F)|$ for every $r\in E(F)$.

By assumption (\ref{C2}), the prescribed pairs $p,q$ are nice, so
$p,q\in E(F)$. Apply Lemma~\ref{lem:hpframework} 
%with $R=K_U$, with this graph $F$, with $\Gamma=\Gamma_F$, 
with $(K_U, F, \Gamma_F, \xi/3)$ in place of $(R, F, \Gamma, \gamma)$ where $K_U$ is the complete graph on vertex set $U$.
%replaced by $\gamma/3$, and with $\gamma'$ replaced by, say, $\gamma/100$.
It outputs a perfect matching $M\subseteq E(F)$ of $U$ containing $p$ and $q$ such
that, for every $e\in E(F)$,
$|N_{\Gamma_F}(e)\cap M|\ge (1/2+\xi/18)|M|$.
In particular,
$\delta(\Gamma_F[M])\ge (1/2+\xi/18)|M|$.
By Lemma~\ref{lem:Hamiltoncon}, the graph $\Gamma_F[M]$ is
Hamilton-connected. Hence it contains a Hamilton path from $p$ to $q$, say
$p=r_1,r_2,\ldots,r_{n/2}=q$.
Since $M$ is a perfect matching of $U$, the pairs
$r_1,\ldots,r_{n/2}$ cover every vertex of $U$ exactly once. Moreover, for
every $i$, we have $r_ir_{i+1}\in E(\Gamma_F)$, and hence
$r_i\cup r_{i+1}\in E(G)$. Therefore the pair sequence
$r_1r_2\cdots r_{n/2}$ is a Hamilton $2$-path in $G$ with ends $p$ and
$q$.
\end{proof}

\begin{proof}[Proof of Lemma \ref{lem:splitHP}]
Let $R$ be the complete bipartite graph with vertex classes $A_U$ and
$B_U$. 
We view $\mathcal E_{U}$ as a subgraph of $R$ and $F$ be its complement with both graphs defined as bipartite graphs on $A_U B_U$, that is, $E(F)=E(R)\setminus \mathcal E_{U}$, and we call edges of $F$ \emph{nice} (split) pairs.
%if it is not in $\mathcal E_{\mathrm{sp}}$, and let $F$ be the spanning subgraph of $R$ whose edges are precisely the nice split pairs.

We first show that $F$ is dense in $R$. By (\ref{S1}), 
%every vertex of $U$ is contained in at most $\rho m$ pairs of
we have $\Delta(\mathcal E_{U})\le \rho m$ and thus $\delta(F)\ge (1-\rho)m$. 
% Hence every vertex loses at most $\rho m$
% neighbours from the opposite side, and so
% $d_F(v)\ge d_R(v)-\rho m\ge d_R(v)-\rho |U|$ for every $v\in U$.
Thus $F$ satisfies the density condition of Lemma~\ref{lem:hpframework},
with $\lambda=\rho$.
Now define the transition graph $\Gamma$ on the set of all split pairs
$A_U B_U$ as follows: two disjoint pairs $s,s'\in A_U B_U$ are adjacent
in $\Gamma$ if and only if $s\cup s'\in E(G)$. Let $\Gamma_F$ be the
subgraph of $\Gamma$ induced on $E(F)$.

We claim that $\Gamma_F$ has large minimum degree. Fix a nice split pair
$s\in E(F)$. Since $s\notin\mathcal E_{U}$, by the definition
of $\mathcal E_{U}$ we have
$|N_G(s)\cap A_U B_U|
\ge (1/2+2\xi)m^2$.
Passing from $\Gamma$ to $\Gamma_F$ only removes neighbours which are
non-nice pairs. 
%By assumption (\ref{S1}), it is easy to see that the total number of non-nice split pairs is at most $\rho m^2$. 
Therefore,
$\deg_{\Gamma_F}(s)\ge (1/2+2\xi)m^2-|\mathcal E_{U}|\ge (1/2+\xi)m^2$,
as $|\mathcal E_{U}|\le \rho m^2$ and $\rho\ll\xi$. As $|E(F)|\le m^2$, this gives
$\deg_{\Gamma_F}(s)\ge (1/2+\xi)|E(F)|$ for every $s\in E(F)$.

By (\ref{S2}), the prescribed split pairs $s_1,s_2$ are in $E(F)$. Apply Lemma~\ref{lem:hpframework} with $R$ equal
to the complete bipartite graph between $A_U$ and $B_U$, and with the present
graph $F$, with $\Gamma=\Gamma_F$, and with the framework parameter
$(F, \Gamma_F, \xi)$ in place of $(F, \Gamma, \gamma)$. We obtain a perfect matching
$M\subseteq E(F)$ containing $s_1$ and $s_2$ such that, for every
$e\in E(F)$, $|N_{\Gamma_F}(e)\cap M|\ge (1/2+\xi/6)|M|$. 
In particular,
$\delta(\Gamma_F[M])\ge (1/2+\xi/6)|M|$.
By
Lemma~\ref{lem:Hamiltoncon}, the graph $\Gamma_F[M]$ is
Hamilton-connected. Hence it contains a Hamilton path from $s_1$ to $s_2$,
say $s_1=r_1,r_2,\ldots,r_m=s_2$.
Since $M$ is a perfect matching of the complete bipartite graph between
$A_U$ and $B_U$, the split pairs $r_1,\ldots,r_m$ cover every vertex of
$U$ exactly once. Moreover, for every $i$, we have
$r_i\cup r_{i+1}\in E(\Gamma_F)$, and hence $r_i\cup r_{i+1}\in E(G)$ by the
definition of the transition graph. Therefore the pair sequence
$r_1r_2\cdots r_m$ is a Hamilton $2$-path in $G$ with ends $s_1$ and
$s_2$, and every pair in the sequence is a split pair.
\end{proof}

\section{The odd-extremal case}
\label{sec:odd}
In this section we treat the odd-extremal case, where almost all edges of $H$ are odd with respect to the bipartition $\{A, B\}$.  
%Here the extremal bipartition is such that almost all edges are odd.  
We begin by fixing the hierarchy of constants and collecting the definitions
which will be used throughout the odd-extremal argument. 
We use the same basic terminology as in the even-extremal case, but the
roles of odd and even edges are interchanged. 

\subsection{Setup and basic definitions}
\begin{setup} \label{setup:oddextr}
Suppose that
$1/n \ll c' \ll c\ll \theta \ll \eta \ll \rho \ll\alpha\ll \beta \ll \beta_2\ll\beta_2' \ll \beta_1 \ll\beta_1'\ll \mu \ll \xi \ll \gamma \ll 1$.
Let $H$ be a $4$-graph of order $n$, and let $V = V(H)$.
\end{setup}

%In this section, our main goal is to prove the following theorem.
%\begin{theorem}[Odd-extremal case]
%\label{thm:oddextremal}
 %Suppose that $1/n\ll\gamma,c'\ll1$ and let $H$ be an $n$-vertex 4-graph with $\delta_3(H)\geq n/3+\gamma n$ and $\{A',B'\}$ be a $(c',\g/3)$-odd-extremal bipartition of $V$.
%Then we construct a bipartition $\{A, B\}$ of $V$ in time $O(n^4)$ such that $H$ contains a Hamilton 2-cycle if and only if $\{A, B\}$ is odd-good, which can be checked in time $O(n^{12})$.
%\end{theorem}
\begin{definition}
Under Setup \ref{setup:oddextr}, for a fixed bipartition $\{A,B\}$ of $V$, we say that
\begin{enumerate}[label=(\roman*)]
\item for each $t\in[3]$, a $t$-subset of $V$ is \emph{$\theta$-good} if it is contained in at most $\theta^{2t-1}\binom{n}{4-t}$ even edges,
    %\item a triple $T\in\binom V3$ is \emph{$\theta$-good} if it is contained in at least $(1/3+\theta^5)n$ odd edges;
    %\item a pair $p\in\binom V2$ is \emph{$\theta$-good} if it is contained in at least $(1/3+\theta^3)\binom n2$ odd edges;
   % \item a vertex $v\in V$ is \emph{$\theta$-good} if it is contained in at least $(1/3+\theta)\binom n3$ odd edges;
    \item a pair $p$ is \emph{$\beta_2$-medium} if it is contained in at least
    $\beta_2\binom n2$ odd edges; otherwise, it is \emph{$\beta_2$-bad},
    \item 
    a vertex $v\in V$ is \emph{$(\beta_1,\beta_2)$-medium} if $v$ is contained in at least $\beta_1 n$ $\beta_2$-medium connate pairs and at least $\beta_1 n$ $\beta_2$-medium split pairs; otherwise it is \emph{$(\beta_1,\beta_2)$-bad},
    \item a $(\beta_1,\beta_2)$-medium vertex $v\in V$ is \emph{$Y$-heavy} if it is contained in at least $|Y|/2$ vertices in $Y$ which form a $\beta_2$-medium pair with $v$ where $Y\in\{A,B\}$. 
\end{enumerate}
\end{definition}

Note that by definition a $\theta$-good pair is also $\beta_2$-medium.
\subsection{Consequences of odd-extremality}

We first record the analogues of the structural estimates from the
even-extremal case. 
Since there are very few even edges, almost every small set lies mostly in odd edges.  
Consequently, the number of bad pairs
and bad vertices is small. 

\begin{proposition}\label{prop:oddtypes}
Assume Setup \ref{setup:oddextr}.
Let $\{A,B\}$ be a $(c,\g/4)$-odd-extremal bipartition of $V$ and suppose
$\delta_3(H)\ge (1/3+\gamma)n$. Then the following holds.

\begin{enumerate}
    \item At most $\eta n^t$ many $t$-sets are non $\theta$-good for $t\in[3]$.
    \item At most $36cn^2$ pairs are $\beta_2$-bad.
\item At most $288cn$ vertices are $(\beta_1,\beta_2)$-bad.

\end{enumerate}
\end{proposition}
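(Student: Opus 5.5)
The plan is to mirror the proof of Proposition~\ref{prop:types}, interchanging the roles of odd and even edges. The only inputs are two facts. First, by the minimum codegree condition and Fact~\ref{degreeinherit}, every $t$-set $p$ satisfies $\deg_H(p)\ge (1/3+\gamma)\binom{n-t}{4-t}$. Second, by $(c,\gamma/4)$-odd-extremality, $H$ has at most $cn^4$ even edges.

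For (1), I will argue by contradiction. Suppose more than $\eta n^t$ $t$-sets are not $\theta$-good. Each such set lies in more than $\theta^{2t-1}\binom{n}{4-t}$ even edges, and each even edge contains exactly $\binom4t$ $t$-sets. Double counting therefore gives more than $\binom4t^{-1}\eta n^t\theta^{2t-1}\binom n{4-t}\ge cn^4$ even edges, since $c\ll\theta\ll\eta$. This contradicts odd-extremality.

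For (2), let $\mathcal B$ be the family of $\beta_2$-bad pairs. A pair in $\mathcal B$ lies in fewer than $\beta_2\binom n2$ odd edges but in at least $(1/3+\gamma)\binom{n-2}2$ edges overall. Hence it lies in at least $n^2/6$ even edges, using $\beta_2\ll\gamma$. Each even edge contains six pairs, so $|\mathcal B|\,n^2/6\le 6cn^4$, which gives $|\mathcal B|\le 36cn^2$.

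For (3), let $v$ be a $(\beta_1,\beta_2)$-bad vertex. Then $v$ lies in fewer than $\beta_1 n$ $\beta_2$-medium connate pairs, or in fewer than $\beta_1 n$ $\beta_2$-medium split pairs. Since $|A|,|B|\ge (1/3+\gamma/4)n$, $v$ lies in at least $(1/3+\gamma/4)n-1$ connate pairs and at least as many split pairs. In either case, therefore, $v$ is in at least $(1/3+\gamma/4-\beta_1)n-1\ge n/4$ pairs of $\mathcal B$. Let $L$ be the set of bad vertices. Counting incidences between $L$ and $\mathcal B$, with each pair counted at most twice, gives $|L|\,n/4\le 2|\mathcal B|\le 72cn^2$, so $|L|\le 288cn$.

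There is no real obstacle here; the only care needed is checking that the constant hierarchy in Setup~\ref{setup:oddextr} still supplies $c\ll\theta\ll\eta$ and $\beta_1,\beta_2\ll\gamma$, which it does.
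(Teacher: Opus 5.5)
Your proposal is correct and matches the paper's proof: the paper proves (1) and (2) by repeating the arguments of Proposition~\ref{prop:types} with ``odd'' and ``even'' interchanged, and proves (3) word for word, which is exactly the double counting you carry out. The one point needing care is that the hierarchy in Setup~\ref{setup:oddextr} still gives $c\ll\theta\ll\eta$ and $\beta_1,\beta_2\ll\gamma$; you checked this, and it holds.
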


 \begin{proof}
 The proofs of (1) and (2) are identical to those of Proposition \ref{prop:types} with the words ``odd'' and ``even'' interchanged.
 The proof of (3) is identical to that of Proposition \ref{prop:types}.
 \end{proof}

 \begin{proposition}\label{prop:oddpairconnect}
     Assume Setup~\ref{setup:oddextr}, and fix a $(c,\g/4)$-odd-extremal bipartition $\{A,B\}$ of $V$. 
Let $R\subseteq V$ with $|R|\le\mu n$.
%and suppose that at most $\rho n^2$ pairs of $V$ are not $\theta$-good. 
%satisfy $|A\cap R|\le \mu n$ and $|B\cap R|\le \mu n$.
Then the following holds.
\begin{enumerate}
    \item For any two disjoint $\theta$-good split pairs $s_1$ and $s_2$,
    there exists a connate pair $p\in {V\setminus R\choose 2}$ such that
    $s_1\cup p\in E(H)$ and $s_2\cup p\in E(H)$.

    \item For any two disjoint $\theta$-good connate pairs $p_1$ and $p_2$,
    there exists a split pair $s\in {V\setminus R\choose 2}$ such that
    $p_1\cup s\in E(H)$ and $p_2\cup s\in E(H)$.
\end{enumerate}
 \end{proposition}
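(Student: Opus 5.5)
This is the odd-extremal analogue of Proposition~\ref{prop:pairconnect}, and I plan to prove it the same way: lower-bound the odd neighbourhoods of the two given pairs, observe that both lie in a class of pairs that is too small to hold them disjointly, and then discard the pairs meeting $R$. Throughout, $a:=|A|$ and $b:=|B|$; odd-extremality of $\{A,B\}$ gives $a,b\ge (1/3+\g/4)n$.

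First I record the degree facts. By Fact~\ref{degreeinherit}, every pair lies in at least $(1/3+\g)\binom{n-2}{2}$ edges. A $\theta$-good pair lies in at most $\theta^3\binom n2$ even edges, so it lies in at least $(1/3+\g/2)\binom n2$ odd edges. The type of the completing pair is forced by parity.
\begin{itemize}
\item If $s$ is split, then $s\cup p$ is odd exactly when $|p\cap A|$ is even, that is, when $p$ is connate.
\item If $p$ is connate, then $p\cup s$ is odd exactly when $|s\cap A|=1$, that is, when $s$ is split.
\end{itemize}
Hence a $\theta$-good split pair has at least $(1/3+\g/2)\binom n2$ connate neighbours, and a $\theta$-good connate pair has at least that many split neighbours.

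For (1), both neighbourhoods lie in $\mathcal C=\binom A2\cup\binom B2$. Since $a+b=n$ and $a,b\ge(1/3+\g/4)n$, we have $|\mathcal C|\le 5n^2/18$, exactly as in the proof of Proposition~\ref{prop:pairconnect}(2). Inclusion--exclusion then gives that the common connate neighbourhood has size at least $2(1/3+\g/2)\binom n2-5n^2/18\ge n^2/30$.

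For (2), both neighbourhoods lie in $AB$, which has size $ab\le n^2/4$. Inclusion--exclusion gives a common split neighbourhood of size at least $2(1/3+\g/2)\binom n2-n^2/4\ge n^2/20$.

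In both cases at most $|R|n\le\mu n^2$ pairs meet $R$. Since $\mu\ll1$, some common neighbour lies in $\binom{V\setminus R}{2}$, and this is the required pair. The argument is routine and I foresee no real obstacle; the only point needing care is the parity bookkeeping, namely that odd completions of split pairs are connate and odd completions of connate pairs are split.
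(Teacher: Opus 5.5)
Your proposal is correct and follows the paper's proof essentially step for step. The shared steps are: the lower bound $(1/3+\g/2)\binom n2$ on the parity-forced connate (respectively split) odd neighbourhoods of $\theta$-good pairs; inclusion--exclusion inside $\binom A2\cup\binom B2$ (of size at most $5n^2/18$), respectively inside $AB$ (of size at most $n^2/4$); and discarding the pairs that meet $R$. Like the paper, you rely on the codegree condition $\delta_3(H)\ge(1/3+\g)n$, which is not formally part of Setup~\ref{setup:oddextr} but is assumed throughout that section.
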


 \begin{proof}
Write $a:=|A|$ and $b:=|B|$. Since $\{A,B\}$ is $(c,\g/4)$-odd-extremal, we have
$a,b\ge n/3+\g n/4$.
Recall that a $\theta$-good pair $s$ is contained in at least
$(1/3+\g-\theta^3)\binom n2\ge(1/3+\g/2)\binom n2$ odd edges. 
Thus, if $s$ is split, then $|N_{\mathrm{con}}(s)|\ge (1/3+\g/2)\binom n2$; if $p$ is connate, then $|N_{\mathrm{sp}}(p)|\ge (1/3+\g/2)\binom n2$.

(1) Let $s_1,s_2$ be two disjoint $\theta$-good split pairs.
Then $N_{\mathrm{con}}(s_1),N_{\mathrm{con}}(s_2)\subseteq \binom A2\cup \binom B2$. 
Let $\mathcal C=\binom{A}2\cup\binom{B}2$.
Moreover, since $a+b=n$ and $a,b\ge (1/3+\g/4)n$, we have
$|\mathcal C|=\binom a2+\binom b2\le 5n^2/18$.
Therefore
$|N_{\mathrm{con}}(s_1)\cap N_{\mathrm{con}}(s_2)|
\ge |N_{\mathrm{con}}(s_1)|+|N_{\mathrm{con}}(s_2)|-|\mathcal C|
\ge 2(1/3+\g/2)\binom n2-5n^2/18\ge n^2/30$
for sufficiently large $n$. The number of connate pairs that meet $R$ is
at most $|R|n\le 2\mu n^2$. Since $\mu\ll1$, there
exists a connate pair $p\in N_{\mathrm{con}}(s_1)\cap N_{\mathrm{con}}(s_2)$
with $p\subseteq V\setminus R$. Then $p\cup s_1\in E(H)$ and
$p\cup s_2\in E(H)$. 

(2) Let $p_1,p_2$ be two disjoint $\theta$-good connate pairs.
Then $N_{\mathrm{sp}}(p_1),N_{\mathrm{sp}}(p_2)\subseteq A B$.
Hence by inclusion-exclusion and $ab\le n^2/4$, we get
$|N_{\mathrm{sp}}(p_1)\cap N_{\mathrm{sp}}(p_2)|
\ge 2(1/3+\g/2)\binom n2- ab\ge n^2/20$.
Note that the number of split pairs
that meet $R$ is at most
$|R|n\le 2\mu n^2$. Since $\mu\ll 1$, there exists a split pair $s\in N_{\mathrm{sp}}(p_1)\cap N_{\mathrm{sp}}(p_2)$ with
$s\subseteq V\setminus R$. Then $p_1\cup s\in E(H)$ and
$p_2\cup s\in E(H)$.
\end{proof}
%Let $\mathcal B$ be the family of $\beta_2$-bad pairs. 
%A vertex $v$ is called \emph{bad} if $d_{\mathcal B}(v)>\rho n$.
%Define $X_{\mathrm{bad}}:=\{v\in V: v \text{ is bad}\}$.
%\begin{claim}\label{claim:bad-associated-small}
%Suppose that $\delta(H)\ge n/3+\gamma n$ and that $\{A,B\}$ is
%$(c,\g/3)$-odd-extremal. Then $|X_{\mathrm{bad}}|\le  3c\rho^{-1}n$.
%\end{claim}

%\begin{proof}
%By Proposition \ref{prop:oddtypes}, we have $|\mathcal B|\le 3c n^2/2$. On the other hand, every vertex in $X_{\mathrm{bad}}$ is incident with more than $\rho n$ bad pairs. Therefore $|X_{\mathrm{bad}}|\rho n\le2|\mathcal B|\le 3c n^2$. Thus $|X_{\mathrm{bad}}|\le 3c\rho^{-1}n$.
%\end{proof}

\begin{definition}
Let $\mathcal E_p$ be the set of non $\theta$-good pairs.
A vertex $v\in V$ is called \emph{exceptional} if it is contained in more than $\rho n$ pairs of $\mathcal E_p$. 
\end{definition}

Denote the set of exceptional vertices by $X_E$.
Then $|X_E|\rho n\le 2|\mathcal E_p|\le2\eta n^2$ and
\begin{equation}\label{ine:xtr}
    |X_E|\le 2\eta\rho^{-1}n.
\end{equation}

A balanced bipartite graph is \emph{weakly Hamilton-connected} if any two different vertices in different parts can be connected by a Hamilton path.

\begin{lemma}\cite{bHamiltonConnected}\label{lem:bHamiltoncon}
A bipartite graph $G = G(X,Y,E)$ with $|X| = |Y | = n$ is weakly Hamilton-
connected if $d(x) + d(y)\ge n + 2$ for any two nonadjacent vertices $x\in X$ and $y\in Y$.
\end{lemma}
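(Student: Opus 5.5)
We would argue by an edge-maximal counterexample together with a Bondy--Chv\'atal-type closure step, adapted to the bipartite setting and to Hamilton paths between prescribed ends.

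\emph{Reduction to a closure step.} Suppose the lemma fails for some $G=G(X,Y,E)$. Adding edges between $X$ and $Y$ never destroys the degree-sum hypothesis, so we may add such edges while the graph remains not weakly Hamilton-connected. Since $K_{n,n}$ is weakly Hamilton-connected (for $n\ge 2$; $n=1$ is trivial), this yields a graph $G'$ satisfying the hypothesis and a non-edge $xy$, $x\in X$, $y\in Y$, such that $G'+xy$ is weakly Hamilton-connected but $G'$ is not. Fix $u\in X$, $v\in Y$ with no Hamilton $u$--$v$ path in $G'$. Any Hamilton $u$--$v$ path $P$ in $G'+xy$ must use $xy$. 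Moreover $d_{G'}(x)+d_{G'}(y)\ge n+2$, because $x,y$ are nonadjacent in $G'$ and degrees only grew from $G$.

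\emph{Crossing argument.} Write $P=a_1a_2\cdots a_{2n}$ with $a_1=u$, $a_{2n}=v$, and $\{x,y\}=\{a_i,a_{i+1}\}$. Positions alternate between $X$ and $Y$, so $a_j\in X$ exactly when $j$ is odd. Assume first $x=a_i$, $y=a_{i+1}$; the other orientation is symmetric. We will use two rerouting moves.
\begin{itemize}
\item[$\bullet$] If $j>i+1$ with $xa_j,\ ya_{j+1}\in E(G')$, then $a_1\cdots a_i\,a_j a_{j-1}\cdots a_{i+1}\,a_{j+1}\cdots a_{2n}$ is a Hamilton $u$--$v$ path in $G'$.
\item[$\bullet$] If $j<i$ with $xa_{j-1},\ ya_j\in E(G')$, then $a_1\cdots a_{j-1}\,a_i a_{i-1}\cdots a_j\,a_{i+1}\cdots a_{2n}$ is a Hamilton $u$--$v$ path in $G'$.
\end{itemize}
Both contradict the choice of $u,v$. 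Hence, for every admissible index $j$, at most one of the two ``partner'' adjacencies can hold.

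\emph{Counting.} Map each $G'$-neighbour $a_j$ of $x$ with $j>i+1$ to the position $a_{j+1}$, and each neighbour $a_{j-1}$ with $j<i$ to $a_j$. These positions lie in $X$ and are forbidden as neighbours of $y$; the map is injective on each side. This gives $d_{G'}(y)\le n-d_{G'}(x)+O(1)$. The $O(1)$ collects the boundary effects: the map is undefined at the ends $a_{2n}$ and $a_1$, $y$ cannot use $x$ itself, and the positions $a_i,a_{i+1}$ themselves need separate handling. Checking that these corrections total at most $+1$ gives $d_{G'}(x)+d_{G'}(y)\le n+1$, contradicting $d_{G'}(x)+d_{G'}(y)\ge n+2$.

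The main obstacle is precisely this boundary bookkeeping. The $+2$ in the hypothesis (rather than the $+1$ that suffices for Hamilton cycles) must exactly absorb the loss at the fixed ends $u$ and $v$. We also need the case where $x$ or $y$ coincides with $u$ or $v$ (for example $i=1$), in which only one of the two rerouting moves is available. We would first verify the count carefully in the generic case $2<i<2n-2$, and then treat these degenerate positions separately, possibly by appealing to the known proof in the cited reference.
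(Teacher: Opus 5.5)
Your reduction and your two rerouting moves are correct, and the count you deferred closes in a few lines with no case analysis, so the proof goes through. The moves only refer to positions on $P$ and never use which part $a_i$ lies in. So let $T$ be the set of positions $k$ with $a_ia_k\in E(G')$, and $S$ the set of positions $k$ with $a_{i+1}a_k\in E(G')$. Thus $|T|+|S|=d_{G'}(x)+d_{G'}(y)$, $i+1\notin T$ and $i\notin S$.

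\textbf{Key claim.} $k+1\notin S$ for every $k\in T\setminus\{2n\}$:
\begin{itemize}
\item for $k\ge i+2$ this is your first move with $j=k$;
\item for $k\le i-2$ it is your second move with $j=k+1$;
\item for $k=i-1$ it holds because $k+1=i$ and $xy\notin E(G')$.
\end{itemize}
The shift $k\mapsto k+1$ is a single injection, so the images from the two sides of $a_ia_{i+1}$ cannot collide. It lands in the positions of the part containing $a_i$, which is exactly the part containing $N_{G'}(a_{i+1})$ and has $n$ positions. Hence $|S|+|T\setminus\{2n\}|\le n$, i.e.\ $d_{G'}(x)+d_{G'}(y)\le n+1$, contradicting the hypothesis.

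The only loss is a possible edge from $a_i$ to the terminal vertex $v=a_{2n}$, which cannot be shifted. The start $u=a_1$ costs nothing, because the shift goes rightwards and an edge $a_{i+1}a_1$ is simply counted in $S$. So $i=1$ and $i+1=2n$ need no separate treatment (one move is then vacuous). This single unshiftable neighbour is precisely why the hypothesis needs $n+2$ rather than the $n+1$ that suffices for Hamilton cycles.

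One imprecision: the two orientations of $xy$ on $P$ are not actually related by a symmetry. Reversing $P$ and swapping the names of $X$ and $Y$ both preserve whether the end of $xy$ lying in the part of the initial vertex comes first. This is harmless, because the count above is orientation-free. When $a_i=y\in Y$ one even has $2n\notin T$ (as $v\in Y$), giving the bound $n$.

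As for the comparison, the paper does not prove this lemma at all. It quotes it from \cite{bHamiltonConnected} and uses it only as a black box in the proof of Lemma~\ref{lem:oddHP}, to find a Hamilton path in $\Gamma[M]-s_1$, where the degree-sum condition holds with linear room to spare. With the count above filled in, your closure-plus-Ore-type-crossing argument is a short self-contained replacement for that citation. The citation buys brevity; your argument buys self-containment and makes transparent where the $+2$ comes from.
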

%Let $\mathcal C=\binom{A}{2}\cup\binom{B}{2}$.
%Define the family of \emph{exceptional split pairs} by $E_{\mathrm{sp}}^{\mathrm{odd}}:=\{s\in A\times B: |N_{\mathrm{con}}(s)|<(1/2+2\xi)|\mathcal C|\}$.
%Define the family of \emph{exceptional connate pairs} by $E_{\mathrm{con}}^{\mathrm{odd}}:=\{p\in\mathcal C: |N_{\mathrm{sp}}(p)|<(1/2+2\xi)|A||B|\}$.
%Set $E_{\mathrm{odd}}:=E_{\mathrm{sp}}^{\mathrm{odd}}\cup E_{\mathrm{con}}^{\mathrm{odd}}$.
%Define $X_{\mathrm{odd}}$ to be the vertices which are contained in more that $\rho n$ pairs from $E_{\mathrm{odd}}$.
For convenience, we restate Lemma \ref{oddextremal}.
\medskip

\noindent
%\begin{lemma}
\textbf{Lemma \ref{oddextremal} (Odd-extremal case)}
%\label{evenextremal}
\emph{Suppose that $1/n\ll c'\ll\gamma\ll1$ and let $H$ be an $n$-vertex $4$-graph with $\delta_3(H)\geq n/3+\gamma n$ and $\{A',B'\}$ be a $(c',\g/3)$-odd-extremal bipartition of $V$.
Then we construct a bipartition $\{A, B\}$ of $V$ in time $O(n^4)$ such that $H$ contains a Hamilton $2$-cycle if and only if $\{A, B\}$ is odd-good, which can be checked in time $O(n^{12})$.}

\subsection{Proof of Lemma \ref{oddextremal}}
%Before delving into the details, we give an overview of the proof.
The proof idea is similar to Lemma \ref{evenextremal}.
It is worth noting that a bridge with $\theta$-good split ends is sufficient to overcome the parity obstruction.
%\red{Will remove and mention bridges}

Now we give the necessary lemmas for the proof of Lemma~\ref{oddextremal}.
The first lemma is an analogue of the refinement lemma in the
even-extremal case, where we move all bad vertices to the opposite side.  
Since there are few bad vertices, the bipartition remains odd-extremal.

\begin{lemma}\label{lem:oddmoved}
Assume Setup \ref{setup:oddextr}.
Let $\{A',B'\}$ be a $(c',\g/3)$-odd-extremal bipartition of $V$. Move all
$(\beta_1',\beta_2')$-bad vertices to the opposite side and denote the
resulting bipartition by $\{A,B\}$. Then $\{A,B\}$ is $(c,\g/4)$-odd-extremal and
every vertex is $(\beta_1,\beta_2)$-medium with respect to $\{A,B\}$.
\end{lemma}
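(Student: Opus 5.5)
I will follow the proof of Lemma~\ref{lem:cleaned-even-partition} with the roles of odd and even edges interchanged. Let $L$ be the set of moved vertices. By Proposition~\ref{prop:oddtypes}(3), applied with $(c',\beta_1',\beta_2',\g/3)$ in place of $(c,\beta_1,\beta_2,\g/4)$, we have $|L|\le 288c'n$. Hence $|A|,|B|\ge (1/3+\g/3)n-288c'n\ge (1/3+\g/4)n$. Moreover, every even edge with respect to $\{A,B\}$ is either even with respect to $\{A',B'\}$ or meets $L$. So there are at most $c'n^4+|L|n^3\le cn^4$ even edges, and $\{A,B\}$ is $(c,\g/4)$-odd-extremal.

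Next, take an unmoved vertex $v$. It was $(\beta_1',\beta_2')$-medium with respect to $\{A',B'\}$. Consider a $\beta_2'$-medium pair $p\ni v$ with $p\cap L=\emptyset$. Its connate/split type is unchanged, and among the $\beta_2'\binom n2$ odd edges containing it, at most $|L|n\le 288c'n^2$ meet $L$. So $p$ remains $\beta_2$-medium, since $c'\ll\beta_2\ll\beta_2'$. Discarding the at most $|L|$ pairs $vu$ with $u\in L$ leaves at least $\beta_1'n-288c'n\ge\beta_1n$ medium pairs of each type, as required.

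The main step is a moved vertex $v$; say $v\in A_{\mathrm{bad}}$, so $v$ now lies in $B$. As in the even case, it suffices to show that $v$ lies in at least $\beta_1n^3$ edges $e$ with $|e\cap A'|=2$ and $|e\cap B'|=2$. Such edges are even for $\{A',B'\}$, with $v$ one of the two $A'$-vertices. After moving, at least $0.9\beta_1n^3$ of them avoid $L\setminus\{v\}$ and become odd edges with three vertices in $B$ and one in $A$. Within such an edge $v$ lies in one connate pair (with the other $B$-vertex) and one split pair. Averaging then gives more than $\beta_1n$ $\beta_2$-medium pairs of each type. The bound is $(0.9\beta_1n^3-n\cdot\beta_2n^2)/\binom n2>\beta_1 n$, after first discarding the edges whose relevant pair is $\beta_2$-bad.

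To find these $\Omega(n^3)$ edges, I use that $v$ is $(\beta_1',\beta_2')$-bad. Then there are at least $(1/3+\g/3-\beta_1')n\ge n/3$ pairs $p\ni v$ of a single type (all split or all connate) that lie in fewer than $\beta_2'\binom n2$ odd edges. Hence each such $p$ lies in at least $\deg(p)-\beta_2'\binom n2\ge n^2/7$ even edges.
\begin{itemize}
\item If $p=\{v,u\}$ is connate ($u\in A'$), an even edge containing it has index vector $(4,0)$ or $(2,2)$. To force $(2,2)$, consider triples $p\cup\{w\}$ with $w\in B'$. Each lies in at least $(1/3+\g)n$ edges, of which fewer than $\beta_2'n$ on average are odd for typical $w$. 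This yields at least about $\frac12|B'|(1/3)n\ge n^2/20$ edges $vuww'$ with $w,w'\in B'$.
\item If $p$ is split, every even edge containing $p$ has index vector $(2,2)$ automatically, giving $n^2/7$ such edges directly.
\end{itemize}
In both cases, summing over the $n/3$ choices of $p$ and dividing by the multiplicity (at most $3$) gives at least $n^3/180\ge\beta_1n^3$ edges of type $(2,2)$ containing $v$.

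The one delicate point is the connate case: I must make sure the even completions are genuinely $(2,2)$ rather than $(4,0)$. This is what restricting to triples $p\cup\{w\}$ with $w\in B'$ handles, using that few of the completions of such a triple can be odd when summed over $w$. The case $v\in B_{\mathrm{bad}}$ is symmetric.
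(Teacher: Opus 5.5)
Your proof is correct and follows essentially the same route as the paper. You bound $|L|$ via Proposition~\ref{prop:oddtypes}(3), then for a moved vertex extract $\Omega(n^3)$ edges of type $(2,2)$ with respect to $\{A',B'\}$ from its at least $n/3$ $\beta_2'$-bad pairs of one type, restricting the third vertex to $B'$ in the connate case, and finally average.

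Your deviations are harmless. For a split $p$ you use directly that every even edge through $p$ already has type $(2,2)$, instead of summing over third vertices in $A'$. There is also a minor slip: in each resulting odd edge $v$ lies in two connate pairs and one split pair, not one of each, and this does not affect the averaging.
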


%After refining the bipartition, 
Similar to the even-extremal case, we also need a bridge to break the parity issue.  
In the odd-extremal setting the bridge should have split ends.
The bridge also covers the exceptional vertices and leaves the remaining vertex set in
one of the two admissible congruence classes.

\begin{lemma}\label{lem:oddbridge}
Assume Setup \ref{setup:oddextr}.
Let $H$ be an $n$-vertex $4$-graph with $\delta_3(H)\ge n/3+\gamma n$, and let $\{A,B\}$ be a
$(c,\g/4)$-odd-extremal and odd-good bipartition such that every vertex is $(\beta_1,\beta_2)$-medium. 
Then one can find a path $P$ with $\theta$-good split ends $s,s'$ such that $X_E\subseteq V(P)$ and $|V(P)|\le\mu n$ vertices.
Furthermore, let $V^*:=(V\setminus V(P))\cup s\cup s'$,
$A^*:=A\cap V^*$ and $B^*:=B\cap V^*$, either
$|V^*|\equiv 6\pmod 8$ and $|A^*|-|B^*|\equiv 2\pmod 4$, or
$|V^*|\equiv 2\pmod 8$ and $|A^*|-|B^*|\equiv 0\pmod 4$.
\end{lemma}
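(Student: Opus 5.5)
The plan is to follow the even-extremal bridge construction (Lemma~\ref{lem:bridge}), adapted to the alternating structure of odd edges.

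\textbf{The invariant.} In a $2$-path all of whose edges are odd, consecutive pairs alternate between split and connate. Suppose such a path has split ends, $j+1$ split pairs and $j$ connate pairs. Then it has $4j+2$ vertices. Each split pair contributes $1$ to each of $A$ and $B$, and each connate pair contributes $\pm 2$ to the difference $|A\cap\cdot|-|B\cap\cdot|$. Hence the difference is $\equiv 2j \pmod 4$. So $j$ odd gives $(4j+2 \bmod 8,\ \text{difference} \bmod 4)=(6,2)$, and $j$ even gives $(2,0)$. These are exactly the two admissible classes for $V^*$ in the statement, so they are the residues forced on the final alternating Hamilton path in $V^*$. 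The main point is therefore to control the residue of $V^*$. That residue is determined by $n \bmod 8$, by $|A|-|B| \bmod 4$, and by the number and type of \emph{even} edges used in $P$, because the odd-edge parts of $P$ obey the same invariant.

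\textbf{Step 1 (covering exceptional vertices).} By \eqref{ine:xtr}, $|X_E|\le 2\eta\rho^{-1}n$, and every vertex is $(\beta_1,\beta_2)$-medium. I would prove odd analogues of Propositions~\ref{prop:medium-to-good} and~\ref{prop:coverex}. For each $x\in X_E$, choose a $\beta_2$-medium split pair $r\ni x$. Extend it on both sides by $\theta$-good connate pairs (the odd neighbours of a split pair are connate), then by $\theta$-good split pairs, giving a short odd segment with split ends containing $x$. Chain these segments greedily with Proposition~\ref{prop:oddpairconnect}(1), always avoiding used vertices. The result is an odd path $P_1$ with $\theta$-good split ends, $X_E\subseteq V(P_1)$ and $|V(P_1)|=O(\eta\rho^{-1}n)\le \mu n/2$.

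\textbf{Step 2 (parity-correcting gadget).} Let $(m,d)$ be as in Definition~\ref{def:evenoddgood}. The bookkeeping above shows that an all-odd $P$ with split ends leaves $V^*$ admissible exactly when $(m,d)\in\{(0,0),(4,2)\}$, which is case (\ref{O0}). In that case nothing more is needed.

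In the other cases I would insert a short gadget built from even edges. The cost of a gadget is measured by how many "alternation breaks" it introduces:
\begin{itemize}
\item an even edge split$\,\cup\,$split or connate$\,\cup\,$connate shifts the alternation count $j$ by one;
\item an even edge in $A^2B^2$ viewed as connate$\,\cup\,$connate ($AA\cup BB$) additionally changes the difference by $4$.
\end{itemize}
The cases are then as follows.
\begin{itemize}
\item Case (\ref{O1}) needs one shift. Take an even edge $e$. Write it as two pairs and extend each pair by Proposition~\ref{prop:oddpairconnect}-type one-step extensions to $\theta$-good split ends. The even edge sits inside the gadget, so the total residue shifts by the required $(4 \bmod 8)$ and $2 \bmod 4$ combination.
\item Case (\ref{O2}) needs two shifts. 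Use two disjoint even edges, or an even $2$-path of length $2$, each turned into a segment with split ends as above and then concatenated.
\item Case (\ref{O3}) needs three shifts, or a single $AABB$ edge used as the connate pairs $AA$ and $BB$.
\end{itemize}
Finally connect the gadget and $P_1$ through a connate pair using Proposition~\ref{prop:oddpairconnect}(1). The gadget has $O(1)$ vertices, so $|V(P)|\le\mu n$.

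\textbf{Main obstacle.} The hardest step is turning the even edges supplied by odd-goodness into segments with $\theta$-good split ends. Those edges may consist of $\beta_2$-bad or non-good pairs, and, unlike Proposition~\ref{prop:two_odd_edges_connate_bridge}, we cannot simply discard them. My first attempt would mimic that proof. I would show that any bad pair occurring in an even edge either has many odd neighbours of the correct type, which gives a medium neighbour pair and then, via the analogue of Proposition~\ref{prop:medium-to-good}, a good one. Otherwise I would apply the same link-graph counting used there to the graph of $\beta_2$-bad pairs. This produces $\Omega(n^2)$ bad pairs, contradicting part (2) of Proposition~\ref{prop:oddtypes}. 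In cases (\ref{O2}) and (\ref{O3}), the even $2$-paths of length $\ge 2$ also need care: the interior pair is shared by two even edges, so its type has to be tracked when counting shifts.
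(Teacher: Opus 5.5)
\textbf{Where the plan matches the paper.} Your architecture is the paper's: a constant-size parity gadget built from the even edges supplied by odd-goodness (Lemma~\ref{lem:28oddbridge}), an all-odd split-ended path through $X_E$, and connate connectors joining them. Your invariant for all-odd paths and your Step~1 are fine. Build the gadget first, though, since its even edges are prescribed and $P_1$ must not use them.

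\textbf{The residue bookkeeping in Step~2 is wrong.} Take a path with split ends having $k_s$ split pairs and $k_c$ connate pairs.
\begin{itemize}
\item Every connate pair contributes $\pm2\equiv 2\pmod 4$ to $|A\cap\cdot|-|B\cap\cdot|$. So the residue of $V^*$ depends only on $k_s-k_c \bmod 4$.
\item A short computation shows $V^*$ is admissible iff $k_s-k_c\equiv 1-i\pmod 4$ in case $(\mathrm{O}i)$.
\item Moreover $k_s-k_c=1+e_{ss}-e_{cc}$, where $e_{ss}$ and $e_{cc}$ count the even edges of type split$\,\cup\,$split and connate$\,\cup\,$connate.
\end{itemize}
So the two kinds of even edge shift the invariant in \emph{opposite} directions. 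Also, an $A^2B^2$ edge read as $AA\cup BB$ acts exactly like an $A^4$ edge read as two connate pairs: the extra change of $4$ is invisible mod $4$.

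In particular your alternative in (\ref{O3}) fails. Using the $A^2B^2$ edge as $AA\cup BB$ gives $s_1p_1p_2s_2$ with residue $(4,0)$. That is the (\ref{O1}) class, not one of $(0,0),(4,2)$. The edge must instead be used as two split pairs flanked by connate pairs, $s_1p_1s_1's_2'p_2s_2$, which has residue $(0,0)$; this is what the paper does. The other cases need the same care:
\begin{itemize}
\item (\ref{O1}) works only with the connate decomposition, which is always available.
\item (\ref{O2}) needs both insertions to be of the same type.
\item In (\ref{O3}) via total $2$-pathlength three, every even edge must be used as two connate pairs. The paper ensures this by treating the $A^2B^2$ case first; a $2$-path of three $A^2B^2$ edges along split pairs would give $+3\equiv-1$, the wrong class.
\end{itemize}

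\textbf{The replacement step does not go through by mimicking Proposition~\ref{prop:two_odd_edges_connate_bridge}.} There, the hypothesis forces both $ua$ and $va$ to be bad in every odd edge $uvab$, so badness propagates. Here an even edge containing a bad pair forces nothing comparable. Moreover, the second branch of your dichotomy is not contradictory: a $\beta_2$-bad pair may have no odd neighbours at all, and up to $36cn^2$ bad pairs can exist. So the edge must be \emph{replaced}, which is the paper's Claim~\ref{clm:csreplace}. For a bad connate pair $uv\subseteq A$ it runs as follows.
\begin{itemize}
\item Arrange, via Claim~\ref{clm:heavycount} and the bound on bad pairs, that $v$ is replaced by an $A$-heavy $v'$ with $uv'$ still bad.
\item Pick $w\in A$ with $uw$ medium and $|N_H(uv'w)\cap B|<\sigma n$ for a small constant $\sigma$. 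This is possible because $uv'$ lies in few odd edges.
\item The codegree condition together with $|A|\le(2/3-\gamma/4)n$ then forces $|N_H(uv'w)\cap A|\ge(1/3+\gamma-\sigma)n\ge(1/2+\gamma/2)|A|$.
\item So this set meets the at least $|A|/2$ medium partners of $v'$. Any common vertex $z$ yields an even edge $uw\cup v'z$ made of two medium connate pairs avoiding $R$.
\end{itemize}
The split case is analogous. This codegree-versus-$|A|/2$ step is the idea your sketch is missing.
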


The final ingredient is the Hamilton path lemma for the odd-extremal
setting.  
Once the bridge has removed all exceptional vertices and arranged
the correct residue, this lemma covers the remaining vertices by a Hamilton $2$-path between the two prescribed good split pairs.

\begin{lemma}
\label{lem:oddHP}
Assume Setup \ref{setup:oddextr}.
Let $U=A_U\cup B_U$ be a vertex set such that $|V\setminus U|\le2\mu n$. Let $s_1,s_2$ be two disjoint $\theta$-good split pairs in $A_U B_U$. Suppose that either $|U|\equiv 6\pmod 8,\ |A_U|-|B_U|\equiv 2\pmod 4$, or $|U|\equiv 2\pmod 8,\ |A_U|-|B_U|\equiv 0\pmod 4$.
Assume that every vertex of $U$ is contained in at most $\rho n$ non $\theta$-good pairs.
%Write $N:=|U|$, $a:=|A_U|$, $b:=|B_U|$, and define $x:=\frac{N+2}{4}$, $y:=\frac{N-2}{4}$, $y_A:=\frac{a-x}{2}$, and $y_B:=\frac{b-x}{2}$. 
%Assume the following transition degree conditions.
Then $H[U]$ contains a Hamilton $2$-path from $s_1$ to $s_2$.
\end{lemma}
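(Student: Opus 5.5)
The plan is to mirror the proofs of Lemmas~\ref{lem:connateHP} and~\ref{lem:splitHP}, but now with an \emph{alternating} pair structure. In an odd-extremal $4$-graph almost every edge is odd, so in a Hamilton $2$-path whose pair sequence starts and ends at split pairs the pairs must alternate split, connate, split, \dots, split. With $|U|=N$ the sequence has $N/2$ pairs. Since both ends are split, $N/2$ is odd, so there are $x=(N+2)/4$ split pairs and $y=(N-2)/4$ connate pairs. This is why $N\equiv 2\pmod 4$ is needed. The split pairs use $x$ vertices of each of $A_U$ and $B_U$. The connate pairs must use $y_A=(|A_U|-x)/2$ pairs inside $A_U$ and $y_B=(|B_U|-x)/2$ pairs inside $B_U$. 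The congruence hypotheses ($N\equiv 6\pmod 8$ with $|A_U|-|B_U|\equiv 2\pmod 4$, or $N\equiv 2\pmod 8$ with $|A_U|-|B_U|\equiv 0\pmod 4$) are exactly what make $|A_U|-x$ and $|B_U|-x$ even, hence $y_A,y_B$ integers with $y_A+y_B=y$. Since $|A_U|,|B_U|\ge (1/3+\gamma/5)n$ and $x\approx N/4$, both $y_A,y_B$ are linear in $n$.

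\textbf{Step 1: random matching.} Let $F$ be the graph of $\theta$-good pairs inside $U$. By hypothesis every vertex of $U$ lies in at most $\rho n$ non-good pairs, so $F$ is almost complete. I will choose a random vertex partition of $U$ and build the matching from it:
\begin{itemize}
\item the part of size $2x$ takes $x$ vertices from $A_U$ and $x$ from $B_U$, and receives a random perfect matching of split pairs containing $s_1,s_2$;
\item the rest of $A_U$ receives a random perfect matching into connate pairs, and likewise the rest of $B_U$.
\end{itemize}
Non-good pairs are then repaired by local swaps, as in Lemma~\ref{lem:hpframework}. Call the resulting sets of split and connate pairs $M_s$ and $M_c$.

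\textbf{Step 2: concentration.} Every good split pair $s$ lies in at least $(1/3+\gamma/2)\binom n2$ odd edges, and all of their other halves are connate. This alone is not enough, because we need more than half of $M_c$ to be adjacent to $s$, and $|\mathcal C|$ can be as large as about $5n^2/18$. I therefore expect to use the full codegree condition more carefully. For a good split pair $ab$ and a third vertex $z\in A$, every odd completion $abzw$ has $w\in A$, so $ab$ has about $(1/3+\gamma)n\cdot|A|/2$ connate neighbours in $\binom A2$. This is a $(1/3+\gamma)n/|A|$ fraction of $\binom A2$, which exceeds $1/2$ only when $|A|<2n/3$.

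This is the main obstacle: the auxiliary bipartite ``transition graph'' between $M_s$ and $M_c$ need not meet the Moon--Moser type degree bound of Lemma~\ref{lem:bHamiltoncon} directly. The reason is that the classes are unbalanced ($|M_s|=x=|M_c|+1$) and the densities are only about $(1/3+\gamma)n/|Y|$. To get around this I will use the fact that the neighbourhood of a split pair inside $\binom A2$, together with that inside $\binom B2$, has total relative weight above $1/2$. I then work with the bipartite graph $\Gamma$ between $M_s$ and $M_c$ and verify the condition $d(x)+d(y)\ge |M_c|+2$ for non-adjacent pairs, by Chernoff bounds over the random matching. One term comes from the split side and one from the connate side, where a good connate pair in $A$ has at least about $(1/3+\gamma)n\cdot|B|$ split neighbours among the $|A||B|$ split pairs.

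\textbf{Step 3: path.} To make the two sides equal in size, I remove $s_2$ and require that the path end at some connate neighbour of $s_2$. I then apply Lemma~\ref{lem:bHamiltoncon} to get a Hamilton path in $\Gamma$ from $s_1$ to such a connate pair. This path alternates split and connate pairs, which yields the Hamilton $2$-path in $H[U]$ from $s_1$ to $s_2$.
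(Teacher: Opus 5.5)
Your proposal is essentially the paper's proof: the same counts $x,y,y_A,y_B$, a random pairing of $U\setminus(s_1\cup s_2)$ with the prescribed numbers of split, $A_U$-connate and $B_U$-connate pairs, swap-repair of non-$\theta$-good pairs, the bipartite split/connate transition graph, and deletion of one end before applying Lemma~\ref{lem:bHamiltoncon} (the paper deletes $s_1$; you delete $s_2$).

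The one thing to fix is your ``main obstacle'', which is not an obstacle. Since $\{A,B\}$ is $(c,\gamma/4)$-odd-extremal, $|A_U|,|B_U|\le(2/3-\gamma/4)n$. Your own per-side count then gives every $\theta$-good split pair relative density at least $1/2+\gamma$ in each of $\binom{A_U}{2}$ and $\binom{B_U}{2}$. The weighted inequality you invoke without proof (the paper's (\ref{F2})) therefore follows at once, and you should say so explicitly. The paper instead derives (\ref{F2}) from only the aggregate bound $m_A+m_B\ge N^2/6$, via the optimisation in Claim~\ref{claim:odd-weighted-estimate}; your per-side observation makes that optimisation unnecessary.
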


\begin{proof}[Proof of Lemma \ref{oddextremal}]
% If $n$ is odd, then $H$ contains no Hamilton $2$-cycle, since every
% $2$-cycle in a $4$-graph has an even number of vertices. In this case
% the algorithm outputs this divisibility obstruction as a certificate.
% Thus we may assume from now on that $n$ is even.
Note that we may assume that $n$ is even, as otherwise there is no Hamilton 2-cycle in $H$ and the algorithm should just output the value of $n$ as a certificate.

\textbf{Step 1. Refine the extremal bipartition.}
Since $H$ is $c'$-odd-extremal, fix a $(c',\g/3)$-odd-extremal bipartition $\{A',B'\}$ of $V$ for which there are at most $c'n^4$ even edges.
We begin by moving all $(\beta_1',\beta_2')$-bad vertices to their opposite side.
To be precise, let $A_{\text{bad}}=\{a\in A':a\ \text{is}\ (\beta_1',\beta_2')\text{-bad}\}$ and $B_{\text{bad}}=\{b\in B':b\ \text{is}\ (\beta_1',\beta_2')\text{-bad}\}$ and set $A:=(A'\setminus A_{\text{bad}})\cup B_{\text{bad}}$
and $B:=(B'\setminus B_{\text{bad}})\cup A_{\text{bad}}$, we say the vertices $A_{\text{bad}}\cup B_{\text{bad}}$ are \emph{moved}.

By Lemma \ref{lem:oddmoved}, we obtain that the bipartition $\{A,B\}$ is $(c,\g/4)$-odd-extremal, where every vertex is $(\beta_1,\beta_2)$-medium.
Note that $(\beta'_1,\beta'_2)$-badness of a vertex $v$ can be checked by checking the edges containing $v$, and thus the sets $A_{\mathrm{bad}}, B_{\mathrm{bad}}$ and the new bipartition $\{A, B\}$ can be constructed in time $O(n^4)$.
As the odd-goodness of $\{A,B\}$ can be checked in time $O(n^{12})$ and odd-goodness is a necessary condition for 2-Hamiltonicity, to complete the proof of the theorem it suffices to assume that $\{A,B\}$ is odd-good, and show that $H$ contains a Hamilton 2-cycle.

\textbf{Step 2. Construct a bridge.}
By Lemma~\ref{lem:oddbridge}, we obtain a path $P_0$ with split ends $s,s'$, where $s,s'$ are $\theta$-good, $X_E\subseteq V(P_0)$ and $|V(P_0)|\le \mu n$.
Next we choose two $\theta$-good split pairs $s_1,s_2$ outside $V(P_0)$, this can be done since  $|\mathcal E_p|\le \eta n^2$ by Proposition \ref{prop:oddtypes}.
By Proposition \ref{prop:oddpairconnect}, there exist connate pairs $p,p'$ such that $s_1\cup p,p\cup s\in E(H)$ and $s_2\cup p',p'\cup s'\in E(H)$ avoiding the used vertices.

\textbf{Step 3. Cover the remaining vertices.}
Let $P=s_1psP_0s'p's_2$, $U=(V\setminus V(P))\cup s_1\cup s_2$, $A_U:=A\cap U$, and $B_U:=B\cap U$, we have either $|U|\equiv 6\pmod 8,\ |A_U|-|B_U|\equiv 2\pmod 4,$ or $|U|\equiv 2\pmod 8,\ |A_U|-|B_U|\equiv 0\pmod 4$. 
%It is easy to see that $|A_U|,|B_U|\ge n/3+\g n/4-\mu n\ge n/3+\gamma n/5$ since $\mu\ll\g$.
%Let $\mathcal E_{p}^U=\mathcal E_{p}\cap \binom U2$.
Note that $|V(P)|\le2\mu n$.
Since $X_E\subseteq V(P_0)$, every vertex of $U$ is contained in at most $\rho n$ non $\theta$-good pairs.
Also, $s_1,s_2$ are $\theta$-good.
Now, apply Lemma~\ref{lem:oddHP} to $H[U]$ with endpoint split pairs $s_2,s_1$. We obtain a Hamilton $2$-path $P_1$ in $H[U]$.
Finally, $s_2Ps_1P_1s_2$ is a Hamilton $2$-cycle of $H$. 
\end{proof}

Now it remains to prove Lemmas~\ref{lem:oddmoved} --~\ref{lem:oddHP}.

\subsection{Proof of Lemma \ref{lem:oddmoved}}

The proof follows the same argument as in the even-extremal case.
We move all bad vertices to the opposite side. 
The number of moved
vertices is small, so the new bipartition is still odd-extremal.  Moreover, the choice of the parameters ensures that each vertex is medium with
respect to the new bipartition.

\begin{proof}
Let $L:=A_{\mathrm{bad}}\cup B_{\mathrm{bad}}$. 
By Proposition~\ref{prop:oddtypes} (3), applied with $c',\beta'_1,\beta'_2$ in
place of $c,\beta_1,\beta_2$, we have $|L|\le 288c'n$.
Since at most $288c'n$ vertices are moved, we have
$|A|,|B|\ge(1/3+\g/3)n-288c'n\ge(1/3+\g/4)n$
and the number of even edges with respect to $\{A,B\}$ is at most
$c' n^4+|L|(n-1)^{3}\le c n^4$, as $c'\ll c$. 
Thus $\{A,B\}$ is $(c,\g/4)$-odd-extremal.

Since $c'\ll\beta_2\ll\beta_2'\ll\beta_1\ll\beta_1'$ and
$\beta_1'n-288c'n\ge\beta_1n$, every unmoved vertex which was
$(\beta_1',\beta_2')$-medium under $\{A',B'\}$ is
$(\beta_1,\beta_2)$-medium under $\{A,B\}$.
Now let $v\in L$. 
Without loss of generality, assume that $v\in A_{\mathrm{bad}}$, so $v$ is
moved from $A'$ to $B$.

Note that it suffices to show that $v$ is contained in $\beta_1n^3$ edges
$e$ of $H$ with $|e\cap A'|=|e\cap B'|=2$.
Indeed, after moving the vertices in $L$, at least
$\beta_1n^3-288c'n^3\ge 0.9\beta_1n^3$ such edges become odd edges with
respect to $\{A,B\}$. Thus the number of $\beta_2$-medium pairs $va$ with
$a\in A$ is at least
$(0.9\beta_1n^3-n\cdot\beta_2n^2)/\binom n2>\beta_1n$, and similarly the
number of $\beta_2$-medium pairs $vb$ with $b\in B$ is at least
$(0.9\beta_1n^3-n\cdot\beta_2n^2)/\binom n2>\beta_1n$, as desired.

Since $v$ is $(\beta_1',\beta_2')$-bad with respect to $\{A',B'\}$, it is
contained in at most $\beta_1'n$ $\beta_2'$-medium split (or connate)  pairs under $\{A',B'\}$. Thus, $v$ was in at least $(1/3+\g/3-\beta_1'-288c')n\ge n/3$ $\beta_2'$-bad split (or connate) pairs $p$ under $\{A',B'\}$.

Fix such a pair $p$. Every such $p$ is contained in fewer than
$\beta_2'\binom n2$ old odd edges. 
By the minimum codegree condition, we
infer that $p$ is in at least $\frac12\frac{n}3\delta_3(H)\ge \frac{n^2 }{18}$ edges of $H$ with appropriate intermediate vertices: if $p$ is split, then we sum over all vertices of $A'$; if $p$ is connate, then we sum over all vertices of $B'$.
Therefore, at least $n^2/20$ such edges contains exactly two vertices of $A'$ and two vertices of $B'$.
Since there are at least $n/3$ choices for such a pair $p$, we obtain that at least $\frac12\cdot\frac n3\cdot\frac{n^2}{20}=\frac{n^3}{120}$ edges containing $v$ with two vertices in $A'$ and two vertices in $B'$. Since $\beta_1\ll 1$, this is at least $\beta_1n^3$, and we are done.

The argument for vertices moved from $B'$ to $A$ is identical with the
roles of $A'$ and $B'$ interchanged.
Hence every vertex of $H$ is $(\beta_1,\beta_2)$-medium with respect to
$\{A,B\}$.
\end{proof}

\subsection{Proof of Lemma \ref{lem:oddbridge}}
We now prove the bridge lemma.  
The proof has two parts.  
First we establish several local tools which allow us to replace bad pairs by medium pairs and to upgrade medium pairs to good pairs. 
Then we use the
odd-goodness to construct a short split-ended path satisfying the
desired residue.  
Finally, this short path is extended to cover all exceptional vertices without changing the required residue.
The first local tool upgrades medium ends to good ends via extensions.

\begin{proposition}\label{prop:oddmedium-to-good}
Assume Setup~\ref{setup:oddextr}, and suppose that $\delta_3(H)\geq n/3+\g n$ and that $\{A, B\}$ is a $(c,\g/4)$-odd-extremal bipartition of $V$.
If $R\subseteq V$ satisfies $|R| \leq \frac{1}{3} \beta n$, then for every $\beta$-medium pair $p$ there exists a $\theta$-good pair $s\in \binom{V \setminus R}{2}$ such that $p\cup s$ is an odd edge.
\end{proposition}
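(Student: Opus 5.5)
This is the odd-edge analogue of Proposition~\ref{prop:medium-to-good}, and I would prove it by the same direct counting. Fix a $\beta$-medium pair $p$ (with respect to $\{A,B\}$ in the odd-extremal setting). By definition, $p$ is contained in at least $\beta\binom n2$ odd edges, so there are at least $\beta\binom n2$ pairs $s$ disjoint from $p$ with $p\cup s$ an odd edge. The task is to show that not all of these candidates can be excluded.

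There are two types of bad candidates, and I would bound each separately. First, the pairs that are not $\theta$-good: by Proposition~\ref{prop:oddtypes}(1) with $t=2$, there are at most $\eta n^2$ of them. Since $\eta\ll\beta$ in Setup~\ref{setup:oddextr}, this is less than $\frac{\beta}{4}\binom n2$. Second, the pairs meeting $R$: there are at most $|R|\,n\le \beta n^2/3$ of them. For large $n$ this is less than $\frac{3\beta}{4}\binom n2$ (indeed $\frac{3\beta}{4}\binom n2\approx \frac{3\beta}{8}n^2>\frac{\beta}{3}n^2$).

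Subtracting both bounds from $\beta\binom n2$ leaves a positive number of candidates. Any remaining $s$ is a $\theta$-good pair in $\binom{V\setminus R}{2}$ with $p\cup s$ an odd edge, which is what we want.

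There is no real obstacle here. The only point to check is that the constant hierarchy places $\eta$ below $\beta$ in Setup~\ref{setup:oddextr}, and it does ($\eta\ll\rho\ll\alpha\ll\beta$). The pairs-meeting-$R$ estimate is the tightest step, but it still holds with room to spare thanks to the factor $\frac13$ in $|R|\le\frac13\beta n$.
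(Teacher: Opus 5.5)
Your proposal is correct and is essentially the paper's own proof: you take the at least $\beta\binom n2$ odd completions of $p$ and discard the at most $\eta n^2<\frac{\beta}{4}\binom n2$ non-$\theta$-good pairs (Proposition~\ref{prop:oddtypes}) and the at most $|R|n\le \beta n^2/3<\frac{3\beta}{4}\binom n2$ pairs meeting $R$. At least one candidate survives, which is exactly the split the paper uses.
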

\begin{proof}
Let $p$ be a $\beta$-medium pair. 
Then there are at least $\beta {n\choose 2}$ pairs $s$ such that $p\cup s$ is an odd edge.
By Proposition \ref{prop:oddtypes}, the number of non $\theta$-good pairs is at most
$\eta n^2 < \frac{\beta}{4}{n\choose 2}$.
Moreover, the number of pairs meeting $R$ is at most $|R|n\le \beta n^2/3<\frac{3\beta}{4}{n\choose 2}$ for sufficiently large $n$. 
Therefore there exists a $\theta$-good pair $s\in {V\setminus R\choose 2}$ such that $p\cup s$ is an odd edge. 
\end{proof}

For some replacement arguments we also need a vertex-level supply of medium pairs on a prescribed side of the bipartition. 
The following claim shows
that almost every vertex is heavy towards both sides. 

\begin{claim}\label{clm:heavycount}
    Assume Setup \ref{setup:oddextr}. Let $H$ be an $n$-vertex $4$-graph with $\delta_3(H)\ge (1/3+\gamma)n$, and let $\{A,B\}$ be a $(c,\g/4)$-odd-extremal bipartition of $V$. Assume furthermore that, with respect to this bipartition, every vertex is $(\beta_1,\beta_2)$-medium. 
    Then all but at most $504cn$ vertices are $Y$-heavy for each $Y\in\{A,B\}$. 
\end{claim}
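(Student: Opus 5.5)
We want to show that all but at most $504cn$ vertices are $Y$-heavy for each $Y\in\{A,B\}$. The plan is a double count over $\beta_2$-bad pairs, i.e.\ pairs lying in fewer than $\beta_2\binom n2$ odd edges. By Proposition~\ref{prop:oddtypes}(2), there are at most $36cn^2$ such pairs.

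Fix $Y\in\{A,B\}$ and suppose a vertex $v$ is not $Y$-heavy. Then fewer than $|Y|/2$ vertices $y\in Y$ make $vy$ a $\beta_2$-medium pair. Hence at least $|Y|/2-1$ pairs $vy$ with $y\in Y\setminus\{v\}$ are $\beta_2$-bad. Since $\{A,B\}$ is $(c,\g/4)$-odd-extremal, we have $|Y|\ge (1/3+\g/4)n$. So $v$ lies in at least $(1/6+\g/8)n-1\ge n/7$ bad pairs. (We do not need the assumption that every vertex is $(\beta_1,\beta_2)$-medium for this count; it only matters that heaviness is defined for medium vertices, which is all of them here.)

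Let $N_Y$ be the number of vertices that are not $Y$-heavy. Each bad pair is counted at most twice, once from each endpoint. Therefore
\[
N_Y\cdot \frac n7\le 2\cdot 36cn^2,
\]
which gives $N_Y\le 504cn$. The constant $504=2\cdot 36\cdot 7$ matches the statement, which suggests this is the intended count, with the lower bound $|Y|/2-1\ge n/7$. Applying the bound to $Y=A$ and $Y=B$ separately gives the claim for each $Y$.

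The only delicate step is the lower bound on the number of bad pairs at a non-heavy vertex. We must handle the case $v\in Y$, where $v$ itself is excluded, which costs the $-1$. We also use $|Y|\ge n/3$, so that $|Y|/2-1\ge n/7$ holds for large $n$. Everything else is routine. If the intended reading is instead that the claim holds for both $Y$ simultaneously, the union bound gives $1008cn$; I would state the per-$Y$ version as the claim literally asserts.
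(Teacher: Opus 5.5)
Your proof is correct and is essentially the paper's argument: the paper also double-counts ordered pairs $(v,a)$ with $v$ not $A$-heavy, $a\in A$ and $va$ $\beta_2$-bad, using $|A|/2-1\ge n/7$ and the bound of $36cn^2$ on bad pairs to obtain $504cn$. Your remark that the $(\beta_1,\beta_2)$-medium hypothesis is needed only so that heaviness is defined is also accurate; the paper's appeal to it (``the failure must come from the $A$-side'') is redundant given the definition.
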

\begin{proof}
Let $\mathcal B$ be the family of all $\beta_2$-bad pairs.
By Proposition \ref{prop:oddtypes},  we have $|\mathcal B|\le 36c n^2$.
Now define $X_A:=\{v\in V:v\text{ is not }A\text{-heavy}\}$. 
Since every vertex is $(\beta_1,\beta_2)$-medium, every vertex already has at least $\beta_1 n$ $\beta_2$-medium neighbours in $B$. 
Hence, if $v\in X_A$, then the failure of being $A$-heavy must come from the $A$-side, namely $|\{a\in A:va\text{ is }\beta_2\text{-medium}\}|<|A|/2$. 
Consequently, $|\{a\in A:va\in\mathcal B\}|\ge |A|/2-1$. Using $|A|\ge n/3+\g n/4$, we get, for all sufficiently large $n$, $|\{a\in A:va\in\mathcal B\}|\ge n/7$. We double-count the set $\mathcal I_A:=\{(v,a):v\in X_A,\ a\in A,\ va\in\mathcal B\}$. Thus, $|\mathcal I_A|\ge |X_A|n/7$.
On the other hand, each bad pair is counted at most twice in $\mathcal I_A$, so $|\mathcal I_A|\le 2|\mathcal B|\le 72c n^2$. Therefore $|X_A|\le 504c n$.  
Let $X_B:=\{v\in V:v\text{ is not }B\text{-heavy}\}$.
Same arguments give $|X_B|\le504cn$ . 
\end{proof}

We next prove the main replacement claim.  
Its purpose is to handle the case when a bad pair appears in a parity block.  
The claim shows that such a local object may be replaced by an even edge consisting of two medium pairs.  

\begin{claim}\label{clm:csreplace}
Assume Setup \ref{setup:oddextr}. 
Let $H$ be an $n$-vertex $4$-graph with $\delta_3(H)\ge (1/3+\gamma)n$, and let $\{A,B\}$ be a $(c,\g/4)$-odd-extremal bipartition of $V$ and every vertex is $(\beta_1,\beta_2)$-medium.
Let $R\subseteq V$ have size at most $\eta n$. 
If $p\in\binom{V\setminus R}{2}$ is a
$\beta_2$-bad connate (or split) pair, then there is an even
edge $p_1\cup p_2\subseteq V\setminus R$ such that $p_1,p_2$ are both $\beta_2$-medium connate (or split) pairs.
\end{claim}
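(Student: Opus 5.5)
The plan is to use the pair $p$ itself as the source of even edges, and then to re-pair an even edge through $p$ so that both halves are medium. Write $p=\{x,y\}$ and treat the connate case; the split case is the same argument with connate and split swapped.

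\emph{Step 1: many even edges through $p$.} By Fact~\ref{degreeinherit}, $\deg_H(p)\ge(1/3+\gamma)\binom{n-2}{2}$. Since $p$ is $\beta_2$-bad, fewer than $\beta_2\binom n2$ of these edges are odd. Hence there are at least $n^2/7$ pairs $q$ with $p\cup q$ an even edge, and each such $q$ is connate. At most $|R|n\le\eta n^2$ of them meet $R$. By Proposition~\ref{prop:oddtypes}(2), at most $36cn^2$ of them are $\beta_2$-bad. So at least $n^2/8$ pairs $q\subseteq V\setminus R$ are $\beta_2$-medium connate pairs with $p\cup q$ even. A priori these edges do not yet give the statement, since $p$ is bad.

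\emph{Step 2: re-pair the edges.} Each edge $e=\{x,y,w,u\}=p\cup q$ has two other perfect matchings, $\{xw,yu\}$ and $\{xu,yw\}$. If $q$ lies on the same side as $p$, then $e$ lies entirely in $A$ or entirely in $B$, so all six pairs of $e$ are connate. The claim then follows from finding one such $e$ with $xw,yu$ (or $xu,yw$) both $\beta_2$-medium.

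To find it, bound the number of "spoiled" edges by the number of $\beta_2$-bad pairs at $x$ and at $y$, times $n$. If $x$ and $y$ each lie in at most $n/100$ bad pairs, then fewer than $n^2/50$ of the edges from Step 1 are spoiled, and a good $e$ exists. Two situations remain. First, $q$ may lie on the opposite side of $p$, say $p\in\binom A2$ and $q\in\binom B2$. Then the re-pairings are split pairs, and I would instead use that $q$ is medium connate, arguing with the roles of $p$ and $q$ interchanged. Second, $x$ or $y$ may lie in many bad pairs. In that case I would discard $p$ and use only the statement, which does not require $p\subseteq p_1\cup p_2$: we only need some even edge made of two medium connate pairs avoiding $R$. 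I would start from the medium pair $q$ found in Step 1, or from a medium connate pair $xz$ guaranteed because $x$ is $(\beta_1,\beta_2)$-medium, and count.

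\emph{Main obstacle.} The hard part is showing that, globally, at least $\Omega(n^4)$ (or at least one outside $R$) of the even edges with connate–connate structure have both halves of some matching medium. The at most $36cn^2$ bad pairs can only kill $O(cn^4)$ edges, so this reduces to showing that even edges of type $A^4$, $B^4$ or $A^2B^2$-with-connate-halves number $\Omega(n^2)$ and are not concentrated at a few vertices.

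For this I would use that there are at most $288cn$ vertices incident to at least $n/4$ bad pairs (as in Proposition~\ref{prop:types}(c)). I would then choose $y'$, with $xy'$ connate, outside this set, and use the codegree of triples $\{x,y',w\}$ to regenerate many even edges through a pair whose vertices have few bad pairs. The split case is verbatim, with the re-pairing producing split pairs from an even edge in $A^2B^2$ through a split $p$.
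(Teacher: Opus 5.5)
Your Step 2 covers only the easy case, where both endpoints of $p$ have few $\beta_2$-bad partners on their own side. Even there, the opposite-side case should be handled by counting same-side edges directly. For $w\in A$, the completions of $p\cup\{w\}$ into $B$ are odd edges, so $\Omega(n^2)$ of the even edges through $p$ lie in $A^4$. Your alternative of interchanging the roles of $p$ and $q$ fails, because $q$ is medium, i.e.\ lies in many \emph{odd} edges, and so is not a source of even edges.

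The real content of the claim is the case where an endpoint, say $x$, has many bad connate partners; the hypotheses allow up to $(1-\beta_1)|A|$ of them. Here your proposal has a genuine gap.

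\begin{itemize}
\item The global reduction in your last two paragraphs cannot work. $H$ has at most $cn^4$ even edges in total. Counting $36cn^2$ bad pairs only bounds the edges they meet by $18cn^4$, which exceeds the number of even edges. So no such count can show that some even edge avoids bad pairs.
\item Even edges may genuinely be concentrated at a few vertices. It is consistent with the hypotheses that every even edge contains $x$, so non-concentration is not something you can prove.
\item Your local sketch is incomplete. The triples $\{x,y',w\}$ generate many even edges only if $xy'$ is itself $\beta_2$-bad. Even then $x$ still has many bad partners, so your symmetric count of spoiled edges, which needs both endpoints to have few bad partners, breaks down at $x$.
\end{itemize}

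The missing idea is an asymmetric choice, which is what the paper does.

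\begin{enumerate}
\item Pass from $p=uv\subseteq A$ to a bad connate pair $uv'$ with $v'\notin R$ and $v'$ $A$-heavy, meaning $v'$ has at least $|A|/2$ medium partners in $A$. Such $v'$ exists: otherwise every bad partner of $u$ outside $R$ would have $(1/2-\eta)|A|$ bad partners, and the bad-pair graph would have $\Omega(n^2)$ edges, contradicting Proposition~\ref{prop:oddtypes}.
\item At $u$, use only the weak guarantee from $(\beta_1,\beta_2)$-mediumness: $u$ has $\beta_1 n$ medium partners $w\in A$.
\item Fix $\sigma$ with $\beta_2\ll\sigma\ll\beta_1,\gamma$. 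Since $uv'$ is bad, $\sum_w|N_H(uv'w)\cap B|<\beta_2\binom n2$. Hence at most $\beta_2\sigma^{-1}n\ll\beta_1 n$ vertices $w$ have $|N_H(uv'w)\cap B|\ge\sigma n$.
\item Pick a medium partner $w$ of $u$ outside this set and outside $R$. This forces $|N_H(uv'w)\cap A|\ge(1/3+\gamma-\sigma)n\ge(1/2+\gamma/2)|A|$. That set therefore meets the medium partners of $v'$ in some $z\notin R$.
\end{enumerate}

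Then $uv'wz$ is an $A^4$ even edge whose halves $uw$ and $v'z$ are medium connate pairs.

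The split case needs the same two ingredients. First arrange, replacing $y$ by another $y'$ if necessary, that $y$ is $A$-heavy or $x$ is $B$-heavy. Then choose $w$ among the medium partners of the other endpoint so that the triple has few odd completions.
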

\begin{proof}
For the connate case, let $p=uv$ be such a $\beta_2$-bad connate pair. 
By symmetry we may assume that $p=uv\subseteq A$. 
If one of $u,v$ is already
$A$-heavy, we label that vertex as $v'$ and the other vertex as $u$; if neither endpoint is heavy, then
we claim that there exists another vertex $v'$ such that $uv'$ is a $\beta_2$-bad connate pair and $v'\in V\setminus R$ is $A$-heavy.

Indeed, let $F$ be the graph with vertex set $V$ and edge set as the collection of $\beta_2$-bad connate pairs in $A$.
By Proposition \ref{prop:oddtypes}, we obtain that $e(F)\le 36c n^2$. 
Now $u$ is $(\beta_1,\beta_2)$-medium but not $A$-heavy. 
Since $u$ is $(\beta_1,\beta_2)$-medium, it already has at least $\beta_1 n$ $\beta_2$-medium neighbours in $B$. 
Therefore the failure of being $A$-heavy implies that $|\{a\in A:ua\text{ is }\beta_2\text{-medium}\}|<|A|/2$.
Consequently, $|N_{F-R}(u)|\ge(1/2-\eta)|A|$. 
We claim that $N_{F}(u)\setminus R$ contains an $A$-heavy vertex.
Suppose not, that is, every vertex $v'\in N_{F}(u)\setminus R$ is not $A$-heavy. 
Since every vertex is $(\beta_1,\beta_2)$-medium, the same argument applied to $v'$ gives $d_{F-R}(v')\ge (1/2-\eta)|A|\ge n/7$ for every $v'\in N_{F}(u)$.
%As $(1/3+\g/4)n\le|A|\le(2/3-\g/4)n$, we obtain
We get $\sum_{v'\in N_{F}(u)\setminus R} d_{F}(v')\ge n^2/49$ and $e(F)\ge n^2/98$, which contradicts $e(F)\le 36c n^2$. 
Thus $N_{F}(u)\setminus R$ contains an $A$-heavy vertex. 
We choose a $A$-heavy vertex $v'\in N_{F}(u)\setminus R$ and $uv'$ is a $\beta_2$-bad pair. 

Next we claim that there is an even edge  consisting of two $\beta_2$-medium connate pairs.
Let \(\sigma\) be a constant with \(\eta\ll \beta_2\ll \sigma\ll \beta_1,\gamma\). For every \(w\in A\setminus\{u,v'\}\), put \(b(w):=|N_H(uv'w)\cap B|\). Since \(uv'\) is \(\beta_2\)-bad, we have \(\deg_{H_{\mathrm{odd}}}(uv')<\beta_2\binom n2\).
Hence \(\sum_{w\in A\setminus\{u,v'\}} b(w)\le \deg_{H_{\mathrm{odd}}}(uv')<\beta_2\binom n2\). 
Therefore the set \(W:=\{w\in A:b(w)\ge \sigma n\}\) satisfies \(|W|\le \beta_2\sigma^{-1}n\). 
Since $u$ is \((\beta_1,\beta_2)\)-medium, the vertex \(u\) has at least \(\beta_1 n\) vertices \(w\in A\) such that \(uw\) is a \(\beta_2\)-medium pair.
Since $|W|\le\beta_2\sigma^{-1}n, |R|\le\eta n$, and \(\eta\ll\beta_2\ll\beta_1\), we may choose \(w\in A\setminus(\{u,v'\}\cup W\cup R)\) such that \(uw\) is \(\beta_2\)-medium. For this choice of \(w\), we have \(|N_H(uv'w)\cap B|<\sigma n\). 
By the minimum codegree condition, \(|N_H(uv'w)\cap A|\ge (1/3+\gamma)n-\sigma n\ge(1/2+\g/2)|A|\) since \(|A|\le (2/3-\g/4)n\) and $\sigma\ll\g$. 
Now use that \(v'\) is \(A\)-heavy. 
Set \(S:=\{z\in A:v'z\text{ is }\beta_2\text{-medium}\}\). Then \(|S|\ge|A|/2\).
Therefore \(|(N_H(uv'w)\cap A)\cap S|\ge \g|A|/2\). 
Thus, we choose a vertex \(z\in (N_H(uv'w)\cap A)\cap S\) with \(z\notin R\cup\{u,v',w\}\) and \(uv'wz\in E(H)\), the pair \(uw\) is \(\beta_2\)-medium by the choice of \(w\), and the pair \(v'z\) is \(\beta_2\)-medium by the choice of \(z\). 
Thus \(p'_1:=uw\) and \(p'_2:=v'z\) are two vertex-disjoint \(\beta_2\)-medium connate pairs.

For the split case,
say $x\in A$ and $y\in B$. 
We claim that either $y$ is  $A$-heavy, or $x$ is $B$-heavy. 
Indeed, let $s=xy$ be a
$\beta_2$-bad split pair disjoint from $R$, where $x\in A$ and $y\in B$.
Let
$X_Y:=\{v\in V:v\text{ is not }Y\text{-heavy}\}$ where $Y\in\{A,B\}$.
Suppose that $y\in X_A$ and that $x\in X_B$. 
Since $x$ is
$(\beta_1,\beta_2)$-medium, it is contained in at least $\beta_1 n$ $\beta_2$-medium
pairs in $A$. 
Hence the failure of $x$ being $B$-heavy must come from the $B$-side, namely
$|\{b\in B:xb\text{ is }\beta_2\text{-medium}\}|<|B|/2$.
Therefore
$|\{b\in B:xb\text{ is }\beta_2\text{-bad}\}|\ge|B|/2$.
Thus, we can choose a vertex $y'\in B\setminus (R\cup X_A)$ such that $xy'$ is
$\beta_2$-bad since $|X_A|\le\eta n$ and $|R|\le\eta n$  by Claim \ref{clm:heavycount}.
Now we may assume that either $y$ is  $A$-heavy, or $x$ is $B$-heavy. 

%Replacing $y$ by $y'$, 
First we assume that $y$ is $A$-heavy. 
Choose $\sigma$ with $\beta_2\ll\sigma\ll\beta_1,\gamma$. For each
$w\in B\setminus\{y\}$, set $b(w):=|N_H(xyw)\cap B|$. Since $xy$ is
$\beta_2$-bad, it is contained in fewer than $\beta_2\binom n2$ odd
edges.
Also we have $\sum_{w\in B\setminus\{y\}}b(w)\le 2\deg_{H_{\mathrm{odd}}}(xy)<2\beta_2\binom n2$.
Thus the set $W:=\{w\in B:b(w)\ge\sigma n\}$ has size at most
$\beta_2\sigma^{-1}n$. 
Since $x$ is $(\beta_1,\beta_2)$-medium, it has
at least $\beta_1 n$ $\beta_2$-medium neighbours in $B$. 
As
$\beta_2/\sigma\ll\beta_1$, we may choose
$w\in B\setminus(R\cup\{y\}\cup W)$ such that $xw$ is
$\beta_2$-medium. 
Then $|N_H(xyw)\cap B|<\sigma n$. 
By the minimum codegree condition,
$|N_H(xyw)\cap A|\ge(1/3+\gamma)n-\sigma n\ge(1/2+\g/2)|A|$.
On the other hand, since $y$ is
$A$-heavy, the set
$S:=\{z\in A:yz\text{ is }\beta_2\text{-medium}\}$ has size at least
$|A|/2$. Therefore
$|(N_H(xyw)\cap A)\cap S|\ge\gamma|A|/2$.
Then we choose a vertex $z\in (N_H(xyw)\cap A)\cap S$ outside $R\cup\{x,y,w\}$ and $xywz\in E(H)$.
Moreover, $xw$ and $yz$ are two vertex-disjoint $\beta_2$-medium split pairs.

If $x$ is $B$-heavy, then we run the argument with the roles of $A$ and $B$ interchanged.
%Consequently, whenever an even edge $e$ is disjoint from a constant-size set $R$, we may replace $e$, if needed, by an even edge outside $R$ which is the union of two disjoint $\beta_2$-medium connate pairs; if $|e\cap A|=|e\cap B|=2$, we may instead replace it by an even edge which is the union of two disjoint $\beta_2$-medium split pairs.
\end{proof}
With the local replacement tools in hand, we now formulate the remaining tools for split-ended paths.  Since the final Hamilton path lemma
requires one of two specific congruence classes, the bridge must delete a number of vertices with a prescribed residue modulo $8$ and prescribed imbalance modulo $4$.

Let $P$ be a path whose two ends are split pairs $s_1,s_2$. Define the
effective deletion of $P$ by
$D(P):=V(P)\setminus(s_1\cup s_2)$. We define the residue of $P$ by
$\operatorname{res}(P):=(|D(P)|\bmod 8,\,
|D(P)\cap A|-|D(P)\cap B|\bmod 4)$.
For $(m,d)\in \{0,2,4,6\}\times\{0,2\}$ define
$\mathcal R_{m,d}^s$ to be the set of pairs $(r_8,r_4)$ such that
$(m-r_8,d-r_4)$ is equal to either $(6,2)$ or $(2,0)$ modulo $(8,4)$.
Equivalently,
\begin{equation}\label{tabel}
\begin{array}{c|c}
(m,d) & \mathcal R_{m,d}^s \\ \hline
(0,0),(4,2) & \{(2,2),(6,0)\} \\
(2,2),(6,0) & \{(4,0),(0,2)\} \\
(4,0),(0,2) & \{(2,0),(6,2)\} \\
(6,2),(2,0) & \{(0,0),(4,2)\}.
\end{array}
\end{equation}

The next lemma constructs the bridge we use.

\begin{lemma}\label{lem:28oddbridge}
Assume Setup \ref{setup:oddextr}.
Let $H$ be an $n$-vertex $4$-graph with $\delta_3(H)\ge n/3+\gamma n$, and let $\{A,B\}$ be a
$(c,\g/4)$-odd-extremal bipartition such that every vertex is $(\beta_1,\beta_2)$-medium. 
Assume moreover that $\{A,B\}$ is odd-good.
Then one can find a path $P_0$ with at most $28$ vertices, whose two ends are $\theta$-good split pairs $s_1,s_2$, such
that, with $V^*:=(V\setminus V(P_0))\cup s_1\cup s_2$,
$A^*:=A\cap V^*$ and $B^*:=B\cap V^*$, either
$|V^*|\equiv 6\pmod 8$ and $|A^*|-|B^*|\equiv 2\pmod 4$, or
$|V^*|\equiv 2\pmod 8$ and $|A^*|-|B^*|\equiv 0\pmod 4$.
\end{lemma}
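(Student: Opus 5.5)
The plan is to build $P_0$ as a concatenation of short blocks joined by the odd-case connector, Proposition~\ref{prop:oddpairconnect}. The only freedom is the residue $\operatorname{res}(P_0)$, which must lie in $\mathcal R^s_{m,d}$ as recorded in \eqref{tabel}.

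We first record how residues add. A path whose pair sequence alternates split, connate, split, connate, \dots{} uses only odd edges. Consider a split-ended such path with $j$ interior pairs, so $j$ is odd: $(j+1)/2$ interior connate pairs and $(j-1)/2$ interior split pairs. Each interior split pair contributes $(2,0)$ to $\operatorname{res}$. An interior connate pair contributes $(2,2)$ or $(2,-2)\equiv(2,2)$ modulo $(8,4)$, whether it lies in $A$ or in $B$.

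Now take the basic connector $s\,p\,s'$ given by Proposition~\ref{prop:oddpairconnect}(1). Using one connector gives residue $(2,2)$. A second connector through an intermediate $\theta$-good split pair $t$, i.e.\ $s\,p\,t\,p'\,s'$, gives residue $(2,2)+(2,0)+(2,2)=(6,0)$. Splicing in longer detours adds multiples of $(4,0)$. Hence purely odd paths with good split ends realise every residue in $\{(2,2),(6,0)\}$ modulo $(8,4)$. This already settles case (\ref{O0}) without using any even edge, and it is just the table row for $(0,0),(4,2)$.

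For the other cases the even edges must shift the residue. An even edge $q_1\cup q_2$ inserted as two consecutive pairs of the same type breaks the alternation once.

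In case (\ref{O1}) take the given edge $e\in H_{\rm even}$. By Claim~\ref{clm:csreplace} we either use $e$ directly, if its two pairs in a suitable splitting are $\beta_2$-medium, or replace it by an even edge $p_1\cup p_2$ whose pairs are both $\beta_2$-medium and of the same type. We then extend each of $p_1$ and $p_2$ by Proposition~\ref{prop:oddmedium-to-good} to $\theta$-good pairs of the opposite type. If these are connate, we extend once more, via Proposition~\ref{prop:oddmedium-to-good} or the connector, to reach $\theta$-good split ends. We then compute the residue contribution of this block: a connate--connate even edge or a split--split even edge shifts the parity count by one pair compared with the alternating pattern, moving the class from $\{(2,2),(6,0)\}$ to $\{(4,0),(0,2)\}$. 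We close up with basic connectors, which are free to adjust within the class by $(4,0)$, to hit the required target. Vertex-disjointness is handled at each step by letting $R$ be the set of already used vertices, which has constant size.

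Cases (\ref{O2}) and (\ref{O3}) need a shift twice or three times. Total $2$-pathlength at least $2$ gives either two disjoint even edges or a $2$-path $q_1q_2q_3$ of two even edges. In the first subcase we make two blocks as in (\ref{O1}) and chain them. In the second subcase the even $2$-path itself is the block, and we upgrade its end pairs as before. For (\ref{O3}) we use three shifts in the same way. The alternative in (\ref{O3}) is an even edge with $|e\cap A|=|e\cap B|=2$, split into two split pairs, or two connate pairs one in $A$ and one in $B$. We check that this single edge gives the residue jump of three generic shifts, because of the $d$-coordinate: connate pairs in $A$ versus $B$ contribute $\pm2$ to the imbalance, and mixing them changes $r_4$ by $2$.

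The main obstacle is twofold. First, the bad pairs inside the given even structures, especially the $2$-paths in (\ref{O2}) and (\ref{O3}): Claim~\ref{clm:csreplace} replaces a single edge, but for a $2$-path whose middle pair is bad we cannot replace just one edge. Here I would use the fact that a bad middle pair lies in few odd edges, so its even edges are abundant, and search among them for a medium continuation. Alternatively I would argue that if every such configuration has a bad pair, then there exist two disjoint even edges instead, reducing to the first subcase. Second, there is the careful modulo-$(8,4)$ bookkeeping for each type of even edge (connate $AA$, $BB$, $AB$, or split) and each case, to verify it lands in \eqref{tabel}. Throughout, I would keep every block at most $10$ vertices, so the total stays within $28$.
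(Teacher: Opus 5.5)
\textbf{Verdict.} Your architecture matches the paper's: a case split over (\ref{O0})--(\ref{O3}), blocks built from the given even edges, bad pairs removed by Claim~\ref{clm:csreplace}, ends upgraded by Proposition~\ref{prop:oddmedium-to-good}, and blocks chained by Proposition~\ref{prop:oddpairconnect}. However, the residue bookkeeping is the actual content of the lemma, and yours is wrong. Followed literally, your plan can land in the wrong row of \eqref{tabel}.

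\textbf{The bookkeeping.} An extra split/connate detour adds $(4,2)$, not $(4,0)$: going from $s\,p\,s'$ to $s\,p\,t\,p'\,s'$ moves the residue from $(2,2)$ to $(6,0)$. The four target sets in \eqref{tabel} are exactly the cosets of $\langle(4,2)\rangle$. So connectors never move you between rows, and there is nothing to ``adjust within the class''. The row is fixed by the junctions alone, as follows.
\begin{itemize}
\item If $k_s,k_c$ count the interior split and connate pairs, then $\operatorname{res}\equiv(2(k_s+k_c),2k_c)$ modulo $(8,4)$.
\item Hence the coset depends only on $k_s-k_c \bmod 4$.
\item For a split-ended path, $k_s-k_c=n_{ss}-n_{cc}-1$, where $n_{ss}$ and $n_{cc}$ count consecutive split--split and connate--connate pairs.
\item The rows (\ref{O0}), (\ref{O1}), (\ref{O2}), (\ref{O3}) require $n_{ss}-n_{cc}\equiv 0,-1,2,1 \pmod 4$ respectively.
\end{itemize}
So a connate--connate even edge and a split--split even edge shift the class in \emph{opposite} directions. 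This contradicts your assertion that both move $\{(2,2),(6,0)\}$ to $\{(4,0),(0,2)\}$.

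\textbf{Consequences in each case.}
\begin{itemize}
\item In (\ref{O1}), splitting an edge with $|e\cap A|=|e\cap B|=2$ into two split pairs gives, e.g., $s_1c_1s_1's_2'c_2s_2$ with residue $(0,0)$. That is the (\ref{O3}) row, and no connector can repair it. You must always use a decomposition into two connate pairs, as the paper does. Every even edge has one, and the connate version of Claim~\ref{clm:csreplace} makes the pairs medium.
\item In (\ref{O3}), the mixed edge works only through its split decomposition, where $n_{ss}=1$. Decomposing it as an $A$-pair plus a $B$-pair gives $(4,0)$, exactly like an edge inside $A$. By your own remark $(2,-2)\equiv(2,2)$, so your justification ``mixing them changes $r_4$ by $2$'' is false.
\item In the total-$2$-pathlength branch of (\ref{O3}), you must first dispose of any mixed even edge, as the paper does, so that every pair in the configuration is connate. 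Otherwise, e.g., an all-split $2$-path of three even edges has $n_{ss}=3$ and lands in the wrong row.
\end{itemize}

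\textbf{Smaller points.} The ``main obstacle'' you single out is not one. The middle pair of an even $2$-path is interior and both of its edges are given, so only the end pairs need to be medium. A bad end pair is handled by Claim~\ref{clm:csreplace} with $R=e_2\cup f_1$, falling back to the disjoint-edges subcase. Also, blocks of at most $10$ vertices do not yield the bound $28$ for three blocks. The paper's worst case is three connate--connate blocks joined by two connate connectors, which uses $3\cdot 8+2\cdot 2=28$ vertices.
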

\begin{proof}
    Let $m\in\{0,2,4,6\}$ and $d\in\{0,2\}$ be such that
$m\equiv n\pmod 8$ and $d\equiv |A|-|B|\pmod 4$.
We consider separately four cases for the pair $(m,d)$ as in Definition \ref{def:evenoddgood}.
To obtain the desired path, our goal is to construct a path $P_0$ whose two ends are $\theta$-good split pairs and $\operatorname{res}(P_0)\in \mathcal R_{m,d}^s$.

\textbf{Case (\ref{O0}).}  Suppose that $(m,d)\in\{(0,0),(4,2)\}$. 
We choose a
$\theta$-good split pair $s_1$. 
Using Proposition \ref{prop:oddmedium-to-good} twice, we choose a $\theta$-good connate pair $p$ such that $s_1\cup p\in E(H)$, and choose a
$\theta$-good split pair $s_2$ disjoint from all previously chosen
vertices such that $p\cup s_2\in E(H)$.
Set $P_0=s_1ps_2$. 
Hence
$\operatorname{res}(P_0)=(2,2)\in\mathcal R_{m,d}^s$ in (\ref{tabel}).

\textbf{Case (\ref{O1}).} Suppose that $(m,d) \in \{(2,2),(6,0)\}$ and let $e$ be an even edge. 
It is easy to see that every even edge admits a decomposition into two disjoint connate pairs.
If $e$ contains a $\beta_2$-bad connate pair, then we apply Claim \ref{clm:csreplace} to obtain another even edge $p_1\cup p_2$ with $R=e$ such that both $p_1$ and $p_2$ are $\beta_2$-medium.
By replacing $e$ with $p_1\cup p_2$ if necessary, we may assume that $e=p_1\cup p_2$ consists of two $\beta_2$-medium connate pairs.
%Since otherwise using Claim \ref{clm:csreplace}, we can replace this edge by another vertex-disjoint even edge consisting of two $\beta_2$-medium connate pairs, rename it by $e:=p_1\cup p_2$. 
By Proposition \ref{prop:oddmedium-to-good}, we may choose disjoint $\theta$-good split pairs $s_1,s_2$, avoiding $p_1\cup p_2$, such that $s_1\cup p_1\in E(H)$ and $p_2\cup s_2\in E(H)$. 
Set $P_0:=s_1p_1p_2s_2$. 
Hence $\operatorname{res}(P_0)=(4,0)\in\mathcal  R_{m,d}^s$ in (\ref{tabel}). 

\textbf{Case (\ref{O2}).} Suppose that
$(m,d)\in\{(4,0),(0,2)\}$ and
$H_{\mathrm{even}}$ has total $2$-pathlength at least two, so it contains
either two disjoint even edges or a $2$-path of two even edges.

First suppose that there are two disjoint even edges $e_1$ and $e_2$.
If $e_1$ contains a $\beta_2$-bad connate pair, then we use Claim~\ref{clm:csreplace} with $R=e_1\cup e_2$ to obtain an even edge $p_1\cup p_2$ such that $p_1,p_2$ are $\beta_2$-medium.
By replacing $e_1$ with $p_1\cup p_2$ if necessary, we may assume that $e_1=p_1\cup p_2$ consists of two $\beta_2$-medium connate pairs.
The same argument to $e_2$ shows that we may assume
$e_2=p_3\cup p_4$ consists of two
$\beta_2$-medium connate pairs. 
Using Proposition~\ref{prop:oddmedium-to-good} twice, choose disjoint $\theta$-good split pairs
$s_1,s_3,s_4,s_2$ such that
$s_1\cup p_1, p_2\cup s_3, s_4\cup p_3, p_4\cup s_2\in E(H)$.
By Proposition \ref{prop:oddpairconnect}, we can choose a connate pair $q$ with $s_3\cup q,q\cup s_4\in E(H)$. 
Let $P_0=s_1p_1p_2s_3qs_4p_3p_4s_2$.
Hence
$\operatorname{res}(P_0)=(6,2)\in\mathcal R_{m,d}^s$ in (\ref{tabel}).

Now suppose that there are even edges $e_1,e_2$ with $|e_1\cap e_2|=2$.
Write $f_1=e_1\setminus e_2$, $f_2=e_1\cap e_2$, and
$f_3=e_2\setminus e_1$. Then $f_1f_2f_3$ is a $2$-path and the three
pairs are either all connate or all split. If $f_1$ is
$\beta_2$-bad, then replace $f_1$ by another even edge by Claim \ref{clm:csreplace} with $R=e_2\cup f_1$, reducing to the previous paragraph; similarly we may
assume that $f_3$ is $\beta_2$-medium.

If $f_1,f_2,f_3$ are connate pairs, then choose disjoint $\theta$-good split
pairs $s_1,s_2$ with $s_1\cup f_1,f_3\cup s_2\in E(H)$ using Proposition \ref{prop:oddmedium-to-good} twice.
Set
$P_0=s_1f_1f_2f_3s_2$. 
We have
$\operatorname{res}(P_0)=(6,2)\in\mathcal R_{m,d}^s$ in (\ref{tabel}).

If $f_1,f_2,f_3$ are split pairs, then choose disjoint $\theta$-good connate
pairs $p_1,p_2$ with $p_1\cup f_1,f_3\cup p_2\in E(H)$, and then choose
disjoint $\theta$-good split pairs $s_1,s_2$ with
$s_1\cup p_1,p_2\cup s_2\in E(H)$ using Proposition \ref{prop:oddmedium-to-good}. 
Set
$P_0=s_1p_1f_1f_2f_3p_2s_2$, we have
$\operatorname{res}(P_0)=(2,0)\in\mathcal R_{m,d}^s$ in (\ref{tabel}).

\textbf{Case (\ref{O3}).} Suppose that
$(m,d)\in\{(6,2),(2,0)\}$ and either there is an
even edge $e$ with $|e\cap A|=|e\cap B|=2$, or $H_{\mathrm{even}}$ has
total $2$-pathlength at least three.

Suppose $e$ is an even edge of $H$ with $|e\cap A|=|e\cap B|=2$.  
If $e$ contains a $\beta_2$-bad split pair, then we apply Claim \ref{clm:csreplace} to obtain another even edge $s_1'\cup s_2'$ with $R=e$ such that both $s_1'$ and $s_2'$ are $\beta_2$-medium. 
By replacing $e$ with $s_1'\cup s_2'$ if necessary, we may assume that $e=s_1'\cup s_2'$ consists of two $\beta_2$-medium connate pairs.
Choose disjoint $\theta$-good connate pairs $p_1,p_2$ with
$p_1\cup s'_1,s'_2\cup p_2\in E(H)$, and then choose disjoint
$\theta$-good split pairs $s_1,s_2$ with $s_1\cup p_1,p_2\cup s_2\in E(H)$ using Proposition \ref{prop:oddmedium-to-good}.
Let $P_0=s_1p_1s'_1s'_2p_2s_2$ and
$\operatorname{res}(P_0)=(0,0)\in\mathcal R_{m,d}^s$ in (\ref{tabel}).

Now we assume every even edge is contained in $A$ or in $B$, and all pairs
in the following $2$-paths are connate. Since the total $2$-pathlength is
at least three, one of the following configurations exists.

If there are three disjoint even edges $e_1,e_2,e_3$, by using Claim \ref{clm:csreplace} as in previous cases to replace these three edges if necessary, we obtain three disjoint even edges
$e_i=q_i\cup q'_i$ for $i\in[3]$, where all six pairs are $\beta_2$-medium connate pairs. 
Choose $\theta$-good split pairs
$s_1,s_3,s_4,s_5,s_6,s_2$ joining $q_1,q_1',q_2,q_2',q_3,q_3'$ respectively by Proposition \ref{prop:oddmedium-to-good}.
Then choose connate pairs $r_1,r_2$ joining $s_3$ to $s_4$ and $s_5$ to $s_6$ by Proposition \ref{prop:oddpairconnect}. 
Let
$P_0=s_1q_1q'_1s_3r_1s_4q_2q'_2s_5r_2s_6q_3q'_3s_2$ and
$\operatorname{res}(P_0)=(0,0)\in\mathcal R_{m,d}^s$ in (\ref{tabel}).

If there is a $2$-path of two even edges and a disjoint even edge, write
$f_1=e_1\setminus e_2$, $f_2=e_1\cap e_2$,
$f_3=e_2\setminus e_1$.
As in Case (\ref{O1}), replacing if necessary, we may assume
that $f_1$ and $f_3$ are $\beta_2$-medium. Write the disjoint even edge as
$q_1\cup q_2$ with $q_1,q_2$ medium connate. Choose good split pairs
$s_1,s_3,s_4,s_2$ with
$s_1\cup f_1,f_3\cup s_3,s_4\cup q_1,q_2\cup s_2\in E(H)$ by Proposition \ref{prop:oddmedium-to-good}, and choose a
connate pair $r$ with $s_3\cup r,r\cup s_4\in E(H)$ by Proposition \ref{prop:oddpairconnect}. 
Let $P_0=s_1f_1f_2f_3s_3rs_4q_1q_2s_2$ and
$\operatorname{res}(P_0)=(0,0)\in\mathcal R_{m,d}^s$ in (\ref{tabel}).

Finally suppose there is a $2$-path of three even edges. Write
$f_1=e_1\setminus e_2$, $f_2=e_1\cap e_2$,
$f_3=e_2\cap e_3$, and $f_4=e_3\setminus e_2$. 
As in Case (\ref{O1}), replacing if necessary
allows us to assume that $f_1,f_4$ are
$\beta_2$-medium. 
Choose good split pairs $s_1,s_2$ with
$s_1\cup f_1,f_4\cup s_2\in E(H)$ by Proposition \ref{prop:oddmedium-to-good}.
Let $P_0=s_1f_1f_2f_3f_4s_2$ and thus
$\operatorname{res}(P_0)=(0,0)\in\mathcal R_{m,d}^s$ in (\ref{tabel}).
\end{proof}

The preceding lemma gives a constant-size block with the correct residue,
but it does not yet cover the exceptional vertices.  
We now extend this
block by inserting short local paths through the exceptional vertices while the residue is preserved.

\begin{proof}[Proof of Lemma \ref{lem:oddbridge}]
By Lemma \ref{lem:28oddbridge}, we obtain a path $P_0$ with at most $28$ vertices, whose two ends are $\theta$-good split pairs $s_1,s_2$.
Let $V^*:=(V\setminus V(P_0))\cup s_1\cup s_2$,
$A^*:=A\cap V^*$ and $B^*:=B\cap V^*$, then either
$|V^*|\equiv 6\pmod 8$ and $|A^*|-|B^*|\equiv 2\pmod 4$, or
$|V^*|\equiv 2\pmod 8$ and $|A^*|-|B^*|\equiv 0\pmod 4$.
Recall that $X_E$ is the set of exceptional vertices. 
Let $X_0:=X_E\setminus V(P_0)$. 
If $|X_0|$ is odd, then choose one vertex
$x_*\in V\setminus (V(P_0)\cup X_E)$ and set $X':=X_0\cup\{x_*\}$; otherwise
set $X':=X_0$. 
Let $q:=|X'|$ and write $X'=\{x_1,\ldots,x_q\}$. 
Then $q$ is
even and $q\le |X_E|+1\le \a n$ since $\eta\ll\a$ and inequality (\ref{ine:xtr}) holds. 

Since every vertex is $(\beta_1,\beta_2)$-medium and $\eta\ll\beta_1$, we greedily choose distinct $y_i$ avoiding $V(P_0)$ and used vertices in the same part as $x_i$ such that $r_i:=\{x_i,y_i\}$ is a $\beta_2$-medium connate pair for each $i\in[q]$.  
By Proposition~\ref{prop:oddmedium-to-good},
applying twice to the medium connate pair $r_i,i\in[q]$, we can choose two
pairwise disjoint $\theta$-good split pairs $a_i,b_i$ avoiding $V(P_0)$ and used vertices such that
$r_i\cup a_i,r_i\cup b_i\in E(H)$. 
Next choose a $\theta$-good split pair
$g_i$ disjoint from all previously chosen vertices for each $i\in[q]$.
Set $g_0:=s_2$.
By Proposition~\ref{prop:oddpairconnect}, applying to the two good split pairs
$g_{i-1},a_i,i\in[q]$, we choose a connate pair $c_i$ avoiding $V(P_0)$ and used vertices such that
$g_{i-1}\cup c_i,c_i\cup a_i\in E(H)$.
Applying the same proposition to
$b_i,g_i$, we choose a connate pair $d_i$ avoiding $V(P_0)$ and used vertices  such that
$b_i\cup d_i,d_i\cup g_i\in E(H)$. 
All these choices are possible because
the forbidden set has size at most $20\a n+O(1)\le\mu n$ in total.

%Next we choose two disjoint $\theta$-good split pairs $s,s'$. By Proposition \ref{prop:oddpairconnect}, there exist connate pairs $p,q$ such that $s\cup p,p\cup s_1\in E(H)$ and $g_q\cup q,q\cup s'\in E(H)$.

Let $P:=P_0s_2c_1a_1r_1b_1d_1g_1c_2a_2r_2b_2d_2g_2\cdots c_qa_qr_qb_qd_qg_q$ and  $X_E\subseteq V(P)$.
If $q=0$, then take $P:=P_0$.
Observe that $|V(P)\setminus V(P_0)|=12q \pmod 8$ and each $a_i,b_i,g_i$ are split pairs,
$c_i,r_i,d_i$ are connate pairs. 
Thus, $|A\cap(V(P)\setminus V(P_0))|-|B\cap(V(P)\setminus V(P_0))|=2q \pmod 4$.
Thus,
$\operatorname{res}(P)=\operatorname{res}(P_0)$ since $q$ is even.
Finally,
$|V(P)|\le |V(P_0)|+12q\le 28+24\a n\le \mu n$,
as $\a\ll\mu$ and $n$ is sufficiently large. 
\end{proof}

\subsection{Proof of Lemma \ref{lem:oddHP}}

It remains to prove the Hamilton path lemma.  
The proof converts the
problem into a matching and Hamilton path problem in an auxiliary graph.
%The main point is to choose enough disjoint good split pairs so that the remaining vertices on the two sides can be paired in the correct proportions.
We need the following technical result in the proof.

\begin{claim}\label{claim:odd-weighted-estimate}
Let $0<1/N\ll \xi\ll 1$. 
Let $a+b=N$ with
$a,b\ge N/3$.
Suppose that
$0\le m_A\le \binom{a}{2}$, $0\le m_B\le \binom{b}{2}$, and
$m_A+m_B\ge N^2/6$. 
Then
$y_Am_A\binom{a}{2}^{-1}+y_Bm_B\binom{b}{2}^{-1}\ge \left(\frac12+\xi\right)\frac{N-2}{4}$ where $y_A=(a-\frac{N+2}{4})/2$ and $y_B=(b-\frac{N+2}{4})/2$.
\end{claim}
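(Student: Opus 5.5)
Throughout, write $x:=\frac{N+2}{4}$, so that $y_A=(a-x)/2$ and $y_B=(b-x)/2$. The first observation is that $y_A+y_B=(N-2x)/2=\frac{N-2}{4}$. The claim therefore says that the weighted average of the densities $\alpha:=m_A\binom a2^{-1}$ and $\beta:=m_B\binom b2^{-1}$, with weights $y_A,y_B$, is at least $1/2+\xi$. Since $a,b\ge N/3>x$, both weights are positive and of order $N$. The plan is to treat the left-hand side as a linear function of $(m_A,m_B)$ and minimise it over the polytope
\[
0\le m_A\le \tbinom a2,\qquad 0\le m_B\le\tbinom b2,\qquad m_A+m_B\ge N^2/6 .
\]
A linear function on a polytope attains its minimum at a vertex. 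The relevant vertices are those where the constraint $m_A+m_B=N^2/6$ is tight and one variable sits at a box bound: either $m_B=\binom b2$ and $m_A=N^2/6-\binom b2$, or $m_B=0$ (if $\binom a2\ge N^2/6$), together with the symmetric pair. It therefore suffices to check the inequality at these at most four points.

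Next I normalise: put $a=tN$ and $b=(1-t)N$ with $t\in[1/3,2/3]$, and replace $\binom a2$ by $a^2/2$ and so on. All errors are $O(1/N)$ and are absorbed since $1/N\ll\xi$. At the vertex $m_B=\binom b2$ the normalised value is
\[
f(t)=\tfrac{3/4-t}{2}+\tfrac{t-1/4}{2}\cdot\frac{1/3-(1-t)^2}{t^2}\quad(\text{when }(1-t)^2\le 2/3),
\]
and it must be compared with $\frac18$, which is the normalised target for $\xi=0$. At the vertex $m_A=0$ the value is $\frac{3/4-t}{2}\cdot\frac{1/3}{(1-t)^2}$. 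Symmetric expressions arise with $t$ and $1-t$ swapped.

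The main step is to verify that each such expression is bounded below by some absolute constant strictly greater than $1/8$ on $[1/3,2/3]$. This is a one-variable calculus check on continuous functions over a compact interval, so a strict inequality gives a uniform margin. Spot checks support this: at $t=1/2$ the value is $1/8+1/24=1/6$, at $t=2/3$ it is $1/24+5/48=7/48$, and the $m_A=0$ vertex at $b=2/3$ gives $5/32$. All of these exceed $1/8$. I expect the minimum to be around $7/48$, and I will confirm this by monotonicity of each piece, or by bounding each factor on subintervals. A margin such as $7/48-1/8=1/48$ then dominates $\xi/4+O(1/N)$, since $\xi\ll1$ and $1/N\ll\xi$, and this gives the claim.

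The only delicate point is handling the case distinctions for which vertices are feasible, that is, whether $\binom b2\ge N^2/6$. Checking every candidate vertex that lies in the box settles it regardless.
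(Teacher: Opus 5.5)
Your LP reduction is correct: both coefficients $y_A\binom a2^{-1}$ and $y_B\binom b2^{-1}$ are positive, so the minimum lies at an endpoint of the segment $m_A+m_B=N^2/6$ inside the box. The gap is that you never prove the inequality at those endpoints, and that inequality is the whole content of the claim. Three spot values and ``I expect the minimum to be around $7/48$'' do not bound a function from below.

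The plan as stated would also fail if carried out literally. The $m_A=0$ expression $\frac{3/4-t}{6(1-t)^2}$ equals exactly $\frac18$ at $t=2/3$, so it is not bounded away from $\frac18$ on $[1/3,2/3]$. It is saved only because that vertex is infeasible there: feasibility requires $\binom b2\ge N^2/6$, i.e.\ $(1-t)^2\ge\frac13$. Your feasibility condition for the vertex $m_B=\binom b2$ is off by a factor of two: it should be $(1-t)^2\le\frac13$, not $\frac23$. So the case distinction you set aside as a detail is exactly where the argument needs care.

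You can close the gap either by finishing your enumeration or by using the paper's shortcut. The paper first computes
\[
y_A\tbinom a2^{-1}-y_B\tbinom b2^{-1}=\frac{(b-a)\bigl(ab-\frac{N+2}{4}(N-1)\bigr)}{a(a-1)b(b-1)},
\]
which is $\le 0$ for $a\le b$, since $ab\le N^2/4<\frac{(N+2)(N-1)}{4}$. Hence, along the segment, the value decreases as $m_A$ (the smaller side) grows. So the only vertex to check is $m_A=\binom a2$, which is always feasible because $\binom a2<N^2/6\le\binom a2+\binom b2$, and no feasibility case split is needed.

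With $r=a/N\in[1/3,1/2]$, this vertex gives, up to $O(1/N)$, the value $f(r)=\frac{r(12r^2-18r+7)}{12(1-r)^2}$, and
\[
f(r)-\frac{7}{48}=\frac{(3r-1)(16r^2-21r+7)}{48(1-r)^2}\ge 0,
\]
since $16r^2-21r+7$ has negative discriminant.

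Your own route also works once it is executed:
\begin{itemize}
\item The same identity holds for all $r\ge 1/3$, so it handles every saturated-side vertex.
\item The zero-side vertex gives $\frac{u-1/4}{6u^2}$ with $u=b/N$. This is decreasing for $u>1/2$, hence at least $\frac{5}{32}$ on the feasible range $u\in[1/\sqrt3,2/3]$.
\end{itemize}
Either way, the margin $\frac{7}{48}-\frac18=\frac1{48}$ absorbs $\frac{\xi}{4}+O(1/N)$.
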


\begin{proof}
By symmetry, assume $a\le b$. 
We claim that $y_A\binom a2^{-1}\le y_B\binom b2^{-1}$. 
Indeed, we have
$y_A\binom a2^{-1}-y_B\binom b2^{-1}
=\frac{(b-a)\left(ab-\frac{N+2} 4(N-1)\right)}{a(a-1)b(b-1)}\le0$ since $ab\le N^2/4$ and
$\frac{(N+2)(N-1)}4>N^2/4$ for $N>2$.
Set $m=m_A+m_B$.
Thus $y_Am_A\binom{a}{2}^{-1}+y_Bm_B\binom{b}{2}^{-1}=\left(y_A\binom a2^{-1}-y_B\binom b2^{-1}\right)m_A+y_B\binom b2^{-1}m$ is a decreasing function of $m_A$. 
The minimum is attained at
$m_A=\binom{a}{2}$ and $m=N^2/6$. Thus the minimum is
$s_{\min}:=y_A+y_B\binom b2^{-1}\left(\frac{N^2}{6}-\binom a2\right)$.
%Since $y_B>0$, the latter term is increasing in $m$, hence it is minimized at $m=N^2/6$.

Write $r:=a/N$. Then $r\in[1/3,1/2]$,
 and
$y_A=\frac{r-1/4}{2}N-O(1)$, $y_B=\frac{3/4-r}{2}N-O(1)$,
$\binom a2=\frac{r^2}{2}N^2-O(N)$ and
$\binom b2=\frac{(1-r)^2}{2}N^2-O(N)$. Therefore
$\frac1Ns_{\min}\ge f(r)-O(\frac1N)$,
where
$f(r):=\frac{r-1/4}{2}
+\frac{3/4-r}{(1-r)^2}\left(\frac16-\frac{r^2}{2}\right)
=\frac{r(12r^2-18r+7)}{12(1-r)^2}$.

We use the identity
$f(r)-\frac7{48}
=
\frac{(3r-1)(16r^2-21r+7)}{48(1-r)^2}\ge0$ for $r\in[1/3,1/2]$. 
Thus $\left(\frac7{48}-O(\frac1N)\right)N\ge\left(\frac18+\frac{\xi}{4}\right)N\ge\left(\frac12+\xi\right)y$, as desired.
\end{proof}

We now prove Lemma~\ref{lem:oddHP}.  
The claim above
ensures that the random or deterministic choice of auxiliary pairs has
enough flexibility.  
We use this flexibility to build the desired matching of pairs, and then obtain a Hamilton $2$-path with the prescribed split ends.

\begin{proof}[Proof of Lemma \ref{lem:oddHP}]
Let $N:=|U|\ge n-2\mu n$, $a:=|A_U|$, and $b:=|B_U|$. 
Let $S=A_U B_U$, $C_A=\binom{A_U}{2}$, $C_B=\binom{B_U}{2}$, and $C=C_A\cup C_B$, and we also view them as graphs defined on $U$.

Define a bipartite graph $\Gamma$ with parts $S$ and $C$ by joining $s\in S$ and $p\in C$ if and only if $s\cup p\in E(H)$. 
Thus every edge of $\Gamma$ corresponds to an odd edge of $H$.

%We first check the required numbers of pair types. 
Since the desired pair sequence starts and ends with split pairs and alternates between split and connate pairs, a Hamilton $2$-path on $N$ vertices must use $N/2$ pairs, of which $x:=\frac{N+2}{4}$ are split pairs and $y:=\frac{N-2}{4}$ are connate pairs. If $y_A$ connate pairs lie inside $A_U$ and $y_B$ inside $B_U$, then the split pairs use exactly $x$ vertices of $A_U$ and $x$ vertices of $B_U$. Hence $y_A=\frac{a-x}{2}$ and $y_B=\frac{b-x}{2}$. Clearly $y_A+y_B=y$.

The congruence assumptions imply that $x,y,y_A,y_B$ are integers. Indeed, if $N\equiv 2\pmod 8$ and $a-b\equiv 0\pmod 4$, then $x$ is odd, $y$ is even, and $a,b$ are both odd, so $a-x$ and $b-x$ are even. If $N\equiv 6\pmod 8$ and $a-b\equiv 2\pmod 4$, then $x$ is even, $y$ is odd, and $a,b$ are both even, so again $a-x$ and $b-x$ are even.
Moreover, since $a,b\ge n/3+\gamma n/4-2\mu n\ge N/3$, we have, for all sufficiently large $n$, $x=\frac{N+2}{4}\le \frac n4+1\le a-\frac{\gamma n}{4}$, and similarly $y\le b-\frac{\gamma n}4$. Therefore $y_A,y_B\ge \frac{\gamma n}{10}$.
We first verify these two conditions.
\begin{enumerate}[label=(F\arabic*), ref=F\arabic*, start=1]
    \item For every $\theta$-good connate pair $p$,
    $|N_S(p)|\ge \left(\frac12+\xi\right)|S|$.
    \label{F1}  
        \item For every $\theta$-good split pair $s$,
    $y_A\frac{|N_{C_A}(s)|}{|C_A|}+y_B\frac{|N_{C_B}(s)|}{|C_B|}\ge \left(\frac12+\xi\right)y$,
    \label{F2}
\end{enumerate}

To see (\ref{F1}), let $p$ be a $\theta$-good connate pair. 
We have $\deg_{H_{\rm even}}(p)\le \theta^3 \binom n2$. 
Hence $|N_S(p)|\ge(\frac13+\g)\binom{n-2}{2}-\theta^3\binom n2-2\mu n^2\ge(\frac13+\frac{\g}3)\binom{n}{2}\ge\left(\frac12+\xi\right)|S|$, by $|S|=ab\le \frac{N^2}{4}\le \frac{n^2}{4}$.
For (\ref{F2}), let $s$ be a $\theta$-good split pair. 
We have $\deg_{H_{\rm even}}(s)\le \theta^3 \binom n2$.
Hence $|N_C(s)|\ge(\frac13+\g)\binom{n-2}{2}-\theta^3\binom n2-\mu n^2\ge(\frac13+\frac{\g}3)\binom{n}{2}$.
Write $m_A:=|N_{C_A}(s)|$ and $m_B:=|N_{C_B}(s)|$.
Thus, $m_A+m_B\ge \left(\frac13+\frac{\gamma}{3}\right)\binom n2\ge\frac{N^2}{6}$. Recall that $a,b\ge N/3$. 
By Claim \ref{claim:odd-weighted-estimate}, the following holds
\[
y_A\frac{m_A}{|C_A|}+y_B\frac{m_B}{|C_B|}\ge \left(\frac12+\xi\right)y.    
\]

Let $s_1$ and $s_2$ be $\theta$-good split pairs.
Choose uniformly at random a decomposition $M_0$ of $U\setminus(s_1\cup s_2)$ into pairs, and conditioned on containing exactly $x-2$ split pairs, $y_A$ pairs from $C_A$, and $y_B$ pairs from $C_B$ uniformly among all such decompositions.

We claim that with positive probability $M_0$ satisfies the following two properties: (i) for every $\theta$-good split pair $s\in S$, $|N_\Gamma(s)\cap M_0\cap C|\ge \left(\frac12+\frac{2\xi}{3}\right)y$ , (ii) for every $\theta$-good connate pair $p\in C$, $|N_\Gamma(p)\cap M_0\cap S|\ge \left(\frac12+\frac{2\xi}{3}\right)x$, and (iii) $M_0$ contains at most $\sqrt{\rho}n$ non $\theta$-good pairs.

Indeed, fix a $\theta$-good split pair $s$. A fixed pair in $C_A$ appears in $ M_0$ with probability $\frac{y_A}{|C_A|}+O(n^{-2})$, and a fixed pair in $C_B$ appears in $M_0$ with probability $\frac{y_B}{|C_B|}+O(n^{-2})$. 
Therefore, by (\ref{F2}), 
\begin{align*}
\mathbb E |N_\Gamma(s)\cap M_0\cap C|&=\left(\frac{y_A}{|C_A|}+O(n^{-2})\right)m_A+\left(\frac{y_B}{|C_B|}+O(n^{-2})\right)m_B\\
&\ge \left(\frac12+\xi\right)y-O(1)\ge \left(\frac12+\frac{5\xi}{6}\right)y
\end{align*}
for all sufficiently large $n$. By Chernoff's bound,
$\mathbb P\left[|N_\Gamma(s)\cap M_0\cap C|<\left(\frac12+\frac{2\xi}{3}\right)y\right]\le \exp(-\Omega_\xi(n))$.

Similarly, fix a $\theta$-good connate pair $p$. A fixed split pair appears in $M_0$ with probability $\frac{x}{|S|}+O(n^{-2})$. Using (\ref{F1}), we get 
\[
\mathbb E |N_\Gamma(p)\cap M_0\cap S|=\left(\frac{x}{|S|}+O(n^{-2})\right)|N_s(p)|\ge \left(\frac12+\xi\right)x-O(1)\ge \left(\frac12+\frac{5\xi}{6}\right)x.
\]
Again Chernoff's bound gives
$\mathbb P\left[|N_\Gamma(p)\cap M_0\cap S|<\left(\frac12+\frac{2\xi}{3}\right)x\right]\le \exp(-\Omega_\xi(n))$.
Taking a union bound over all $O(n^2)$ pairs, $M_0$ satisfies the above two properties with probability $1-o(1)$.

%Second, we claim that $M_0$ contains at most $\sqrt{\rho}n$ not $\theta$-good pairs. 
Now we consider (iii). 
Note that every vertex of $U$ is incident with at most $\rho n$ non $\theta$-good pairs, so the total number of non $\theta$-good pairs in $U$ is at most $\frac12 \rho n|U|\le \frac12\rho n^2$. Every fixed pair appears in $M_0$ with probability $O(1/n)$. Hence the expected number of non $\theta$-good pairs in $M_0$ is $O(\rho n)$. 
By Markov's inequality, the probability that $M_0$ contains more than $\sqrt{\rho}n$ non $\theta$-good pairs is $O(\sqrt{\rho})$. 
Since $\rho\ll \xi$, all properties (i)--(iii) hold simultaneously for some choice of $M_0$.

Fix such a matching $M_0$. We replace the non $\theta$-good pairs in $M_0$ one by one, while preserving the numbers of split, $A_U$-connate, and $B_U$-connate pairs.

Suppose first that $ab\in M_0$ is a non $\theta$-good split pair, where $a\in A_U$ and $b\in B_U$. 
We choose a $\theta$-good split pair $xy\in M_0$, with $x\in A_U$ and $y\in B_U$, disjoint from $\{a,b\}$, such that both $ay$ and $xb$ are $\theta$-good. 
This is possible because $a$ and $b$ are each incident with at most $\rho n$ non $\theta$-good pairs, while $M_0$ contains linearly many split pairs and at most $\sqrt{\rho}n$ non $\theta$-good pairs. 
Replace $ab,\ xy$ by $ay,\ xb$. 
This reduces the number of non $\theta$-good pairs and preserves the number of split pairs.

Now we deal with non $\theta$-good connate pairs.
%Without loss of generality, we may 
First assume that $ab\in M_0$ is a non $\theta$-good connate pair inside $A_U$. 
By construction, 
$|M_0\cap C_A|=y_A\ge \gamma n/10$.
Among these pairs, at most
$\sqrt{\rho}n$ are non $\theta$-good, at most $O(1)$ meet
$\{a,b\}$, and at most $2\rho n$ contain a vertex $z$ for
which one of $az,bz$ is non $\theta$-good. Since $\rho\ll \gamma$, there is a
$\theta$-good connate pair $xy\in M_0\cap C_A$ disjoint from
$\{a,b\}$ such that all pairs $ax,ay,bx,by$ are $\theta$-good.
Replace $ab,\ xy$ by $ax,\ by$.
This reduces the number of non $\theta$-good pairs and preserves the number of $A_U$-connate pairs. 
Similar operations for non $\theta$-good connate pairs in $B_U$ can be executed analogously.

After at most $\sqrt{\rho}n$ such operations, we obtain a perfect matching $M_0'$ of $U\setminus(s_1\cup s_2)$, all pairs in $ M_0'$ are $\theta$-good.
Let $M:=M_0'\cup s_1\cup s_2$ and thus $M$ contains exactly $x$ split pairs, $y_A$ pairs from $C_A$, and $y_B$ pairs from $C_B$. 
During the repair process, at most $4\sqrt{\rho}n$ matching pairs are changed. Therefore, for every $\theta$-good split pair $s$,
$|N_\Gamma(s)\cap M\cap C|\ge \left(\frac12+\frac{2\xi}{3}\right)y-4\sqrt{\rho}n\ge \left(\frac12+\frac{\xi}{2}\right)y,$ 
and for every $\theta$-good connate pair $p$,
$|N_\Gamma(p)\cap M\cap S|\ge \left(\frac12+\frac{2\xi}{3}\right)x-4\sqrt{\rho}n\ge \left(\frac12+\frac{\xi}{2}\right)x,$
because $\rho\ll \xi^2$ and $y,x=\Omega(n)$.

Let $X:=M\cap S$ and $Y:=M\cap C$. Then $|X|=x$, $|Y|=y$, and $x=y+1$.
The bipartite graph $\Gamma[M]$ satisfies $d_{\Gamma[M]}(s)\ge \left(\frac12+\frac{\xi}{2}\right)y$ for every $s\in X$, and $d_{\Gamma[M]}(p)\ge \left(\frac12+\frac{\xi}{2}\right)x$ for every $p\in Y$.

Choose $y_0\in N_{\Gamma[M]}(s_1)\cap Y$. Let
$G':=\Gamma[M]-s_1$, with parts $X':=X\setminus\{s_1\}$ and $Y$.
Then $|X'|=|Y|=y$. For every nonedge $xy$ with $x\in X'$ and $y\in Y$,
the degree-sum assumption in Lemma~\ref{lem:bHamiltoncon} holds. Hence
$G'$ has a Hamilton path from $y_0$ to $s_2$. Attaching the edge
$s_1y_0$ gives a Hamilton path from $s_1$ to $s_2$ in $\Gamma[M]$.
Since $M$ is a perfect matching of $U$, the pairs $r_1,\ldots,r_{N/2}$ cover every vertex of $U$ exactly once. Moreover, for every $i$, $r_i\cup r_{i+1}\in E(\Gamma)$, and hence $r_i\cup r_{i+1}\in E(H)$. Finally, because $\Gamma$ is bipartite between split pairs and connate pairs, the pair sequence alternates between split pairs and connate pairs.
Thus $r_1r_2\cdots r_{N/2}$ is a Hamilton $2$-path in $H[U]$ from $s_1$ to $s_2$, as required.
\end{proof}

\bibliographystyle{abbrv}
\bibliography{refs, Bibref}
\end{document}